\documentclass[12pt]{amsart}

\usepackage{upref,amssymb,bm,mathrsfs}
\usepackage{enumerate}
\usepackage{mathtools}
\usepackage[shortlabels]{enumitem}

\usepackage[centering,width=6in,truedimen]{geometry}
\geometry{a4paper,height=9in}
\parskip.5ex plus.1ex
\linespread{1.1}

\usepackage[colorlinks,pagebackref,pdfstartview=FitH]{hyperref}

\hypersetup{
    colorlinks,
    citecolor=blue,
    filecolor=black,
    linkcolor=red,
    urlcolor=black
}

\theoremstyle{plain}
\newtheorem{thm}{Theorem}[section]
\newtheorem{prop}[thm]{Proposition}
\newtheorem{lem}[thm]{Lemma}
\newtheorem{cor}[thm]{Corollary}

\newtheorem*{ques*}{Question}

\theoremstyle{definition}

\theoremstyle{remark}
\newtheorem{rem}[thm]{Remark}

\newcommand{\N}{\mathbb N}
\newcommand{\Q}{\mathbb Q}
\newcommand{\R}{\mathbb R}
\newcommand{\Z}{\mathbb Z}
\numberwithin{equation}{section}
\renewcommand{\epsilon}{\varepsilon}

\newcommand{\bb}{\mathbf{b}}
\newcommand{\cC}{\mathcal{C}}

\newcommand{\cB}{\mathcal{B}}
\newcommand{\cL}{\mathcal{L}}
\newcommand{\bc}{\mathbf{c}}
\newcommand{\ba}{\mathbf{a}}

\title[A quantitative framework for sets of exact approximation order]{A quantitative framework for sets of exact approximation order by rational numbers}

\author{Simon Baker}

\address{
Mathematical Sciences\\
Loughborough University\\
Loughborough\\
LE11 3TU, UK
}
\email{simonbaker412@gmail.com}

\author{Benjamin Ward}

\address{
 Department of Mathematics\\
 University of York\\
 Heslington\\
 YO10 5DD, UK
}
\email{benjamin.ward@york.ac.uk, ward.ben1994@gmail.com}

\subjclass{11J04, 11J70, 11J83}

\keywords{Diophantine approximation, Exact approximation order, Fractal geometry}

\begin{document}

\begin{abstract}
	In this paper we study a quantitative notion of exactness within Diophantine approximation. Given $\Psi:(0,\infty)\to (0,\infty)$ and $\omega:(0,\infty)\to (0,1)$ satisfying $\lim_{q\to\infty}\omega(q)=0$, we study the set of points, which we call $E(\Psi,\omega)$, that are $\Psi$-well approximable but not $\Psi(1-\omega)$-well approximable. We prove results on the cardinality and dimension of $E(\Psi,\omega)$. In particular we obtain the following general statements:
    \begin{itemize}[leftmargin=2ex]
        \item For any $\omega:(0,\infty)\to (0,1)$ and $\tau>2$ there exists $\Psi:(0,\infty)\to (0,\infty)$ such that $\lim_{q\to\infty}\frac{-\log \Psi(q)}{\log q}=\tau$ and $E(\Psi,\omega)\neq\emptyset.$
        \item Under natural monotonicity assumptions on $\Psi$ and $\omega,$ we prove that if $\omega$ decays to zero sufficiently slowly (in a way that depends upon $\Psi$) then $E(\Psi,\omega)$ is uncountable. Moreover, under further natural assumptions on $\Psi$ we can calculate the Hausdorff dimension of $E(\Psi,\omega)$.
    \end{itemize}
Our main result demonstrates a new threshold for the behaviour of $E(\Psi,\omega)$. A particular instance of this threshold is illustrated by considering functions of the form $\Psi_{\tau}(q)=q^{-\tau}$ when $\tau\in \N_{\geq 3}$. For these functions we prove the following:
    \begin{itemize}[leftmargin=2ex]
        \item If $\omega(q)= Cq^{-\tau(\tau-1)}$ for some sufficiently large $C$ or $\omega(q)=q^{-\tau'}$ for some $\tau'<\tau(\tau-1),$ then $E(\Psi_{\tau},\omega)$ is uncountable and we calculate its Hausdorff dimension.
        \item If $\omega(q)< cq^{-\tau(\tau-1)}$ for some $c\in (0,1)$ for all $q$ sufficiently large then $E(\Psi_{\tau},\omega)=\emptyset.$ 
    \end{itemize}  
\end{abstract}

\maketitle
\section{Introduction}
Diophantine approximation in $\R$ is concerned primarily with how well real numbers can be approximated by rational numbers. 
This can be made precise within the following general framework:
Given a non-increasing function $\Psi:(0,\infty)\to (0,\infty)$ we say that $x\in \R$ is $\Psi$-approximable if 
$$\left|x-\frac{p}{q}\right|\leq \Psi(q)$$ for i.m. $(p,q)\in \mathbb{Z}\times \mathbb{N}$. Here and throughout we use i.m. as a shorthand for infinitely many. We denote the set of $\Psi$-approximable numbers by $W(\Psi)$. The sets $W(\Psi)$ are of central interest in Diophantine approximation and their properties are well understood. 


Suppose $x\in W(\Psi)$ for some $\Psi$, it is natural to ask whether $\Psi$ is in a sense the optimal approximation function for $x$. This problem was first considered from a metric perspective by G\"{u}ting in \cite{Gut}. Given $\tau\geq 2$ let $$W(\tau)=\left\{x\in \R:\left|x-\frac{p}{q}\right|\leq q^{-\tau} \textrm{ for i.m. }(p,q)\in \mathbb{Z}\times \N\right\},$$ 
and for $x\in \R$ let $V(x)=\sup\{\tau: x\in W(\tau)\}$. G\"{u}ting proved in \cite{Gut} that for $\tau\geq 2$ we have $$\dim_{H}(\{x:V(x)=\tau\})=\dim_{H}(W(\tau))=\frac{2}{\tau}\, ,$$
where $\dim_{H}$ denotes the Hausdorff dimension. That is, the set of points for which $\tau$ is the optimal exponent of approximation has the same Hausdorff dimension as the set of points that are $\tau$-approximable. Note that the formula $\dim_{H}(W(\tau))=\frac{2}{\tau}$ was established prior to this independently by Jarnik \cite{Jar} and Besicovitch \cite{Bes}. In \cite{BDV} Beresnevich, Dickinson and Velani proved an analogue of G\"{u}ting's result where the quality of approximation is viewed at a finer logarithmic scale. The authors of \cite{BDV} also introduced the following analogue of the set of badly approximable numbers: Given a non-increasing $\Psi:(0,\infty)\to (0,\infty)$ we say that $x\in \R$ is $\Psi$-badly approximable if $x\in W(\Psi)$ and there exists $c>0$ such that for all sufficiently large $q\in \N$ we have
$$\left|x-\frac{p}{q}\right|\geq  c\Psi(q)$$ for all $p\in \Z$. We let $\textrm{Bad}(\Psi)$ denote the set of $\Psi$-badly approximable points. If $x\in \textrm{Bad}(\Psi)$ then $\Psi$ can be viewed as an optimal approximation function for $x$ because $x\notin W(\Psi')$ for any $\Psi'$ satisfying $\Psi'=o(\Psi)$. In \cite{BDV} the authors posed the problem of determining the Hausdorff dimension of $\textrm{Bad}(\Psi)$ for a general $\Psi$.  Note that for any function $\Psi$ tending to zero, by Cassels' Scaling Lemma \cite{Cas}, we have that $\textrm{Bad}(\Psi)$ is a Lebesgue nullset. In this paper, the sets we will be interested in will be a subset of $\textrm{Bad}(\Psi)$ for some $\Psi$. Hence they are nullsets and so when considering the size of these sets we will use Hausdorff dimension.

A more precise framework for understanding those points for which $\Psi$ is the optimal approximation function is provided by the notion of $\Psi$-exact approximation order: We say that $x$ has $\Psi$-exact approximation order if $x\in W(\Psi)$ and there exists a function $c_{x}:\N\to (0,1)$ depending on $x$ which satisfies $\lim_{q\to \infty}c_{x}(q)=1$ and
$$\left|x-\frac{p}{q}\right|\geq  c_{x}(q)\Psi(q)$$
for all $(p,q)\in \mathbb{Z}\times \N.$ 
We let $E(\Psi)$ denote the set of points with $\Psi$-exact approximation order. If $x\in E(\Psi)$ then $\Psi$ can be viewed as the optimal approximation function for $x$ because $x\in W(\Psi)$ but $x\notin W(c\Psi)$ for any $c\in (0,1).$ By definition, for any $\Psi$ we have the inclusions
$$E(\Psi)\subset \textrm{Bad}(\Psi)\subset W(\Psi).$$ In the special case where $\Psi(q)=q^{-2}$ then $W(\Psi)$ has full Lebesgue measure and $\textrm{Bad}(\Psi)$, which is now the classical set of badly approximable points, has zero Lebesgue measure and full Hausdorff dimension. The set $E(\Psi)$ is empty. It was shown by Moreria that if one considers $\Psi_{c}(q)=cq^{-2},$ then $E(\Psi_{c})$ has positive Hausdorff dimension for $c$ sufficiently small \cite[Theorem 2]{Mor}. In the case where $\Psi(q)=o(q^{-2})$ these sets exhibit different behaviour. Jarnik showed in \cite[Satz. 6 (p.539)]{Jar} that when $\Psi(q)=o(q^{-2})$ and non-increasing then $E(\Psi)$ is non-empty. In \cite{Bug1} Bugeaud answered the problem posed by Beresnevich, Dickinson and Velani in \cite{BDV} in a strong sense and proved that when $\Psi$ is non-increasing and satisfies some additional decay assumptions then $\dim_{H}(E(\Psi))=\dim_{H}(\textrm{Bad}(\Psi))=\dim_{H}(W(\Psi))$. These additional assumptions were subsequently removed in a follow up paper by Bugeaud and Moreira \cite[Theorem 4]{BugMor} who showed that when $\Psi$ is non-increasing and $\Psi(q)=o(q^{-2}),$ then $\dim_{H}(E(\Psi))=\dim_{H}(\textrm{Bad}(\Psi))=\dim_{H}(W(\Psi)).$
For more on this topic we refer the reader to \cite{Bug2,BugMor,Mor} and the references therein. For more on the higher dimensional version of this problem we refer the reader to \cite{BadeSax,Fre,KLWZ,Joh}, and for this problem in other settings see \cite{BanGhoNan, BDGW, HeXio,Pan, Zha, ZhaZho}. One can also consider other notions of the size of $E(\Psi)$ such as the packing dimension \cite{Joh2}, or Fourier dimension \cite{FraWhe, FraWhe2}.

\subsection{A quantitative framework for exactness}
We are concerned with providing a more quantitative framework for understanding those points for which $\Psi$ is the optimal approximation function. Given a non-increasing function $\Psi:(0,\infty)\to (0,\infty)$ and a function $\omega:(0,\infty)\to (0,1)$ satisfying $\lim_{q\to\infty}\omega(q)=0,$ we associate the set of $x\in \R$ satisfying $x\in W(\Psi)$ and for all $q\in \N$ sufficiently large we have $$ \left|x-\frac{p}{q}\right| \geq\left(1-\omega(q)\right) \Psi(q)$$ for all $p\in \mathbb{Z}.$ We say that a point is of exact $(\Psi,\omega)$-order of approximation if it satisfies these two properties, and denote the set of points by $E(\Psi,\omega)$. If $x\in E(\Psi,\omega)$ then $\Psi$ is the optimal approximation function for $\Psi$ in the sense that $x\in W(\Psi)$ but $x\notin W\left((1-\omega)\Psi\right)$. Note that unlike in the definition of $E(\Psi)$ the function $\omega$ is uniform over $x\in \R$ and so provides a more quantitative framework by which we can assert that $\Psi$ is the optimal approximation function. We note that it is a consequence of our assumption $\lim_{q\to\infty}\omega(q)=0$ that for any $\Psi$ and $\omega$ we have the inclusions 
$$E(\Psi,\omega)\subset E(\Psi)\subset \textrm{Bad}(\Psi)\subset W(\Psi).$$ \par


 In \cite{AkhMos} Akhunzhanov and Moschevitin introduced the related set of points $x\in \R$ satisfying $x\in W(\Psi(1+\omega))$ and for all $q\in \N$ sufficiently large we have
\begin{equation*}
    \left|x-\frac{p}{q}\right|\geq \Psi(q)(1-\omega(q))\, 
\end{equation*}
for all $p\in \Z$. This can be considered as a symmetric version of our set $E(\Psi,\omega)$.
They proved that if $q^{2}\Psi(q)$ is non-increasing and $\omega(q)\geq C q^{2}\Psi(q)$ for all $q$ sufficiently large, for some determined fixed constant $C>0$, then the corresponding set is non-empty. A later paper by Akhunzhanov \cite{Akh} improved this further by showing that the same conclusion holds under the weaker assumption that $\omega(q)\geq C \left(q^{2}\Psi(q)\right)^{2}$ for all $q$ sufficiently large. Note that this work also studied a higher dimensional analogue of this problem, and in particular implies \cite[Theorem 2]{BadeSax} in the symmetric setting.\par 
The aforementioned results motivate the following question:
\begin{itemize}
    \item[Q1.] For which $\Psi$ and $\omega$ is $E(\Psi,\omega)$ non-empty?
\end{itemize}
At first glance it might seem reasonable to expect that $E(\Psi,\omega)$ is non-empty for every pair $(\Psi,\omega)$. For each $\Psi$ the set $W(\Psi)$ is reasonably well distributed within the unit interval (for example, each $W(\Psi)$ has full packing dimension and satisfies the large intersection property \cite{Fal1994}), so one could expect that changing the order of $\Psi$, by however small an amount, would lead to an increase in points in the set. Indeed, the non-emptiness of the sets $E(\Psi)$ supports this idea. However, this intuitive guess turns out to be incorrect. The obstruction to $E(\Psi,\omega)$ being non-empty comes from the fact $E(\Psi,\omega)$ is not just a $\limsup$ set, it is the intersection of a $\limsup$ set with a $\liminf$ set. For all $q$ sufficiently large, we have to be careful to rule out rational approximations that satisfy $ |x-\frac{p}{q}|<\Psi(q)\left(1-\omega(q)\right)$. In Theorem~\ref{thm:empty} below we show that there are large families of pairs of functions $(\Psi,\omega)$ such that $E(\Psi,\omega)=\emptyset$. In other words, we show that there exists functions $\Psi,\Phi:(0,\infty)\to(0,\infty)$ such that
\begin{equation*}
   \Psi(q)<\Phi(q) \quad \text{ for all }\, \, q\in(0,\infty)\, \quad \text{ and } \quad  W(\Psi)=W(\Phi).
\end{equation*}
See also Theorem~\ref{thm: non-empty2} below. During the preparation of this paper a preprint of Moshchevitin and Pitcyn \cite{MoshPit} was released which shows a similar result. We reserve further comments to Remark~\ref{rem: MoshPit result} below. As highlighted in \cite{MoshPit} the only previously known approximation functions where this phenomenon occurs is when one considers approximation functions related to the gaps in the Markoff-Lagrange spectrum \cite{CusFla, Mat}, for example $\Psi(q)=\tfrac{1}{\sqrt{5}q^{2}}$ and $\Phi(q)=\tfrac{1}{(\sqrt{5}+\varepsilon)q^{2}}$ for $\varepsilon>0$. See also \cite{Han}.\par

If the set $E(\Psi,\omega)$ is non-empty, one can ask about the finer metric properties of $E(\Psi,\omega)$. We can easily see via Cassels' Scaling Lemma that $E(\Psi,\omega)$ is always a Lebesgue nullset, so we consider its Hausdorff dimension. It is an immediate consequence of the definition of $E(\Psi,\omega)$ that if $x\in E(\Psi,\omega)\cap[0,1]$ then $x$ will belong to one of $2(q+1)$ intervals of size $\Psi(q)\omega(q)$ for infinitely many $q\in \N.$ The following upper bound for the Hausdorff dimension of $E(\Psi,\omega)$ follows from this observation:
\begin{equation}
\label{eq:Hausdorff upper bound}
\dim_{H}(E(\Psi,\omega))\leq \inf\left\{s\geq 0: \sum_{q=1}^{\infty}q(\Psi(q)\omega(q))^{s}<\infty\right\}.
\end{equation}This leads us to our next question:
\begin{itemize}
     \item[Q2.] For which $\Psi$ and $\omega$ do we have equality in \eqref{eq:Hausdorff upper bound}?
\end{itemize}

In Theorems~\ref{thm:main}-\ref{thm:main2} below we give sufficient conditions on $\omega$ (depending on $\Psi$) ensuring that we have equality in \eqref{eq:Hausdorff upper bound}. Jarnik's theorem \cite{Jar}, together with \cite{BugMor}, imply that for a general family of $\Psi,$ the Hausdorff dimension of $E(\Psi), \textrm{Bad}(\Psi)$ and $W(\Psi)$ coincide with the right hand side of \eqref{eq:Hausdorff upper bound} when $\omega$ is taken to be a strictly positive constant function. Thus, if $\omega$ decays to zero sufficiently quickly, we should not expect the Hausdorff dimension of $E(\Psi,\omega)$ to coincide with that of $E(\Psi), \textrm{Bad}(\Psi)$ and $W(\Psi)$. Conversely, if $\omega$ decays to zero sufficiently slowly then the dimensions should coincide. This is shown to be the case in Corollary~\ref{cor:dimension coincidence} below. \par 

In this paper we address both Q1 and Q2. In the special case when $\Psi$ satisfies an additional divisibility criteria, then we obtain complete answers to both of these questions (see Theorems \ref{thm: non-empty2} and \ref{thm:main2}).

\subsection{Main results: Q1}
We prove the following results on $E(\Psi,\omega)$.

\begin{thm} \label{thm: non-empty}
    Suppose that $\lim_{q\to\infty}\limits q^{2}\Psi(q)=\lim_{q\to\infty}\limits\omega(q)=0$ and $q\to q^{2}\Psi(q)$ is non-increasing. Furthermore, suppose that 
    \begin{equation} \label{lower bound for corollary}
        \omega(q)\geq \frac{45}{\Psi(q)}\Psi\left(\frac{1}{2q\Psi(q)}\right)
    \end{equation}
    for all $q$ sufficiently large. Then $E(\Psi,\omega)$ is uncountable.
\end{thm}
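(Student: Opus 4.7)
I will construct an uncountable set of points in $E(\Psi,\omega)$ by a Cantor-type branching construction. Build a binary tree of rationals $p_\sigma/q_\sigma$ and closed intervals $I_\sigma$, indexed by $\sigma\in\{0,1\}^{<\N}$, satisfying: (i) $I_\sigma\subset A_\sigma:=\{y:(1-\omega(q_\sigma))\Psi(q_\sigma)\leq|y-p_\sigma/q_\sigma|\leq\Psi(q_\sigma)\}$ (the allowed band of $p_\sigma/q_\sigma$), and $q_\sigma\to\infty$ along every infinite branch; (ii) the two children $I_{\sigma 0},I_{\sigma 1}$ of any node are disjoint sub-intervals of $I_\sigma$; (iii) $I_\sigma$ is disjoint from $F(q):=\bigcup_{p\in\Z}\{y:|y-p/q|<(1-\omega(q))\Psi(q)\}$ for every denominator $q$ strictly between $q_{\sigma'}$ (the parent denominator) and $q_\sigma$. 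Any infinite branch then selects a point $x\in\bigcap_n I_{\sigma|_n}$ satisfying $|x-p_{\sigma|_n}/q_{\sigma|_n}|\leq\Psi(q_{\sigma|_n})$ for all $n$, forcing $x\in W(\Psi)$, and with $x\notin F(q)$ for all $q$ sufficiently large, forcing $x\in E(\Psi,\omega)$. The branching yields uncountably many such $x$.

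For the inductive step at a node $\sigma$, set $Q_\sigma:=1/(2q_\sigma\Psi(q_\sigma))$; by hypothesis $\omega(q_\sigma)\Psi(q_\sigma)\geq 45\Psi(Q_\sigma)$, and since $q\Psi(q)\to 0$ we have $Q_\sigma\gg q_\sigma$ eventually. I will choose child denominators $q_{\sigma j}$ ($j\in\{0,1\}$) large enough that (a) the $\Psi(q_{\sigma j})$-neighbourhoods of distinct rationals of denominator $q_{\sigma j}$ are pairwise disjoint, so that keeping $I_{\sigma j}\subset A_{\sigma j}$ automatically avoids the forbidden intervals of the other rationals of denominator $q_{\sigma j}$; and (b) the host interval $I_\sigma$ contains the full $\Psi(q_{\sigma j})$-neighbourhoods of at least two rationals $p_{\sigma j}/q_{\sigma j}$, giving two well-separated child bands $A_{\sigma 0},A_{\sigma 1}\subset I_\sigma$. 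Both (a) and (b) are arranged by taking $q_{\sigma j}$ sufficiently large relative to $q_\sigma$, using that the spacing $1/q_{\sigma j}$ is much smaller than $\omega(q_\sigma)\Psi(q_\sigma)$. It remains to locate $I_{\sigma j}\subset A_{\sigma j}$ disjoint from $\bigcup_{q\in(q_\sigma,q_{\sigma j})}F(q)$ and long enough to host two further grandchildren.

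The core estimate is that, within each $A_{\sigma j}$, the total Lebesgue measure of $\bigcup_{q\in(q_\sigma,q_{\sigma j})}F(q)$ is strictly smaller than the band width $\omega(q_{\sigma j})\Psi(q_{\sigma j})$. A competing rational $p'/q'\neq p_{\sigma j}/q_{\sigma j}$ contributes a forbidden interval of width at most $2\Psi(q')$, and can meet $A_{\sigma j}$ only when $|p'/q'-p_{\sigma j}/q_{\sigma j}|\leq\Psi(q_{\sigma j})+\Psi(q')$; combined with the Farey lower bound $|p'/q'-p_{\sigma j}/q_{\sigma j}|\geq 1/(q_{\sigma j}q')$ this restricts the relevant $q'$ to $q'\gtrsim Q_{\sigma j}$ (or symmetric small-denominator regimes). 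Partitioning the competing $q'$ into dyadic scales $[Q,2Q]$ and counting rationals in $A_{\sigma j}$ of each scale as $O(Q^2\omega(q_{\sigma j})\Psi(q_{\sigma j})+Q)$, each contributing a forbidden interval of width at most $2\Psi(Q)$, the monotonicity of $q^2\Psi(q)$ bounds each scale's contribution. Summing the dyadic geometric series yields a total forbidden measure of at most $C\Psi(Q_{\sigma j})$ for an absolute constant $C$. The hypothesis $\omega(q_{\sigma j})\Psi(q_{\sigma j})\geq 45\Psi(Q_{\sigma j})$ is calibrated so that $C<45$, leaving ample room for two disjoint grandchild sub-intervals.

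The main obstacle is the measure-theoretic bookkeeping of the dyadic sum: different regimes (e.g.\ whether $Q\Psi(q_{\sigma j})\gtrless 1$, or whether a competing $q'$ is smaller or larger than $q_{\sigma j}$) require separate treatment for the count of interfering rationals per denominator, and the monotonicity $q^2\Psi(q)\searrow 0$ must be exploited carefully to guarantee that both the per-scale count and the cumulative forbidden width telescope to a constant multiple of $\Psi(Q_{\sigma j})$ strictly below the band width. Pinning down the explicit constant $45$ and showing that all three conditions (a)--(c) can be met simultaneously at every node of the tree is the delicate technical heart of the argument.
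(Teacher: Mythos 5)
Your overall skeleton (a nested Cantor/tree construction whose nodes are rationals $p_\sigma/q_\sigma$ with bands $A_\sigma$, and the identification of the critical scale $Q_\sigma=1/(2q_\sigma\Psi(q_\sigma))$) is in the same spirit as the paper's proof, which builds an injection $\{1,2\}^{\N}\to E(\Psi,\omega)$. But the step you call the core estimate is not correct, and it is exactly where the real difficulty lies. At the node $\sigma j$ you must make the band $A_{\sigma j}$, of total width $2\omega(q_{\sigma j})\Psi(q_{\sigma j})$, avoid $F(q')$ for all $q'\in(q_\sigma,q_{\sigma j})$. Every such $q'$ is \emph{smaller} than $q_{\sigma j}$, so by the monotonicity of $q\mapsto q^{2}\Psi(q)$ each interfering rational carries a forbidden interval of radius $(1-\omega(q'))\Psi(q')\geq(1-\omega(q'))\Psi(q_{\sigma j})$, i.e.\ larger than the band width by a factor of order $1/\omega(q_{\sigma j})$. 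Such an interval cannot be "absorbed" inside the band by any measure/union bound: a single one, if it happens to sit over $A_{\sigma j}$, covers the band entirely. Moreover your localisation "$q'\gtrsim Q_{\sigma j}$" is impossible in this range, since $Q_{\sigma j}>q_{\sigma j}$; the Farey inequality applied at the parent scale only forces $q'\geq 1/(2q_\sigma\Psi(q_\sigma))=Q_\sigma$, and denominators near $Q_\sigma$ have forbidden radii up to $\Psi(Q_\sigma)\gg\omega(q_{\sigma j})\Psi(q_{\sigma j})$. Hence the claimed bound "total forbidden measure $\leq C\Psi(Q_{\sigma j})<45\Psi(Q_{\sigma j})\leq$ band width" cannot hold as stated; what is actually required is to choose the child rational $p_{\sigma j}/q_{\sigma j}$ itself (not merely the sub-interval $I_{\sigma j}$) so that its whole $\Psi(q_{\sigma j})$-neighbourhood dodges these much larger forbidden intervals — an estimate at the scale of $I_\sigma$ that your plan does not supply.

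Even for the denominators deferred to the next level, $q'\in(q_{\sigma j},q_{\sigma jk})$, a blind dyadic union bound inside the band cannot produce $C\Psi(Q_{\sigma j})$: allowing even one nearby rational per denominator already contributes about $\sum_{q'\geq Q_{\sigma j}}\Psi(q')$, which is of order $Q_{\sigma j}\Psi(Q_{\sigma j})$, exceeding $45\Psi(Q_{\sigma j})$ by an unbounded factor; one must know that for most denominators there is \emph{no} nearby rational at all. This is precisely what the paper extracts from continued fractions: since $\Psi(q)<1/(2q^{2})$ eventually, Lemma~\ref{lem:Legendre} reduces the lower-bound condition to convergents (see Proposition~\ref{prop: equivalence}), and the construction in Lemma~\ref{lem: building U(p/q)} extends the continued fraction of the current rational keeping every intermediate partial quotient below $\lfloor 1/(3q_k^{2}\Psi(q_k))\rfloor$, so that by Lemma~\ref{lem:approximation quality} the intermediate convergents are not even $\Psi$-approximations. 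The only genuinely dangerous scale is the first new partial quotient, of size about $1/(q^{2}\Psi(q))$, i.e.\ next denominator about $Q$, and it is exactly there that the hypothesis $\omega(q)\Psi(q)\geq 45\,\Psi(1/(2q\Psi(q)))$ is spent (Lemma~\ref{lem:S bigger than bad}). If you wish to keep a purely metric argument you need an ingredient of this kind (e.g.\ Legendre/Grace to discard all but $O(1)$ candidates per scale, plus a selection of the child rationals away from the unavoidable ones); as written, the inductive step cannot be completed and the proof has a genuine gap.
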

Theorem \ref{thm: non-empty} allows us to conclude that $E(\Psi,\omega)\neq\emptyset$ and uncountable under a weaker assumption on $\omega$ then that imposed by Akhunzhanov in \cite{Akh}. The following corollary makes our improvement on their result clear in the case of power functions.
\begin{cor} \label{cor: non-empty}
    Suppose that $\Psi(q)=q^{-\tau}$ for some $\tau>2$ and that $\omega(q)=Cq^{-\tau(\tau-2)}$ for some $C>0$. Then $E(\Psi,\omega)$ is uncountable if $C$ is sufficiently large.
\end{cor}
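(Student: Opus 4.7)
The plan is to deduce the corollary directly from Theorem~\ref{thm: non-empty} by verifying its hypotheses for the given pair $(\Psi,\omega)$. The monotonicity and decay conditions are immediate: since $\tau>2$, the map $q\mapsto q^{2}\Psi(q)=q^{2-\tau}$ is non-increasing and tends to $0$, while $\omega(q)=Cq^{-\tau(\tau-2)}\to 0$ because $\tau(\tau-2)>0$. The codomain requirement $\omega(q)\in(0,1)$ may fail on a bounded range of $q$, but this is harmless since Theorem~\ref{thm: non-empty} only requires the key inequality and the defining conditions of $E(\Psi,\omega)$ to hold for $q$ sufficiently large.

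The substantive step is to evaluate the right hand side of~(\ref{lower bound for corollary}) for power-law $\Psi$. A direct substitution yields $\frac{1}{2q\Psi(q)}=\frac{1}{2}q^{\tau-1}$, and hence
\[
\Psi\!\left(\frac{1}{2q\Psi(q)}\right)=2^{\tau}q^{-\tau(\tau-1)},
\]
so that
\[
\frac{45}{\Psi(q)}\,\Psi\!\left(\frac{1}{2q\Psi(q)}\right)=45\cdot 2^{\tau}\,q^{-\tau(\tau-2)}.
\]
This calculation clarifies where the exponent $\tau(\tau-2)$ in the statement originates, and it reduces (\ref{lower bound for corollary}) to the single constant inequality $C\geq 45\cdot 2^{\tau}$. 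Once $C$ is chosen at least this large, Theorem~\ref{thm: non-empty} applies and delivers that $E(\Psi,\omega)$ is uncountable. There is no genuine obstacle here: all of the real work is performed by Theorem~\ref{thm: non-empty}, and the corollary is merely a translation of its hypotheses into the specific setting of power functions.
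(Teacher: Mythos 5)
Your proposal is correct and follows exactly the route the paper intends: the corollary is a direct substitution of $\Psi(q)=q^{-\tau}$ into condition \eqref{lower bound for corollary}, which, as you compute, reduces to the constant inequality $C\geq 45\cdot 2^{\tau}$, after which Theorem~\ref{thm: non-empty} gives uncountability. Your handling of the codomain issue is also fine, since $E(\Psi,\omega)$ depends only on the values of $\omega(q)$ for $q$ sufficiently large, so modifying $\omega$ on a bounded range of $q$ changes nothing.
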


\begin{rem}
    If $\omega,\omega'$ satisfy $\omega(q)\leq \omega'(q)$ for all $q$ sufficiently large then $E(\Psi,\omega)\subseteq E(\Psi,\omega')$ for any $\Psi$.  Thus Theorem~\ref{thm: non-empty} is equivalent to the following statement: If $\Psi$ satisfies the conditions of Theorem~\ref{thm: non-empty} and $\omega:(0,\infty)\to(0,\infty)$ is given by $$\omega(q)= \frac{45}{\Psi(q)}\Psi\left(\frac{1}{2q\Psi(q)}\right),$$
    then $E(\Psi,\omega)$ is uncountable. This may be viewed as the correct way to present Theorem \ref{thm: non-empty} However, to avoid this theorem being misinterpreted as applying to only one function $\omega$, we chose to formulate it with condition \eqref{lower bound for corollary}. We will adopt this convention throughout.
\end{rem}

\begin{rem}
Equation \eqref{cor: non-empty} may not appear natural. To explain where this bound comes from and to aid the reader with our later proofs, we provide a brief heuristic idea for where this condition appears. It is good to keep in mind the ideas discussed in the following when reading the technical proofs that appear in Section~\ref{sec:Technical results}. Let $\Psi$ satisfy the assumptions of Theorem~\ref{thm: non-empty} and let us try to see what sort of bound we should expect on $\omega$. Suppose $E(\Psi,\omega)\neq \emptyset$ and let $x\in E(\Psi,\omega)$. Then there exists $(p,q)\in \Z\times \N$, with $q$ sufficiently large so that $\Psi(q)<\frac{1}{2q^{2}}$, for which 
\begin{equation*}
    (1-\omega(q))\Psi(q) \leq \left|x-\frac{p}{q}\right|\leq\Psi(q)\, .
\end{equation*}
Since $\Psi(q)<\frac{1}{2q^{2}}$ we must have that $\frac{p}{q}=\frac{p_{n}}{q_{n}}$ is a convergent of $x$ by Legendre's theorem (see Lemma \ref{lem:Legendre}) and the next partial quotient $a_{n+1}$ must be of the order $\frac{1}{q^{2}\Psi(q)}$ (see Section~\ref{sec:preliminaries} for definitions and relevant background on continued fractions). Note that $a_{n+1}$ can not be too much bigger than this, otherwise we would have $\left|x-\frac{p}{q}\right|<(1-\omega(q))\Psi(q),$ which is not allowed. Using properties of continued fractions and our bound on $a_{n+1},$ we have that the denominator $q_{n+1}$ of the next convergent of $x$ must be of the order $\frac{1}{q\Psi(q)}$. Now let us suppose we are in the worst case scenario where $\frac{p_{n+1}}{q_{n+1}}$ lies in the interval $[\frac{p}{q}+(1-\omega(q))\Psi(q), \frac{p}{q}+\Psi(q)]$. Hence, since $x$ also belongs to this interval, we have that
\begin{equation} \label{assumption for contradiction}
    \left|x-\frac{p_{n+1}}{q_{n+1}}\right| < \omega(q)\Psi(q)\, .
\end{equation}
Now, considering the next partial quotient $a_{n+2},$ we must suppose again that $a_{n+2}$ is bounded from above by some constant multiple of $\frac{1}{q_{n+1}^{2}\Psi(q_{n+1})}$, otherwise $\frac{p_{n+1}}{q_{n+1}}$ will approximate $x$ too well. Using properties of convergents of continued fractions and our bound on $a_{n+2}$, we have that
\begin{equation*}
    \left|x-\frac{p_{n+1}}{q_{n+1}}\right|\geq \frac{1}{3a_{n+2}q_{n+1}^{2}}\gg  \Psi(q_{n+1}) \gg \Psi\left(\frac{1}{q\Psi(q)}\right)\, .
\end{equation*}
Combining this with \eqref{assumption for contradiction} we must have that
\begin{equation*}
    \omega(q)\Psi(q) \gg \Psi\left(\frac{1}{q\Psi(q)}\right)
\end{equation*}
which is essentially our condition \eqref{lower bound for corollary}. 
\end{rem}
\vspace{0.5cm}

Note that a crucial part in the above heuristics was that the next convergent of our point of interest lies inside the interval to be avoided. If we impose suitable divisibility conditions on our approximation function $\Psi$ we can do much better than assuming this worst case scenario. To that end, we say a function $\Psi$ satisfies the \textit{square divisibility condition} if for every $q\in \N$ we have $\Psi(q)^{-1}=0 \mod q^{2}$.
Examples of functions satisfying the square divisibility condition include $\Psi(q)=q^{-\tau}$ for some $\tau\in \N_{\geq 2}$. Moreover, for any $\tau\geq 2$ there exists a function $\Psi$ satisfying the square divisibility condition and $\lim_{q\to \infty}\frac{-\log \Psi(q)}{\log q}=\tau$. Restricting to approximation functions satisfying the square divisibility condition we are able to prove the following result.
\begin{thm}
\label{thm: non-empty2}
Suppose that $\lim_{q\to\infty}\limits q^{2}\Psi(q)=\lim_{q\to\infty}\limits\omega(q)=0$ and $q\to q^{2}\Psi(q)$ is non-increasing. Furthermore, suppose that $\Psi$ satisfies the square divisibility condition. Then the following statements hold:
\begin{enumerate}
    \item $E(\Psi,\omega)=\emptyset$ if $$\omega(q) < \frac{1-\omega\left(\frac{1}{\Psi(q)}\right)}{\Psi(q)}\Psi\left(\frac{1}{\Psi(q)}\right)$$ for all $q$ sufficiently large. In particular, if $$\omega(q) < \frac{c}{\Psi(q)}\Psi\left(\frac{1}{\Psi(q)}\right)$$ for all $q$ sufficiently large for some $c\in (0,1),$ then $E(\Psi,\omega)=\emptyset.$
    \item $E(\Psi,\omega)$ is non-empty and uncountable if $$\omega(q)\geq \frac{45}{16\Psi(q)}\Psi\left( \frac{1}{8\Psi(q)} \right)$$ for all $q$ sufficiently large.
\end{enumerate}
\end{thm}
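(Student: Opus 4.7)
For part (1), my plan is a direct contradiction argument exploiting the square divisibility. Suppose $x\in E(\Psi,\omega)$ and pick any $(p,q)\in\Z\times\N$ with $q$ large such that $(1-\omega(q))\Psi(q)\leq|x-p/q|\leq\Psi(q)$; infinitely many such pairs exist because $x\in W(\Psi)$. The square divisibility condition gives $\Psi(q)^{-1}=q^{2}m_{q}$ for some $m_{q}\in\N$, so that
$$\frac{P}{Q}:=\frac{p}{q}\pm\Psi(q)=\frac{pqm_{q}\pm 1}{q^{2}m_{q}}$$
is a rational with integer numerator and denominator exactly $Q=1/\Psi(q)$. Choosing the sign so that $P/Q$ lies on the same side of $p/q$ as $x$, we obtain $|x-P/Q|\leq\omega(q)\Psi(q)=\omega(q)/Q$. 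Since $Q\to\infty$ with $q$, the $E(\Psi,\omega)$ lower bound applied at denominator $Q$ forces $\omega(q)/Q\geq(1-\omega(Q))\Psi(Q)$, and multiplying by $Q=1/\Psi(q)$ contradicts the hypothesis directly. The ``in particular'' conclusion then follows because $\omega(1/\Psi(q))\to 0$, so $1-\omega(1/\Psi(q))>c$ eventually.

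For part (2), my plan is to adapt the Cantor-style continued fraction construction that underlies Theorem~\ref{thm: non-empty}. That proof builds $x\in E(\Psi,\omega)$ by inductively selecting partial quotients $a_{n+1}$ so that the convergents $p_{n}/q_{n}$ realise the $W(\Psi)$ condition while avoiding the forbidden intervals at scale $(1-\omega(q_{n}))\Psi(q_{n})$; the heuristic preceding Theorem~\ref{thm: non-empty2} identifies the next convergent $p_{n+1}/q_{n+1}$, whose denominator can be as small as $1/(q_{n}\Psi(q_{n}))$, as the worst competitor and thereby produces the factor $\Psi(1/(2q\Psi(q)))$. Under square divisibility, part (1) shows that $p_{n}/q_{n}\pm\Psi(q_{n})$ already has denominator $Q_{n}=1/\Psi(q_{n})$, which exceeds $1/(q_{n}\Psi(q_{n}))$ by a factor of $q_{n}$; re-running the same worst-case analysis with $Q_{n}$ in place of $q_{n+1}$ replaces the bound from Theorem~\ref{thm: non-empty} by $\omega(q)\geq\tfrac{45}{16\Psi(q)}\Psi(1/(8\Psi(q)))$, the new numerical constants arising from the continued-fraction bookkeeping and the choice of safety margin around $Q_{n}$.

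The main obstacle will be the bookkeeping in the inductive construction, which must simultaneously secure the $\limsup$ membership $x\in W(\Psi)$ and the \emph{uniform} $\liminf$ condition ruling out every rational $p'/q'$ of large denominator, not only convergents of $x$. The square divisibility condition enters the $\liminf$ verification at precisely the artificial denominators $1/\Psi(q_{n})$, which lie outside the range directly controlled by Legendre's theorem, so the relevant decay of $\Psi$ is at $1/\Psi(q_{n})$ rather than at $1/(q_{n}\Psi(q_{n}))$. Provided the admissible range of $a_{n+1}$ at each stage contains at least two integers, which is exactly what the hypothesis $\omega(q)\geq\tfrac{45}{16\Psi(q)}\Psi(1/(8\Psi(q)))$ delivers, the Cantor-like tree of choices has uncountably many branches, and each distinct branch yields a different point of $E(\Psi,\omega)$.
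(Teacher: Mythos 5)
Your argument for part (1) is correct and is essentially the paper's proof: the paper deduces part (1) from Theorem~\ref{thm:empty}(2) with $d(q)=1$, $e(q)=1/\Psi(q)$, and the mechanism there is exactly your computation that $p/q\pm\Psi(q)$ is a rational with denominator $1/\Psi(q)$ lying within $\omega(q)\Psi(q)$ of $x$, which the defining $\liminf$ inequality of $E(\Psi,\omega)$ at denominator $1/\Psi(q)$ then forbids. (One remark: you do not need $P/Q$ to be a convergent or in lowest terms, since the lower bound in the definition of $E(\Psi,\omega)$ is required for \emph{all} integers $P$ and all large integer denominators $Q$; your write-up implicitly uses this correctly.)

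Part (2), however, is only a plan, and the one concrete mechanism you do commit to is wrong. You claim uncountability follows because ``the admissible range of $a_{n+1}$ at each stage contains at least two integers, which is exactly what the hypothesis delivers.'' It does not: the admissible range is $S(p/q)$, whose length is $\omega(q)/(q^{2}\Psi(q)(1-\omega(q)))$, and under the hypothesis $\omega(q)\geq\frac{45}{16\Psi(q)}\Psi(1/(8\Psi(q)))$ this is typically far smaller than $1$ (e.g.\ for $\Psi(q)=q^{-\tau}$ it is of order $q^{2-\tau-\tau(\tau-1)}$), so $S(p/q)$ generally meets at most one or two integers and certainly need not contain two integer choices of $a_{n+1}$. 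The paper obtains uncountability differently: between consecutive approximation events it inserts freely chosen padding blocks $\delta^{l}$ with $\delta\in\{2,3\}$, which affect neither the $\limsup$ nor the $\liminf$ conditions, and the branching over these blocks gives the injection from $\{2,3\}^{\N}$. Moreover, the substantive content of part (2) — which your sketch defers entirely to ``bookkeeping'' — is the explicit identification (Lemma~\ref{lem: where is S(p/q)}) of the continued fraction of the left endpoint $p/q+\Psi(q)$ of the target interval as $[0;a_{1},\ldots,a_{n},\frac{1}{q^{2}\Psi(q)}-1,1,a_{n}-1,\ldots,a_{1}-1,1]$: this shows that any $x$ just to the right of that endpoint has only \emph{small} intermediate partial quotients until its convergent denominator reaches $q_{*}\geq 1/(8\Psi(q))$, and it is the measure comparison $\mathcal{L}(S(p/q))\geq 20\,\mathcal{L}(\cB(p/q,\bc))$ at that scale (Lemma~\ref{lem:S bigger than bad 2}) that both produces the constant $\frac{45}{16}\Psi(1/(8\Psi(q)))/\Psi(q)$ and guarantees a whole fundamental interval with a capped next partial quotient fits inside $S(p/q)$. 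Your heuristic correctly locates the relevant scale $1/\Psi(q)$, but without this endpoint computation and the capping of the partial quotient $\lfloor 1/(3q_{*}^{2}\Psi(q_{*}))\rfloor$ at the end of the reversed block, the $\liminf$ verification (which, contrary to your remark, does reduce to convergents via Legendre) cannot be completed.
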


The second statement in Theorem \ref{thm: non-empty2} allows us to conclude that $E(\Psi,\omega)$ is uncountable under a weaker assumption on $\omega$ than that appearing in Theorem \ref{thm: non-empty}. The first statement of this theorem shows that for certain approximation functions $\Psi$ the set $E(\Psi,\omega)$ is empty if $\omega$ decays to zero sufficiently quickly. Note that for many natural choices of $\Psi$ the thresholds for non-empty and empty appearing in Theorem~\ref{thm: non-empty2} differ by a multiplicative constant. 
We refer the reader to Corollary \ref{cor:polynomial corollary} where we specialise to functions of the form $q\to q^{-\tau}.$ In this special case, the critical threshold identified by Theorem \ref{thm: non-empty2} is particularly clear. \par 

Statement 1 from Theorem \ref{thm: non-empty2} can be deduced from the following more general result. This result shows that the family of approximation functions satisfying the square divisibility condition are not the only ones that require a bound on the decay rate of $\omega$ to ensure that $E(\Psi,\omega)$ is non-empty.

\begin{thm} \label{thm:empty}
    Assume that $\Psi$ is such that there exists functions $d:(0,\infty)\to (0,\infty)$ and $e:(0,\infty)\to (0,\infty)$ such that $\Psi=\frac{d}{e}$ and $d(q),e(q)\in \N$ for all $q\in \N$. Then the following statements are true:
    \begin{enumerate}
               \item If $\omega:(0,\infty)\to (0,\infty)$ is such that 
               $$\omega(q)<\frac{1-\omega(qe(q))}{\Psi(q)}\Psi(qe(q))$$
               for all $q$ sufficiently large, then $E(\Psi,\omega)=\emptyset$.
       \item Assume $e(q)=0 \mod q$ for all $q\in \N$. If $\omega:(0,\infty)\to (0,\infty)$ is such that 
       $$\omega(q)<\frac{1-\omega(e(q))}{\Psi(q)}\Psi(e(q))$$
       for all $q$ sufficiently large, then $E(\Psi,\omega)=\emptyset.$ 
    \end{enumerate}
\end{thm}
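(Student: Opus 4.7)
The plan is to derive a contradiction from the assumption $x \in E(\Psi,\omega)$ by constructing, for each "witnessing" pair $(p,q)$, a new rational $p'/q'$ with $q' = qe(q)$ whose distance from $x$ violates the lower bound defining $E(\Psi,\omega)$. The key observation driving the construction is that $\Psi(q) = d(q)/e(q)$ is an integer multiple of $1/(qe(q))$, so the point $p/q + \Psi(q)$ is itself a rational with denominator $qe(q)$.

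First, assuming $x \in E(\Psi,\omega)$, there exist infinitely many $(p,q) \in \mathbb{Z}\times\mathbb{N}$ with
\[
(1-\omega(q))\Psi(q) \;\leq\; \left|x-\tfrac{p}{q}\right| \;\leq\; \Psi(q),
\]
so $x$ lies in the annulus $[p/q - \Psi(q), p/q - (1-\omega(q))\Psi(q)] \cup [p/q + (1-\omega(q))\Psi(q), p/q + \Psi(q)]$. After passing to a subsequence I may assume the sign of $x - p/q$ is constant; by symmetry, say $x > p/q$. Then I define
\[
q' \;=\; qe(q), \qquad p' \;=\; pe(q) + qd(q),
\]
both integers, and compute $p'/q' = p/q + d(q)/e(q) = p/q + \Psi(q)$. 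Because $x$ lies in $[p'/q' - \omega(q)\Psi(q),\, p'/q']$, this yields the upper bound $|x - p'/q'| \leq \omega(q)\Psi(q)$.

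Second, since $q' = qe(q) \to \infty$ along this subsequence and $x \in E(\Psi,\omega)$, the defining lower bound gives $|x - p'/q'| \geq (1-\omega(q'))\Psi(q')$ for all sufficiently large $q$. Combining the two estimates yields
\[
\omega(q)\Psi(q) \;\geq\; \bigl(1-\omega(qe(q))\bigr)\Psi(qe(q))
\]
for arbitrarily large $q$, directly contradicting the hypothesis of statement (1). For statement (2), the argument is identical but with the cheaper choice $q' = e(q)$ and $p' = pe(q)/q + d(q)$; the divisibility assumption $q \mid e(q)$ is exactly what guarantees $p' \in \mathbb{Z}$, and we conclude again that $x \notin E(\Psi,\omega)$.

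I do not anticipate a real obstacle here: the argument is direct once one identifies the "right-hand" endpoint $p/q + \Psi(q)$ as a rational at a controlled denominator. The only things to be careful about are choosing a subsequence on which the sign of $x - p/q$ is constant, ensuring that $q' \to \infty$ so that the $E(\Psi,\omega)$ lower bound applies (which holds since $e(q)\geq 1$), and aligning the various "for all $q$ sufficiently large" quantifiers coming from the definition of $E(\Psi,\omega)$ and from the hypothesis on $\omega$.
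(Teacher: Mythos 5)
Your proposal is correct and follows essentially the same route as the paper: identify $p/q\pm\Psi(q)$ as a rational with denominator $qe(q)$ (or $e(q)$ under the divisibility hypothesis), deduce that $x$ is approximated by it to within $\omega(q)\Psi(q)$, and contradict the lower bound in the definition of $E(\Psi,\omega)$. Your handling of the sign via a subsequence and of the quantifier alignment is a slightly more careful write-up of the same argument.
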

\begin{rem} \rm
If $\Psi$ satisfies the square divisibility condition then we define $d:(0,\infty)\to (0,\infty) $ and $e:(0,\infty)\to(0,\infty)$ according to the rules $d(q)=1$ and $e(q)=\frac{1}{\Psi(q)}$ for all $q$. With this formulation it is now clear that the second statement of Theorem \ref{thm:empty} implies the first statement of Theorem \ref{thm: non-empty2}.
\end{rem}

\begin{rem}\rm \label{rem: MoshPit result}
In the preprint of Moshchevitin and Pitcyn \cite{MoshPit} it is shown that $E(\Psi,\omega)=\emptyset$ for a range of approximation functions $\Psi$ and $\omega$. In particular \cite[Theorem 2]{MoshPit} considers approximation functions of the same form as those appearing in Theorem~\ref{thm:empty}. Indeed, there is a significant overlap between these two statements. For example, when $\Psi_{\tau}(q)=q^{-\tau}$ for some $\tau\in \N_{\geq 3},$ both of these statements imply that for any $\epsilon>0$ we have $E(\Psi_{\tau},\omega)=\emptyset$  if $\omega(q)\leq (1-\epsilon)q^{-\tau(\tau-1)}$ for all $q$ sufficiently large. That being said, our proofs are different. 
\end{rem}

The proof of Theorem \ref{thm:empty} is straightforward and is given in Section \ref{sec: proof of empty}. We will briefly highlight here the mechanism that leads to $E(\Psi,\omega)$ being empty. If $x\in E(\Psi,\omega)$ then $x$ is contained in an interval of length $\omega(q)\Psi(q)$ and this interval has either $\frac{p}{q}+\Psi(q)$ or $\frac{p}{q}-\Psi(q)$ as an endpoint for infinitely many $(p,q)\in \mathbb{Z}\times \N$. If $\frac{p}{q}+\Psi(q)$ or $\frac{p}{q}-\Psi(q)$ is a rational number then this observation provides a rational approximation to $x$ with error $\omega(q)\Psi(q).$ If $\omega\cdot \Psi$ is too small relative to the denominators of these newly constructed rationals then $x$ cannot be in $E(\Psi,\omega)$. It is this observation that we exploit in our proof of Theorem \ref{thm:empty}.

Theorem~\ref{thm: non-empty} provides a lower bound for $\omega$ (depending upon $\Psi$) which guarantees that $E(\Psi,\omega)$ is uncountable. 
Contrasting this, Theorem~\ref{thm:empty} shows that for $\Psi$ satisfying $\Psi(q)\in \mathbb{Q}$ for all $q\in \N$ there exists an upper bound for $\omega$ (again, depending upon $\Psi$) that guarantees $E(\Psi,\omega)=\emptyset$. In addition, the preprint \cite{MoshPit} provides other families of functions $\Psi$ for which there exists an $\omega$ such that $E(\Psi,\omega)=\emptyset$. These observations may lead one to ask whether there exists some upper bound for $\omega$ (depending only on the decay rate of $\Psi$) that guarantees $E(\Psi,\omega)=\emptyset$ without any additional assumptions on $\Psi$. The following theorem demonstrates that this is not the case.  
\begin{thm}\label{thm: non-empty3}
    For any $\omega:(0,\infty)\to(0,\infty)$ and  $\tau>2$ there exists a non-increasing function $\Psi:(0,\infty)\to (0,\infty)$ such that $\lim_{q\to\infty} \frac{-\log \Psi(q)}{\log q}=\tau$ and $E(\Psi,\omega)\neq \emptyset$. 
\end{thm}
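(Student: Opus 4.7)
The plan is to construct the irrational $x$ via its continued fraction expansion and tailor $\Psi$ to $x$ so that the inequality $|x-p/q|\geq(1-\omega(q))\Psi(q)$ holds for essentially trivial reasons at the convergents of $x$. Fix $\tau>2$. Inductively choose partial quotients $a_{n+1}$ so that the convergent denominators $q_n$ of $x=[0;a_1,a_2,\ldots]$ satisfy
\[
q_{n+1}\in\bigl[q_n^{\tau-1}-3q_n,\;q_n^{\tau-1}-2q_n\bigr].
\]
This is possible once $q_n$ is sufficiently large, since $q_{n+1}=a_{n+1}q_n+q_{n-1}$ ranges over an arithmetic progression of common difference $q_n$, while the target interval has length $q_n$. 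Using the standard identity $|x-p_n/q_n|=1/(q_n(q_{n+1}+q_n y_{n+2}))$ for some $y_{n+2}\in(0,1)$, this choice forces $|x-p_n/q_n|=(1+\epsilon_n)q_n^{-\tau}$ with $\epsilon_n\in[c_1 q_n^{-(\tau-2)},c_2 q_n^{-(\tau-2)}]$ for absolute constants $0<c_1<c_2$.

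Next, define $\Psi:(0,\infty)\to(0,\infty)$ by $\Psi(q)=(1+\epsilon_n)q^{-\tau}$ for $q\in[q_n,q_{n+1})$, extended to $q<q_1$ by $\Psi(q)=\Psi(q_1)$. On each interval $[q_n,q_{n+1})$, $\Psi$ decreases because $q^{-\tau}$ does; at the jump $q=q_{n+1}$ the non-increasing condition reduces to $\epsilon_n\geq\epsilon_{n+1}$. This follows from the strict inequality $(\tau-1)(\tau-2)>\tau-2$ for $\tau>2$: indeed $\epsilon_{n+1}\leq c_2 q_{n+1}^{-(\tau-2)}\leq c_2' q_n^{-(\tau-1)(\tau-2)}<c_1 q_n^{-(\tau-2)}\leq\epsilon_n$ for all sufficiently large $n$. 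Since $\epsilon_n\to 0$, one immediately has $\lim_{q\to\infty}-\log\Psi(q)/\log q=\tau$.

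Finally I would verify $x\in E(\Psi,\omega)$. By construction $|x-p_n/q_n|=\Psi(q_n)$, so $x\in W(\Psi)$ with equality at every convergent, independent of $\omega$. For the lower bound, fix large $q\in[q_n,q_{n+1})$ and $p\in\mathbb{Z}$. If the reduced form of $p/q$ is not $p_n/q_n$, the best-approximation theorem for continued fractions gives $|qx-p|\geq|q_n x-p_n|$, hence
\[
|x-p/q|\geq (1+\epsilon_n)q_n^{-(\tau-1)}/q,
\]
which dominates $(1-\omega(q))\Psi(q)=(1-\omega(q))(1+\epsilon_n)q^{-\tau}$ because $q\geq q_n$ and $1-\omega(q)\leq 1$. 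If instead $p/q$ reduces to $p_m/q_m$ for some $m\leq n$ (the case $m>n$ being impossible since $q<q_{n+1}\leq q_m$), then $|x-p/q|=(1+\epsilon_m)q_m^{-\tau}$, and the monotonicity $\epsilon_m\geq\epsilon_n$ together with $q\geq q_n>q_m$ gives the required inequality.

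The main technical obstacle is the non-increasing property of $\Psi$ across the jumps at $q_{n+1}$, which boils down to the monotonicity of $(\epsilon_n)$, an elementary consequence of the gap $(\tau-1)(\tau-2)>\tau-2$ built into the hypothesis $\tau>2$. The striking feature is that $\omega$ plays no role whatsoever in defining $x$ or $\Psi$: it enters only through the trivial inequality $1-\omega(q)\leq 1$, so a single construction handles every admissible $\omega$ simultaneously.
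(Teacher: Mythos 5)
Your proof is correct, and it reaches the conclusion by a more direct route than the paper. Both arguments rest on the same underlying idea --- construct $x$ first and then tailor $\Psi$ to $x$ at the convergent denominators --- but the executions differ substantially. The paper perturbs the reference function $\psi(q)=1/(q^{2}\lceil q^{\tau-2}\rceil)$, which satisfies the square divisibility condition, so that the left endpoint of the target set $S(\Psi,\omega,p_{N_i}/q_{N_i})$ acquires a prescribed continued fraction tail of $1$'s; the point $x$ is then steered into a fundamental interval just inside $S$, and the depth $R_{i+1}$ of that interval must be chosen large depending on $\omega(q_{N_i})$. You instead force the exact equality $|x-p_n/q_n|=\Psi(q_n)$ by definition, so the $\liminf$ condition holds at the convergents with slack exactly $\omega(q_n)\Psi(q_n)>0$ for free, and you dispose of all other rationals via the best-approximation property $\|qx\|\geq\|q_nx\|$ for $q<q_{n+1}$; indeed your estimate shows $|x-p/q|\geq\Psi(q)$ for every pair with equality only at convergents, so $\omega$ genuinely never enters the construction. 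This bypasses the paper's fundamental-interval machinery (Proposition~\ref{prop: equivalence}, the sets $S(p/q)$) entirely, at the cost of being a bespoke construction rather than a perturbation of a structured function. Two small points to tidy up: the monotonicity $\epsilon_{n}\geq\epsilon_{n+1}$, and hence the non-increasing property of $\Psi$ across the jumps at $q_{n+1}$, is only guaranteed for $n$ sufficiently large, so you should either start the recursion with $q_{1}$ enormous or declare $\Psi$ constant below some threshold (neither affects the limit $\lim_{q\to\infty}\frac{-\log\Psi(q)}{\log q}=\tau$ nor membership in $E(\Psi,\omega)$, both being asymptotic conditions); and your two cases in the lower-bound verification overlap when $p/q$ reduces to $p_m/q_m$ with $m<n$, though both yield the required inequality, so nothing is lost.
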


The following question that naturally arises from Theorems \ref{thm: non-empty2} and \ref{thm:empty}.

\begin{ques*}
Does there exist a non-increasing $\Psi:(0,\infty)\to (0,\infty)$ and $\omega:(0,\infty)\to(0,\infty)$ such that
\begin{equation*}
    \omega(q)\geq  \frac{1}{\Psi(q)}\Psi\left( \frac{1}{\Psi(q)}\right) 
\end{equation*}
for all sufficiently large $q$, yet $E(\Psi,\omega)=\emptyset$?
\end{ques*}

\subsection{Main results: Q2}
In response to Q2 we are able to prove the following statements. Essentially, they show that if we impose the additional 
assumption $\sum_{q=1}^{\infty}q\Psi(q)<\infty,$ then we can replace the uncountability conclusion in Theorems \ref{thm: non-empty} and \ref{thm: non-empty2} with a stronger dimension conclusion.

\begin{thm} 
    \label{thm:main}
    Assume that $\Psi$ and $\omega$ are such that $q\to q^{2}\Psi(q)$ and $q\to q^{2}\Psi(q)\omega(q)$ are non-increasing, $\sum_{q=1}^{\infty}q\Psi(q)<\infty$ and $\lim_{q\to \infty}\omega(q)=0.$ Let us also assume that $$\omega(q)\geq \frac{45}{\Psi(q)}\Psi\left(\frac{1}{2q\Psi(q)}\right)$$ for all $q$ sufficiently large.  Then $$\dim_{H}(E(\Psi,\omega))=\inf\left\{s\geq 0: \sum_{q=1}^{\infty}q(\Psi(q)\omega(q))^{s}<\infty\right\}.$$
\end{thm}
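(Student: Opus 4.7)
The upper bound $\dim_H(E(\Psi,\omega)) \leq s^*$, where $s^* := \inf\{s \geq 0 : \sum_{q} q(\Psi(q)\omega(q))^s < \infty\}$, is exactly \eqref{eq:Hausdorff upper bound}, so I would focus entirely on the lower bound. The plan is to build a Cantor-like subset $K \subseteq E(\Psi,\omega)$ and apply a mass distribution argument. Fix $s < s^*$; by definition $\sum_{q} q(\Psi(q)\omega(q))^{s} = \infty$. Choose a rapidly increasing sequence $(q_n)_{n \geq 1}$ of integers (to be determined) and define the ``target intervals'' at level $n$ as
\[
J_{\pm}(p,q_n) := \left[\tfrac{p}{q_n} \pm (1-\omega(q_n))\Psi(q_n),\; \tfrac{p}{q_n} \pm \Psi(q_n)\right],
\]
each of length $\omega(q_n)\Psi(q_n)$. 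Any $x$ lying in $J_{\pm}(p,q_n)$ for infinitely many $n$ is automatically in $W(\Psi)$; the difficulty is ensuring that $x$ also satisfies the $\liminf$ side of the exactness condition.

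The Cantor construction proceeds inductively. At level $n$, inside each surviving parent interval $I$ from level $n-1$ I retain those $J_{\pm}(p,q_n)$ with $\gcd(p,q_n)=1$ that are \emph{safe}, meaning that the closed ball $B(p/q_n,(1-\omega(q_n))\Psi(q_n))$ has already been excised from every relevant point and, crucially, that the interval avoids every excision ball $B(p'/q',(1-\omega(q'))\Psi(q'))$ for $q'$ in the gap $(q_{n-1},q_n]$ (and for smaller scales, which was handled at earlier levels). The summability assumption $\sum_q q\Psi(q)<\infty$ is what makes this feasible: the total Lebesgue measure of the excised balls contributed by scales in a long tail is negligible compared with the length of $I$, so generically most candidate children survive. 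Counting coprime residues modulo $q_n$ and using Mertens/Euler-product estimates, the number of surviving children in a parent interval $I$ of length $\omega(q_{n-1})\Psi(q_{n-1})$ is of the order $q_n|I|$, which is the expected count.

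The role of the hypothesis $\omega(q) \geq 45\Psi(1/(2q\Psi(q)))/\Psi(q)$ is exactly to make the construction self-consistent: if a child interval $J_{\pm}(p,q_n)$ sits inside a parent at scale $q_{n-1}$, then (by the heuristic given between Theorems~\ref{thm: non-empty} and \ref{thm: non-empty2}) the dangerous rationals to avoid at scale $\sim 1/(q_n\Psi(q_n))$ have excision balls of radius $\lesssim \Psi(1/(2q_n\Psi(q_n)))$, which is bounded by a constant times $\omega(q_n)\Psi(q_n)$, so these do not wipe out the whole child. This is the same mechanism that proves Theorem~\ref{thm: non-empty}, and I would essentially reuse the geometric lemmas from Section~\ref{sec:Technical results} here. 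Choosing $(q_n)$ so that $q_n(\Psi(q_n)\omega(q_n))^s$ stays $\geq 1$ infinitely often (possible because $\sum q(\Psi\omega)^s = \infty$), and so that $q_n\omega(q_{n-1})\Psi(q_{n-1})$ is large, guarantees that at each level a definite positive proportion of children survives.

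On the resulting Cantor set $K$ I place the natural probability measure $\mu$ that distributes mass uniformly across children at each level. A standard Jarn\'{\i}k--Besicovitch style local mass calculation yields $\mu(B(x,r)) \ll r^{s-\varepsilon}$ for all $x\in K$ and small $r$, once $(q_n)$ grows fast enough relative to $s$. The Mass Distribution Principle then gives $\dim_H K \geq s - \varepsilon$. Letting $\varepsilon \to 0$ and $s \uparrow s^*$ delivers $\dim_H E(\Psi,\omega) \geq s^*$. The main obstacle I anticipate is the bookkeeping in step two: verifying that a positive proportion of candidate children is always safe requires a careful summation over excision scales $q' \in (q_{n-1}, q_n]$, and the interaction between the lower bound on $\omega$ and the monotonicity of $q \mapsto q^2\Psi(q)\omega(q)$ is precisely what keeps this combinatorial estimate on the correct side. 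Once this survival estimate is in hand, both the containment $K \subseteq E(\Psi,\omega)$ and the mass distribution computation are routine.
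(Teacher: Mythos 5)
Your overall architecture (Cantor subset plus a dimension lower bound) matches the paper's, and the upper bound is indeed just \eqref{eq:Hausdorff upper bound}. But there are two genuine gaps in the lower bound as you describe it.

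First, the survival estimate is the heart of the matter and your proposed mechanism for it fails. You keep a child $J_{\pm}(p,q_n)$, of length $\omega(q_n)\Psi(q_n)$, if it avoids the excision balls $B(p'/q',(1-\omega(q'))\Psi(q'))$, and you argue that the total measure of these balls in a parent is negligible because $\sum_q q\Psi(q)<\infty$. That measure bound is false at the critical scales: for $q'$ just above $q_n$ an excision ball has radius $\approx \Psi(q_n)$, which is $1/\omega(q_n)$ times the length of the parent $J_{\pm}(p,q_n)$, so a single such ball can swallow the parent whole, and the ``$+1$'' edge term in the count $\sum_{q'}(q'|I|+1)\Psi(q')$ is not $o(|I|)$. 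The $\liminf$ condition cannot be enforced by a measure/counting argument on excision balls; it has to be enforced pointwise, via Legendre's theorem, by controlling the continued fraction expansion of every surviving point. This is exactly what the paper's Section~\ref{sec:Technical results} does: inside each target interval one retains only a union of fundamental intervals $U(p/q)$ (of measure $\geq \omega(q)\Psi(q)/6400$, Lemma~\ref{lem: building U(p/q)}) whose partial quotients are bounded by $\lfloor 1/(3q_k^2\Psi(q_k))\rfloor$ at every level, and between construction scales one further restricts to the sets $J(\bb_l)$ with geometrically decaying partial quotient bounds (this is where $\sum q\Psi(q)<\infty$ actually enters, via Lemma~\ref{lem:Geometric convergence}). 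You gesture at ``reusing the geometric lemmas from Section 4,'' but those lemmas are not an optional safety check bolted onto your construction; they replace it. In particular your children cannot be the full intervals $J_{\pm}(p,q_n)$ — Theorem~\ref{thm: non-empty2}(1) shows such an interval can contain no point of $E(\Psi,\omega)$ at all.

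Second, your count of children loses a factor of $2$ in the dimension. Taking a single denominator $q_n$ per level gives $\asymp q_n|I|$ children of length $\omega(q_n)\Psi(q_n)$ in a parent $I$, and Proposition~\ref{prop:dimension lower bound} (or an equivalent mass distribution computation) then yields only $\liminf \log q_n/(-\log\Psi(q_n)\omega(q_n))=1/\lambda(\Psi,\omega)$, i.e.\ half of $s^{*}=2/\lambda(\Psi,\omega)$. To reach $2/\lambda$ one needs $\asymp Q^{2}|I|$ well-separated rationals with denominators ranging over $[Q/10,Q]$; the paper obtains these via Dirichlet's theorem together with Grace's lemma (to force the selected rationals to lie in the partial-quotient-restricted set $J$) and a maximal $Q^{-2}$-separated family, as in Lemma~\ref{inductive lemma}. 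This is the same Jarn\'{\i}k--Besicovitch phenomenon that makes $\dim_H W(\tau)=2/\tau$ rather than $1/\tau$, and it is not recoverable from a single-denominator construction.
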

Note that the conclusion of Theorem \ref{thm:main} can also be written in terms of the lower order at infinity of the function $\Psi\cdot\omega$. That is, if $\Psi$ and $\omega$ satisfy the assumptions of Theorem~\ref{thm:main} and we let $\lambda(\Psi,\omega)$ denote the lower order at infinity of $\Psi\cdot\omega$, then $\dim_{H} (E(\Psi,\omega))=\frac{2}{\lambda(\Psi,\omega)}$. See Proposition~\ref{prop:lower order and dimension} and ~\S~\ref{sec: dim statements proof} for the definitions and details. \par
In the case of approximation functions satisfying the square divisibility condition we can again allow $\omega$ to have a faster rate of decay.
\begin{thm} 
    \label{thm:main2}
    Assume that $\Psi$ and $\omega$ are such that $q\to q^{2}\Psi(q)$ and $q\to q^{2}\Psi(q)\omega(q)$ are non-increasing, $\Psi$ satisfies the square divisibility condition, $\sum_{q=1}^{\infty}q\Psi(q)<\infty$, and $\lim_{q\to \infty}\omega(q)=0.$ Let us also assume that 
    \begin{equation*}
        \omega(q)\geq \frac{45}{16\Psi(q)}\Psi\left(\frac{1}{8\Psi(q)}\right) 
    \end{equation*}
    for all $q$ sufficiently large.  Then $$\dim_{H}(E(\Psi,\omega))=\inf\left\{s\geq 0: \sum_{q=1}^{\infty}q(\Psi(q)\omega(q))^{s}<\infty\right\}.$$
\end{thm}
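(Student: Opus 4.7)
The upper bound $\dim_{H}(E(\Psi,\omega)) \leq s_0 := \inf\{s\geq 0 : \sum_{q=1}^{\infty} q(\Psi(q)\omega(q))^s < \infty\}$ is unconditional, being exactly the content of \eqref{eq:Hausdorff upper bound}: cover $E(\Psi,\omega)\cap[0,1]$, for each large $q$, by the $2(q+1)$ intervals of length $\Psi(q)\omega(q)$ that surround the admissible annuli around rationals with denominator $q$, and apply the Hausdorff--Cantelli lemma. All the substance of the theorem lies in the matching lower bound.

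My plan is to promote the uncountability argument behind Theorem~\ref{thm: non-empty2}(2) into a full Cantor-set construction of dimension $s_{0}$, in the spirit of the proof of Theorem~\ref{thm:main}. Choose a sufficiently sparse increasing sequence $q_1 < q_2 < \cdots$. At stage $n$, in each parent interval $I$ surviving from stage $n-1$, enumerate the rationals $p/q_n \in I$ and retain as children the two length-$\Psi(q_n)\omega(q_n)$ sub-intervals $\bigl[\tfrac{p}{q_n}\pm (1-\omega(q_n))\Psi(q_n),\tfrac{p}{q_n}\pm\Psi(q_n)\bigr]$, discarding any that fail to lie strictly interior to $I$. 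Let $K$ denote the resulting Cantor set; by construction each $x \in K$ is $\Psi$-approximable via the sequence $p/q_n$. Equip $K$ with the natural probability measure $\mu$ distributing mass uniformly among children at each stage; choosing $(q_n)$ so that each parent of length $\asymp \Psi(q_{n-1})$ contains $\asymp q_n \Psi(q_{n-1})$ admissible rationals with denominator $q_n$, a standard mass-distribution calculation yields $\mu(B(x,r)) \ll r^s$ for every $s < s_0$, hence $\dim_{H}(K) \geq s_0$. The monotonicity of $q \mapsto q^2\Psi(q)\omega(q)$ is used here to convert single-scale estimates into a uniform power-law bound, exactly as in Jarn\'ik's theorem.

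The main obstacle is verifying $K \subseteq E(\Psi,\omega)$, i.e.\ that every $x \in K$ satisfies $|x - p/q| \geq (1-\omega(q))\Psi(q)$ for all large $q$ and all $p \in \Z$. Following the heuristic after Theorem~\ref{thm: non-empty}, the delicate case is when $x$ lies close to one of the rationals $p/q_n \pm \Psi(q_n)$ realising an endpoint of a level-$n$ sub-interval. Under the square divisibility hypothesis $\Psi(q_n)^{-1} \in q_n^{2}\Z$, each such endpoint is itself a rational whose denominator divides $\Psi(q_n)^{-1}$, so the obstruction reduces to a single constraint at the scale $\Psi(q_n)^{-1}$: namely $\omega(q_n)\Psi(q_n) \gtrsim \Psi(\Psi(q_n)^{-1})$. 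The assumption $\omega(q_n) \geq \tfrac{45}{16\Psi(q_n)}\Psi\!\left(\tfrac{1}{8\Psi(q_n)}\right)$ is precisely tailored to this inequality, with the factor $1/8$ providing slack to enforce strict nesting of children inside parents. This is exactly why the square divisibility hypothesis relaxes the $\Psi(\tfrac{1}{q\Psi(q)})$-type bound required in Theorem~\ref{thm:main} to the weaker $\Psi(\Psi(q)^{-1})$-type bound here. Rationals $p/q$ with $q \notin \{q_n\}$ at scales much finer or much coarser than the current level are ruled out by Legendre's theorem and the sparsity of $(q_n)$ respectively, while rationals of comparable denominator are excluded by the standard convergent gap estimates underpinning Theorem~\ref{thm: non-empty2}(2).
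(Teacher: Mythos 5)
Your upper bound is fine, but the lower-bound construction has two genuine gaps. First, a counting problem: at stage $n$ you only use rationals with the \emph{single} denominator $q_{n}$, so each parent (which has length $\asymp\Psi(q_{n-1})\omega(q_{n-1})$, not $\Psi(q_{n-1})$) contains only $\asymp q_{n}\Psi(q_{n-1})\omega(q_{n-1})$ children, separated by $\asymp q_{n}^{-1}$, each of length $\asymp\Psi(q_{n})\omega(q_{n})$. Any mass-distribution or Falconer-type estimate (e.g.\ Proposition~\ref{prop:dimension lower bound}) applied to such a tree gives dimension $\liminf\frac{\log q_{n}}{-\log\Psi(q_{n})\omega(q_{n})}=\frac{1}{\lambda(\Psi,\omega)}$, i.e.\ only \emph{half} of $s_{0}=\frac{2}{\lambda(\Psi,\omega)}$ (Proposition~\ref{prop:lower order and dimension}); indeed for $\Psi(q)=q^{-\tau_1}$, $\omega(q)=q^{-\tau_2}$ your Cantor set has dimension about $\frac{1}{\tau_1+\tau_2}$, not $\frac{2}{\tau_1+\tau_2}$. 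To reach $s_0$ one must take, inside each surviving piece, $\asymp Q'^{2}\times(\text{parent length})$ children coming from \emph{all} denominators $s\in[Q'/10,Q']$, separated by $\asymp Q'^{-2}$; this is what the paper does via Dirichlet's theorem together with Grace's lemma (Lemma~\ref{lem:Grace}) in the inductive Lemma~\ref{inductive lemma}, and it is not available if you fix one denominator per level.

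Second, and more fundamentally, retaining the full annulus components $\bigl[\tfrac{p}{q_{n}}\pm(1-\omega(q_{n}))\Psi(q_{n}),\tfrac{p}{q_{n}}\pm\Psi(q_{n})\bigr]$ as children does not give $K\subseteq E(\Psi,\omega)$, and Legendre plus ``sparsity of $(q_n)$'' cannot repair this: a level-$(n+1)$ child is constrained only at the scales $q_{n}$ and $q_{n+1}$, and nothing prevents it from lying entirely within distance $(1-\omega(s))\Psi(s)$ of some rational $r/s$ with $q_{n}<s<q_{n+1}$ (sparseness of $(q_n)$ only enlarges the range of dangerous intermediate $s$). Every point of such a child, and all of its descendants, fails the $\liminf$ condition, so the construction as stated produces points outside $E(\Psi,\omega)$. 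The heart of the paper's proof is precisely the pruning you omit: inside each annulus one replaces the interval by the union of fundamental intervals $U(p/q)$ of Lemma~\ref{lem: building U(p/q) with divisibility}, whose digit bounds \eqref{b digits SQD}, combined with the digit-restricted sets $J$ between construction levels, exclude \emph{all} intermediate-denominator approximations via Lemma~\ref{lem:approximation quality}, while the measure bound $\mathcal{L}(U(p/q))\gg\omega(q)\Psi(q)$ (Lemma~\ref{lem: bound on S(p/q) with divisibility}) guarantees enough children survive. It is here, through the continued-fraction description of the left endpoint of $S(p/q)$ (Lemma~\ref{lem: where is S(p/q)}) and Lemmas~\ref{lem:S bigger than bad 2}--\ref{lem: far from bad}, that the square divisibility hypothesis and the constant $\frac{45}{16\Psi(q)}\Psi\bigl(\frac{1}{8\Psi(q)}\bigr)$ actually enter the lower bound; your endpoint-denominator heuristic is the mechanism of the emptiness result (Theorem~\ref{thm:empty}), not of this containment argument. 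Without these ingredients the proposal does not establish either the containment in $E(\Psi,\omega)$ or the dimension value $s_0$.
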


Focusing on the special case of functions of the form $q\to q^{-\tau}$ we see that Theorems \ref{thm: non-empty2}, \ref{thm:main} and \ref{thm:main2} imply the following corollary. The second part of this statement simplifies the critical threshold identified in Theorem \ref{thm: non-empty2}.

\begin{cor}
    \label{cor:polynomial corollary}
    Let $\Psi:(0,\infty)\to(0,\infty)$ be given by $\Psi(q)=q^{-\tau_{1}}$ for some $\tau_{1}>2$.  Then the following statements are true:
    \begin{enumerate}
        \item If $\omega:(0,\infty)\to(0,\infty)$ is given by $\omega(q)=q^{-\tau_{2}}$ for some $\tau_{2}<\tau_{1}(\tau_{1}-2)$, then $\dim_{H}(E(\Psi,\omega))=\frac{2}{\tau_{1}+\tau_{2}}$.
        \item If $\tau_{1}\in \N_{\geq3}$ and $\omega:(0,\infty)\to(0,\infty)$ is given by $\omega(q)=q^{-\tau_{2}}$ for some $\tau_{2}<\tau_{1}(\tau_{1}-1)$, then $\dim_{H}(E(\Psi,\omega))=\frac{2}{\tau_{1}+\tau_{2}}$.
        \item If $\tau_{1}\in \N_{\geq3}$ and $\omega:(0,\infty)\to(0,\infty)$ is given by $\omega(q)=Cq^{-\tau_{1}(\tau_{1}-1)}$ for some $C>\frac{45}{16}\cdot 8^{\tau_{1}},$ then $\dim_{H}(E(\Psi,\omega))=\frac{2}{\tau_{1}^{2}}.$
        \item If $\tau_{1}\in \N_{\geq3}$ and $\omega:(0,\infty)\to(0,\infty)$ is given by $\omega(q)=cq^{-\tau_{1}(\tau_{1}-1)}$ for some $c\in (0,1),$ then $E(\Psi,\omega)=\emptyset.$
        \end{enumerate}
  
            
\end{cor}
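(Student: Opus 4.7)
The plan is to derive each of the four parts of Corollary \ref{cor:polynomial corollary} as a direct specialisation of the general theorems already stated in the excerpt, namely Theorem \ref{thm:main}, Theorem \ref{thm:main2}, and Theorem \ref{thm: non-empty2}(1). The work consists almost entirely of substituting the power functions $\Psi(q)=q^{-\tau_1}$ and $\omega(q)=q^{-\tau_2}$ (or $\omega(q)=Cq^{-\tau_1(\tau_1-1)}$) into the hypotheses of those theorems and checking that each regularity assumption holds, then computing the resulting dimension expression. None of the steps is expected to be genuinely hard; the main thing to be careful about is tracking the constants $45$, $45/16$, $2$, $8$ that appear inside the thresholds, and verifying monotonicity at the right places.

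For part (1) I would apply Theorem \ref{thm:main}. The functions $q\mapsto q^{2-\tau_1}$ and $q\mapsto q^{2-\tau_1-\tau_2}$ are non-increasing as soon as $\tau_1>2$ and $\tau_1+\tau_2>2$, the series $\sum q^{1-\tau_1}$ converges because $\tau_1>2$, and $\omega(q)\to 0$. The key computation is
\[
\tfrac{45}{\Psi(q)}\,\Psi\!\left(\tfrac{1}{2q\Psi(q)}\right)
= 45\,q^{\tau_1}\left(\tfrac{q^{\tau_1-1}}{2}\right)^{-\tau_1}
= 45\cdot 2^{\tau_1}\,q^{-\tau_1(\tau_1-2)},
\]
so the lower bound condition on $\omega$ reduces to $q^{\tau_1(\tau_1-2)-\tau_2}\geq 45\cdot 2^{\tau_1}$, which holds for all large $q$ precisely when $\tau_2<\tau_1(\tau_1-2)$. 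The dimension output is then
\[
\inf\!\Big\{s\geq 0:\sum_{q=1}^{\infty} q^{1-s(\tau_1+\tau_2)}<\infty\Big\}=\tfrac{2}{\tau_1+\tau_2},
\]
yielding (1). For part (2) the same argument is carried out with Theorem \ref{thm:main2} in place of Theorem \ref{thm:main}; when $\tau_1\in\N_{\geq 3}$, the equality $\Psi(q)^{-1}=q^{\tau_1}\equiv 0\pmod{q^{2}}$ shows that $\Psi$ satisfies the square divisibility condition, and the analogous computation gives
\[
\tfrac{45}{16\Psi(q)}\,\Psi\!\left(\tfrac{1}{8\Psi(q)}\right)
= \tfrac{45}{16}\,q^{\tau_1}\left(\tfrac{q^{\tau_1}}{8}\right)^{-\tau_1}
= \tfrac{45\cdot 8^{\tau_1}}{16}\,q^{-\tau_1(\tau_1-1)},
\]
so the threshold becomes $\tau_2<\tau_1(\tau_1-1)$, and the series computation is identical.

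Part (3) is exactly the boundary case of (2): taking $\omega(q)=Cq^{-\tau_1(\tau_1-1)}$ with $C>\tfrac{45}{16}\cdot 8^{\tau_1}$, the lower bound just derived is satisfied for all $q$, while monotonicity of $q\mapsto q^{2-\tau_1^2}$ again follows from $\tau_1\geq 3$. The dimension calculation then reads
\[
\sum_{q=1}^{\infty} q\,(\Psi(q)\omega(q))^{s}
= C^{s}\sum_{q=1}^{\infty} q^{1-s\tau_1^{2}},
\]
which converges iff $s>2/\tau_1^{2}$, yielding $\dim_{H}(E(\Psi,\omega))=2/\tau_1^{2}$. Finally, part (4) is obtained from Theorem \ref{thm: non-empty2}(1). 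With $\omega(q)=cq^{-\tau_1(\tau_1-1)}$ and $1/\Psi(q)=q^{\tau_1}$, one computes
\[
\tfrac{1-\omega(1/\Psi(q))}{\Psi(q)}\,\Psi\!\left(\tfrac{1}{\Psi(q)}\right)
= \bigl(1-c\,q^{-\tau_1^{2}(\tau_1-1)}\bigr)\,q^{-\tau_1(\tau_1-1)},
\]
and since $c<1$ while $c\,q^{-\tau_1^{2}(\tau_1-1)}\to 0$, the inequality $\omega(q)<\bigl(1-c\,q^{-\tau_1^{2}(\tau_1-1)}\bigr)\,q^{-\tau_1(\tau_1-1)}$ holds for all sufficiently large $q$, so Theorem \ref{thm: non-empty2}(1) gives $E(\Psi,\omega)=\emptyset$.

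The only real point of attention is bookkeeping: one must remember that parts (2)--(4) require $\tau_1\in\N_{\geq 3}$ in order to invoke square divisibility, and that the constant $\tfrac{45}{16}\cdot 8^{\tau_1}$ in part (3) is precisely what allows the threshold inequality to be satisfied for every large $q$ rather than just in the limit. Beyond this, the corollary follows immediately from the theorems it cites.
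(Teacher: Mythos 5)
Your proposal is correct and follows exactly the route the paper intends: the corollary is stated as an immediate consequence of Theorems~\ref{thm:main}, \ref{thm:main2} and \ref{thm: non-empty2}(1), and your substitutions of the power functions into their hypotheses, the constant bookkeeping ($45\cdot 2^{\tau_1}$, $\tfrac{45}{16}\cdot 8^{\tau_1}$), and the series/dimension computations all check out.
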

 
Taking $\omega$ to be a function that decays to zero sufficiently slowly, e.g. logarithmically, it is possible to obtain the following quantitative improvement on \cite[Theorem 1]{Bug1}. This is an immediate consequence of Theorem \ref{thm:main} and Proposition \ref{prop:lower order and dimension} proved below.

\begin{cor}
\label{cor:dimension coincidence}
    Assume that $\Psi$ and $\omega$ are such that $q\to q^{2}\Psi(q)$ and $q\to q^{2}\Psi(q)\omega(q)$ are non-increasing, $\sum_{q=1}^{\infty}q\Psi(q)<\infty$ and $\lim_{q\to \infty}\omega(q)=0.$ Suppose in addition that $\lim_{q\to\infty}\frac{-\log \omega(q)}{\log q}=0.$ Then 
    \begin{equation*}
        \dim_{H}(E(\Psi,\omega))=\dim_{H}(E(\Psi))=\dim_{H}(\textrm{\rm Bad}(\Psi))=\dim_{H}(W(\Psi))\, .
    \end{equation*}
\end{cor}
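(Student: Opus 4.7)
The plan is to invoke Theorem~\ref{thm:main} to extract an exact dimension formula for $E(\Psi,\omega)$, and then use Proposition~\ref{prop:lower order and dimension} together with the subpolynomial decay of $\omega$ to identify this formula with $\dim_H W(\Psi)$. The one hypothesis of Theorem~\ref{thm:main} not directly supplied by the corollary is the lower bound $\omega(q)\geq 45\Psi(1/(2q\Psi(q)))/\Psi(q)$, so I would begin by verifying it. Writing $u(q) := q^{2}\Psi(q)$, which is non-increasing and must tend to zero since $\sum_q q\Psi(q)=\sum_q u(q)/q$ converges, and setting $N(q) := 1/(2q\Psi(q)) = q/(2u(q))\geq q$, monotonicity of $u$ gives
\[
\frac{\Psi(N(q))}{\Psi(q)} \;=\; \frac{u(N(q))}{u(q)}\cdot \frac{q^{2}}{N(q)^{2}} \;=\; 4\,u(q)\,u(N(q)) \;\leq\; 4\,u(q)^{2},
\]
reducing the required inequality to $\omega(q)\geq 180\,u(q)^{2}$ for all sufficiently large $q$. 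In the regime of real interest, $\lambda(\Psi)>2$, so $u(q)^{2}$ decays at a positive polynomial rate and is dominated by the subpolynomially decaying $\omega$.

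With Theorem~\ref{thm:main} now applicable, we obtain $\dim_H E(\Psi,\omega) = \inf\{s\geq 0 : \sum_{q=1}^{\infty} q(\Psi(q)\omega(q))^{s} < \infty\}$, and by Proposition~\ref{prop:lower order and dimension} this equals $2/\lambda(\Psi\omega)$, where $\lambda(f)$ denotes the lower order at infinity of $f$. The subpolynomial hypothesis $\lim_{q\to\infty} \frac{-\log\omega(q)}{\log q}=0$ yields
\[
\lambda(\Psi\omega) \;=\; \liminf_{q\to\infty}\frac{-\log\Psi(q)-\log\omega(q)}{\log q} \;=\; \liminf_{q\to\infty}\frac{-\log\Psi(q)}{\log q} \;=\; \lambda(\Psi),
\]
since the $\omega$-contribution to the ratio is $o(1)$ and so vanishes in the liminf. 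Combining with the Jarnik--Besicovitch formula $\dim_H W(\Psi) = 2/\lambda(\Psi)$ (applicable since $\sum q\Psi(q)<\infty$ forces $\lambda(\Psi)\geq 2$) and the theorem of Bugeaud--Moreira \cite{BugMor}, which ensures that $\dim_H E(\Psi) = \dim_H \textrm{Bad}(\Psi) = \dim_H W(\Psi)$ under our monotonicity and decay assumptions on $\Psi$, identifies all four quantities with $2/\lambda(\Psi)$.

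The main obstacle in executing this plan is the verification of $\omega(q)\geq 180\,u(q)^{2}$ required to apply Theorem~\ref{thm:main}: if $u$ itself decays only sub-polynomially, as is permitted by the hypotheses (for example $\Psi(q) = 1/(q^{2}\log^{2} q)$), then $180\,u(q)^{2}$ need not be dominated by an arbitrary subpolynomially decaying $\omega$. In such borderline cases, one could pass to an intermediate function $\omega'$ with $180\,u^{2}\leq \omega'\leq \omega$ (whenever such a choice exists) and exploit the inclusion $E(\Psi,\omega')\subseteq E(\Psi,\omega)$ together with Theorem~\ref{thm:main} applied to $\omega'$ to recover the sharp lower bound on dimension; since $\omega'$ can still be chosen subpolynomial, the identification $\lambda(\Psi\omega')=\lambda(\Psi)$ carried out above remains valid.
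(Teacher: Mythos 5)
Your route is exactly the paper's: the paper proves this corollary with a one-line appeal to Theorem~\ref{thm:main} and Proposition~\ref{prop:lower order and dimension}, together with Jarn\'ik's theorem and Bugeaud--Moreira for the last three equalities, and your identification $\lambda(\Psi\omega)=\lambda(\Psi)$ from $\lim_{q\to\infty}\frac{-\log\omega(q)}{\log q}=0$ is precisely the intended use of the proposition. Your verification of the hypothesis $\omega(q)\geq \frac{45}{\Psi(q)}\Psi\bigl(\frac{1}{2q\Psi(q)}\bigr)$, via $\frac{1}{\Psi(q)}\Psi\bigl(\frac{1}{2q\Psi(q)}\bigr)=4u(q)\,u\bigl(q/(2u(q))\bigr)\leq 4u(q)^{2}$ with $u(q)=q^{2}\Psi(q)$, is correct, as is your remark that this is automatic whenever $\lambda(\Psi)>2$.

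The obstacle you flag in the borderline case $\lambda(\Psi)=2$ is genuine: for instance $\Psi(q)=q^{-2}(\log q)^{-2}$ and $\omega(q)=e^{-\sqrt{\log q}}$ satisfy all the stated hypotheses of the corollary, yet $\omega$ lies below the threshold $180\,u(q)u(q/(2u(q)))$, and since any admissible $\omega'\leq\omega$ inherits this, neither your intermediate-function workaround nor Theorem~\ref{thm:main} applies there. However, this is a defect of the corollary's claim of immediacy rather than of your argument: the paper offers no verification of the threshold either, and its derivation tacitly presupposes that hypothesis of Theorem~\ref{thm:main}. It does hold in the regime the authors have in mind (e.g. logarithmically decaying $\omega$), since $\sum_{q}q\Psi(q)<\infty$ and monotonicity force $u(q)\log q\to 0$, hence $180\,u(q)^{2}=o(1/\log q)$; and it holds whenever $\lambda(\Psi)>2$, as you note. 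So your proposal reproduces the paper's proof and, if anything, is more careful about where it actually applies.
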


From a technical perspective the main innovation of this paper is a detailed analysis of the set of $x$ satisfying 
\begin{equation}
\label{eq:Extra Diophantine inequality}
\Psi(q)(1-\omega(q))\leq\left|x-\frac{p}{q}\right|\leq \Psi(q) 
\end{equation} for some $\Psi,\omega$ and $p/q\in \Q.$ This analysis is given in Section \ref{sec:Technical results}. We show that under suitable assumptions the set of $x$ satisfying \eqref{eq:Extra Diophantine inequality} can be approximated from within by a set that has a simple symbolic description in terms of the continued fraction expansion. Crucially, this symbolic description can be used to rule out unwanted rational solutions to $|x-p'/q'|< \Psi(q')(1-\omega(q')).$ Under suitable assumptions we can guarantee that this approximating set has comparable Lebesgue measure to the set of $x$ satisfying \eqref{eq:Extra Diophantine inequality}. This property will be important when it comes to proving Hausdorff dimension statements.

The rest of the paper is structured as follows. In Section \ref{sec:preliminaries} we recall some standard results from continued fraction theory that we will use in our proofs. We also formulate a result that gives an equivalent definition of the set $E(\Psi,\omega)$ in terms of continued fractions, see Proposition~\ref{prop: equivalence}. In Section \ref{sec:Technical results} we will prove several key technical results, Lemmas~\ref{lem: building U(p/q)}, ~\ref{lem: non-empty S(p/q) divisibility cond}, and ~\ref{lem: building U(p/q) with divisibility}, showing that under suitable assumptions on $\Psi$ and $\omega$, we can build suitable subsets of $E(\Psi,\omega)$ that have a simple description in terms of the partial quotients of the continued fraction expansion. In Section \ref{sec: proof non-empty} we prove Theorems \ref{thm: non-empty}, \ref{thm: non-empty2} and \ref{thm: non-empty3}, and in Section~\ref{sec: dim statements proof} we prove Theorems~\ref{thm:main} and \ref{thm:main2}. Theorem~\ref{thm:empty} is proven in a different manner to our other results. Its proof can be found in Section~\ref{sec: proof of empty}.\\

\noindent \textbf{Notation.} Throughout this paper we will adopt the following notation conventions. Given two positive real-valued functions $f,g:X\to \R$ defined on some set $X$, we write $f\ll g$ if there exists $C>0$ such that $f(x)\leq Cg(x)$ for all $x\in X$. Moreover, given a digit $a$ belonging to some alphabet set $\mathcal{A}$ and $n\in \N$, we denote by $a^{n}=(\overbrace{a,\ldots,a}^{\times n})$ the word given by the $n$-fold concatenation of $a$ with itself. We similarly define $a^{\infty}$ to be the infinite word whose digits consist only of the digit $a.$

\section{Preliminaries}
\label{sec:preliminaries}

\subsection{Continued fractions and fundamental intervals}
In this section, we recall some classical results on continued fractions, many of which can be found in Khintchine's book \cite{Khi2}. The theory of continued fractions is deeply related to many sets appearing in Diophantine approximation. For instance, one can relate the set of badly approximable points, $\Psi$-well approximable points, and $\Psi$-Dirichlet improvable points to properties of the continued fraction expansion of a point, see for example \cite{Khi2,KleinWad18}. We prove a key proposition below that gives an equivalent condition for belonging to $E(\Psi,\omega)$ in terms of the continued fraction expansion. \par 
We can write any $x\in \R\setminus \Q$ in continued fraction expansion form:
\begin{equation*}
    x=a_{0}(x)+\frac{1}{a_{1}(x)+\frac{1}{a_{2}(x)+\frac{1}{\ddots}}}=[a_{0}(x);a_{1}(x),a_{2}(x),\ldots]\, ,
\end{equation*}
for unique $a_{0}(x)\in \Z$ and $a_{i}(x)\in \N$ for all $i\geq 1.$ We call the entries in $(a_i)$ the partial quotients of $x$. When we want to emphasise that some partial quotients are the partial quotients of a real number $x$, we may write $a_{i}(x)=a_{i}$. For $x\in \Q$ a similar continued fraction can be defined, however it is not necessarily unique. We will make use of the following lemma. For a proof see Chapter 1 of Schmidt's book \cite{Sch}.

\begin{lem}
\label{lem:rationalcfexpansion}
    Every rational number $\frac{p}{q}\in \Q$ has a unique continued fraction expansion $\frac{p}{q}=[a_{0};a_{1},\ldots, a_{n}]$ where $n\in \N$ is even. Furthermore, if $c>b>0$ we have
    \begin{equation*}
        [a_{0};a_{1},\ldots, a_{n}]<[a_{0};a_{1},\ldots, a_{n},c]<[a_{0};a_{1},\ldots, a_{n},b]\, .
    \end{equation*}
    Similarly, every $\frac{p}{q}\in \Q$ also has a unique continued fraction expansion $\frac{p}{q}=[a_{0};a_{1},\ldots, a_{m}]$ where $m\in \N$ is odd, and for $c>b>0$ we have
    \begin{equation*}
        [a_{0};a_{1},\ldots, a_{m}]>[a_{0};a_{1},\ldots, a_{m},b]>[a_{0};a_{1},\ldots, a_{m},c]\, .
    \end{equation*}
\end{lem}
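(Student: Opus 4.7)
The plan is to split the lemma into two independent parts: (i) existence and uniqueness of finite continued fraction expansions of each parity for every rational, and (ii) the monotonicity of the map $y \mapsto [a_0;a_1,\ldots,a_n,y]$.

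For part (i), I would start from the fact (obtained from the Euclidean algorithm) that every $p/q \in \Q$ admits at least one finite continued fraction expansion. The key structural observation is the identity
\begin{equation*}
[a_0;a_1,\ldots,a_{k-1},a_k] = [a_0;a_1,\ldots,a_{k-1},a_k-1,1]
\end{equation*}
valid whenever $a_k \geq 2$. Using this, one can convert between a representation whose last partial quotient is $\geq 2$ and one that ends in a $1$; since the two resulting expansions differ in length by exactly one, exactly one has even last index and the other odd. This gives existence. For uniqueness within each parity class, I would argue by induction on the length: if $[a_0;a_1,\ldots,a_n] = [b_0;b_1,\ldots,b_n]$ with $a_n,b_n \neq 1$ (or both equal to $1$) then the integer parts force $a_0 = b_0$, and stripping the leading term and inverting reduces to a shorter identity.

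For part (ii), the main computation relies on the recursion $p_k = a_k p_{k-1} + p_{k-2}$, $q_k = a_k q_{k-1} + q_{k-2}$, from which one obtains, for any real $y>0$,
\begin{equation*}
[a_0;a_1,\ldots,a_n,y] \;=\; \frac{y p_n + p_{n-1}}{y q_n + q_{n-1}},
\end{equation*}
together with the classical identity $p_n q_{n-1} - p_{n-1} q_n = (-1)^{n-1}$. Subtracting $p_n/q_n$ yields
\begin{equation*}
[a_0;a_1,\ldots,a_n,y] - [a_0;a_1,\ldots,a_n] \;=\; \frac{(-1)^{n}}{q_n(yq_n + q_{n-1})}.
\end{equation*}
For even $n$ this quantity is positive and strictly decreasing in $y$, giving $[a_0;\ldots,a_n] < [a_0;\ldots,a_n,c] < [a_0;\ldots,a_n,b]$ for $c>b>0$. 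For odd $m$ the same formula gives a negative quantity that strictly increases toward $0$ as $y$ grows, producing the reverse chain of inequalities asserted in the lemma.

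There is no real obstacle here: both parts are routine consequences of the recursion and the determinant identity. The only mildly delicate point is bookkeeping the parity — one must be careful to distinguish the index $n$ of the final partial quotient from the number $n+1$ of partial quotients — and to verify the induction base case cleanly in the uniqueness argument. I would keep the presentation short and cite Khintchine \cite{Khi2} or Schmidt \cite{Sch} for the underlying identities rather than reproving them.
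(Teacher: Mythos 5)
Your argument is mathematically sound and is essentially the standard textbook computation; note that the paper itself does not prove this lemma at all but simply cites Chapter 1 of Schmidt's book \cite{Sch}, so writing out the recursion-plus-determinant argument already does more than the paper does. One point needs attention, however. Your formula $[a_0;a_1,\ldots,a_n,y]-[a_0;a_1,\ldots,a_n]=\frac{(-1)^{n}}{q_n(yq_n+q_{n-1})}$ is correct, and for odd $m$ it indeed gives a negative quantity that increases towards $0$ as $y$ grows; but that yields $[a_0;\ldots,a_m]>[a_0;\ldots,a_m,c]>[a_0;\ldots,a_m,b]$ for $c>b>0$ (the appended entry occupies the even position $m+1$, so the value is \emph{increasing} in it), which is not the chain printed in the statement, where $b$ and $c$ appear in the opposite order. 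A quick check with $[0;1]=1$, $[0;1,1]=1/2$, $[0;1,2]=2/3$ confirms that the printed odd-parity chain is a transposition typo, and the paper's subsequent use of the lemma (the description of how the index $k$ of $I_{n+1}(\bb k)$ orders the subintervals of $I_{n}(\bb)$, increasing left to right when $n$ is odd) is consistent with your ordering rather than with the printed one. So your computation is right, but the sentence claiming it produces ``the reverse chain of inequalities asserted in the lemma'' glosses over this: you should either state the corrected inequality explicitly or flag the misprint, since as literally printed the odd case cannot be proved. The existence and uniqueness half of your sketch (passing between the expansion ending in a digit $\geq 2$ and the one ending in $1$ via $[\ldots,a_k]=[\ldots,a_k-1,1]$, plus induction on length) is the standard argument and is fine, modulo the care with base cases and with integer tails equal to $1$ that you already mention.
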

Unless the expansion is explicitly given, when we speak of the continued fraction expansion of a rational number we will often mean the even continued fraction expansion given by the first part of Lemma \ref{lem:rationalcfexpansion}. Given $x\in \R$ and $n\in \N$ we call 
\begin{equation*}
    \frac{p_{n}}{q_{n}}:=[a_{0}(x);a_{1}(x),\ldots, a_{n}(x)]
\end{equation*}
the $n$-th convergent of $x$. Given the partial quotients of a real number $x\in (0,1)$ we can inductively define the numerator and denominator of the convergents by setting $p_{0}=q_{-1}=0$, $q_{0}=p_{-1}=1$ and then
\begin{equation}
\label{eq:recursive formulas}
    q_{n+1}=a_{n+1}q_{n}+q_{n-1}\, , \quad p_{n+1}=a_{n+1}p_{n}+p_{n-1}\, .
\end{equation}
When manipulating a continued fraction expansion it is often useful to write $q_{n}(\ba)$, for an $n$-tuple of positive integers $\ba\in \N^{n},$ to make explicit what sequence of partial quotients we are using to define $q_{n}$. In addition to this, if $q_{n}$ is the denominator of the $n$th convergent of $x$ we may write $q_{n}(x)$ to emphasise the dependence on $x$. While we assume all partial quotients are strictly positive, we will on occasion undertake some arithmetic operations on the partial quotients with the result being that some terms may equal zero. In these instances, we use the formula $[\ldots, b,0,c,\ldots]=[\ldots,b+c,\ldots]$, which can easily be verified, to revert back to the case of strictly positive entries. 

 The following lemma shows how the quality of approximation provided by a convergent is determined by the partial quotients. For a proof of this lemma see \cite{Khi}.

\begin{lem}
    \label{lem:approximation quality}
    Let $x\in \R.$ Then for any $n\geq 1,$ if the $n$th convergent $\frac{p_{n}}{q_{n}}$ is distinct from $x$, then $$\frac{1}{3a_{n+1}q_{n}^{2}}<\frac{1}{(a_{n+1}+2)q_{n}^{2}}<\left|x-\frac{p_{n}}{q_{n}}\right|< \frac{1}{a_{n+1}q_{n}^{2}}.$$
\end{lem}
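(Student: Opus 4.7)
The plan is to reduce both bounds to estimates on the $(n{+}1)$-st complete quotient $\alpha_{n+1}:=[a_{n+1};a_{n+2},\ldots]$ of $x$. First I would write $x=[a_{0};a_{1},\ldots,a_{n},\alpha_{n+1}]$ and, applying the recursive formulas \eqref{eq:recursive formulas} with the real ``last digit'' $\alpha_{n+1}$, derive the identity
$$x=\frac{\alpha_{n+1}p_{n}+p_{n-1}}{\alpha_{n+1}q_{n}+q_{n-1}}.$$
The hypothesis $p_{n}/q_{n}\neq x$ ensures that $\alpha_{n+1}$ is a well-defined positive real number: for irrational $x$ this is automatic, while for rational $x$ it forces the continued fraction expansion of $x$ to extend beyond position $n$.

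Second, I would combine this identity with the classical determinant relation $p_{n}q_{n-1}-p_{n-1}q_{n}=(-1)^{n-1}$, which follows by a one-line induction from \eqref{eq:recursive formulas}, to arrive at the exact error formula
$$x-\frac{p_{n}}{q_{n}}=\frac{(-1)^{n}}{q_{n}(\alpha_{n+1}q_{n}+q_{n-1})}.$$
The lemma therefore reduces to a two-sided bound on the positive quantity $\alpha_{n+1}q_{n}+q_{n-1}$.

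Third, writing $\alpha_{n+1}=a_{n+1}+1/\alpha_{n+2}$ with $\alpha_{n+2}>1$ (the degenerate case $\alpha_{n+1}=a_{n+1}$, corresponding to a terminating rational expansion, requires only minor separate bookkeeping) gives $a_{n+1}\leq\alpha_{n+1}<a_{n+1}+1$. Coupled with $q_{n-1}<q_{n}$ for $n\geq 1$, which follows immediately from \eqref{eq:recursive formulas} and the positivity of the partial quotients, this produces
$$a_{n+1}q_{n}^{2}\;\leq\; q_{n}(\alpha_{n+1}q_{n}+q_{n-1})\;<\;(a_{n+1}+2)q_{n}^{2}.$$
Inverting yields the upper and middle inequalities of the lemma, while the outermost inequality $1/(3a_{n+1}q_{n}^{2})\leq 1/((a_{n+1}+2)q_{n}^{2})$ reduces to the elementary fact $a_{n+1}+2\leq 3a_{n+1}$, valid for $a_{n+1}\geq 1$. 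The argument is entirely standard and presents no substantial obstacle; the only mild care needed is the rational-$x$ boundary case, which the assumption $p_{n}/q_{n}\neq x$ disposes of by guaranteeing the existence of $\alpha_{n+1}$ with $\alpha_{n+1}\geq a_{n+1}$.
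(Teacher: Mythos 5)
Your proof is correct and follows the standard route via the exact error formula $x-\frac{p_{n}}{q_{n}}=\frac{(-1)^{n}}{q_{n}(\alpha_{n+1}q_{n}+q_{n-1})}$; the paper does not prove this lemma but refers to Khintchine \cite{Khi}, where the same computation appears, and the identical identity is already used in the paper's proof of Proposition~\ref{prop: equivalence}. Two micro-points only: the strict upper inequality follows since $q_{n-1}\geq 1$ makes your displayed ``$\leq$'' strict, and, as your ``$a_{n+1}+2\leq 3a_{n+1}$'' correctly reflects, the outermost inequality in the lemma is actually an equality when $a_{n+1}=1$ — a harmless imprecision in the paper's statement, not in your argument.
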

For the $\Psi$ we will consider, the following result due to Legendre implies that all but finitely many of the rational $\Psi$-approximations of a point will be given by its convergents. For a proof see \cite{HardyWright}.
\begin{lem}
\label{lem:Legendre}
    Let $x\in \R$ and suppose $(p,q)\in \mathbb{Z}\times \N$ is such that \begin{equation*}
    \left|x-\frac{p}{q}\right|<\frac{1}{2q^{2}}.
\end{equation*}
Then $\frac{p}{q}$ is a convergent of $x$.
\end{lem}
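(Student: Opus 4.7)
The plan is to invoke the classical Legendre criterion via a continued-fraction realisation of $x$. I would first reduce to the case $\gcd(p,q)=1$: if $p/q$ equals $p'/q'$ in lowest terms then $q'\leq q$ and the hypothesis $|x - p'/q'|<\tfrac{1}{2q^{2}}\leq \tfrac{1}{2(q')^{2}}$ still holds, while the conclusion is unchanged.

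Next I would expand $p/q$ as a finite continued fraction $p/q=[a_{0};a_{1},\ldots,a_{n}]$, exploiting the fact recalled in Lemma~\ref{lem:rationalcfexpansion} that such a rational admits two expansions of opposite parity in $n$. With this flexibility in hand I can choose the parity of $n$ so that $(-1)^{n}$ agrees with the sign of $x-p/q$. I then introduce a real parameter $\theta$ by the defining equation
\[
x = \frac{\theta p_{n}+p_{n-1}}{\theta q_{n}+q_{n-1}},
\]
where $(p_{k},q_{k})$ are the standard convergent numerators and denominators of $[a_{0};a_{1},\ldots,a_{n}]$. Using the determinant identity $p_{n}q_{n-1}-p_{n-1}q_{n}=(-1)^{n-1}$, a direct computation yields
\[
x-\frac{p_{n}}{q_{n}} = \frac{(-1)^{n}}{q_{n}(\theta q_{n}+q_{n-1})},
\]
and the parity choice above forces $\theta q_{n}+q_{n-1}>0$, hence $\theta>0$.

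Now I would use the hypothesis to upgrade this to $\theta>1$. Taking absolute values in the display and combining with $|x-p/q|<1/(2q^{2})$ together with $q=q_{n}$ gives $\theta q_{n}+q_{n-1}>2q_{n}$, so $\theta>2-q_{n-1}/q_{n}\geq 1$, the last inequality using $q_{n-1}\leq q_{n}$ (with the boundary case $q=1$ handled by the convention $q_{-1}=0$, $q_{0}=1$).

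Finally, since $\theta>1$ it admits its own continued fraction expansion $\theta=[b_{1};b_{2},b_{3},\ldots]$ with each $b_{i}\in\N$. Substituting this back into the defining equation for $\theta$ yields the continued fraction expansion $x=[a_{0};a_{1},\ldots,a_{n},b_{1},b_{2},\ldots]$, from which it follows that $p/q=[a_{0};a_{1},\ldots,a_{n}]=p_{n}/q_{n}$ is a convergent of $x$. The step I expect to require the most care is the sign and parity analysis in the second paragraph; once the correct branch is fixed, the remainder is a routine manipulation of the convergent recursion together with the standard dictionary between real numbers greater than $1$ and their continued fraction expansions.
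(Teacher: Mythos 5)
Your argument is correct and is precisely the classical proof of Legendre's criterion that the paper delegates to Hardy--Wright (Theorem~184 there): reduce to lowest terms, choose the parity of the expansion to match the sign of $x-p/q$, introduce $\theta$ via $x=(\theta p_{n}+p_{n-1})/(\theta q_{n}+q_{n-1})$, deduce $\theta>1$ from the hypothesis, and concatenate the expansion of $\theta$. The only loose end is the degenerate case $x=p/q$, where the sign-matching step is vacuous and no finite $\theta$ exists; there $p/q$ is trivially the final convergent of its own expansion, so a one-line remark disposes of it (the intermediate claim ``hence $\theta>0$'' is also a non sequitur from $\theta q_{n}+q_{n-1}>0$ alone, but it is harmless since $\theta>1$ is derived independently in the next step).
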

When the constant $1/2$ appearing on the right hand side of the inequality in Lemma \ref{lem:Legendre} is replaced with $1$ we have the following result due to Grace \cite{Grace}. This result tells us that if a rational approximation is sufficiently good, then it is either a convergent, or its continued fraction expansion closely resembles that of the point it is approximating. 
\begin{lem}[\cite{Grace}]
\label{lem:Grace}
Let $x=[0;a_1,a_2,\ldots]$ be a real number and $(p,q)\in\mathbb{Z}\times \N$ be such that
\begin{equation*}
    \left|x-\frac{p}{q}\right|<\frac{1}{q^{2}}\, .
\end{equation*}
Then, there exists an integer $n$ such that 
$$\frac{p}{q}\in \left\{[0;a_1,\ldots,a_n], [0;a_1,\ldots,a_n,1], [0;a_1,\ldots,a_n,a_{n+1}-1]\right\}\, .$$

\end{lem}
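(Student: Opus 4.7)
The plan is to classify all rationals $p/q$ satisfying the hypothesis using the structure of convergents and intermediate convergents of $x$. For each $n\geq 0$ and $j\in\{0,1,\ldots,a_{n+1}\}$, define the intermediate convergents
$$r_{n,j} := \frac{j p_n + p_{n-1}}{j q_n + q_{n-1}}.$$
A direct computation using $p_n q_{n-1} - p_{n-1} q_n = \pm 1$ shows these are in lowest terms, they interpolate monotonically between $r_{n,0}= p_{n-1}/q_{n-1}$ and $r_{n,a_{n+1}} = p_{n+1}/q_{n+1}$, and by the recursion \eqref{eq:recursive formulas} they satisfy $r_{n,1}=[0;a_1,\ldots,a_n,1]$ and $r_{n,a_{n+1}-1}=[0;a_1,\ldots,a_n,a_{n+1}-1]$. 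If $p/q$ is a convergent of $x$ the first option in the statement applies, so assume it is not, and let $n\geq 0$ be the unique integer with $q_n\leq q<q_{n+1}$.

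First I would locate $p/q$ in the open interval $(p_{n+1}/q_{n+1},p_{n-1}/q_{n-1})$, on the opposite side of $x$ from $p_n/q_n$ (with this interval extending to infinity when $n=0$). The best-approximation property of convergents gives $|qx-p|>|q_n x-p_n|\geq 1/(q_n+q_{n+1})$, hence $q<q_n+q_{n+1}$, so $p/q$ cannot lie strictly inside the Farey interval $(p_n/q_n,p_{n+1}/q_{n+1})$ containing $x$. Elementary estimates then rule out the remaining external positions: if $p/q$ lies beyond $p_n/q_n$ then $|p/q-p_n/q_n|\geq 1/(qq_n)$ forces $q<q_n$, contradicting $q\geq q_n$; similarly $p/q$ beyond $p_{n-1}/q_{n-1}$ forces $q<q_{n-1}$, also contradictory.

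Next I would argue that $p/q$ must coincide with $r_{n,j}$ for some $j\in\{1,\ldots,a_{n+1}-1\}$. Consecutive intermediates $r_{n,j}$ and $r_{n,j+1}$ are Farey neighbors (the numerator of their difference equals $\pm 1$), so any fraction strictly between them has denominator at least $(2j+1)q_n+2q_{n-1}$. If $p/q$ lay strictly between $r_{n,j+1}$ and $r_{n,j}$, combining this with the upper bound $q<(j+1)q_n+q_{n-1}$ (derived from $|p/q-r_{n,j+1}|\geq 1/(q((j+1)q_n+q_{n-1}))$ and the Grace hypothesis) produces the contradictory chain $(2j+1)q_n+2q_{n-1}\leq q<(j+1)q_n+q_{n-1}$.

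Finally I would exclude the middle $j\in\{2,\ldots,a_{n+1}-2\}$. Since $p_{n+1}/q_{n+1}$ lies between $x$ and $r_{n,j}$, the identity $|r_{n,j}-p_{n+1}/q_{n+1}|=(a_{n+1}-j)/((jq_n+q_{n-1})q_{n+1})$ yields $|x-r_{n,j}|\geq (a_{n+1}-j)/((jq_n+q_{n-1})q_{n+1})$; comparing this with $1/(jq_n+q_{n-1})^2$ reduces the required bound to $[(j-1)a_{n+1}-j^2]q_n+(a_{n+1}-j-1)q_{n-1}\geq 0$, which holds because for $j\geq 2$ and $a_{n+1}\geq j+2$ both coefficients are non-negative (the $q_{n-1}$ coefficient is at least $1$, and the $q_n$ coefficient satisfies $(j-1)a_{n+1}-j^2\geq (j-1)(j+2)-j^2=j-2\geq 0$). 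This contradicts the strict Grace hypothesis, forcing $j\in\{1,a_{n+1}-1\}$. The main obstacle is the second step, where ruling out $p/q$ strictly between adjacent intermediate convergents requires carefully combining Farey-neighbor geometry with the Diophantine bound; edge cases such as $n=0$, with $q_{-1}=0$ and $p_{-1}/q_{-1}$ interpreted at infinity, follow the same template.
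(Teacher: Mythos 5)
Your argument is correct, and it fills a genuine gap in the sense that the paper itself gives no proof of Lemma~\ref{lem:Grace}: it is quoted verbatim from Grace's 1918 paper, so there is no internal proof to compare against. What you have written is essentially the classical Fatou/Grace argument, reconstructed correctly: the fan of intermediate convergents $r_{n,j}=\frac{jp_n+p_{n-1}}{jq_n+q_{n-1}}$, the mediant/Farey-neighbour bound ruling out fractions strictly between consecutive members of the fan (your chain $(2j+1)q_n+2q_{n-1}\le q<(j+1)q_n+q_{n-1}$ is indeed vacuous since it would force $jq_n+q_{n-1}<0$), and the final computation $(a_{n+1}-j)(jq_n+q_{n-1})-(a_{n+1}q_n+q_{n-1})=[(j-1)a_{n+1}-j^2]q_n+(a_{n+1}-j-1)q_{n-1}\ge 0$ for $2\le j\le a_{n+1}-2$, which contradicts the strict hypothesis $|x-p/q|<1/q^2$ and leaves only $j\in\{1,a_{n+1}-1\}$. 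I checked the identities $|r_{n,j}-p_{n+1}/q_{n+1}|=(a_{n+1}-j)/((jq_n+q_{n-1})q_{n+1})$ and the reducedness of $r_{n,j}$, and they are right. Two small cosmetic points: in Step 1 you do not actually need the best-approximation property, since $p/q$ strictly inside the Farey interval $(p_n/q_n,p_{n+1}/q_{n+1})$ already forces $q\ge q_n+q_{n+1}>q_{n+1}$, contradicting the choice of $n$ directly; and when $a_1=1$ one can have $q_0=q_1$, so "the unique $n$ with $q_n\le q<q_{n+1}$" should be read as the largest such $n$ — neither affects the proof.
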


Armed with the result of Legendre and several key properties of continued fractions, we are able to prove the following key proposition. This result provides a useful equivalent definition for a set $E(\Psi,\omega)$.

\begin{prop} \label{prop: equivalence}
Let $\Psi:(0,\infty)\to (0,\infty)$ be such that $\Psi(q)<\frac{1}{2q^{2}}$ for all sufficiently large $q$. Let $x=[a_{0};a_{1},a_{2},\ldots]$ and $\frac{p_{n}}{q_{n}}$ denote the $n$-th convergent of $x$ for each $n\in \N$. Then $x\in E(\Psi,\omega)$ if and only if
\begin{enumerate}[(i)]
    \item \label{E-bad} for all $n\in \N$ sufficiently large
    \begin{equation*}
        [a_{n+1};a_{n+2},\ldots] \leq \frac{1}{q_{n}^{2}\Psi(q_{n})(1-\omega(q_{n}))}-\frac{q_{n-1}}{q_{n}}\, ,
    \end{equation*}
    and
    \item \label{S(p/q) set} for infinitely many $n\in \N$
    \begin{equation*}
        [a_{n+1};a_{n+2},\ldots] \in \left[ \frac{1}{q_{n}^{2}\Psi(q_{n})}-\frac{q_{n-1}}{q_{n}}, \frac{1}{q_{n}^{2}\Psi(q_{n})(1-\omega(q_{n}))}-\frac{q_{n-1}}{q_{n}} \right]\, .
    \end{equation*}
\end{enumerate}
\end{prop}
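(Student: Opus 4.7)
The plan is to translate the Diophantine inequalities defining $E(\Psi,\omega)$ into statements about the tails of the continued fraction expansion, using the classical identity
\begin{equation*}
x - \frac{p_n}{q_n} = \frac{(-1)^n}{q_n(q_n\alpha_{n+1} + q_{n-1})}, \qquad \alpha_{n+1} := [a_{n+1};a_{n+2},\ldots],
\end{equation*}
together with Lemma~\ref{lem:Legendre} (Legendre) to argue that, under the hypothesis $\Psi(q)<\tfrac{1}{2q^2}$ for $q$ large, any sufficiently good rational approximation to $x$ must appear as a convergent. This reduces a condition quantified over all $(p,q)\in\Z\times\N$ to a condition quantified over $n\in\N$, which is exactly the regime in which (i) and (ii) live.

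First I would record the algebraic consequences of the above identity. Rearranging,
\begin{equation*}
\left|x-\frac{p_n}{q_n}\right| = \frac{1}{q_n^{2}\bigl(\alpha_{n+1}+q_{n-1}/q_n\bigr)},
\end{equation*}
so that $|x-p_n/q_n|\leq\Psi(q_n)$ is equivalent to $\alpha_{n+1}\geq \frac{1}{q_n^2\Psi(q_n)}-\frac{q_{n-1}}{q_n}$ and $|x-p_n/q_n|\geq(1-\omega(q_n))\Psi(q_n)$ is equivalent to $\alpha_{n+1}\leq \frac{1}{q_n^2\Psi(q_n)(1-\omega(q_n))}-\frac{q_{n-1}}{q_n}$. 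Hence condition~\eqref{E-bad} says precisely that $|x-p_n/q_n|\geq(1-\omega(q_n))\Psi(q_n)$ holds for all large $n$, and condition~\eqref{S(p/q) set} says precisely that $(1-\omega(q_n))\Psi(q_n)\leq|x-p_n/q_n|\leq\Psi(q_n)$ holds for infinitely many $n$. With this dictionary in hand, the proposition becomes a purely qualitative statement about convergents versus arbitrary rational approximations.

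For the implication $x\in E(\Psi,\omega)\Rightarrow$ (i) and (ii), note that $x\in W(\Psi)$ yields infinitely many $(p,q)$ with $|x-p/q|\leq\Psi(q)<\tfrac{1}{2q^2}$ for $q$ large, so by Legendre these are convergents $p_n/q_n$, producing infinitely many $n$ with $|x-p_n/q_n|\leq\Psi(q_n)$. The lower-bound half of the definition of $E(\Psi,\omega)$, applied to the specific pair $(p_n,q_n)$ for large $n$, combines with this to yield (ii), and applied to all convergents with $n$ large it yields (i). Conversely, assume (i) and (ii). The translated form of (ii) immediately gives $x\in W(\Psi)$. Suppose for contradiction that $x\notin E(\Psi,\omega)$, i.e.\ there are arbitrarily large $q$ and some $p\in\Z$ with $|x-p/q|<(1-\omega(q))\Psi(q)$. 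For $q$ large we have $(1-\omega(q))\Psi(q)<\Psi(q)<\tfrac{1}{2q^2}$, so $p/q$ must be a convergent $p_n/q_n$; moreover $q_n=q\to\infty$ forces $n\to\infty$, contradicting the translated form of (i).

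The only subtlety is bookkeeping the two distinct "sufficiently large" thresholds, namely the one on $q$ for which $\Psi(q)<\tfrac{1}{2q^2}$ and the one on $n$ coming from (i); since $q_n\to\infty$, both are controlled by taking $q$ (equivalently $n$) large, so this is a routine rather than serious difficulty. No other results beyond Lemma~\ref{lem:Legendre} and the standard continued fraction identity for $x-p_n/q_n$ are required.
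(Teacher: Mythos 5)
Your proposal is correct and follows essentially the same route as the paper: use the identity $|x-p_n/q_n|=\bigl(q_n^2(\alpha_{n+1}+q_{n-1}/q_n)\bigr)^{-1}$ to translate the two inequalities into conditions (i) and (ii) on the tails, and invoke Lemma~\ref{lem:Legendre} to reduce membership in $E(\Psi,\omega)$ to a statement about convergents only; your explicit two-implication bookkeeping is just an expanded version of the paper's closing remark that the equivalences plus the Legendre reduction give the result.
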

\begin{rem} \label{remark to prop}
Condition \ref{E-bad} corresponds to the $\liminf$ property in the definition of $E(\Psi,\omega)$, while \ref{S(p/q) set} corresponds to the $\limsup$ property. Under the assumption $\lim_{q\to\infty} q^{2}\Psi(q)=\lim_{q\to\infty}\omega(q)=0$, Lemma~\ref{lem:approximation quality} implies that if $x$ satisfies \ref{S(p/q) set} and
\begin{enumerate}[(i')]
    \item \label{i' statement} for every $n \in \N$ sufficiently large either \ref{S(p/q) set} holds or $a_{n} \leq \left\lfloor \frac{1}{3q_{n-1}^{2}\Psi(q_{n-1})}\right\rfloor$\, ,
\end{enumerate}
then $x\in E(\Psi,\omega)$. We will often find it easier to work with condition \ref{i' statement} instead of \ref{E-bad}. Similarly, if $x$ does not satisfy \ref{S(p/q) set} or
\begin{equation*}
    a_{n}(x)>\frac{2}{q_{n-1}^{2}\Psi(q_{n-1})} \quad \text{ for i.m.\, } \, n\in \N\, ,
\end{equation*}
then $x\not\in E(\Psi,\omega)$.
\end{rem}
\begin{proof}[Proof of Proposition~\ref{prop: equivalence}]
    Clearly our statements are unaffected by a finite number of exceptions. Thus we can suppose $Q\in \N$ is large enough so that $\Psi(q)<\frac{1}{2q^{2}}$ for all $q>Q$. Furthermore, we can assume $n$ is sufficiently large so that $q_{n}>Q$. Hence, by Lemma~\ref{lem:Legendre}, when determining whether $x$ belongs to $E(\Psi,\omega)$ we only need to consider the convergents of $x$. Write $x=[a_{0}(x);a_{1}(x),\ldots]$ and note that, for every $n\in \N$, 
\begin{equation*}
    x=\frac{p_{n}[a_{n+1}(x);a_{n+2}(x),\ldots] +p_{n-1}}{q_{n}[a_{n+1}(x);a_{n+2}(x),\ldots]+q_{n-1}},
\end{equation*}
see for example \cite[Chapter II. \S 5]{Khi2}. Using the well known formula $p_{n-1}q_{n}-q_{n-1}p_{n}=(-1)^{n}$ (see for example \cite[Theorem 2]{Khi2}), it follows from the above that we have
\begin{equation*}
    \left|x-\frac{p_{n}}{q_{n}}\right|=\frac{|(-1)^{n}|}{q_{n}\left(q_{n}[a_{n+1}(x);a_{n+2}(x),\ldots] +q_{n-1}\right)}=\frac{1}{q_{n}^{2}\left([a_{n+1}(x);a_{n+2}(x),\ldots] +\tfrac{q_{n-1}}{q_{n}}\right)}\, .
\end{equation*}
It is then clear that 
\begin{equation} \label{eq: equivalence 1}
    \left|x-\frac{p_{n}}{q_{n}}\right|\geq (1-\omega(q_{n}))\Psi(q_{n}) \quad \iff \quad [a_{n+1};a_{n+2},\ldots] \leq \tfrac{1}{q_{n}^{2}\Psi(q_{n})(1-\omega(q_{n}))}-\tfrac{q_{n-1}}{q_{n}}
\end{equation}
and 
\begin{align} \label{eq: equivalence 2}
 &(1-\omega(q_{n}))\Psi(q_{n})\leq \left|x-\frac{p_{n}}{q_{n}}\right|\leq \Psi(q_{n})  \nonumber \\  \iff \quad &[a_{n+1};a_{n+2},\ldots] \in \left[ \tfrac{1}{q_{n}^{2}\Psi(q_{n})}-\tfrac{q_{n-1}}{q_{n}}, \tfrac{1}{q_{n}^{2}\Psi(q_{n})(1-\omega(q_{n}))}-\tfrac{q_{n-1}}{q_{n}} \right]\, .
\end{align}
Our result now follows from \eqref{eq: equivalence 1}, \eqref{eq: equivalence 2} and our earlier observation that belonging to $E(\Psi,\omega)$ is determined by the convergents of $x$.
\end{proof}

\subsubsection{Fundamental intervals}
 The natural next step is to partition the unit interval into suitable subsets that satisfy the conditions appearing in Proposition~\ref{prop: equivalence}. To do this we use the notion of fundamental intervals. Given $\bb=(b_{0},b_{1},\ldots, b_{n})\in \N^{n+1}$ we associate the fundamental interval
\begin{equation*}
    I_{n}(b_{0};b_{1},\ldots, b_{n}):=\{x\in \R: a_{0}(x)=b_{0},\, a_{1}(x)=b_{1}, \, \ldots , \, a_{n}(x)=b_{n} \}\, .
\end{equation*}
We emphasise that unlike many articles on Diophantine approximation our fundamental intervals are not confined to $[0,1]$ because we allow for a choice of $a_{0}$. This added flexbility will simplify our proofs in Section \ref{sec:Technical results}. When it is notationally more convenient, we may denote a fundamental interval in several different ways; if $\bb\in \N^{n+1}$ let $I(\bb)=I_{n}(b_{0};b_{1},\ldots,b_{n})$ and, for $j\leq n$, let $I_{j}(\bb)=I_{j}(b_{0};b_{1},\ldots, b_{j})$.\par 
Note that fundamental intervals are indeed intervals. They will act as building blocks in the proofs of each of our theorems.\par 
A useful fact when working with fundamental intervals follows from Lemma~\ref{lem:rationalcfexpansion}: A fundamental interval $I_{n}(\bb)$ is partitioned into countably many $(n+1)$-th level fundamental intervals $I_{n+1}(\bb k)$ where $k\in \N$. If $n$ is even then the $k$ appearing in $I_{n+1}(\bb k)$ is increasing as we move from right to left within $I_{n}(\bb)$. Similarly, if $n$ is odd then $k$ increases as we move from left to right within $I_{n}(\bb)$. \par 
The following lemma provides a useful description of fundamental intervals. Here and throughout this paper, we let $\mathcal{L}$ denote the Lebesgue measure on $\R$.

\begin{lem}
\label{lem:Fundamental interval}
Given $\bb=(b_{0},b_{1},\ldots, b_{n})\in \N^{n}$ let $\frac{p_n}{q_{n}}=[0;b_1,\ldots,b_n]$ and $\frac{p_{n-1}}{q_{n-1}}=[0;b_1,\ldots,b_{n-1}]$. Then
$$I_{n}(b_{0};b_{1},\ldots, b_{n})=\left[b_{0}+\frac{p_{n}}{q_{n}},b_{0}+\frac{p_{n}+p_{n-1}}{q_{n}+q_{n-1}}\right]$$ if $n$ is even and 
$$I_{n}(b_{0};b_{1},\ldots, b_{n})=\left[b_{0}+\frac{p_{n}+p_{n-1}}{q_{n}+q_{n-1}},b_{0}+\frac{p_{n}}{q_{n}}\right]$$ if $n$ is odd. Moreover, we have $$\frac{1}{2q_{n}^{2}}\leq \mathcal{L}(I_{n}(b_{0};b_{1},\ldots, b_{n}))\leq \frac{1}{q_{n}^{2}}$$
 \end{lem}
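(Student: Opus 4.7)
The plan is to parametrize points in the fundamental interval via a single real variable and then read off both the endpoints and the length from a direct computation. Specifically, for any $x \in I_n(b_0;b_1,\ldots,b_n)$ I would write
$$x = b_0 + \frac{p_n \alpha + p_{n-1}}{q_n \alpha + q_{n-1}},$$
where $\alpha = [a_{n+1}(x);a_{n+2}(x),\ldots] \in [1,\infty]$, the formal value $\alpha = \infty$ corresponding to the rational endpoint $x = b_0 + p_n/q_n$. This is the standard M\"obius-type identity for convergents already invoked in the proof of Proposition~\ref{prop: equivalence}. As $\alpha$ traces out its full range the point $x$ sweeps over the whole fundamental interval, with the other endpoint $b_0 + (p_n+p_{n-1})/(q_n+q_{n-1})$ attained at $\alpha = 1$.

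To decide which endpoint is the left and which is the right, I would differentiate $f(\alpha) = (p_n\alpha + p_{n-1})/(q_n\alpha + q_{n-1})$, obtaining
$$f'(\alpha) = \frac{p_n q_{n-1} - q_n p_{n-1}}{(q_n\alpha + q_{n-1})^2} = \frac{(-1)^{n+1}}{(q_n\alpha+q_{n-1})^2},$$
using the standard identity $p_{n-1}q_n - q_{n-1}p_n = (-1)^n$. Hence $f$ is strictly decreasing when $n$ is even and strictly increasing when $n$ is odd, which reproduces exactly the two orderings claimed in the statement. The only delicate point here is the parity bookkeeping in the sign of $(-1)^{n+1}$, but this is essentially the classical fact that convergents of even index lie on one side of $x$ and odd-indexed convergents on the other.

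For the length bound, the same determinantal identity gives
$$\mathcal{L}(I_n(b_0;b_1,\ldots,b_n)) = \left|\frac{p_n}{q_n} - \frac{p_n+p_{n-1}}{q_n+q_{n-1}}\right| = \frac{|p_n q_{n-1} - q_n p_{n-1}|}{q_n(q_n+q_{n-1})} = \frac{1}{q_n(q_n+q_{n-1})}.$$
The two-sided bound $1/(2q_n^2) \leq \mathcal{L}(I_n) \leq 1/q_n^2$ then follows immediately from the inequalities $0 \leq q_{n-1} \leq q_n$, which are a standard consequence of the recursion \eqref{eq:recursive formulas} together with $a_k \geq 1$. There is no substantive obstacle in this lemma: it is a classical computation, and the only care required is in the parity analysis and in handling the boundary case $\alpha = \infty$ correctly via a limiting argument.
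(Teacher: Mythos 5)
Your proof is correct, and it is essentially the classical argument: the paper's own proof simply cites Khinchine (Chapter III, \S 12) for the two endpoint identities and then deduces the measure bound from $|p_{n-1}q_{n}-q_{n-1}p_{n}|=1$ together with $0\leq q_{n-1}\leq q_{n}$, exactly as you do. The only difference is that you supply the underlying computation yourself, via the M\"obius parametrization $x=b_{0}+\frac{p_{n}\alpha+p_{n-1}}{q_{n}\alpha+q_{n-1}}$ with $\alpha\in[1,\infty]$ and the sign of $f'(\alpha)=(-1)^{n+1}/(q_{n}\alpha+q_{n-1})^{2}$, which correctly reproduces the parity of the endpoint ordering; this makes the lemma self-contained at no real cost. (Both you and the paper gloss over the same harmless convention issue, namely that the set $\{x:a_{i}(x)=b_{i},\,0\leq i\leq n\}$ only fills out the closed interval up to its endpoints and countably many rationals, so "$I_{n}=[\,\cdot\,,\,\cdot\,]$" is to be read as the closure; nothing later in the paper is affected by this.)
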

 \begin{proof}
      Proofs of the $I_{n}(b_{0};b_{1},\ldots, b_{n})$ identities can be found in \cite[Chapter III, \S~12.]{Khi}. Our bounds for $\mathcal{L}(I_{n}(b_{0};b_{1},\ldots, b_{n}))$ follows from these identities and the well known fact that $|p_{n-1}q_{n}-q_{n-1}p_{n}|=1$ for all $n\in \N$.  
 \end{proof}


The following lemma gives bounds on the Lebesgue measure of naturally occurring subsets of fundamental intervals. For a proof see \cite[Lemma 3]{Bug1}.

\begin{lem}
\label{lem:Subset measure bound}
    Let $\bb=(b_{1},\ldots, b_{n})\in \N^{n}$ and $A\geq 3.$ Then for any $a\in \Z$ $$\mathcal{L}\left(\left\{x\in I_{n}(a;\bb): a_{n+1}(x)\leq A\right\}\right)\geq \left(1-\frac{3}{A}\right) \mathcal{L}(I_{n}(a;\bb))$$
\end{lem}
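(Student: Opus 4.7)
My plan is to compute the Lebesgue measure of the complementary set $\{x\in I_n(a;\bb):a_{n+1}(x)>A\}$ directly. This set decomposes as the countable disjoint union $\bigsqcup_{k>A}I_{n+1}(a;\bb,k)$ of level-$(n+1)$ fundamental intervals, so the problem reduces to summing the measures of these pieces and comparing with $\mathcal{L}(I_n(a;\bb))$.

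The first step is to obtain an exact length formula for $\mathcal{L}(I_{n+1}(a;\bb,k))$ in terms of $q_n$, $q_{n-1}$ and $k$. Using the endpoint description from Lemma~\ref{lem:Fundamental interval} together with the standard identity $|p_nq_{n-1}-q_np_{n-1}|=1$, and the recursion $q_{n+1}=kq_n+q_{n-1}$, this works out to
\[ \mathcal{L}(I_{n+1}(a;\bb,k))=\frac{1}{(kq_n+q_{n-1})((k+1)q_n+q_{n-1})}, \]
while analogously $\mathcal{L}(I_n(a;\bb))=\frac{1}{q_n(q_n+q_{n-1})}$.

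The key observation is that these terms telescope: the summand can be rewritten as
\[ \frac{1}{q_n}\left(\frac{1}{kq_n+q_{n-1}}-\frac{1}{(k+1)q_n+q_{n-1}}\right), \]
so that $\sum_{k>A}\mathcal{L}(I_{n+1}(a;\bb,k))=\frac{1}{q_n((A+1)q_n+q_{n-1})}$. Dividing by $\mathcal{L}(I_n(a;\bb))$ gives the ratio $\tfrac{q_n+q_{n-1}}{(A+1)q_n+q_{n-1}}$, which since $q_{n-1}\leq q_n$ is at most $\tfrac{2}{A+1}$, hence at most $\tfrac{3}{A}$ for $A\geq 3$. Subtracting from $\mathcal{L}(I_n(a;\bb))$ then yields the desired lower bound.

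There is no substantive obstacle. The endpoint formulas in Lemma~\ref{lem:Fundamental interval} split by the parity of $n$, so care is needed to check that in both cases the decomposition into subintervals proceeds in the correct direction and produces the same length formula; but this is purely bookkeeping. The only point requiring a moment's thought is choosing the telescoping representation that yields a bound tight enough to give $3/A$ (indeed we get the stronger $2/(A+1)$ along the way), rather than the weaker $O(1/A)$ that would arise from cruder estimation of each term individually.
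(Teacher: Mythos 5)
Your computation is correct: the complement decomposes into the fundamental intervals $I_{n+1}(a;\bb,k)$ with $k>A$, the exact length formula $\frac{1}{(kq_n+q_{n-1})((k+1)q_n+q_{n-1})}$ follows from Lemma~\ref{lem:Fundamental interval} and $|p_nq_{n-1}-q_np_{n-1}|=1$, and the telescoping sum together with $q_{n-1}\leq q_n$ gives a ratio of at most $2/(A+1)\leq 3/A$. The paper does not prove this lemma itself but cites \cite[Lemma 3]{Bug1}, and your argument is precisely the standard computation behind that reference, so there is nothing to flag.
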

Lastly, we have the following lemma which shows that the length of a fundamental interval and the denominator of a convergent are almost multiplicative functions. 

\begin{lem}
    \label{lem:Quasi-multiplicative}
    Let $\ba=(a_{1},\ldots,a_{m}),\bb=(b_{1},\ldots,b_{n})$ be finite words. Then 
    $$\frac{q_{m}(\ba)q_{n}(\bb)}{2}\leq q_{m+n}(\ba\bb)\leq 4q_{m}(\ba)q_{n}(\bb).$$ Moreover, for any $a\in \mathbb{Z}$ we have $$\frac{\mathcal{L}(I_{m}(0,\ba))\mathcal{L}(I_{n}(0,\bb))}{16}\leq \mathcal{L}(I_{m+n}(a,\ba\bb))\leq 64\mathcal{L}(I_{m}(0,\ba))\mathcal{L}(I_{n}(0,\bb)).$$
    In the special case when $\bb=(b)$ is a word of length $1$ we have the following bound for any $a\in \Z$
    $$\mathcal{L}(I_{m+1}(a;\ba b))\leq 4\mathcal{L}\left(I_{m+1}(a;\ba (b+1))\right).$$
\end{lem}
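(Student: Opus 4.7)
The plan is to prove the three displayed inequalities in turn. Each follows from a classical continuant identity combined with Lemma~\ref{lem:Fundamental interval}.

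First I would establish the denominator bounds via Euler's identity for continuants:
\[
q_{m+n}(\ba\bb) \;=\; q_m(\ba)\,q_n(\bb) \;+\; q_{m-1}(a_1,\ldots,a_{m-1})\,q_{n-1}(b_2,\ldots,b_n).
\]
This identity can be verified directly from the matrix recurrence $\bigl(\begin{smallmatrix} q_k \\ q_{k-1} \end{smallmatrix}\bigr) = \bigl(\begin{smallmatrix} c_k & 1 \\ 1 & 0 \end{smallmatrix}\bigr) \bigl(\begin{smallmatrix} q_{k-1} \\ q_{k-2} \end{smallmatrix}\bigr)$ applied across the split $\ba\bb$. Since the second summand is non-negative, the lower bound $q_{m+n}(\ba\bb) \geq q_m(\ba) q_n(\bb) \geq q_m(\ba)q_n(\bb)/2$ is immediate. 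For the upper bound, the continuant symmetry $K(b_1,\ldots,b_n)=K(b_n,\ldots,b_1)$ and the standard recurrence $q_n(\bb) = b_1 q_{n-1}(b_2,\ldots,b_n) + q_{n-2}(b_3,\ldots,b_n)$, together with $b_1 \geq 1$, imply $q_{n-1}(b_2,\ldots,b_n) \leq q_n(\bb)$; similarly $q_{m-1}(a_1,\ldots,a_{m-1}) \leq q_m(\ba)$. Substituting into Euler's identity gives $q_{m+n}(\ba\bb) \leq 2q_m(\ba) q_n(\bb) \leq 4 q_m(\ba) q_n(\bb)$.

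Next I would deduce the Lebesgue estimate by plugging the denominator bounds into Lemma~\ref{lem:Fundamental interval}, which sandwiches $\mathcal{L}(I_k)$ between $1/(2q_k^2)$ and $1/q_k^2$. Multiplying through, both inequalities
\[
\frac{\mathcal{L}(I_m(0,\ba))\mathcal{L}(I_n(0,\bb))}{16} \;\leq\; \mathcal{L}(I_{m+n}(a,\ba\bb)) \;\leq\; 64\, \mathcal{L}(I_m(0,\ba))\mathcal{L}(I_n(0,\bb))
\]
follow at once with room to spare in both constants.

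Finally, for the special case $\bb = (b)$, I would use the explicit endpoint formula from Lemma~\ref{lem:Fundamental interval} to obtain the closed form
\[
\mathcal{L}(I_{m+1}(a;\ba b)) = \frac{1}{q_{m+1}(\ba b)\bigl(q_{m+1}(\ba b)+q_m(\ba)\bigr)},
\]
and similarly with $b+1$ replacing $b$. Setting $Q = q_m(\ba)$ and $R = q_{m-1}(\ba)$, so that $q_{m+1}(\ba b) = bQ+R$ and $q_{m+1}(\ba(b+1)) = (b+1)Q+R$, the ratio of the two measures telescopes to $\bigl((b+2)Q + R\bigr)/(bQ + R)$, which is at most $3$ since $b \geq 1$ forces $bQ + R \geq Q$. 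This is strictly stronger than the stated bound of $4$. The entire argument is essentially routine; no step presents a genuine difficulty, and the main care required lies in tracking the numerical constants, which I expect are chosen for later convenience rather than tightness.
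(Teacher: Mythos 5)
Your proposal is correct. For the final statement your argument coincides with the paper's: both compute $\mathcal{L}(I_{m+1}(a;\ba b))=\bigl((bQ+R)((b+1)Q+R)\bigr)^{-1}$ with $Q=q_m(\ba)$, $R=q_{m-1}(\ba)$, take the ratio of consecutive intervals, and bound it using $0\leq R\leq Q$ and $b\geq 1$; your constant $3$ is the same elementary estimate the paper packages as $b/(b+3)\geq 1/4$. For the first two statements, however, the paper simply cites Lemmas 2.5 and 2.6 of Jordan--Sahlsten, whereas you give a self-contained derivation from Euler's continuant identity
$q_{m+n}(\ba\bb)=q_m(\ba)q_n(\bb)+q_{m-1}(a_1,\ldots,a_{m-1})q_{n-1}(b_2,\ldots,b_n)$,
together with the monotonicity $q_{k-1}\leq q_k$ (applied on the $\bb$ side via continuant symmetry). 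This is a genuinely different, more elementary route, and it yields the sharper intermediate bounds $q_mq_n\leq q_{m+n}(\ba\bb)\leq 2q_mq_n$, from which the measure inequalities follow with constants $1/8$ and $4$ rather than $1/16$ and $64$. One small caution: to get the stated lower measure constant you must feed the sharp bound $q_{m+n}\leq 2q_mq_n$ (not the stated $4q_mq_n$) into Lemma~\ref{lem:Fundamental interval}; using only the stated factor of $4$ would give $1/32$, which falls short of the claimed $1/16$. Since you explicitly derive the factor $2$ before weakening it, your argument is complete as written.
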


\begin{proof}
The first two statements follow from \cite[Lemmas 2.5 and 2.6]{JorSah}. For the final statement when $\bb=(b)$ is a word of length $1,$ we observe that for any $a\in \Z$ we have $$\mathcal{L}(I_{m+1}(a;\ba b))=\frac{1}{(q_{m}b+q_{m-1})(q_{m}(b+1)+q_{m-1})}$$
where $\frac{p_{m}}{q_{m}}=[0;a_{1},\ldots, a_{m}].$ This follows from Lemma~\ref{lem:Fundamental interval} and \eqref{eq:recursive formulas}. Thus 
$$\mathcal{L}(I_{m+1}(a;\ba (b+1)))=\frac{(q_{m}b+q_{m-1})}{(q_{m}(b+2)+q_{m-1})}\mathcal{L}(I_{m+1}(a;\ba b)).$$ Now using the fact $0\leq \frac{q_{m-1}}{q_{m}}\leq 1$ for all $m$ and $b\geq 1,$ we have 
$$\mathcal{L}(I_{m+1}(a;\ba (b+1)))\geq \frac{b}{b+3}\mathcal{L}(I_{m+1}(a;\ba b))\geq \frac{\mathcal{L}(I_{m+1}(a;\ba b))}{4}.$$ This completes our proof.

\end{proof}

There are some instances when we will need to know about the relative size of fundamental intervals that are close in a geometric sense, but are not nested within the same previous level fundamental interval.

\begin{lem} \label{lem: c-1,1 shorter than c}
Let $\ba=(a_{1},\ldots, a_{n})$ be a finite word and let $c\in \N_{\geq 2}$. Then for any $b\in \N$ we have that
\begin{equation*}
    \cL\left(I_{n+2}(0;\ba, c, b)\right)\geq \cL\left(I_{n+3}(0;\ba,c-1,1,b)\right)\, .
\end{equation*}
\end{lem}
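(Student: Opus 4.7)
The plan is a direct computation using Lemma~\ref{lem:Fundamental interval} together with the recursion $q_{k+1}=a_{k+1}q_{k}+q_{k-1}$ for continued fraction denominators. First, set $p_{n-1}/q_{n-1}=[0;a_{1},\ldots,a_{n-1}]$ and $p_{n}/q_{n}=[0;a_{1},\ldots,a_{n}]$, and then compute the denominators of the further convergents built from the partial quotient sequences $(a_{1},\ldots,a_{n},c,b)$ and $(a_{1},\ldots,a_{n},c-1,1,b)$ in parallel.

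For the first sequence, let $Q_{1}=cq_{n}+q_{n-1}$ and $Q_{2}=bQ_{1}+q_{n}$, so that by Lemma~\ref{lem:Fundamental interval} one has
\begin{equation*}
\cL(I_{n+2}(0;\ba,c,b))=\frac{1}{Q_{2}(Q_{2}+Q_{1})}.
\end{equation*}
For the second sequence, let $Q_{1}'=(c-1)q_{n}+q_{n-1}$, $Q_{2}'=Q_{1}'+q_{n}$, $Q_{3}'=bQ_{2}'+Q_{1}'$. The key identity I would point out is $Q_{2}'=(c-1)q_{n}+q_{n-1}+q_{n}=cq_{n}+q_{n-1}=Q_{1}$, i.e.\ inserting $(c-1,1)$ reproduces the same intermediate denominator as inserting $c$. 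Consequently,
\begin{equation*}
\cL(I_{n+3}(0;\ba,c-1,1,b))=\frac{1}{Q_{3}'(Q_{3}'+Q_{1})}.
\end{equation*}

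It therefore suffices to show $Q_{3}'\geq Q_{2}$. Using $Q_{2}'=Q_{1}$ one obtains
\begin{equation*}
Q_{3}'-Q_{2}=\bigl(bQ_{1}+Q_{1}'\bigr)-\bigl(bQ_{1}+q_{n}\bigr)=Q_{1}'-q_{n}=(c-2)q_{n}+q_{n-1},
\end{equation*}
which is nonnegative precisely because $c\geq 2$ (and $q_{n-1}\geq 0$). Hence $Q_{3}'\geq Q_{2}$, and so $Q_{3}'(Q_{3}'+Q_{1})\geq Q_{2}(Q_{2}+Q_{1})$, which inverts to the desired inequality on Lebesgue measures.

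There is no real obstacle here beyond bookkeeping: the whole argument is arithmetic on the recursion $q_{k+1}=a_{k+1}q_{k}+q_{k-1}$, and the condition $c\geq 2$ enters exactly once, through the nonnegativity of $(c-2)q_{n}+q_{n-1}$. The one subtle point worth remarking on is that the sign of the word length (even vs.\ odd) does not matter: both interval-length formulas in Lemma~\ref{lem:Fundamental interval} give the same denominator product, so the comparison does not depend on the parity of $n$.
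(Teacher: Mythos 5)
Your proof is correct and follows essentially the same route as the paper: both compute the two interval lengths explicitly via the endpoint description of fundamental intervals and the recursion $q_{k+1}=a_{k+1}q_k+q_{k-1}$, and reduce the comparison to the nonnegativity of $(c-2)q_n+q_{n-1}$, your observation $Q_2'=Q_1$ being just a slightly tidier packaging of the paper's bracket-by-bracket comparison. No gaps.
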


\begin{proof}
    By Lemma~\ref{lem:Fundamental interval}, \eqref{eq:recursive formulas}, and the well-known property that $|p_{n}q_{n+1}-p_{n+1}q_{n}|=1$ for every $n\in \N$, we have that
    \begin{align} \label{eq: big FI}
        \cL\left(I_{n+2}(0;\ba, c,b)\right)\nonumber
        =&\frac{1}{q_{n+2}(\ba cb)\left(q_{n+2}(\ba cb)+q_{n+1}(\ba c)\right)}\nonumber \\
        =&\frac{1}{\left((bc+1)q_{n}(\ba)+bq_{n-1}(\ba)\right)\left(((b+1)c+1)q_{n}(\ba)+(b+1)q_{n-1}(\ba) \right)}\, ,
    \end{align}
    and
    \begin{align} \label{eq: small FI}
        &\cL(I_{n+3}(0;\ba, c-1,1,b))\nonumber\\
        =&\frac{1}{q_{n+3}(\ba (c-1)1b)\left( q_{n+3}(\ba(c-1)1b)+q_{n+2}(\ba(c-1)1)\right)}\nonumber \\
        =&\frac{1}{\left(((b+1)c-1)q_{n}(\ba)+(b+1)q_{n-1}(\ba)\right)\left( ((b+2)c-1)q_{n}(\ba)+(b+2)q_{n-1}(\ba)\right)}\, .
    \end{align}
    Considering the first bracket in the denominator of \eqref{eq: small FI} we see that
    \begin{align*}
        ((b+1)c-1)q_{n}(\ba)+(b+1)q_{n-1}(\ba)&=(bc+1)q_{n}(\ba)+bq_{n-1}(\ba)+(c-2)q_{n}(\ba) +q_{n-1}(\ba)\\
        &\geq(bc+1)q_{n}(\ba)+bq_{n-1}(\ba)
    \end{align*}
    since $c\geq 2$ and $q_{n-1}(\ba)\geq 0$ for any $n\in\N$. This is the first bracket in the denominator of \eqref{eq: big FI}.  The same inequality holds when comparing the second brackets appearing in each of the denominators. Thus \eqref{eq: small FI} is less than or equal to \eqref{eq: big FI} as claimed.
\end{proof}

\subsection{Properties of convergent series and Cantor sets} \label{sec: cantor set stuff}
The following lemma is well known. For a proof see \cite[Lemma 5]{Bug1}.

\begin{lem}
\label{lem:Geometric convergence}
    Let $\Psi:(0,\infty)\to (0,\infty)$ be non-increasing. If $\sum_{q=1}^{\infty}\Psi(q)$ converges then for any $c>0$ and $M>1$ the series $\sum_{x=1}^{\infty}M^{x}\Psi(cM^{x})$ converges. 
\end{lem}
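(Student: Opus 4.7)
The plan is to prove Lemma~\ref{lem:Geometric convergence} by a Cauchy--condensation style comparison: partition the integers $q\geq c$ into the dyadic-like blocks determined by the points $cM^{x}$, and use the monotonicity of $\Psi$ to bound each term $M^{x}\Psi(cM^{x})$ by a constant multiple of the sum of $\Psi(q)$ over the $q$'s in one such block.

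First, I would consider the disjoint intervals $J_{x}:=[cM^{x-1},cM^{x})$ for $x\geq 1$, whose union contains all integers $q\geq c$. Since $\Psi$ is non-increasing, for every integer $q\in J_{x}$ we have $\Psi(q)\geq \Psi(cM^{x})$. Next I would count integers in $J_{x}$: we have $\#(\N\cap J_{x})\geq cM^{x-1}(M-1)-1$, and since $M>1$ and $c>0$ are fixed, this is at least $\tfrac{c(M-1)}{2M}M^{x}$ for all $x$ larger than some threshold $x_{0}$ depending only on $c$ and $M$.

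Combining these two observations yields, for every $x\geq x_{0}$,
\begin{equation*}
M^{x}\Psi(cM^{x})\;\leq\;\frac{2M}{c(M-1)}\sum_{q\in\N\cap J_{x}}\Psi(q).
\end{equation*}
Summing over $x\geq x_{0}$ and using that the blocks $J_{x}$ are pairwise disjoint gives
\begin{equation*}
\sum_{x\geq x_{0}}M^{x}\Psi(cM^{x})\;\leq\;\frac{2M}{c(M-1)}\sum_{q=1}^{\infty}\Psi(q),
\end{equation*}
which is finite by hypothesis. The finitely many terms with $1\leq x<x_{0}$ contribute a finite amount, so the full series $\sum_{x=1}^{\infty}M^{x}\Psi(cM^{x})$ converges.

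There is no real obstacle here; the only slightly delicate point is ensuring that the blocks $J_{x}$ contain enough integers, which forces the separation of a finite initial segment and the choice of threshold $x_{0}$. Everything else is a direct consequence of monotonicity and the geometric growth of $M^{x}$.
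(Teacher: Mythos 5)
Your proof is correct: the Cauchy--condensation comparison over the blocks $[cM^{x-1},cM^{x})$, using monotonicity of $\Psi$ and the integer count in each block, is exactly the standard argument for this fact. The paper itself gives no proof and simply cites \cite[Lemma 5]{Bug1}, whose proof proceeds along the same lines, so your argument matches the intended one.
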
 

We now recall a standard lower bound for the Hausdorff dimension of Cantor sets. Let $[0,1]=E_{0}\subset E_{1}\subset E_{2}\subset \cdots$ be a nested sequence of sets such that each $E_{k}$ consists of finitely many disjoint intervals. Moreover, assume that each interval in $E_{k}$ contains at least two intervals in $E_{k+1}$, and that the maximal length of an interval in $E_{k}$ converges to zero as $k$ tends to infinity. Then the set $$E=\bigcap_{k=0}^{\infty}E_{k}$$ is a Cantor set. The following lower bound for the Hausdorff dimension of $E$ follows from Falconer \cite[Example 4.6]{Fal} and the remarks following \cite[Proposition 1]{Bug1}.

\begin{prop}
    \label{prop:dimension lower bound}
    Let $\{E_{k}\}$ be as above. Suppose that each interval in $E_{k-1}$ contains at least $m_{k}$ intervals in $E_{k}$ which are separated by gaps of size at least $\epsilon_{k},$ where $0<\epsilon_{k+1}<\epsilon_{k}.$ Moreover, suppose that the intervals of $E_{k}$ are of length at least $\delta_{k}$ and that $m_{k+1}\epsilon_{k+1}>c\delta_{k}$ for some $c>0$. Then 
    $$\dim_{H}(E)\geq \liminf_{k\to\infty}\frac{\log m_{k}}{-\log \delta_{k}}.$$
\end{prop}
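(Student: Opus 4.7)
My plan is to prove the bound via the mass distribution principle (Frostman's lemma in its upper-bound form). First I would build a natural Borel probability measure $\mu$ supported on $E$ by an inductive level-by-level mass split: set $\mu(E_{0})=1$ and, for each interval $J$ of $E_{k-1}$, choose exactly $m_{k}$ of its level-$k$ children and give each of them mass $\mu(J)/m_{k}$. Iterating, every surviving level-$k$ interval $J_k$ receives mass $(m_{1}m_{2}\cdots m_{k})^{-1}$, and the resulting Kolmogorov-consistent system extends to a probability measure supported on the Cantor set $E$.

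Next I would estimate $\mu(B(x,r))$ for $x\in E$ and small $r>0$. Choose the integer $k$ so that $\epsilon_{k+1}\le r<\epsilon_{k}$ (this is possible because the $\epsilon_{k}$ decrease to $0$). Since the $m_{k}$ level-$k$ intervals sitting inside any level-$(k-1)$ interval are separated by gaps of size at least $\epsilon_{k}>r$, the ball $B(x,r)$ hits at most one level-$k$ interval $J$. Inside $J$ the children are separated by at least $\epsilon_{k+1}$, so $B(x,r)$ meets at most $1+2r/\epsilon_{k+1}\le 3r/\epsilon_{k+1}$ level-$(k+1)$ intervals, each of $\mu$-mass $(m_{1}\cdots m_{k+1})^{-1}$. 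Invoking the hypothesis $m_{k+1}\epsilon_{k+1}>c\delta_{k}$ to replace $\epsilon_{k+1}$ by $c\delta_{k}/m_{k+1}$ then gives
\[
\mu(B(x,r))\le \frac{3r}{c\,\delta_{k}\,m_{1}\cdots m_{k}}.
\]

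Finally, fix any $s<\liminf_{k\to\infty}\log m_{k}/(-\log\delta_{k})$; then for all $k$ large enough one has $m_{k}\ge\delta_{k}^{-s}$, so $m_{1}\cdots m_{k}\ge\delta_{k}^{-s}$, whence $\mu(B(x,r))\le (3/c)\,r\,\delta_{k}^{s-1}$. A short case distinction on whether $r\le\delta_{k}$ or $r>\delta_{k}$, combined with the relations $r<\epsilon_{k}\le\delta_{k-1}/m_{k}$ and the inequality $m_{k}\ge\delta_{k}^{-s}$, upgrades this to $\mu(B(x,r))\ll r^{s}$ uniformly in small $r$. The mass distribution principle (see \cite[Proposition~4.2]{Fal}) then yields $\dim_{H}(E)\ge s$, and letting $s\uparrow\liminf_{k\to\infty}\log m_{k}/(-\log\delta_{k})$ completes the proof. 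The main obstacle is the last step, namely the bookkeeping that converts the two-parameter estimate $\mu(B)\lesssim r\,\delta_{k}^{s-1}$ into the clean Frostman-type bound $\mu(B)\lesssim r^{s}$; the assumption $m_{k+1}\epsilon_{k+1}>c\delta_{k}$ is exactly what is needed to prevent the estimate at level $k+1$ from being worse than at level $k$ and so to make this passage uniform.
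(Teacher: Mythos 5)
Your argument is correct and is essentially the proof behind the paper's citation: the paper deduces the proposition from Falconer's Example 4.6 (whose proof is exactly this equal-mass-splitting measure plus the mass distribution principle) together with the remark in Bugeaud's paper that the hypothesis $m_{k+1}\epsilon_{k+1}>c\delta_{k}$ converts Falconer's exponent into $\liminf_k \log m_k/(-\log\delta_k)$, a conversion you have folded directly into the ball estimate. One small slip: for $\epsilon_{k+1}\le r<\epsilon_k$ the ball $B(x,r)$ has diameter $2r$, which may exceed $\epsilon_k$, so it can meet \emph{two} (not one) level-$k$ intervals; this only doubles the constant and does not affect the Frostman bound or the conclusion.
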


\section{Proof of empty statements} \label{sec: proof of empty}

\begin{proof}[Proof of Theorem~\ref{thm:empty}] We will prove statement $(1)$ in detail and then explain the minor changes needed to prove statement $(2).$

Let $\Psi$ and $\omega$ satisfy the assumptions of statement $(1)$. For the purpose of obtaining a contradiction, assume $E(\Psi,\omega)$ is non-empty. Let $x\in E(\Psi,\omega),$ then for infinitely many $(p,q)\in\mathbb{Z}\times \N$ we have 
\begin{equation}
    \label{eq:first approximationA}
    \Psi(q)\left(1-\omega(q)\right)\leq \left|x-\frac{p}{q}\right|\leq \Psi(q).
\end{equation}
Let us temporarily assume that $(p,q)\in \mathbb{Z}\times \N$ satisfies \eqref{eq:first approximationA} and we have $x-\frac{p}{q}>0$. For such a $(p,q)$ we observe the following equivalences
\begin{align*}
\Psi(q)\left(1-\omega(q)\right)\leq \left|x-\frac{p}{q}\right|\leq \Psi(q)\iff&-\Psi(q)\omega(q)\leq x-\frac{p}{q}-\Psi(q)\leq 0\\
\iff &-\Psi(q)\omega(q)\leq x-\frac{p}{q}-\frac{d(q)}{e(q)}\leq 0\\
\iff &-\Psi(q)\omega(q)\leq x-\frac{pe(q)+d(q)q}{qe(q)}\leq 0.
\end{align*}
Letting $p'=pe(q)+d(q)q,$ the above implies that 
\begin{equation}
\label{eq:secondary approximationA}
\left|x-\frac{p'}{qe(q)}\right|\leq \Psi(q)\omega(q).
\end{equation}
It can similarly be shown that \eqref{eq:secondary approximationA} holds for some $p'\in \Z$ when \eqref{eq:first approximationA} is satisfied by some $(p,q)\in \Z\times \N$ satisfying $x-\frac{p}{q}<0.$ Thus if $x\in E(\Psi,\omega)$ then \eqref{eq:secondary approximationA} is satisfied by i.m. $(p',q)\in \mathbb{Z}\times \N$.
The key observation is that \eqref{eq:secondary approximationA} is a rational approximation to $x$. Therefore, since $x\in E(\Psi,\omega)$, it follows from \eqref{eq:secondary approximationA} that we must have 
\begin{equation}
\label{eq:about to be contradicted}
\Psi(qe(q))(1-\omega(qe(q))\leq \left|x-\frac{p'}{qe(q)}\right|\leq\Psi(q)\omega(q)
\end{equation}for infinitely many values of $q$. Comparing the left hand term and right hand term of \eqref{eq:about to be contradicted}, we see that this contradicts our assumption.

To prove statement $(2)$ we will use the fact that $\frac{p}{q}+\frac{d(q)}{e(q)}$ can be written as a fraction whose denominator is $e(q)$ for any $(p,q)\in \Z\times \N.$ This is a consequence of our assumption that $e(q)=0 \mod q$ for all $q\in \N$. As a result of this, if we proceed as in the the proof of statement $(1)$ and let $x\in E(\Psi,\omega),$ then for infinitely many $(p',q)\in \Z\times \N$ we would have 
$$\left|x-\frac{p'}{e(q)}\right|\leq \Psi(q)\omega(q).$$ This is our analogue of \eqref{eq:secondary approximationA}. The rest of the argument is now identical. 
\end{proof}
\section{Technical results }
\label{sec:Technical results}
In this section we establish the main technical results that will allow us to prove our theorems. Roughly speaking, we will show that for a given $(p,q)\in \Z\times \N,$ under the assumptions of our theorems, it is possible to construct a well-behaved subset of 
\begin{equation}
\label{eq:set we want to approximate}
\left\{x:(1-\omega(q))\Psi(q)\leq x-\frac{p}{q}<\Psi(q)\right\}.
\end{equation}
That is, using fundamental intervals, we build subsets whose partial quotients up to some $n$ satisfy the conditions appearing in Proposition~\ref{prop: equivalence}.
The main results of this section are Lemmas~\ref{lem: building U(p/q)},~\ref{lem: non-empty S(p/q) divisibility cond} and~\ref{lem: building U(p/q) with divisibility}. \par Throughout this section we let $\Psi:(0,\infty)\to (0,\infty)$ and $\omega:(0,\infty)\to (0,1)$ be such that 
\begin{equation} \label{psi omega cond}
    q\to q^{2}\Psi(q) \, \text{  is non-increasing and  }\,  \lim_{q\to\infty}\limits q^{2}\Psi(q)=\lim_{q\to\infty}\limits \omega(q)=0.
\end{equation}

\subsection{Technical results for Theorems \ref{thm: non-empty} and \ref{thm:main}} \label{sec:Technical results part 1}
Before we state the main result of this subsection we introduce some terminology. We say that a finite set of words $\{\bb_{l}\}_{l=1}^{L}$ is consecutive if they are all of the same length and either $\{\bb_{l}\}_{l=1}^{L}=\{b\}_{b=M_{1}}^{M_{2}}$ for some $M_{1},M_{2}\in \N$ or if there exists a word $\ba$ such that $\{\bb_{l}\}_{l=1}^{L}=\{\ba b\}_{b=M_{1}}^{M_{2}}$ for some $M_{1},M_{2}\in \N$.
The main result of this subsection is the following lemma.
\begin{lem} \label{lem: building U(p/q)}
Let $\Psi$ and $\omega$ satisfy \eqref{psi omega cond}. Assume that $$\omega(q)\geq \frac{45}{\Psi(q)}\Psi\left(\frac{1}{2q\Psi(q)}\right)$$ for all $q$ sufficiently large. Let $(p,q)\in \Z\times \N$ and $\frac{p}{q}=[0;a_{1},\ldots,a_{n}]$ for $n$ even.  Then if $q$ is sufficiently large, there exists a finite set of consecutive words $\{\bb_{l}\}_{l=1}^{L}$ such that 
\begin{equation} \label{eq: inclusion statement}
    \bigcup_{l=1}^{L}I(0;a_{1},\ldots,a_{n},\bb_{l})\subset \left\{x:\Psi(q)(1-\omega(q))\leq x-\frac{p}{q}< \Psi(q)\right\}
\end{equation}
and 
\begin{equation} \label{eq: measure bound}
    \mathcal{L}\left(\bigcup_{l=1}^{L}I(0;a_{1},\ldots,a_{n},\bb_{l}))\right)\geq \frac{\omega(q)\Psi(q)}{6400}\, .
\end{equation} 
Moreover, if each $\bb_{l}$ has length $m\geq 2$, then
      \begin{equation} \label{b digit restrition2}
    b_{i}\in \left\{ 1,2, \ldots,\left\lfloor \frac{1}{3q_{n+i}^{2}\Psi(q_{n+i})}\right\rfloor \right\} \quad 1\leq i \leq m-1\, ,
    \end{equation} where $\frac{p_{n+i}}{q_{n+i}}=[0;a_{1},\ldots, a_{n},b_{0},b_{1},\ldots,b_{i-1}]$ for $1\leq i\leq m-1.$ 
\end{lem}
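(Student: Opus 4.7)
The plan is to translate the Diophantine conditions defining $S(p/q) := \{x: \Psi(q)(1-\omega(q)) \le x - p/q < \Psi(q)\}$ into a range condition on continued fraction tails, and then construct fundamental intervals of controlled depth lying inside $S(p/q)$. Since $p/q = [0; a_1, \ldots, a_n]$ with $n$ even, Lemma~\ref{lem:Fundamental interval} places $p/q$ at the left endpoint of $I_n(0; a_1, \ldots, a_n)$, and $\Psi(q) < 1/(2q^2)$ forces $S(p/q) \subset I_n(0; a_1, \ldots, a_n)$. The identity derived in the proof of Proposition~\ref{prop: equivalence} then converts both Diophantine inequalities into the range condition $[a_{n+1}(x); a_{n+2}(x), \ldots] \in [T_1, T_2]$ where $T_1 = 1/(q^2\Psi(q)) - q_{n-1}/q$, $T_2 = 1/(q^2\Psi(q)(1-\omega(q))) - q_{n-1}/q$, and $\delta_0 := T_2 - T_1 \approx \omega(q)/(q^2\Psi(q))$.

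With this reformulation in hand, I would iteratively apply the Gauss map. If $\delta_0 \ge 1$, length-one words suffice: letting $b_0^{(l)}$ range over $\{\lceil T_1 \rceil, \ldots, \lfloor T_2 - 1 \rfloor\}$ gives consecutive length-one words $\bb_l = (b_0^{(l)})$, each corresponding $I_{n+1}$ lies in $S(p/q)$, and summing the lengths from Lemma~\ref{lem:Fundamental interval} yields measure comparable to $\omega(q)\Psi(q)$. If $\delta_0 < 1$, I fix one integer $b_0$ for which the overlap $[T_1, T_2] \cap [b_0, b_0+1]$ has length at least $\delta_0/2$ (at least one of at most two candidate integers works), and pass, inside $I_{n+1}(0; \ldots, b_0)$, to the transformed range $[T_1^{(1)}, T_2^{(1)}] = [1/(\min(T_2, b_0+1) - b_0), 1/(\max(T_1, b_0) - b_0)]$ for $[a_{n+2}(x); \ldots]$. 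Iterating yields digits $b_1, b_2, \ldots$, and I stop at the smallest depth $m$ at which the transformed width exceeds one; consecutive integer choices of the last digit $b_{m-1}$ then give the family $\{\bb_l\}$.

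The role of the hypothesis $\omega(q) \ge 45\Psi(1/(2q\Psi(q)))/\Psi(q)$ is to bound the depth $m$ and to guarantee that the cap $b_i \le \lfloor 1/(3 q_{n+i}^2 \Psi(q_{n+i})) \rfloor$ of \eqref{b digit restrition2} is non-binding. Using $q_{n+1} \asymp 1/(q\Psi(q))$ (from $b_0 \asymp T_1$ and $q_{n+1} = b_0 q + q_{n-1}$), the hypothesis rearranges, via monotonicity of $\Psi$, to $\omega(q)\Psi(q) \ge 45\Psi(q_{n+1})$. This inequality controls two things at once: it ensures that a full level-$(n+2)$ fundamental interval comfortably fits inside $S(p/q)$, and it makes the cap exceed $T_2^{(1)}$ by a factor of roughly $15$, supplying many admissible consecutive integers for the terminal digit. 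The intermediate-digit bound \eqref{b digit restrition2} is then verified via Lemma~\ref{lem:approximation quality}: the constraint $b_i \le \lfloor 1/(3 q_{n+i}^2 \Psi(q_{n+i}))\rfloor$ forces $|x - p_{n+i}/q_{n+i}| > \Psi(q_{n+i})$, which is exactly condition~(i$'$) of Remark~\ref{remark to prop}, the $\liminf$-type requirement later used to place these points into $E(\Psi,\omega)$.

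The main obstacle will be the careful book-keeping of constants required to yield the lower bound $\omega(q)\Psi(q)/6400$ on the total measure. At each iteration a factor $1/2$ is lost by discarding the smaller of two candidate sub-intervals; Lemma~\ref{lem:Quasi-multiplicative} contributes multiplicative losses up to $1/64$ when comparing a union of level-$(n+m)$ intervals to the length of the enclosing level-$(n+m-1)$ one; and Lemma~\ref{lem: c-1,1 shorter than c} is needed to compare the largest admissible fundamental interval at the capped end of the transformed range to its immediate neighbour which is forbidden by \eqref{b digit restrition2}. Tracking these through and accounting for at most two boundary fundamental intervals excluded at the ends of $[T_1^{(k)}, T_2^{(k)}]$ should aggregate to a constant fitting inside $1/6400$. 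The most delicate case is when the Gauss map sends the transformed range close to the cap: the quantitative strength of the constant $45$ in the hypothesis is precisely what provides the safety margin ensuring $[T_1^{(m-1)}, T_2^{(m-1)}]$ still contains a full integer sub-interval below the cap, so that $L \ge 1$ is achieved.
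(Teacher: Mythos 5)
Your proposal is correct and follows essentially the same route as the paper: reformulate the two Diophantine inequalities as a tail condition $[a_{n+1};a_{n+2},\ldots]\in S(p/q)$ (the paper's Lemma~\ref{lem: belongs to u(p/q)}), zoom in through nested fundamental intervals until a level is reached at which consecutive full fundamental intervals fit inside $S(p/q)$ (the paper's case analysis in Lemma~\ref{lem: bound on S(p/q)}), use the hypothesis on $\omega$ to show $S(p/q)$ dominates the ``bad'' intervals with partial quotient above the cap $\lfloor 1/(3q_{n+i}^{2}\Psi(q_{n+i}))\rfloor$ (the paper's Lemma~\ref{lem:S bigger than bad}, where your factor $\approx 15$ is the paper's factor $20$), and finally convert the tail-measure bound into \eqref{eq: measure bound} via the mean value theorem and quasi-multiplicativity. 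You also correctly identify the delicate case (the renormalized range abutting the cap) and the precise role of the constant $45$, so the remaining work is only the constant book-keeping you describe.
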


Instead of working directly with the set in \eqref{eq:set we want to approximate}, we will work with the set appearing in Proposition~\ref{prop: equivalence} \ref{S(p/q) set}. The following lemma can be viewed as a finite version of Proposition~\ref{prop: equivalence} with a fixed prefix word.

\begin{lem} \label{lem: belongs to u(p/q)}
Let $\Psi$ and $\omega$ satisfy \eqref{psi omega cond}. Let $(p,q)\in \Z\times \N$ be such that $\frac{p}{q}\in (0,1)$ and $\frac{p}{q}=[0;a_{1},\ldots, a_{n}]$ for $n$ even. Then if $q$ is sufficiently large, we have
    \begin{equation}\label{eq1}
        (1-\omega(q))\Psi(q)\leq x-\frac{p}{q}\leq \Psi(q)
    \end{equation}
    if and only if
    \begin{equation} \label{eq2}
        x=[0;a_{1},\ldots, a_{n}, a_{n+1}(x), a_{n+2}(x),\ldots]
    \end{equation}
    with
    \begin{equation} \label{condition}
        [a_{n+1}(x);a_{n+2}(x),\ldots] \in \left[ \frac{1}{q^{2}\Psi(q)}-\frac{q_{n-1}}{q}, \frac{1}{q^{2}\Psi(q)(1-\omega(q))}-\frac{q_{n-1}}{q} \right].
    \end{equation}
\end{lem}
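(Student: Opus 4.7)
The plan is to reduce \eqref{eq1} to \eqref{condition} using the same convergent identity that drives Proposition~\ref{prop: equivalence}. The one subtlety is that here $\frac{p}{q}$ is prescribed, so I first need to show that for $q$ large enough every $x$ satisfying \eqref{eq1} has $\frac{p_{n}(x)}{q_{n}(x)}=\frac{p}{q}$ with the prescribed prefix $a_{1},\ldots,a_{n}$; once this is in place, the rest is algebraic.

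The first step is to nest $[\tfrac{p}{q},\tfrac{p}{q}+\Psi(q)]$ inside $I_{n}(0;a_{1},\ldots,a_{n})$. Since $n$ is even, Lemma~\ref{lem:Fundamental interval} gives
$$I_{n}(0;a_{1},\ldots,a_{n})=\Bigl[\tfrac{p}{q},\tfrac{p}{q}+\mathcal{L}(I_{n}(0;a_{1},\ldots,a_{n}))\Bigr]$$
with $\mathcal{L}(I_{n}(0;a_{1},\ldots,a_{n}))\geq\tfrac{1}{2q^{2}}$. Assumption \eqref{psi omega cond} provides $\Psi(q)<\tfrac{1}{2q^{2}}$ for all $q$ sufficiently large, so the nesting follows. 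In particular every $x$ satisfying \eqref{eq1} lies in $I_{n}(0;a_{1},\ldots,a_{n})$, and (restricting attention to irrational $x$, which is all that will matter throughout the paper) has $a_{i}(x)=a_{i}$ for $1\leq i\leq n$ and $\tfrac{p_{n}(x)}{q_{n}(x)}=\tfrac{p}{q}$.

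The second step is the convergent identity already used in the proof of Proposition~\ref{prop: equivalence}. Writing $\xi_{n+1}:=[a_{n+1}(x);a_{n+2}(x),\ldots]$, the standard formula $x=\tfrac{p_{n}\xi_{n+1}+p_{n-1}}{q_{n}\xi_{n+1}+q_{n-1}}$ together with $p_{n-1}q_{n}-p_{n}q_{n-1}=(-1)^{n}=1$ (which uses the parity of $n$) gives
$$x-\tfrac{p}{q}=\frac{1}{q^{2}\bigl(\xi_{n+1}+\tfrac{q_{n-1}}{q}\bigr)},$$
which is in particular positive. Substituting into \eqref{eq1}, inverting and rearranging produces precisely \eqref{condition}.

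For the converse, any $x$ of the form \eqref{eq2} has $\tfrac{p_{n}(x)}{q_{n}(x)}=\tfrac{p}{q}$ by construction, so the same identity applies and \eqref{condition} translates back to \eqref{eq1}. The only step involving any real care is the inclusion $[\tfrac{p}{q},\tfrac{p}{q}+\Psi(q)]\subset I_{n}(0;a_{1},\ldots,a_{n})$, which is why both the parity of $n$ and the decay $q^{2}\Psi(q)\to 0$ appear among the hypotheses. I do not foresee any other obstacle.
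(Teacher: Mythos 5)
Your proposal is correct and follows essentially the same route as the paper: first pin down that the expansion of $x$ begins with $a_{1},\ldots,a_{n}$, then apply the convergent identity $x-\frac{p}{q}=\frac{1}{q^{2}\left(\xi_{n+1}+\frac{q_{n-1}}{q}\right)}$ exactly as in the proof of Proposition~\ref{prop: equivalence}. The only (minor) difference is in the first step, where the paper invokes Legendre's theorem (Lemma~\ref{lem:Legendre}) together with Lemma~\ref{lem:rationalcfexpansion}, while you instead nest $\left[\frac{p}{q},\frac{p}{q}+\Psi(q)\right]$ inside $I_{n}(0;a_{1},\ldots,a_{n})$ using Lemma~\ref{lem:Fundamental interval} and $\Psi(q)<\frac{1}{2q^{2}}$ --- an equally valid justification of the same intermediate claim.
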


\begin{proof} The proof is similar to the proof of Proposition~\ref{prop: equivalence}. Note that since $\lim_{q\to\infty}q^{2}\Psi(q)= 0$ we have $\Psi(q)\leq \tfrac{1}{2q^{2}}$ for all $q$ sufficiently large, and so by Lemma \ref{lem:Legendre} if $0<x-p/q\leq \Psi(q)$ then $p/q$ is a convergent of $x$. Furthermore, since $n$ is even and we are looking at approximations of $x$ exclusively from rationals to the left of $x$, it is a consequence of Lemma~\ref{lem:rationalcfexpansion} and Lemma~\ref{lem:Legendre} that the first $n$ partial quotients of $x$ must agree with the partial quotients of $\frac{p}{q}$, thus giving us \eqref{eq2}. The equivalence between \eqref{eq1} and \eqref{condition} now follows from the same argument as given in the proof of Proposition~\ref{prop: equivalence}.
\end{proof}
Instead of working with the set in \eqref{eq:set we want to approximate}, we will work with the set provided by Lemma \ref{lem: belongs to u(p/q)}.  For this reason, we introduce some additional notation. Given $\Psi$, $\omega$ and $p/q=[0;a_{1},\ldots,a_{n}]$ for $n$ even, let
$$S(p/q)=S(\Psi,\omega, p/q):=\left[\frac{1}{q^{2}\Psi(q)}-\frac{q_{n-1}}{q}, \frac{1}{q^{2}\Psi(q)(1-\omega(q))}-\frac{q_{n-1}}{q} \right].$$
Note that $S(p/q)$ is the set appearing in Proposition~\ref{prop: equivalence} \ref{S(p/q) set}. For convenience we will often suppress the dependence on $\Psi$ and $\omega$ from our notation and just write $S(p/q)$. We construct a well-behaved subset of $S(p/q)$ in Lemma \ref{lem: bound on S(p/q)}. Lemma \ref{lem: building U(p/q)} will essentially follow from Lemma \ref{lem: belongs to u(p/q)} and Lemma~\ref{lem: bound on S(p/q)}. Before we prove Lemma \ref{lem: bound on S(p/q)} we establish some basic properties of $S(p/q)$. This is the content of the next two lemmas.
\begin{lem}
\label{lem:b size}
Let $\Psi$ and $\omega$ satisfy \eqref{psi omega cond}. Let $(p,q)\in \Z\times \N$ and $\frac{p}{q}=[0;a_{1},\ldots,a_{n}]$ for $n$ even. Then if $q$ is sufficiently large, for any $b\in \N$ such that $[b,b+1]\cap S(p/q)\neq \emptyset$ we have
$$\frac{1}{2q^{2}\Psi(q)}\leq b\leq \frac{2}{q^{2}\Psi(q)}.$$
\end{lem}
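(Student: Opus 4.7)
The plan is to unpack the definition of $S(p/q)$ directly and use the two asymptotic assumptions in \eqref{psi omega cond}. The condition $[b,b+1]\cap S(p/q)\neq\emptyset$ is equivalent to saying $b$ lies to the left of the right endpoint of $S(p/q)$ and $b+1$ lies to the right of the left endpoint, i.e.
$$\frac{1}{q^{2}\Psi(q)}-\frac{q_{n-1}}{q}-1 \;\leq\; b \;\leq\; \frac{1}{q^{2}\Psi(q)(1-\omega(q))}-\frac{q_{n-1}}{q}.$$

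For the upper bound, I would discard the non-positive term $-q_{n-1}/q$ and observe that because $\omega(q)\to 0$, for all sufficiently large $q$ we have $(1-\omega(q))^{-1}\leq 2$, which immediately gives $b\leq 2/(q^{2}\Psi(q))$. For the lower bound, I would use $q_{n-1}/q\leq 1$ to get $b\geq 1/(q^{2}\Psi(q))-2$, and then note that since $q^{2}\Psi(q)\to 0$, for $q$ large enough we have $1/(q^{2}\Psi(q))\geq 4$, hence $1/(q^{2}\Psi(q))-2\geq 1/(2q^{2}\Psi(q))$ (equivalently, $q^{2}\Psi(q)\leq 1/4$).

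The only thing to watch is that the phrase ``if $q$ is sufficiently large'' covers both thresholds simultaneously, namely $\omega(q)\leq 1/2$ and $q^{2}\Psi(q)\leq 1/4$. Both hold for $q$ past some $Q$ by \eqref{psi omega cond}. There is no real obstacle here; the argument is a short direct calculation, and the lemma is a routine bookkeeping statement that will feed into the more substantive Lemma~\ref{lem: bound on S(p/q)} and hence Lemma~\ref{lem: building U(p/q)}.
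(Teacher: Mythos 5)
Your argument is correct and matches the paper's (very terse) proof, which likewise just unpacks the definition of $S(p/q)$, uses $0\leq q_{n-1}/q\leq 1$, and invokes the limits in \eqref{psi omega cond} to absorb the additive constants. You have simply written out the thresholds ($\omega(q)\leq 1/2$ and $q^{2}\Psi(q)\leq 1/4$) that the paper leaves implicit.
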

\begin{proof}
These inequalities immediately follow our assumptions and $0\leq \tfrac{q_{n-1}}{q}\leq 1.$    
\end{proof}

Note that belonging to $S(p/q)$ infinitely often is not enough to guarantee an element belongs to $E(\Psi,\omega)$, we also need to ensure Proposition~\ref{prop: equivalence} \ref{E-bad} is satisfied. As stated in Remark~\ref{remark to prop} it is easier to work with condition \ref{i' statement}. We impose upper bounds on the partial quotients via the following sets: Given $(p,q)\in \Z\times \N$ where $\frac{p}{q}=[0;a_{1},\ldots, a_{n}]$ for $n$ even, an integer $b_{0}\in \N,$ and a possibly empty word $\bb=(b_{1},\ldots, b_{m})$, we define \begin{equation*}
        B(p/q,b_{0},\bb):=\left\{ I_{m+1}(b_{0};\bb c_{1}): c_{1}\in \left\{ \left\lceil \frac{1}{3q_{n+m+1}^{2}\Psi(q_{n+m+1})}\right\rceil, \ldots \right\}  \right\}\, ,
    \end{equation*}
    where $q_{n+m+1}$ is given by $\frac{p_{n+m+1}}{q_{n+m+1}}=[0;a_{1},\ldots,a_{n},b_{0},b_{1},\ldots, b_{m}]$. 
    We also let
    \begin{equation} \label{bad collection}
        \cB(p/q,b_{0},\bb):=\bigcup_{I\in B(p/q,b_{0},\bb)}I \,.
    \end{equation}
We emphasise that $\cB(p/q,b_{0},\bb)$ is an interval. The significance of this set is that if $$x\in I_{n+m+2}(0;a_{1},\ldots ,a_{n},b_{0},\bb c_{1})\text{ for some }c_{1}<\left\lceil \frac{1}{3q_{n+m+1}^{2}\Psi(q_{n+m+1})}\right\rceil,$$
then $$\left|x-\frac{p_{n+m+1}}{q_{n+m+1}}\right|> \Psi(q_{n+m+1}).$$ This follows from Lemma \ref{lem:approximation quality}. We will on occasion informally refer to the sets of the form $\cB(p/q,b_{0},\bb)$ as bad sets. This is because in our proofs they will consist of those fundamental intervals that we are trying to avoid. Given a word $\bb=(b_{0},b_{1},\cdots,b_{m})$ we will also adopt the notational convention that $B(p/q,\bb)=B(p/q,b_{0},b_{1}\cdots b_m)$ and $\cB(p/q,\bb)=\cB(p/q,b_{0},b_{1}\cdots b_{m}),$ i.e. the first entry in $\bb$ determines the integer and the remainder determines the finite word appearing in the definition of these sets. 

The following lemma shows that under the assumptions of Theorems \ref{thm: non-empty} and \ref{thm:main}, the set $S(p/q)$ will always be much larger than $\cB(p/q,b_{0},\bb)$ for any $\bb$ and $b_{0}$ such that $[b_{0},b_{0}+1]\cap S(p/q)\neq\emptyset$.
\begin{lem}
\label{lem:S bigger than bad}
Let $\Psi$ and $\omega$ satisfy \eqref{psi omega cond}. Assume that $$\omega(q)\geq \frac{45}{\Psi(q)}\Psi\left(\frac{1}{2q\Psi(q)}\right)$$ for all $q$ sufficiently large. Let $(p,q)\in \Z\times \N$ and $p/q=[0;a_{1},\ldots,a_{n}]$ for $n$ even. Then if $q$ is sufficiently large, for $b_{0}\in \N$ satisfying $[b_{0},b_{0}+1]\cap S(p/q)\neq\emptyset$ and a word $\bb$ we have $$\mathcal{L}(S(p/q))\geq 20\mathcal{L}(\cB(p/q,b_{0},\bb)).$$
\end{lem}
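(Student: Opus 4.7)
The plan is to compute the two quantities directly and compare them. First, by the definition of $S(p/q)$ as an interval, we have
\begin{equation*}
    \mathcal{L}(S(p/q)) = \frac{1}{q^{2}\Psi(q)}\left(\frac{1}{1-\omega(q)}-1\right) = \frac{\omega(q)}{q^{2}\Psi(q)(1-\omega(q))}.
\end{equation*}
Thus all of the work goes into obtaining a sharp enough upper bound on $\mathcal{L}(\cB(p/q,b_{0},\bb))$.

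Next, I would analyse $\cB(p/q,b_{0},\bb)$ using continued fraction machinery. Write $\bb = (b_{1},\ldots,b_{m})$ (allowing $m=0$ for the empty word), let $t_{k} = q_{k}(b_{1},\ldots,b_{k})$ with the convention $t_{0}=1$, $t_{-1}=0$, and let $C^{*} = \bigl\lceil \tfrac{1}{3 q_{n+m+1}^{2}\Psi(q_{n+m+1})}\bigr\rceil$. The fundamental intervals $I_{m+1}(b_{0};\bb\, c_{1})$ for $c_{1} \geq C^{*}$ are nested inside $I_{m}(b_{0};\bb)$ and their union is itself an interval, whose endpoints are given explicitly by Lemma~\ref{lem:Fundamental interval}. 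A direct computation with the convergent recurrences yields
\begin{equation*}
    \mathcal{L}(\cB(p/q,b_{0},\bb)) \;=\; \frac{1}{t_{m}(C^{*}t_{m}+t_{m-1})} \;\leq\; \frac{1}{C^{*}t_{m}^{2}} \;\leq\; \frac{3\, q_{n+m+1}^{2}\,\Psi(q_{n+m+1})}{t_{m}^{2}}.
\end{equation*}

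The third step is to convert this estimate into one involving only $q$, $\Psi(q)$ and $\omega(q)$. Applying Lemma~\ref{lem:Quasi-multiplicative} to the concatenation $(a_{1},\ldots,a_{n})(b_{0},b_{1},\ldots,b_{m})$ gives $q_{n+m+1} \leq 4q\cdot q_{m+1}(b_{0}b_{1}\cdots b_{m})$, and since $q_{m+1}(b_{0}b_{1}\cdots b_{m}) = b_{0}t_{m}+t_{m-1} \leq 2b_{0}t_{m}$ we obtain $q_{n+m+1}/t_{m} \leq 8qb_{0}$. Combined with the upper bound $b_{0} \leq 2/(q^{2}\Psi(q))$ from Lemma~\ref{lem:b size}, this yields $q_{n+m+1}^{2}/t_{m}^{2} \leq 256/(q^{2}\Psi(q)^{2})$. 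Simultaneously, the lower half of Lemma~\ref{lem:Quasi-multiplicative} together with $b_{0}\geq 1/(2q^{2}\Psi(q))$ gives a lower bound $q_{n+m+1} \gtrsim 1/(q\Psi(q))$, after which the monotonicity of $q^{2}\Psi(q)$ upgrades it to a bound on $\Psi(q_{n+m+1})$ in terms of $\Psi(1/(2q\Psi(q)))$. Feeding both estimates into the display above bounds $\mathcal{L}(\cB(p/q,b_{0},\bb))$ by a fixed multiple of $\Psi(1/(2q\Psi(q)))/(q^{2}\Psi(q)^{2})$, and the hypothesis $\omega(q) \geq 45 \Psi(1/(2q\Psi(q)))/\Psi(q)$ finishes the comparison with $\mathcal{L}(S(p/q))$, provided $q$ is large enough to absorb the $1-\omega(q)$ factor.

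The main obstacle is tracking constants through the chain of inequalities: the quasi-multiplicativity of $q_{n+m+1}$ introduces a factor of $8$, squaring gives $64$, Lemma~\ref{lem:b size} contributes another $4$, and the approximation-quality factor $3$ from $C^{*}$ must also be accommodated. One must use $q^{2}\Psi(q)$-monotonicity carefully to transfer between the scales $1/(4q\Psi(q))$ (which arises naturally from the lower bound on $q_{n+m+1}$) and $1/(2q\Psi(q))$ (which appears in the hypothesis on $\omega$) without losing the factor of $20$. The edge case $m=0$, where $\bb$ is empty, has to be verified separately but follows the same template with $t_{0}=1$, $t_{-1}=0$.
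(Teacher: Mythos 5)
Your first two steps are sound: the formula for $\mathcal{L}(S(p/q))$ is correct, and your exact evaluation $\mathcal{L}(\cB(p/q,b_{0},\bb))=\frac{1}{t_{m}(C^{*}t_{m}+t_{m-1})}\leq 3q_{n+m+1}^{2}\Psi(q_{n+m+1})/t_{m}^{2}$ is valid and in fact sharper than the paper's appeal to Lemma~\ref{lem:Subset measure bound} (minor point: $q_{m+1}(b_{0}b_{1}\cdots b_{m})=b_{0}t_{m}+q_{m-1}(b_{2},\ldots,b_{m})$, not $b_{0}t_{m}+t_{m-1}$, though the bound $\leq 2b_{0}t_{m}$ you actually use still holds). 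The gap is in step three, and it is twofold. First, the ``scale transfer'' you postpone to the end is impossible with the stated hypotheses: quasi-multiplicativity only yields $q_{n+m+1}\geq\tfrac{1}{2}q\,b_{0}t_{m}\geq\tfrac{1}{4q\Psi(q)}$ when $t_{m}=1$, and monotonicity of $q\mapsto q^{2}\Psi(q)$ gives no upper bound for $\Psi\bigl(\tfrac{1}{4q\Psi(q)}\bigr)$ in terms of $\Psi\bigl(\tfrac{1}{2q\Psi(q)}\bigr)$ — no doubling-type regularity of $\Psi$ is assumed, and for $\Psi(q)=e^{-q}$ the ratio of these two values is unbounded. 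The correct move is not a transfer between scales but the recursion \eqref{eq:recursive formulas}: $q_{n+m+1}\geq q_{n+1}\geq b_{0}q\geq\tfrac{1}{2q\Psi(q)}$, with no constant lost; this is exactly what the paper does.

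Second, even granting $\Psi(q_{n+m+1})\leq\Psi\bigl(\tfrac{1}{2q\Psi(q)}\bigr)$, your decomposition into the two separate estimates $q_{n+m+1}^{2}/t_{m}^{2}\leq 256/(q^{2}\Psi(q)^{2})$ and $\Psi(q_{n+m+1})\leq\Psi\bigl(\tfrac{1}{2q\Psi(q)}\bigr)$ is too lossy to close the argument: it gives $\mathcal{L}(\cB)\leq\frac{768}{q^{2}\Psi(q)^{2}}\Psi\bigl(\tfrac{1}{2q\Psi(q)}\bigr)$, whereas the hypothesis only guarantees $\mathcal{L}(S(p/q))\geq\frac{45}{q^{2}\Psi(q)^{2}}\Psi\bigl(\tfrac{1}{2q\Psi(q)}\bigr)$, and $45<20\cdot 768$. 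The constant $45$ has essentially no slack (in the paper it is exactly $20\cdot\tfrac{9}{4}$), so the quasi-multiplicative factors $8^{2}$ and the factor $4$ from Lemma~\ref{lem:b size} cannot be absorbed. The repair is to keep the product $q_{n+m+1}^{2}\Psi(q_{n+m+1})$ intact: from $q_{n+m+1}\geq\tfrac{1}{2q\Psi(q)}$ and the monotonicity of $q\mapsto q^{2}\Psi(q)$ one gets $q_{n+m+1}^{2}\Psi(q_{n+m+1})\leq\frac{1}{4q^{2}\Psi(q)^{2}}\Psi\bigl(\tfrac{1}{2q\Psi(q)}\bigr)$, hence (using $t_{m}\geq 1$) $\mathcal{L}(\cB)\leq\frac{3}{4q^{2}\Psi(q)^{2}}\Psi\bigl(\tfrac{1}{2q\Psi(q)}\bigr)$, and $45\geq 20\cdot\tfrac{3}{4}$ finishes. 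With those two changes your argument becomes a slight sharpening of the paper's proof; as written, the final comparison fails.
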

\begin{proof}
Let $(p,q)\in \Z\times \N$ be given. Let $b_{0}$ satisfy $[b_0,b_{0}+1]\cap S(p/q)\neq\emptyset$  and $\bb=(b_{1},\ldots,b_{m})$ be a possibly empty word. It is a consequence of Lemma \ref{lem:Subset measure bound} that 
\begin{equation} \label{eq: bad size}
\mathcal{L}(\cB(p/q,b_{0},\bb))\leq 9q_{n+m+1}^{2}\Psi(q_{n+m+1})\mathcal{L}(I_{m}(b_{0};\bb)).
\end{equation}
Using the fact $q\to q^{2}\Psi(q)$ is non-increasing we see that the right hand side of the inequality above is maximised when $\bb$ is the empty word. Thus 
\begin{equation}
\label{eq:empty word bound}
\mathcal{L}(\cB(p/q,b_{0},\bb))\leq 9q_{n+1}^{2}\Psi(q_{n+1})
\end{equation}for any word $\bb$.
By Lemma \ref{lem:b size} we know that $b_{0}\geq \frac{1}{2q^{2}\Psi(q)}.$ Using this inequality together with the inequality $q_{n+1}\geq b_{0}q,$ which follows from \eqref{eq:recursive formulas}, we have $q_{n+1}\geq \frac{1}{2q\Psi(q)}$. Using this bound together with the fact $q\to q^{2}\Psi(q)$ is non-increasing, the following measure bound follows from \eqref{eq:empty word bound} 
\begin{equation}
\label{eq:bad measure bound}
\mathcal{L}(\cB(p/q,b_{0},\bb))\leq \frac{9}{4q^{2}\Psi(q)^{2}}\Psi\left(\frac{1}{2q\Psi(q)}\right).
\end{equation}The following bound for the measure of $S(p/q)$ follows immediately from the definition:
\begin{equation}
\label{eq:S(p/q) measure bound}
    \mathcal{L}(S(p/q))= \frac{\omega(q)}{q^{2}\Psi(q)(1-\omega(q))}\geq \frac{\omega(q)}{q^{2}\Psi(q)}.
\end{equation}
Thus our result would follow from \eqref{eq:bad measure bound} and \eqref{eq:S(p/q) measure bound} if the following inequality held
$$\frac{\omega(q)}{q^{2}\Psi(q)}\geq \frac{180}{4q^{2}\Psi(q)^{2}}\Psi\left(\frac{1}{2q\Psi(q)}\right).$$ After rearranging this inequality and cancelling some terms, we see that it is satisfied for all $q$ sufficiently large by our assumption. Thus our result follows.
\end{proof}

The following lemma builds a well-behaved subset of $S(p/q)$.
 
\begin{lem} \label{lem: bound on S(p/q)}
  Let $\Psi$ and $\omega$ satisfy \eqref{psi omega cond}. Assume that $$\omega(q)\geq \frac{45}{\Psi(q)}\Psi\left(\frac{1}{2q\Psi(q)}\right)$$ for all $q$ sufficiently large. Let $(p,q)\in \Z\times \N$ and $\frac{p}{q}=[0;a_{1},\ldots,a_{n}]$ for $n$ even.  Then if $q$ is sufficiently large, there exists a finite set of consecutive words $\{\bb_{l}\}_{l=1}^{L}$ such that 
\begin{equation} \label{words contained in S(p/q)}
    \bigcup_{l=1}^{L}I(\bb_{l})\subset S(p/q)\quad \textrm{ and }\quad \mathcal{L}\left(\bigcup_{l=1}^{L}I(\bb_{l})\right)\geq \frac{9\mathcal{L}(S(p/q))}{100}\, .
\end{equation}
Moreover, if each consecutive word has length $m\geq 2$, then
      \begin{equation} \label{b digit restrition}
    b_{i}\in \left\{ 1,2, \ldots,\left\lfloor \frac{1}{3q_{n+i}^{2}\Psi(q_{n+i})}\right\rfloor \right\} \quad 1\leq i \leq m-1\, ,
    \end{equation} where $\frac{p_{n+i}}{q_{n+i}}=[0;a_{1},\ldots, a_{n},b_{0},b_{1},\ldots,b_{i-1}]$ for $1\leq i\leq m-1.$ 
\end{lem}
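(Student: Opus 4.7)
The plan is to construct $\{\bb_{l}\}_{l=1}^{L}$ by descending through the fundamental interval hierarchy until we reach a level at which complete fundamental intervals fit cleanly inside $S(p/q)=[\alpha,\beta]$, using Lemma~\ref{lem:S bigger than bad} at each stage to force the digit restriction on intermediate letters. The starting point is Lemma~\ref{lem:b size}: only finitely many integers $b_{0}$ satisfy $I_{0}(b_{0})\cap S(p/q)\neq\emptyset$, so the leading letter of any $\bb_{l}$ is drawn from a bounded set, and I may assume $q$ is large enough that these leading letters are as big as needed.

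I would then split the argument into two cases. \textbf{Case A:} If some $I_{0}(b_{0})=[b_{0},b_{0}+1]$ is contained in $S(p/q)$, take the length-one consecutive words $\{(b)\}_{b=b_{0}^{\min}}^{b_{0}^{\max}}$ consisting of all extremal integers $b$ with $I_{0}(b)\subset S(p/q)$. Their union is a subinterval of $S(p/q)$ of length at least $\cL(S(p/q))-2$; distinguishing $\cL(S(p/q))\geq 3$ (where this exceeds $\cL(S(p/q))/3$) from $\cL(S(p/q))<3$ (where a single unit interval already exceeds $9\cL(S(p/q))/100$) gives the required measure bound, and there is no digit condition to verify since $m=1$. \textbf{Case B:} If $S(p/q)$ contains no complete integer interval, then it lies in the union of at most two adjacent $I_{0}(b_{0})$, and I would set $J=S(p/q)\cap I_{0}(b_{0})$ for the $b_{0}$ maximising $\cL(J)$, so that $\cL(J)\geq\cL(S(p/q))/2$. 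I would then iterate: at each level $i\geq 1$, either the current $J$ contains a complete level-$i$ fundamental interval (in which case terminate and take the complete sub-FIs inside $J$ as the varying last letter of our words) or $J$ is contained in at most two adjacent level-$i$ FIs and we pass to the larger intersection. Termination is guaranteed because fundamental intervals shrink to zero while $\cL(S(p/q))>0$, and the terminal depth $m-1$ yields a uniquely determined prefix $(b_{0},b_{1},\ldots,b_{m-2})$ together with a consecutive range of final letters $b_{m-1}$.

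Both the digit restriction and the quantitative measure bound rest on Lemma~\ref{lem:S bigger than bad}, which bounds $\cL(\cB(p/q,b_{0},\bb))$ by $\cL(S(p/q))/20$ uniformly in $\bb$. If a prefix digit $b_{i}$ exceeded $\lfloor 1/(3q_{n+i}^{2}\Psi(q_{n+i}))\rfloor$ for some $1\leq i\leq m-2$, the containing FI $I_{i}(b_{0};b_{1},\ldots,b_{i})$ would be contained in $\cB(p/q,b_{0},b_{1}\cdots b_{i-1})$, forcing the working interval $J$ (which by construction lies in that same FI) to have measure at most $\cL(S(p/q))/20$, contradicting the lower bound on $\cL(J)$ surviving the descent. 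For the varying last letter $b_{m-1}$ I would simply restrict to $b_{m-1}\leq\lfloor 1/(3q_{n+m-1}^{2}\Psi(q_{n+m-1}))\rfloor$, removing at most an additional $\cL(S(p/q))/20$ from the union. \textbf{The main obstacle} will be ensuring that the iterative descent in Case B terminates after sufficiently few steps that the accumulated halvings from successive straddlings, together with the $\cL(S(p/q))/20$ loss at the final level, still leave a union of measure at least $9\cL(S(p/q))/100$. The crucial ingredient is precisely Lemma~\ref{lem:S bigger than bad}: because the bad sets shrink proportionally to $\cL(S(p/q))$ at every level while the sub-FI lengths decay geometrically, the descent depth is forced to be controlled, which prevents the halvings from compounding indefinitely.
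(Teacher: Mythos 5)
Your skeleton matches the paper's: a case analysis on how $S(p/q)$ sits inside the fundamental-interval hierarchy, with Lemma~\ref{lem:S bigger than bad} supplying both the digit restriction \eqref{b digit restrition} and the loss budget for discarding the bad set. However, there is a genuine gap, and it sits exactly where you flag "the main obstacle": nothing in your argument actually controls the compounding factor-$2$ losses from successive straddlings, and the mechanism you propose for it is not valid. Lemma~\ref{lem:S bigger than bad} compares $\cL(\cB(p/q,b_{0},\bb))$ with $\cL(S(p/q))$, not with the current working interval $J$, and it says nothing about how many times $J$ can straddle two adjacent fundamental intervals; nor is the descent depth bounded at all (the paper's Case~3d explicitly allows arbitrarily deep nesting, which causes no loss precisely because no straddle occurs there). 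The fact that straddling cannot recur is a geometric statement about fundamental intervals, not a measure statement: by Lemma~\ref{lem:rationalcfexpansion} the children of an interval accumulate monotonically at one of its endpoints, so once the working interval contains (after the first straddle) an endpoint of its ambient fundamental interval, any further "straddle" forces it to contain a complete child. The paper exploits exactly this structure in Cases~2a, 2b and 3c: when $S(p/q)$ reaches the shared endpoint, it either contains the digit-$1$ child outright or swallows the entire bad set $\cB(p/q,b_{0},1)$ (here Lemma~\ref{lem:S bigger than bad} is used for containment, via the relative sizes), after which the construction terminates within one or two further levels. Your proposal, as written, neither proves this nor substitutes anything for it, and both your digit-restriction argument (which needs $\cL(J)>\cL(S(p/q))/20$ to survive the descent) and your final measure bound depend on it.

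Even granting that at most one halving occurs, your accounting falls short of the stated constant. Taking "the complete sub-FIs inside $J$" at the terminal level captures only a bounded fraction of $\cL(J)$ (adjacent children differ in length by a bounded factor, so the run of complete children has measure at least roughly $\cL(J)/5$), and after subtracting the bad set you land near $\tfrac12\cdot\tfrac15\cL(S(p/q))-\tfrac{1}{20}\cL(S(p/q))=\tfrac{1}{20}\cL(S(p/q))$, which is below the claimed $\tfrac{9}{100}\cL(S(p/q))$. The paper reaches $9/100$ by a sharper device: it chooses the minimal $M$ such that the consecutive block of intervals with last digit from $M$ up to $\left\lfloor \frac{1}{3q_{n+2}^{2}\Psi(q_{n+2})}\right\rfloor$ lies in $S(p/q)$, notes that this block together with the single interval of digit $M-1$ covers $(S(p/q)\cap I)\setminus\cB$, and uses Lemma~\ref{lem:Quasi-multiplicative} to show that adding that one interval costs at most a factor $5$; this yields $\tfrac12\cdot\tfrac{9}{10}\cdot\tfrac15=\tfrac{9}{100}$. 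To repair your proof you would need (i) the endpoint-inheritance argument (or the paper's terminal treatment of straddles) to cap the halvings, and (ii) the minimal-$M$ extension trick (or an equivalently sharp count) to recover the constant.
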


\begin{proof}
Our proof is split into a number of cases that are listed below. Many of these cases follow by similar arguments, and so for the purpose of brevity, we will on occasion only give an outline of the argument.  \\

\noindent \textbf{Case 1. - There exists $b_{0}\in \N$ such that $I_{0}(b_{0})\subseteq S(p/q)$.} Let $M_{1}$ be the smallest natural number such that $I_{0}(M_{1})\subset S(p/q)$ and let $M_{2}$ be the largest natural number satisfying $I_{0}(M_{2})\subseteq S(p/q).$ $M_{1}$ and $M_{2}$ are well defined because of our assumption. Then we have $$\bigcup_{b=M_{1}}^{M_{2}}I_{0}(b)\subseteq S(p/q).$$ It follows from a simple argument using the definitions of $M_{1}$ and $M_{2}$ that we have the following measure bound $$\mathcal{L}\left(\bigcup_{b=M_{1}}^{M_{2}}I_{0}(b)\right)\geq \frac{\mathcal{L}(S(p/q))}{3}\, .$$ Taking our set of consecutive words to be $\{b\}_{b=M_{1}}^{M_{2}}$ completes our proof in this case.\\

\noindent \textbf{Case 2. - There exists no $b_{0}\in \N$ such that $I_{0}(b_{0})\subseteq S(p/q)$ and there exists $b_{0}\in \N$ such that $I_{0}(b_{0})\cap S(p/q)\neq\emptyset$ and $I_{0}(b_{0}+1)\cap S(p/q)\neq\emptyset$.} It follows from our assumptions that $S(p/q) \subset I_{0}(b_{0})\cup I_{0}(b_{0}+1)$, otherwise we would be in Case 1. Thus either 
\begin{equation}
    \label{eq:subcasea}
    \mathcal{L}(S(p/q) \cap I_{0}(b_{0}))\geq \frac{\mathcal{L}(S(p/q))}{2}
\end{equation}
or 
\begin{equation}
    \label{eq:subcaseb}
    \mathcal{L}(S(p/q) \cap I_{0}(b_{0}+1))\geq \frac{\mathcal{L}(S(p/q))}{2}\, .
\end{equation}
We treat each of these subcases individually.

\noindent \textit{Case 2a. \eqref{eq:subcasea} holds. } It follows from our assumptions that $S(p/q)$ contains the right end point of $I_{0}(b_{0})$. Thus $S(p/q)\cap I_{1}(b_{0};1)\neq\emptyset.$ If  $I_{1}(b_{0};1)\subset S(p/q)\cap I_{0}(b_{0})$ then we can take $\{b_{0}1\}$ to be our set of words, as in this case $$\frac{\mathcal{L}(S(p/q))}{2}\leq \mathcal{L}(S(p/q)\cap I_{0}(b_{0}))\leq \mathcal{L}( I_{0}(b_{0}))= 2\mathcal{L}( I_{1}(b_{0};1))\, ,$$ where in the final equality we used the fact that $\mathcal{L}(I_{1}(b_{0};1))=1/2$ for all $b_{0}\in \Z$. Thus \eqref{words contained in S(p/q)} and \eqref{b digit restrition} are satisfied. Now suppose $S(p/q)\cap I_{0}(b_{0})\subset I_{1}(b_{0};1).$ The fundamental intervals $\{I_{2}(b_{0};1,n)\}_{n=1}^{\infty}$ are all contained in $I_{1}(b_{0};1)$ and are increasing in the sense that the right end point of $I_{2}(b_{0};1,n)$ is the left end point of   $I_{2}(b_{0};1,n+1)$ for all $n\in \N$. Thus, since $S(p/q)$ contains the right end point of $I_{1}(b_{0};1)$ it intersects $I_{2}(b_{0};1,n)$ for all $n$ sufficiently large. Therefore, $S(p/q)\cap \cB(p/q,b_{0},1)\neq\emptyset.$ It follows now from our assumption $S(p/q)\cap I_{0}(b_{0})\subset I_{1}(b_{0};1)$, \eqref{eq:subcasea} and Lemma \ref{lem:S bigger than bad} that  
\begin{equation}
\label{eq:bad inclusion}
\cB(p/q,b_{0},1)\subseteq S(p/q)\cap I_{1}(b_{0};1)
\end{equation}and 
\begin{equation}
    \label{eq:10bound}
10\mathcal{L}(\cB(p/q,b_{0},1))\leq \mathcal{L}(S(p/q)\cap I_{1}(b_{0};1))\, .
\end{equation} Thus
\begin{equation}
    \label{eq:10bad bound}
10\mathcal{L}\left(I_{2}\left(b_{0};1,\left\lceil \frac{1}{3q_{n+2}^{2}\Psi(q_{n+2})}\right\rceil\right)\right)\leq \mathcal{L}(S(p/q)\cap I_{1}(b_{0};1))\, .
\end{equation}
 It follows now from \eqref{eq:bad inclusion}, \eqref{eq:10bad bound} and Lemma \ref{lem:Quasi-multiplicative} that
\begin{equation}
\label{eq:first inclusion}
I_{2}\left(b_{0};1,\left\lfloor \frac{1}{3q_{n+2}^{2}\Psi(q_{n+2})}\right\rfloor \right) \subseteq S(p/q)\, .
\end{equation}
Let $M\geq 1$ be as small as possible that 
$$\bigcup_{b_{2}=M}^{\left\lfloor \frac{1}{3q_{n+2}^{2}\Psi(q_{n+2})}\right\rfloor}I_{2}\left(b_{0};1,b_{2} \right) \subset S(p/q).$$ Note that $M$ is well defined by \eqref{eq:first inclusion}. If $M=1,$ then we would have $I_{1}(b_{0};1)\subset S(p/q)\cap I_{0}(b_{0}),$ and so we can apply the argument given at the beginning of this case. If $M\geq 2$ then we can take our set of words to be $\{b_{0}1b_{2}\}_{b_{2}=M}^{\left\lfloor \frac{1}{3q_{n+2}^{2}\Psi(q_{n+2})}\right\rfloor}$ since by Lemma \ref{lem:Quasi-multiplicative}, \eqref{eq:subcasea}, \eqref{eq:10bound}, and the minimality of $M$, we have 
\begin{align}
    5\mathcal{L}\left(\bigcup_{b_{2}=M}^{\left\lfloor \frac{1}{3q_{n+2}^{2}\Psi(q_{n+2})}\right\rfloor}I_{2}\left(b_{0};1,b_{2} \right)\right)&\geq \mathcal{L}\left(\bigcup_{b_{2}=M-1}^{\left\lfloor \frac{1}{3q_{n+2}^{2}\Psi(q_{n+2})}\right\rfloor}I_{2}\left(b_{0};1,b_{2} \right)\right)\nonumber\\
    &\geq \mathcal{L}\big((S(p/q)\cap I_{1}(b_{0};1))\setminus \cB(p/q,b_{0},1)\big)\nonumber\\
    &\geq \mathcal{L}(S(p/q)\cap I_{1}(b_{0};1))-\frac{\mathcal{L}(S(p/q)\cap I_{1}(b_{0};1))}{10}\nonumber\\
    &=\frac{9\mathcal{L}(S(p/q)\cap I_{1}(b_{0};1))}{10}\nonumber\\
    &\geq \frac{9\mathcal{L}(S(p/q))}{20}\, . \label{eq: technique repeated later}
\end{align}
Thus 
$$\mathcal{L}\left(\bigcup_{b_{2}=M}^{\left\lfloor \frac{1}{3q_{n+2}^{2}\Psi(q_{n+2})}\right\rfloor}I_{2}\left(b_{0};1,b_{2} \right)\right)\geq \frac{9\mathcal{L}((S(p/q))}{100}\, .$$ This completes our proof in this case.\\

\noindent \textit{Case 2b. \eqref{eq:subcaseb} holds.} The fundamental intervals $\{I_{1}(b_{0}+1;n)\}$ are all contained in $I_{0}(b_{0}+1)$ and are decreasing in the sense that the left end point of $I_{1}(b_{0}+1;n)$ is the right endpoint of $I_{1}(b_{0}+1;n+1)$ for all $n\in \N$. It follows from this observation and \eqref{eq:subcaseb} that $$I_{1}(b_{0}+1;n)\subset S(p/q)$$ for all $n$ sufficiently large. Using this observation together with similar arguments used in Case 2a, it can be shown that there exists $M\in \N$ such that $$\bigcup_{b_{1}=M}^{\left\lfloor \frac{1}{3q_{n+1}^{2}\Psi(q_{n+1})}\right\rfloor}I_{1}\left(b_{0}+1;b_{1} \right) \subset S(p/q).$$ and 
$$\mathcal{L}\left(\bigcup_{b_{1}=M}^{\left\lfloor \frac{1}{3q_{n+1}^{2}\Psi(q_{n+1})}\right\rfloor}I_{1}\left(b_{0}+1;b_{1} \right)\right) \geq \frac{9\mathcal{L}(S(p/q))}{100}.$$ Taking our set of words to be $\{(b_0+1)b_{1}\}_{b_{1}=M}^{\left\lfloor \frac{1}{3q_{n+1}^{2}\Psi(q_{n+1})}\right\rfloor}$ completes our proof.\\

\noindent \textbf{Case 3. - There exists $b_{0}\in \N$ such that $S(p/q)\subset I_{0}(b_{0}).$ }We split this case into further subcases.\\

\noindent \textit{Case 3a - $S(p/q)\cap \cB(p/q,b_{0})\neq\emptyset.$} Replicating the arguments given in Case 2a we can show that  $S(p/q)\cap \cB(p/q,b_{0})\neq\emptyset$ implies $$I_{1}\left (b_{0},\left\lfloor \frac{1}{3q_{n+1}^{2}\Psi(q_{n+1})}\right \rfloor\right)\subset S(p/q).$$ We then choose the smallest $M$ such that $$\bigcup_{b_{1}=M}^{\left\lfloor \frac{1}{3q_{n+1}^{2}\Psi(q_{n+1})}\right \rfloor}I_{1}(b_{0},b_{1})\subset S(p/q).$$ Taking our set of words to be $\{b_{0}b_{1}\}_{b_{1}=M}^{\left\lfloor \frac{1}{3q_{n+1}^{2}\Psi(q_{n+1})}\right \rfloor}$, we can replicate the argument used in Case 2a to see that our conclusion is satisfied.\\

\noindent \textit{Case 3b - $S(p/q)\cap \cB(p/q,b_{0})=\emptyset$ and there exists $b_{1}$ such that $I_{1}(b_{0};b_{1})\subset S(p/q).$} If we let $M_{1}$ and $M_{2}$ be respectively the minimum and maximum values of $m$ for which
$I_{1}(b_{0};m)\subset S(p/q),$ then by the same reasoning as in Case 1, with a slight modification taking into account the fact that consecutive $1$st level fundamental intervals are of different sizes (Lemma~\ref{lem:Quasi-multiplicative} bounds the change in length), it can be shown that $\{b_{0}b_{1}\}_{b_{1}=M_{1}}^{M_{2}}$ satisfies our desired conclusion. \\

\noindent \textit{Case 3c - $S(p/q)\cap \cB(p/q,b_{0})=\emptyset,$ there exists no $b_{1}$ such that $I_{1}(b_{0};b_{1})\subset S(p/q)$ and there exists $b_{1}$ such that $I_{1}(b_{0};b_{1})\cap S(p/q)\neq \emptyset$ and $I_{1}(b_{0};b_{1}+1)\cap S(p/q)\neq \emptyset.$ } In this case, it follows from our assumptions that $S(p/q)\subset I_{1}(b_{0};b_{1})\cup I_{1}(b_{0};b_{1}+1).$ Thus we must have either $$\frac{\mathcal{L}(S(p/q))}{2}\leq \mathcal{L}(S(p/q)\cap I_{1}(b_{0};b_{1}))\quad\text{ or }\quad \frac{\mathcal{L}(S(p/q))}{2}\leq \mathcal{L}(S(p/q)\cap I_{1}(b_{0};b_{1}+1)).$$
We now construct our set of words using the same arguments as used in Case 2a.\\

\noindent \textit{Case 3d - $S(p/q)\cap \cB(p/q,b_{0})=\emptyset$ and there exists $b_{1}$ such that $S(p/q)\subset I_{1}(b_{0};b_{1}).$ } Since $S(p/q)\cap \cB(p/q,b_{0})=\emptyset$ we know that $b_{1}$ satisfies \eqref{b digit restrition}.  Now at this stage four possible events can occur:
\begin{enumerate}[leftmargin=2\parindent]
    \item $S(p/q)\cap \cB(p/q,b_{0},b_{1})\neq\emptyset$.
    \item $S(p/q)\cap \cB(p/q,b_{0},b_{1})=\emptyset$ and there exists $b_{2}$ such that $I_{2}(b_{0};b_{1},b_{2})\subset S(p/q).$
    \item $S(p/q)\cap \cB(p/q,b_{0},b_{1})=\emptyset,$ there exists no $b_{2}$ such that $I_{2}(b_{0};b_{1},b_{2})\subset S(p/q)$ and there exists $b_{2}$ such that $I_{2}(b_{0};b_{1},b_{2})\cap S(p/q)\neq \emptyset$ and $I_{2}(b_{0};b_{1},b_{2}+1)\cap S(p/q)\neq \emptyset.$
    \item $S(p/q)\cap \cB(p/q,b_{0},b_{1})=\emptyset$ and there exists $b_{2}$ such that $S(p/q)\subset I_{2}(b_{0};b_{1},b_{2}).$ 
\end{enumerate}
If $(1)$ holds then the arguments outlined in Case 3a can be applied. Similarly, if $(2)$ or $(3)$ hold then the arguments in Case 3b and Case 3c can be applied respectively. If $(4)$ holds then since $S(p/q)\cap \cB(p/q,b_{0},b_{1})=\emptyset$ we know that $b_{2}$ satisfies \eqref{b digit restrition} and we are essentially back at the beginning of Case 3d. We can then define an analogous list of cases (1)-(4) that describe the behaviour of $S(p/q)$ within $I_{2}(b_{0};b_{1},b_{2}).$ We can then proceed by an analogous analysis. Since the maximum length of an $m$-th level fundamental interval tends to zero as $m$ increases, it is clear that $S(p/q)$ cannot be contained in a fundamental interval $I_{m}(b_{0};b_{1},\ldots b_{m})$ for arbitrarily large values of $m$. Thus this process must eventually yield a word $\bb=(b_{1},\ldots,b_{m})$ such that  \eqref{b digit restrition} holds for $1\leq i\leq m$ and one of the following three events occur:
\begin{itemize}[leftmargin=2\parindent]
    \item[(1')]$S(p/q)\cap \cB(p/q,b_{0},\bb)\neq\emptyset$.
    \item[(2')]$S(p/q)\cap \cB(p/q,b_{0},\bb)=\emptyset$ and there exists $b$ such that $I_{m+1}(b_{0};\bb b)\subset S(p/q).$
    \item[(3')]$S(p/q)\cap \cB(p/q,b_{0},\bb)=\emptyset,$ there exists no $b$ such that $I_{2}(b_{0};\bb b)\subset S(p/q)$ and there exists $b$ such that $I_{2}(b_{0};\bb b)\cap S(p/q)\neq \emptyset$ and $I_{2}(b_{0};\bb (b+1))\cap S(p/q)\neq \emptyset.$
\end{itemize} Applying the same arguments as those outlined in Cases 3a, 3b and 3c we can then construct our desired set of words and complete our proof.  
\end{proof}
Equipped with Lemma \ref{lem: belongs to u(p/q)} we can now prove Lemma \ref{lem: building U(p/q)}.
\begin{proof}[Proof of Lemma \ref{lem: building U(p/q)}]
    Let $(p,q)\in \Z\times \N$ and $q$ be large. Let $\{\bb_{l}\}_{l=1}^{L}$ be as in Lemma \ref{lem: bound on S(p/q)}. Then \eqref{eq: inclusion statement} follows from Lemma \ref{lem: belongs to u(p/q)}. Equation \eqref{b digit restrition2} follows from \eqref{b digit restrition}. It remains to prove the measure bound \eqref{eq: measure bound}. 
    It follows from an application of the mean value theorem applied to the map $x\to \frac{1}{x}$ and Lemma \ref{lem:b size} that when considering the left shift of the words $\{0\bb_{l}\}_{l=1}^{L}$ we have that
\begin{equation}
\label{eq:Quasi bernoulli bounda}
\mathcal{L}\left(\bigcup_{l=1}^{L}I(0;\bb_{l})\right)\geq \frac{q^{4}\Psi(q)^{2}}{9}\mathcal{L}\left(\bigcup_{l=1}^{L}I(\bb_{l})\right)
\geq \frac{q^{4}\Psi(q)^{2}}{9}\frac{9\mathcal{L}(S(p/q))}{100} 
\geq \frac{q^{2}\omega(q)\Psi(q)}{200}.
\end{equation}
Thus applying Lemma \ref{lem:Fundamental interval}, Lemma \ref{lem:Quasi-multiplicative} and \eqref{eq:Quasi bernoulli bounda} we have
\begin{align*}
\mathcal{L}\left(\bigcup_{l=1}^{L}I(0;a_{1},\ldots,a_{n},\bb_{l})\right)&\geq \frac{1}{16}\mathcal{L}(I(0;a_{1},\ldots,a_{n}))\mathcal{L}\left(\bigcup_{l=1}^{L}I(0;\bb_{l})\right)\\
&\geq \frac{1}{32q^{2}}\cdot \frac{q^{2}\omega(q)\Psi(q)}{200}\\
&=\frac{\omega(q)\Psi(q)}{6400}.
\end{align*} This completes our proof. 
\end{proof}

\subsection{Technical results for Theorems \ref{thm: non-empty2} and \ref{thm:main2}} \label{sec: Technical results with divisibility cond}
The purpose of this subsection is to prove the following two lemmas. Both of these lemmas assume that a function $\Psi$ satisfies the square divisibility condition. We recall that $\Psi$ satisfies the square divisibility condition if for every $q\in \N$ we have $\Psi(q)^{-1}=0 \mod q^{2}.$

\begin{lem} \label{lem: non-empty S(p/q) divisibility cond}
Let $\Psi$ and $\omega$ satisfy \eqref{psi omega cond}. Furthermore suppose that $\Psi$ satisfies the square divisibility condition and
    \begin{equation*}
        \omega(q)\geq \frac{45}{16\Psi(q)}\Psi\left( \frac{1}{8\Psi(q)} \right)
    \end{equation*}
    for all $q$ sufficiently large. Let $(p,q)\in \Z\times \N$ and $\frac{p}{q}=[0;a_{1},\ldots,a_{n}]$ for $n$ even where $a_{1},a_{n}\geq 2$. 
   Then if $q$ is sufficiently large, we have that 
    \begin{equation*}
        I\left(\frac{1}{q^{2}\Psi(q)}-1;1,a_{n}-1, a_{n-1},\ldots, a_{2},a_{1}-1,1, \left\lfloor \frac{1}{3q_{*}^{2}\Psi(q_{*})}\right\rfloor\right)\subseteq S(p/q) \, ,
    \end{equation*}
    where $\frac{p_{*}}{q_{*}}=[0;a_{1},\ldots, a_{n},\tfrac{1}{q^{2}\Psi(q)}-1,1,a_{n}-1,a_{n-1},\ldots, a_{2},a_{1}-1,1]$.
    Equivalently,
    \begin{align*}
        I\left(0;a_{1},\ldots, a_{n}, \tfrac{1}{q^{2}\Psi(q)}-1,1,a_{n}-1, a_{n-1}, \ldots,a_{2}, a_{1}-1,1, \left\lfloor \frac{1}{3q_{*}^{2}\Psi(q_{*})}\right\rfloor \right) \\
        \subset \left\{x: \Psi(q)(1-\omega(q))\leq x-\frac{p}{q}\leq \Psi(q) \right\}\, .
    \end{align*}
\end{lem}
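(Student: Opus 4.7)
The plan is to apply Lemma~\ref{lem: belongs to u(p/q)}, which reduces the claim to showing that the fundamental interval $I_{n+3}(b_0;1,a_n-1,\ldots,a_2,a_1-1,1,c)$ is contained in $S(p/q)$, where $b_0:=\tfrac{1}{q^2\Psi(q)}-1\in\N$ (an integer thanks to the square divisibility condition) and $c:=\lfloor 1/(3q_*^2\Psi(q_*))\rfloor$. The starting point will be the classical reversal identity $\tfrac{q_{n-1}}{q}=[0;a_n,a_{n-1},\ldots,a_1]$, which is readily verified from the recurrences~\eqref{eq:recursive formulas}. Combining this with the two identities $1-[0;b_1,b_2,\ldots]=[0;1,b_1-1,b_2,\ldots]$ (valid when $b_1\geq 2$) and $[0;\ldots,b_m]=[0;\ldots,b_m-1,1]$ (valid when $b_m\geq 2$), both of which apply thanks to the hypothesis $a_1,a_n\geq 2$, I obtain
$$L:=\tfrac{1}{q^2\Psi(q)}-\tfrac{q_{n-1}}{q}=b_0+[0;1,a_n-1,\ldots,a_2,a_1-1,1].$$
Since $n+2$ is even, Lemma~\ref{lem:Fundamental interval} identifies $L$ with the left endpoint of $I_{n+2}(b_0;1,a_n-1,\ldots,a_1-1,1)$.

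The fundamental interval in the statement is the sub-interval of this level-$(n+2)$ interval obtained by appending the partial quotient $c$; its left endpoint is therefore automatically $\geq L$, while its right endpoint, by Lemma~\ref{lem:Fundamental interval} at odd level $n+3$ together with the convergent identity $|p_{n+3}q_{n+2}-p_{n+2}q_{n+3}|=1$, lies exactly a distance $1/(q_{n+3}q_{n+2})$ above $L$. Here $q_{n+2},q_{n+3}$ denote the convergent denominators of $[0;1,a_n-1,\ldots,a_1-1,1,c]$. Because the rational $[0;1,a_n-1,\ldots,a_1-1,1]=1-\tfrac{q_{n-1}}{q}=\tfrac{q-q_{n-1}}{q}$ is already in lowest terms, we have $q_{n+2}=q$ and hence $q_{n+3}=cq+q_{n+1}\geq cq$. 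The required inclusion $I_{n+3}\subseteq S(p/q)=[L,\,L+\tfrac{\omega(q)}{q^2\Psi(q)(1-\omega(q))}]$ will therefore reduce to a single inequality,
$$c\geq \frac{\Psi(q)(1-\omega(q))}{\omega(q)}.$$

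All that remains is to check this lower bound on $c$ from the assumption on $\omega$. Applying Lemma~\ref{lem:Quasi-multiplicative} to the concatenation $(a_1,\ldots,a_n,b_0)\cdot(1,a_n-1,\ldots,a_1-1,1)$, together with the elementary estimate that $b_0q^2$ lies between $\tfrac{1}{2\Psi(q)}$ and $\tfrac{1}{\Psi(q)}$ for $q$ large (using $q^2\Psi(q)\to 0$), gives $\tfrac{1}{4\Psi(q)}\leq q_*\leq \tfrac{8}{\Psi(q)}$. Monotonicity of $q\mapsto q^2\Psi(q)$ (which in particular forces $\Psi$ itself to be non-increasing) then yields
$$q_*^2\Psi(q_*)\leq \frac{1}{16\Psi(q)^2}\,\Psi\!\left(\tfrac{1}{8\Psi(q)}\right),$$
and plugging in the hypothesis $\Psi(1/(8\Psi(q)))\leq 16\Psi(q)\omega(q)/45$ produces $c\geq \tfrac{15\Psi(q)}{\omega(q)}-1$. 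The main obstacle I anticipate is the bookkeeping of multiplicative constants in this final step, namely reconciling the $\tfrac{1}{4\Psi(q)}$ coming from Lemma~\ref{lem:Quasi-multiplicative} with the $\tfrac{1}{8\Psi(q)}$ appearing in the hypothesis, and then confirming that $\tfrac{15\Psi(q)}{\omega(q)}-1\geq \Psi(q)(1-\omega(q))/\omega(q)$. This last inequality splits into an easy subcase $\omega(q)\leq 14\Psi(q)$ where the leading $15$ absorbs the difference, and a subcase where the right-hand side is automatically less than one while $c\geq 1$ holds for $q$ large since $q_*^2\Psi(q_*)\to 0$.
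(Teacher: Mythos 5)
Your argument is correct, and it reaches the inclusion by a somewhat more direct route than the paper. The paper's proof also begins from the identity $\tfrac{1}{q^{2}\Psi(q)}-\tfrac{q_{n-1}}{q}=[\tfrac{1}{q^{2}\Psi(q)}-1;1,a_{n}-1,\ldots,a_{2},a_{1}-1,1]$ (its Lemma~\ref{lem: where is S(p/q)}), but then it never computes endpoints exactly: it notes that the target interval sits flush against the right end of the bad set $\cB(p/q,\bc)$, whose left endpoint coincides with that of $S(p/q)$, bounds $\cL(I(\bb)\cup\cB(p/q,\bc))\leq 5\,\cL(\cB(p/q,\bc))$ via Lemma~\ref{lem:Quasi-multiplicative}, and concludes from the measure comparison $\cL(S(p/q))\geq 20\,\cL(\cB(p/q,\bc))$ of Lemma~\ref{lem:S bigger than bad 2}, which in turn rests on Lemma~\ref{lem:Subset measure bound} and the bound $q_{*}\geq \tfrac{1}{8\Psi(q)}$. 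You instead exploit the reversed-word structure: since $[0;1,a_{n}-1,\ldots,a_{2},a_{1}-1,1]=\tfrac{q-q_{n-1}}{q}$ is in lowest terms, the relevant continuant equals $q$, so the right endpoint of the target interval lies exactly $\tfrac{1}{q(cq+q_{n+1})}\leq \tfrac{1}{cq^{2}}$ above the left endpoint of $S(p/q)$, and the whole lemma reduces to $c\geq \Psi(q)(1-\omega(q))/\omega(q)$; your verification of this from the hypothesis on $\omega$ (quasi-multiplicative lower bound $q_{*}\geq \tfrac{1}{4\Psi(q)}$, monotonicity of $t\mapsto t^{2}\Psi(t)$, which does force $\Psi$ non-increasing, and the two-case check, where for $\omega(q)\leq 14\Psi(q)$ the constant $15$ absorbs the $-1$ and otherwise the required bound is below $1\leq c$, with $c\geq 1$ holding uniformly for large $q$ because $q_{*}\to\infty$) is sound. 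Your route bypasses the bad-set machinery (Lemmas~\ref{lem:Subset measure bound} and~\ref{lem:S bigger than bad 2}) and even gives a marginally better lower bound for $q_{*}$; the paper's route has the advantage that the lemmas it leans on are reused verbatim in the later case analysis of Lemma~\ref{lem: bound on S(p/q) with divisibility}, so it fits the surrounding architecture without extra computation.
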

Lemma \ref{lem: non-empty S(p/q) divisibility cond} is the main technical tool that allows us to prove the non-empty part of Theorem \ref{thm: non-empty2}. The result still holds without the assumption $a_{1},a_{n}\geq 2,$ however, we include this assumption as it simplifies our proofs and is sufficient for our needs.
\begin{lem} \label{lem: building U(p/q) with divisibility}
  Let $\Psi$ and $\omega$ satisfy \eqref{psi omega cond}. Furthermore suppose that $\Psi$ satisfies the square divisibility condition and
    \begin{equation*}
        \omega(q)\geq \frac{45}{16\Psi(q)}\Psi\left( \frac{1}{8\Psi(q)} \right)
    \end{equation*}
    for all $q$ sufficiently large. Let $(p,q)\in \Z\times \N$ and $\frac{p}{q}=[0;a_{1},\ldots,a_{n}]$ for $n$ even, where the partial quotients satisfy
    \begin{equation*}
        a_{i}\leq \frac{1}{6 q^{2}\Psi(q)}\, ,  \quad 1\leq i\leq n\, .
    \end{equation*}
     Then if $q$ is sufficiently large, there exists a finite set of consecutive words $\{\bb_{l}\}_{l=1}^{L}$ such that 
\begin{equation} \label{contained SQD}
\bigcup_{l=1}^{L}I(0;a_{1},\ldots,a_{n},\bb_{l})\subset \left\{x:\Psi(q)(1-\omega(q))\leq x-\frac{p}{q}< \Psi(q)\right\}    
\end{equation}
and 
\begin{equation} \label{measure SQD}
\mathcal{L}\left(\bigcup_{l=1}^{L}I(0;a_{1},\ldots,a_{n},\bb_{l}))\right)\geq \frac{\omega(q)\Psi(q)}{10^8}.
\end{equation}
Moreover, if each $\bb_{l}$ has length $m\geq 2$, then
      \begin{equation} \label{b digits SQD}
    b_{i}\in \left\{ 1,2, \ldots,\left\lfloor \frac{1}{3q_{n+i}^{2}\Psi(q_{n+i})}\right\rfloor \right\} \quad 1\leq i \leq m-1\, ,
    \end{equation}
    where $\frac{p_{n+i}}{q_{n+i}}=[0;a_{1},\ldots, a_{n},b_{0},b_{1},\ldots,b_{i-1}]$ for $1\leq i\leq m-1.$ 
\end{lem}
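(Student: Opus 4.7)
The plan is to mimic the structure of the proof of Lemma~\ref{lem: building U(p/q)}: first establish a measure lower bound on a union of consecutive fundamental intervals contained in $S(p/q)$, then transfer this to the target set through Lemma~\ref{lem: belongs to u(p/q)} and Lemma~\ref{lem:Quasi-multiplicative}. The key difference is that the weaker hypothesis on $\omega$ makes $S(p/q)$ too small for the naive bad-set domination used in Lemma~\ref{lem:S bigger than bad} to apply at depth $n$. Instead, the square divisibility condition provides, via Lemma~\ref{lem: non-empty S(p/q) divisibility cond}, an explicit ``seed'' fundamental interval sitting inside $S(p/q)$ at a much larger depth, where the relevant convergent denominator is of order $1/\Psi(q)$ rather than $1/(q\Psi(q))$. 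The weaker $\omega$ threshold $\tfrac{45}{16\Psi(q)}\Psi(1/(8\Psi(q)))$ is calibrated precisely so that the bad-set domination remains valid from this post-seed depth onwards.

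First, I would invoke Lemma~\ref{lem: non-empty S(p/q) divisibility cond} (modifying the palindromic construction in the degenerate cases $a_1 = 1$ or $a_n = 1$, where a one-step adjustment yields an analogous seed) to obtain a specific fundamental interval $I_{\mathrm{seed}}$ of the form $I(0;a_1,\ldots,a_n,\bb_{\mathrm{seed}})\subseteq S(p/q)$ with $\bb_{\mathrm{seed}}=(\tfrac{1}{q^2\Psi(q)}-1,1,a_n-1,\ldots,a_1-1,1,\lfloor\tfrac{1}{3q_*^2\Psi(q_*)}\rfloor)$. The hypothesis $a_i\leq \tfrac{1}{6q^2\Psi(q)}$ ensures that every intermediate entry of $\bb_{\mathrm{seed}}$ satisfies the restriction \eqref{b digits SQD} with respect to the corresponding convergent denominator. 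I would then identify the minimal-depth fundamental interval $I_{\dagger}\supset I_{\mathrm{seed}}$ for which $I_{\dagger}\subseteq S(p/q)$ still holds; the denominator $q_{\dagger}$ at the tail of $I_{\dagger}$ will be of order at least $1/(8\Psi(q))$, coming from the growth of denominators across the palindromic tail of $\bb_{\mathrm{seed}}$.

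Next, at depths immediately shallower than $I_{\dagger}$, I would run the case analysis of Lemma~\ref{lem: bound on S(p/q)} (Cases 1--3d) applied to $S(p/q)$ and to the fundamental interval enclosing $I_{\dagger}$. Cases 1, 2, 3b and 3c produce a set of consecutive words directly; Cases 3a and 3d trigger a further descent. The crucial point is that the analog of Lemma~\ref{lem:S bigger than bad} now reads
\begin{equation*}
    20\,\cL(\cB(p/q,\bb))\leq \cL(S(p/q)\cap I_{\dagger})
\end{equation*}
for every $\bb$ encountered in the descent past $I_{\dagger}$, because the relevant denominator $q_{n+|\bb|}$ is at least $q_\dagger\gtrsim 1/(8\Psi(q))$, and $\omega(q)\geq \tfrac{45}{16\Psi(q)}\Psi(1/(8\Psi(q)))$ is exactly the scaled version of the inequality that drove Lemma~\ref{lem:S bigger than bad}. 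The descent terminates because maximal fundamental-interval lengths shrink to zero, so $S(p/q)\cap I_{\dagger}$ cannot be contained in arbitrarily deep fundamental intervals; at termination the construction of consecutive words proceeds verbatim as in Cases 2a--3c of the earlier proof, with the minimality argument \eqref{eq: technique repeated later} yielding $\cL(\bigcup I(\bb_l))\geq \tfrac{9}{100}\cL(S(p/q)\cap I_{\dagger})$.

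The main obstacle is the rigorous verification that $q_{\dagger}\gtrsim 1/(8\Psi(q))$ and, more importantly, that the post-seed bad-set domination holds at every step of the descent past $I_{\dagger}$. This reduces to a careful telescoping of the recursion $q_{n+i+1}=b_iq_{n+i}+q_{n+i-1}$ through the palindromic tail prescribed by Lemma~\ref{lem: non-empty S(p/q) divisibility cond}: the first entry $\tfrac{1}{q^2\Psi(q)}-1$ pushes the denominator to order $1/(q\Psi(q))$, and the reverse tail $(1,a_n-1,a_{n-1},\ldots,a_1-1,1)$ multiplies this by a further factor comparable to $q/8$ (the constant $8$ absorbing the worst-case subtractions $a_i-1$ together with the initial $1$). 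Once this denominator bound is in hand, \eqref{contained SQD} follows from Lemma~\ref{lem: belongs to u(p/q)}, and \eqref{measure SQD} follows by combining $\cL(S(p/q))\geq \omega(q)/(q^2\Psi(q))$ with Lemma~\ref{lem:Quasi-multiplicative} applied along the prefix $(a_1,\ldots,a_n)$ and the palindromic tail; the constant $10^8$ absorbs the factors $1/16$ from each of several quasi-multiplicative concatenations, the $1/100$ loss from the $S(p/q)$ analysis, and the $1/9$ loss from the mean-value passage between shifted and unshifted fundamental intervals (as in \eqref{eq:Quasi bernoulli bounda}).
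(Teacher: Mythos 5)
There is a genuine gap, and it sits exactly where the new difficulty of this lemma lies. Your plan reduces everything to the case analysis of Lemma~\ref{lem: bound on S(p/q)} together with the claim that the bad-set domination $20\,\cL(\cB(p/q,\bb))\leq \cL(S(p/q)\cap I_{\dagger})$ holds ``for every $\bb$ encountered in the descent past $I_{\dagger}$, because the relevant denominator is at least $1/(8\Psi(q))$''. But the descent does not start past $I_{\dagger}$: it starts at the integer part and proceeds along the digits $c_{0},c_{1},\ldots,c_{m}$ of the left endpoint of $S(p/q)$, and the bad sets met at intermediate depths $0\leq j<m$ are $\cB(p/q,(c_{0},\ldots,c_{j}))$, whose associated denominators $q_{n+1+j}$ range from order $1/(q\Psi(q))$ up to order $1/\Psi(q)$. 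Dominating those by $S(p/q)$ would require $\omega(q)\gg \frac{1}{\Psi(q)}\Psi\bigl(\frac{1}{2q\Psi(q)}\bigr)$, i.e.\ the stronger hypothesis of Lemma~\ref{lem: building U(p/q)}, not the present one; under the weaker threshold $\omega(q)\geq\frac{45}{16\Psi(q)}\Psi\bigl(\frac{1}{8\Psi(q)}\bigr)$ only the terminal bad set $\cB(p/q,\bc)$ (depth $m$, where $q_{*}\geq\frac{1}{8\Psi(q)}$) is controlled, which is the content of Lemma~\ref{lem:S bigger than bad 2}. So the cases in which $S(p/q)$ meets a large intermediate bad set (the analogues of Cases $2a$ and $3a$--$3d$) cannot be closed by your argument. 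Nor can you fall back on harvesting only $I_{\dagger}$: near the threshold $I_{\dagger}$ is forced down to the seed interval of Lemma~\ref{lem: non-empty S(p/q) divisibility cond}, and $\cL(I_{\dagger})/\cL(S(p/q))$ tends to zero (for $\Psi(q)=q^{-\tau}$ it is of order $q^{-\tau(\tau-2)}$), so the measure bound \eqref{measure SQD} would fail.

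What the paper does instead (in Lemma~\ref{lem: bound on S(p/q) with divisibility}, of which the present lemma is a short corollary via Lemma~\ref{lem: belongs to u(p/q)}, Lemma~\ref{lem:b size} and the measure transfer of Lemma~\ref{lem: building U(p/q)}) is to exploit the exact location of the left endpoint given by Lemma~\ref{lem: where is S(p/q)}: its digits $c_{i}$ are at most $\max_{i}a_{i}+1\leq \frac{1}{6q^{2}\Psi(q)}+1$, which by the hypothesis on the $a_{i}$ keeps the endpoint well clear of every intermediate bad set, and Lemma~\ref{lem: far from bad} quantifies this by a gap of size at least a fixed multiple of $\cL(\cB(p/q,(c_{0},\ldots,c_{j})))$. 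Consequently, whenever $S(p/q)$ is small relative to such a bad set yet reaches it (cases $2^{*}b$ and $3^{*}cii$), the stretch of fundamental intervals between the endpoint's digit and the bad set, which must lie inside $S(p/q)$, already has measure at least a constant times $\cL(\cB)\geq\frac{1}{20}\cL(S(p/q))$. Your proposal contains no substitute for this gap estimate, and without it (or an equivalent mechanism) the constant-proportion harvest of $S(p/q)$, and hence \eqref{measure SQD}, does not follow.
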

Lemma \ref{lem: building U(p/q) with divisibility} will allow us to prove Theorem \ref{thm:main2}. It is an analogue of Lemma \ref{lem: building U(p/q)} under the square divisibility assumption. We highlight the additional assumption appearing in Lemma \ref{lem: building U(p/q) with divisibility} that $a_{i}\leq \frac{1}{6 q^{2}\Psi(q)}$ for $ 1\leq i\leq n$. This condition imposes additional technicalities in our proof of Theorem~\ref{thm:main2} in Section~\ref{sec: dim statements proof} that are not present in the proof of Theorem~\ref{thm:main}.

When $\Psi$ satisfies the square divisibility condition we have a much better understanding of where $S(p/q)$ lies in relation to the partition of the unit interval given by the fundamental intervals. In this subsection we utilise this property. We begin with the following lemma which gives a useful expression for the left endpoint of $S(p/q)$.

\begin{lem} \label{lem: where is S(p/q)}
    Let $\Psi$ and $\omega$ satisfy \eqref{psi omega cond}. Furthermore suppose that $\Psi$ satisfies the square divisibility condition. Let $(p,q)\in \Z\times \N$ and $\frac{p}{q}=[0;a_{1},\ldots,a_{n}]$ with $n$ even. Then the left endpoint of $S(p/q)$ is equal to the rational number
    \begin{equation*}
        \left[ \frac{1}{q^{2}\Psi(q)}-1;1,a_{n}-1,a_{n-1}, \ldots, a_{2}, a_{1}-1,1\right].
    \end{equation*}
    Furthermore, there exists $\bc=(c_{0},c_{1},\ldots, c_{m})$ with $m$ even that satisfies the following properties
    \begin{enumerate}[(i)]
        \item $[c_{0};c_{1},\ldots, c_{m}]=\left[ \frac{1}{q^{2}\Psi(q)}-1;1,a_{n}-1,a_{n-1}, \ldots, a_{2}, a_{1}-1,1\right]$,
        \item $c_{0}=\frac{1}{q^{2}\Psi(q)}-1$,
        \item $0<c_{i}\leq \max_{1\leq i \leq n} a_{i} +1$ for every $1\leq i \leq m$.
    \end{enumerate}
    
\end{lem}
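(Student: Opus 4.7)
The plan is to compute the left endpoint of $S(p/q)$ directly and then read off a word $\bc$ satisfying (ii) and (iii). By definition, the left endpoint equals $\tfrac{1}{q^{2}\Psi(q)} - \tfrac{q_{n-1}}{q}$. The square divisibility condition makes $Q := \tfrac{1}{q^{2}\Psi(q)} \in \N$, and since $0 < \tfrac{q_{n-1}}{q} < 1$, I would split
\[
\frac{1}{q^{2}\Psi(q)} - \frac{q_{n-1}}{q} = (Q - 1) + \left(1 - \frac{q_{n-1}}{q}\right),
\]
where the integer part $Q - 1$ supplies the value $c_{0}$ required by (ii), and the fractional part lies in $(0,1)$.

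Next I would expand the fractional part as a continued fraction. The standard mirror identity for convergents gives $\tfrac{q_{n-1}}{q_{n}} = [0; a_{n}, a_{n-1}, \ldots, a_{1}]$, and a one-line computation shows
\[
1 - [0; b_{1}, b_{2}, b_{3}, \ldots] = [0; 1, b_{1} - 1, b_{2}, b_{3}, \ldots],
\]
interpreted via the collapsing rule $[\ldots, b, 0, c, \ldots] = [\ldots, b+c, \ldots]$ if $b_{1} = 1$. Applying this with $b_{i} = a_{n+1-i}$ yields
\[
1 - \frac{q_{n-1}}{q} = [0; 1, a_{n}-1, a_{n-1}, \ldots, a_{2}, a_{1}],
\]
and the even-odd ambiguity $[\ldots, a_{1}] = [\ldots, a_{1} - 1, 1]$ on the terminal digit produces the claimed expansion, establishing the first part of the lemma.

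For the second part I would read the word $\bc$ off this expansion: setting $c_{0} = Q - 1$ and $(c_{1}, \ldots, c_{m}) = (1, a_{n}-1, a_{n-1}, \ldots, a_{2}, a_{1} - 1, 1)$ gives $m = n + 2$, which is even since $n$ is. Each $c_{i}$ with $i \geq 1$ is then either $1$, some $a_{j}$, or some $a_{j} - 1$, so the bound $c_{i} \leq \max_{j} a_{j} + 1$ in (iii) holds trivially. The main obstacle is handling the edge cases $a_{1} = 1$ or $a_{n} = 1$, where a $0$ appears in the sequence and must be eliminated via the collapsing rule; each such collapse removes two digits, preserving the parity of $m$, while the merged digit has the form $1 + a_{j}$ — which is exactly why the bound in (iii) allows the "$+1$". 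This bookkeeping is the only genuine subtlety; everything else reduces to the two continued-fraction identities noted above.
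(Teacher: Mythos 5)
Your proposal is correct and follows essentially the same route as the paper: the same split of the left endpoint into integer part $\frac{1}{q^{2}\Psi(q)}-1$ plus $1-\frac{q_{n-1}}{q}$, the same mirror identity $\frac{q_{n-1}}{q_{n}}=[0;a_{n},\ldots,a_{1}]$ combined with the negation/complement identity, and the same case analysis collapsing zeros when $a_{1}=1$ or $a_{n}=1$ (each collapse removing two digits and producing a merged digit $a_{j}+1$, which is exactly the paper's justification for the ``$+1$'' in (iii) and for $m$ remaining even).
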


\begin{rem}
We remark that in the first expression for the left endpoint of $S(p/q)$ there are an even number of digits after the integer part $\frac{1}{q^{2}\Psi(q)}-1.$ The problem with this expression is that it may produce digits that take the value zero, i.e. $a_{n}-1$ and $a_{1}-1$ may both equal zero. This is why we have to introduce the word $\bc.$
\end{rem}

\begin{proof}[Proof of Lemma \ref{lem: where is S(p/q)}]
    Recall the formulas 
\begin{equation*}
    \frac{q_{n-1}}{q_{n}}=[0;a_{n},\ldots, a_{1}] \quad \text{ and } \quad -[0;b_{1},\ldots, b_{n}]=[-1;1,b_{1}-1,b_{2}, \ldots, b_{n}]\, .
\end{equation*}
Both are standard results in continued fraction theory, see for example \cite{PoorShall92}. It is then clear to see that since $\frac{1}{q^{2}\Psi(q)}\in \N,$ the left endpoint of $S(p/q)$ is
\begin{align*}
    \frac{1}{q^{2}\Psi(q)}-\frac{q_{n-1}}{q}&=\left[ \frac{1}{q^{2}\Psi(q)}-1;1,a_{n}-1,a_{n-1},\ldots, a_{2}, a_{1}\right]\\
    &=\left[ \frac{1}{q^{2}\Psi(q)}-1;1,a_{n}-1,a_{n-1},\ldots, a_{2}, a_{1}-1,1\right] .
\end{align*}
Define the word $\bc$ according to the rule 
\begin{equation*}
    \bc=\begin{cases}
        \left( \frac{1}{q^{2}\Psi(q)}-1,1,a_{n}-1,a_{n-1},\ldots, a_{2}, a_{1}-1,1\right) \quad & \text{ if } a_{1},a_{n}\geq 2\, ,\\
         \left( \frac{1}{q^{2}\Psi(q)}-1,1+a_{n-1},a_{n-2}\ldots, a_{2}, a_{1}-1,1\right) & \text{ if } a_{1}\geq 2 \, \text{ and } a_{n}=1\, ,\\
          \left( \frac{1}{q^{2}\Psi(q)}-1,1,a_{n}-1,a_{n-1},\ldots,a_{3}, a_{2}+1\right) & \text{ if } a_{1}=1 \, \text{ and } a_{n}\geq2\, , \\
          \left( \frac{1}{q^{2}\Psi(q)}-1,1+a_{n-1},a_{n-2},\ldots,a_{3}, a_{2}+1\right) & \text{ if } a_{1}=1 \text{ and } a_{n}=1\, .
    \end{cases}
\end{equation*}
Note that in all of the above cases $c_{0}=\frac{1}{q^{2}\Psi(q)}-1$ so $(ii)$ holds. It can easily be verified that each of the digits $c_{i}$ satisfy $(iii)$. By using the formula $[\ldots, b, 0, c, \ldots]=[\ldots, b+c,\ldots]$ it is easy to see that
\begin{equation*}
    [\bc]=\left[\frac{1}{q^{2}\Psi(q)}-1;1,a_{n}-1,a_{n-1},\ldots, a_{2},a_{1}-1,1\right]. 
\end{equation*}
So $(iii)$ holds. Finally, it follows our formula for $\bc$ that the number of digits appearing after $\frac{1}{q^{2}\Psi(q)}-1$ is $n+2$, $n$, or $n-2$. Since $n$ is even, this implies $m$ is even.
\end{proof}

We need the following modified version of Lemma~\ref{lem:S bigger than bad}.

\begin{lem} \label{lem:S bigger than bad 2}
    Let $\Psi$ and $\omega$ satisfy \eqref{psi omega cond}. Furthermore suppose that $\Psi$ satisfies the square divisibility condition and
    \begin{equation*}
        \omega(q)\geq \frac{45}{16\Psi(q)}\Psi\left( \frac{1}{8\Psi(q)} \right)
    \end{equation*}
    for all $q$ sufficiently large. Let $(p,q)\in \Z\times \N$ and $\frac{p}{q}=[0;a_{1},\ldots,a_{n}]$ for $n$ even. 
    Then if $q$ is sufficiently large, we have 
    \begin{equation*}
        \cL(S(p/q))\geq 20 \cL\left(\cB\left(p/q, \bc \right)\right)
    \end{equation*}
    where $\bc$ is as in Lemma \ref{lem: where is S(p/q)}.
\end{lem}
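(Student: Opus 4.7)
The strategy mirrors that of Lemma~\ref{lem:S bigger than bad}, but exploits the extra rigidity provided by the square divisibility condition in order to compute $q_{n+m+1}$ \emph{exactly}, rather than settling for worst-case estimates. Writing $k := 1/(q^{2}\Psi(q))\in\N$ (so $c_{0}=k-1$), the first step is to pin down the denominator of the reduced fraction $[0;a_{1},\ldots,a_{n},c_{0},c_{1},\ldots,c_{m}]$. By Lemma~\ref{lem: where is S(p/q)}, $\beta := [c_{0};c_{1},\ldots,c_{m}] = (kq-q_{n-1})/q$, and this representation is already in lowest terms since $\gcd(q,q_{n-1})=1$. Substituting into
\[
    [0;a_{1},\ldots,a_{n},\beta] \;=\; \frac{p_{n}\beta+p_{n-1}}{q_{n}\beta+q_{n-1}},
\]
clearing denominators using $q_{n}=q$, and applying the identity $p_{n-1}q_{n}-p_{n}q_{n-1}=(-1)^{n}=1$ (valid because $n$ is even), the denominator simplifies to $kq^{2}$ while the numerator becomes $kq\,p_{n}+1$. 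Since $kq\,p_{n}+1\equiv 1\pmod{kq}$ and also $\pmod{q}$, a brief coprimality check shows that the fraction is already in lowest terms, whence $q_{n+m+1}=kq^{2}=1/\Psi(q)$.

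With $q_{n+m+1}$ known, $\cL(\cB(p/q,\bc))$ is bounded by direct computation. Set $A:=\lceil 1/(3q_{n+m+1}^{2}\Psi(q_{n+m+1}))\rceil$. Since $[0;c_{1},\ldots,c_{m}]=(q-q_{n-1})/q$ is in lowest terms, $q_{m}(c_{1},\ldots,c_{m})=q$; writing $Q'' := q_{m-1}(c_{1},\ldots,c_{m-1})$ and using the length formula from Lemma~\ref{lem:Fundamental interval}, a partial-fractions telescoping gives
\[
    \cL(\cB(p/q,\bc)) \;=\; \sum_{c\geq A}\frac{1}{(cq+Q'')((c+1)q+Q'')} \;=\; \frac{1}{q(Aq+Q'')} \;\leq\; \frac{3\,q_{n+m+1}^{2}\Psi(q_{n+m+1})}{q^{2}}.
\]
The crucial gain now comes from invoking the monotonicity of $q\mapsto q^{2}\Psi(q)$ at $1/(8\Psi(q))\leq 1/\Psi(q)=q_{n+m+1}$:
\[
    q_{n+m+1}^{2}\Psi(q_{n+m+1}) \;\leq\; \bigl(\tfrac{1}{8\Psi(q)}\bigr)^{2}\Psi\!\bigl(\tfrac{1}{8\Psi(q)}\bigr) \;=\; \frac{\Psi(1/(8\Psi(q)))}{64\,\Psi(q)^{2}}.
\]

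Combining these bounds with $\cL(S(p/q))\geq\omega(q)/(q^{2}\Psi(q))$ and the hypothesis $\omega(q)\geq\tfrac{45}{16\Psi(q)}\Psi(1/(8\Psi(q)))$ yields
\[
    \cL(S(p/q)) \;\geq\; \frac{45}{16}\cdot\frac{\Psi(1/(8\Psi(q)))}{q^{2}\Psi(q)^{2}} \;\geq\; 20\cdot\frac{3\,\Psi(1/(8\Psi(q)))}{64\,q^{2}\Psi(q)^{2}} \;\geq\; 20\,\cL(\cB(p/q,\bc)),
\]
where the middle inequality uses $45/16\geq 60/64$. The main non-routine step is the exact identification $q_{n+m+1}=1/\Psi(q)$; this collapse is precisely what the square divisibility condition is engineered to produce, and without it one would be forced into the cruder worst-case estimates used in Lemma~\ref{lem:S bigger than bad}, which do not suffice for the tight constant $45/16$ appearing in the hypothesis.
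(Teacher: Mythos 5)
Your proof is correct, and it follows the same overall strategy as the paper — the trivial lower bound $\cL(S(p/q))\geq \omega(q)/(q^{2}\Psi(q))$ compared against an upper bound for $\cL(\cB(p/q,\bc))$ of the form $\mathrm{const}\cdot\Psi(1/(8\Psi(q)))/(q^{2}\Psi(q)^{2})$ — but your derivation of the upper bound is genuinely sharper in two places. The paper does not compute $q_{n+m+1}$ exactly: it only establishes $q_{n+m+1}\geq \tfrac{1}{8\Psi(q)}$ via the quasi-multiplicativity of continuants (Lemma~\ref{lem:Quasi-multiplicative}) together with the reduced-form identity $[0;c_{1},\ldots,c_{m}]=(q-q_{n-1})/q$, and then bounds $\cL(\cB)$ by $9q_{n+m+1}^{2}\Psi(q_{n+m+1})\cL(I(\bc))$ using Lemma~\ref{lem:Subset measure bound} and $\cL(I(\bc))\leq 1/q^{2}$. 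Your exact identification $q_{n+m+1}=kq^{2}=1/\Psi(q)$ (via the determinant identity $p_{n-1}q_{n}-p_{n}q_{n-1}=1$ and the coprimality check on $(p_{n}kq+1)/(kq^{2})$) and your telescoping evaluation $\cL(\cB)=1/(q(Aq+Q''))\leq 3q_{n+m+1}^{2}\Psi(q_{n+m+1})/q^{2}$ together yield the constant $3/64$ in place of the paper's $9/64$, so your argument closes with a factor of $3$ to spare, whereas the paper's constants are engineered to match the hypothesis $\omega(q)\geq\tfrac{45}{16\Psi(q)}\Psi(1/(8\Psi(q)))$ exactly ($20\cdot 9/64=45/16$). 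Both routes are valid; the paper's is shorter because it reuses the generic lemmas, while yours makes explicit that the square divisibility condition forces the denominator collapse and would support a slightly weaker hypothesis on $\omega$.
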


\begin{proof}
    By Lemma \ref{lem:Subset measure bound} we have that
\begin{equation} \label{eq: size of later bad}
    \cL\left(\cB\left(p/q,\bc\right)\right) \leq 9 q_{*}^{2}\Psi(q_{*}) \cL\left( I\left(\bc\right)\right)
\end{equation}
where $\frac{p_{*}}{q_{*}}=\left[0,a_{1}\ldots,a_{n},\bc\right].$ Using Lemma~\ref{lem:Quasi-multiplicative}, the fact that $1-\frac{q_{n-1}}{q_{n}}=\frac{q_{n}-q_{n-1}}{q_{n}}=[0;c_{1},\ldots, c_{m}]$ is in reduced form, and Lemma~\ref{lem: where is S(p/q)} $(ii)$, we have that
\begin{equation}
\label{eq:q_{2n+2} bound}
 q_{*}=q_{n+1+m}\left( (a_{1},\ldots,  a_{n})\left(\frac{1}{q^{2}\Psi(q)}-1\right)(c_{1},\ldots,c_{m}) \right) \geq \frac{1}{8\Psi(q)} \, .
\end{equation}

Combining \eqref{eq: size of later bad}, \eqref{eq:q_{2n+2} bound}, the non-increasing property of $q\mapsto q^{2}\Psi(q)$, and Lemma~\ref{lem:Fundamental interval}, we have
\begin{equation} \label{eq: size of bad divisibility case}
    \cL\left(\cB\left(p/q, \bc\right)\right) \leq \frac{9}{64q^{2}\Psi(q)^{2}}\Psi\left(\frac{1}{8\Psi(q)}\right).  
\end{equation}
The rest of this proof is identical to the latter part of the proof of Lemma~\ref{lem:S bigger than bad}. We use the simple lower bound $\cL(S(p/q))\geq \frac{\omega(q)}{q^{2}\Psi(q)}$ and then compare it with the upper bound for $\cL\left(\cB\left(p/q, \bc\right)\right)$ obtained above to see, via our lower bound on the decay rate of $\omega$,  that $\cL\left(S(p/q)\right) \geq 20 \cL\left(\cB\left(p/q, \bc\right)\right)$ as required.

\end{proof}

Equipped with Lemma \ref{lem:S bigger than bad 2} we can now prove Lemma \ref{lem: non-empty S(p/q) divisibility cond}.
\begin{proof}[Proof of Lemma \ref{lem: non-empty S(p/q) divisibility cond}]
For ease of notation write 
\begin{equation*}
    \bb=\left(\bc,\left\lfloor \frac{1}{3q_{*}^{2}\Psi(q_{*})}\right\rfloor\right)
\end{equation*}
where $\bc$ is given by $$\bc=\left(\frac{1}{q^{2}\Psi(q)}-1,1,a_{n}-1, a_{n-1},\ldots, a_{2},a_{1}-1,1\right).$$ By our assumption $a_{1},a_{n}\geq 2,$ all of the digits in $\bc$ are strictly positive integers. Note that the left endpoint of $I(\bb)$ is the right endpoint of $\cB\left(p/q, \bc\right)$, and $\cB(p/q,\bc)$ has the same left endpoint as $S(p/q)$ by Lemma~\ref{lem: where is S(p/q)}. By Lemma~\ref{lem:Quasi-multiplicative} we have the bound
\begin{equation*}
    \cL\left(I(\bb)\cup \cB\left(p/q, \bc\right)\right) \leq 5 \cL\left(\cB\left(p/q, \bc\right)\right),
\end{equation*}
and so by Lemma~\ref{lem:S bigger than bad 2}, $S(p/q)$ is sufficiently large, relative to $\cB\left(p/q, \bc\right)$, to guarantee that 
\begin{equation*}
    S(p/q) \supseteq I(\bb)\cup \cB\left(p/q,\bc\right)\, .
\end{equation*}
Hence $I(\bb)\subset S(p/q)$ as claimed.

\end{proof}

The following lemma is important because it tells us in a quantifiable way that $S(p/q)$ avoids fundamental intervals with big partial quotients. In this lemma, given $x\in \R$ and $A\subset \R,$ we let $d(x,A)$ denote the Euclidean distance between $x$ and $A$.

\begin{lem} \label{lem: far from bad}
    Let $\Psi$ and $\omega$ satisfy \eqref{psi omega cond}. Furthermore suppose that $\Psi$ satisfies the square divisibility condition and
    \begin{equation*}
        \omega(q)\geq \frac{45}{16\Psi(q)}\Psi\left( \frac{1}{8\Psi(q)} \right)
    \end{equation*}
    for all $q$ sufficiently large. Let $(p,q)\in \Z\times \N$ and $\frac{p}{q}=[0;a_{1},\ldots,a_{n}]$ for $n$ even, where the partial quotients satisfy
    \begin{equation} \label{cond: a_i bounded size divisibility statment 1}
        a_{i}\leq  \frac{1}{6 q^{2}\Psi(q)}\, , \quad 1\leq i\leq n\, .
    \end{equation}
    Then for $\bc=(c_{0},\ldots,c_{m})$ as in Lemma \ref{lem: where is S(p/q)}, if $q$ is sufficiently large and $0\leq j\leq m-1,$ then the following statements hold:
    \begin{enumerate}[(i)]
        \item If $j$ is odd, then 
    \begin{equation*}
d\left([c_{0};c_{1},\ldots,c_{j},c_{j+1}+1],\cB(p/q,\left(c_{0},\ldots, c_{j}\right)) \right)>\frac{1}{640}\cL(\cB(p/q,(c_{0},\ldots,c_{j})))\, ,
    \end{equation*}
    \item If $j$ is even and $c_{j+1}=1$, then
    \begin{equation*}
        d\left( [c_{0};c_{1},\ldots,c_{j+1},c_{j+2}+1],\cB(p/q,(c_{0},\ldots, c_{j},c_{j+1})) \right)>\frac{1}{40960}\cL(\cB(p/q,(c_{0},\ldots,c_{j})))\, .
    \end{equation*}
    \end{enumerate}
\end{lem}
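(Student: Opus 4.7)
The plan is to reduce both parts of the lemma to a telescoping estimate on sums of Lebesgue measures of adjacent level-$(j+1)$ fundamental intervals inside $I_{j}(c_{0};c_{1},\ldots,c_{j})$. Let $K_{0}:=\left\lceil 1/(3q_{n+j+1}^{2}\Psi(q_{n+j+1}))\right\rceil$, so that $\cB(p/q,(c_{0},\ldots,c_{j}))$ is the union of $I_{j+1}(c_{0};c_{1},\ldots,c_{j},k)$ over $k\geq K_{0}$. The hypothesis $a_{i}\leq 1/(6q^{2}\Psi(q))$ combined with Lemma~\ref{lem: where is S(p/q)}(iii) gives $c_{j+1}\leq 1/(6q^{2}\Psi(q))+1$, while $q_{n+j+1}\geq q$ together with the monotonicity of $q\mapsto q^{2}\Psi(q)$ gives $K_{0}\geq 1/(3q^{2}\Psi(q))$; hence for $q$ sufficiently large, $K_{0}$ is much larger than $c_{j+1}+2$. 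The computational input is the standard identity, valid with $p_{j}/q_{j}=[0;c_{1},\ldots,c_{j}]$,
\begin{equation*}
\cL(I_{j+1}(c_{0};c_{1},\ldots,c_{j},k))=\frac{1}{(kq_{j}+q_{j-1})((k+1)q_{j}+q_{j-1})}=\frac{1}{q_{j}}\left(\frac{1}{kq_{j}+q_{j-1}}-\frac{1}{(k+1)q_{j}+q_{j-1}}\right),
\end{equation*}
which by telescoping yields in particular $\cL(\cB(p/q,(c_{0},\ldots,c_{j})))=1/(q_{j}(K_{0}q_{j}+q_{j-1}))$.

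For part (i), $j$ is odd, so the partition $\{I_{j+1}(c_{0};c_{1},\ldots,c_{j},k)\}_{k\geq 1}$ is ordered left-to-right in $k$; consequently $\cB(p/q,(c_{0},\ldots,c_{j}))$ occupies the right side of $I_{j}(c_{0};c_{1},\ldots,c_{j})$, and the point $[c_{0};c_{1},\ldots,c_{j},c_{j+1}+1]$, being the right endpoint of $I_{j+1}(c_{0};c_{1},\ldots,c_{j+1})$ by Lemma~\ref{lem:Fundamental interval}, lies strictly to its left. Thus $d$ is exactly the total length of the intervening intervals, and telescoping gives $d=\tfrac{1}{q_{j}}\bigl(\tfrac{1}{(c_{j+1}+1)q_{j}+q_{j-1}}-\tfrac{1}{K_{0}q_{j}+q_{j-1}}\bigr)$. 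Using $q_{j-1}\leq q_{j}$ one obtains
\begin{equation*}
\frac{d}{\cL(\cB(p/q,(c_{0},\ldots,c_{j})))}\geq \frac{K_{0}-c_{j+1}-2}{c_{j+1}+2},
\end{equation*}
which by our bounds on $K_{0}$ and $c_{j+1}$ tends to $1$ as $q\to\infty$ and therefore exceeds $1/640$ for $q$ sufficiently large.

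For part (ii), $j$ is even, so the partition is ordered right-to-left in $k$: now $\cB(p/q,(c_{0},\ldots,c_{j}))$ sits on the left side of $I_{j}(c_{0};c_{1},\ldots,c_{j})$ and $I_{j+1}(c_{0};c_{1},\ldots,c_{j},1)$ is the rightmost block of the partition. Since $c_{j+1}=1$, the point $[c_{0};c_{1},\ldots,c_{j+1},c_{j+2}+1]$ lies inside $I_{j+1}(c_{0};c_{1},\ldots,c_{j},1)$, and therefore its distance to $\cB(p/q,(c_{0},\ldots,c_{j}))$ is at least the total length of the intervening buffer $\bigcup_{k=2}^{K_{0}-1}I_{j+1}(c_{0};c_{1},\ldots,c_{j},k)$. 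Telescoping together with $q_{j-1}\leq q_{j}$ then yields $d/\cL(\cB(p/q,(c_{0},\ldots,c_{j})))\geq (K_{0}-3)/3$, which is enormous and in particular exceeds $1/40960$. The only real obstacle is keeping the parity and nesting conventions correctly aligned to identify the correct buffer in each regime; once that is done the explicit constants $1/640$ and $1/40960$ are loose and drop out of the telescoping estimate with room to spare.
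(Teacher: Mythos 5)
Your part (i) is fine, and in fact cleaner than the paper's argument: where the paper counts the level-$(j+1)$ fundamental intervals between $[c_{0};c_{1},\ldots,c_{j},c_{j+1}+1]$ and the bad set and multiplies by a minimal length, you telescope the exact identity $\cL(I_{j+1}(c_{0};c_{1},\ldots,c_{j},k))=\tfrac{1}{q_{j}}\bigl(\tfrac{1}{kq_{j}+q_{j-1}}-\tfrac{1}{(k+1)q_{j}+q_{j-1}}\bigr)$, which gives $\cL(\cB)=\tfrac{1}{q_{j}(K_{0}q_{j}+q_{j-1})}$ and a ratio tending to $1$; since $K_{0}\geq \tfrac{1}{3q^{2}\Psi(q)}$ and $c_{j+1}\leq \tfrac{1}{6q^{2}\Psi(q)}+1$ by \eqref{cond: a_i bounded size divisibility statment 1} and Lemma~\ref{lem: where is S(p/q)}(iii), this comfortably beats $1/640$ for $q$ large. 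That is a legitimate alternative to the paper's count-times-minimum-length estimate.

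Part (ii), however, has a genuine gap: you bound the distance from $[c_{0};c_{1},\ldots,c_{j+1},c_{j+2}+1]$ to the \emph{wrong} bad set. The statement asks for the distance to $\cB(p/q,(c_{0},\ldots,c_{j},c_{j+1}))$, which is a union of level-$(j+2)$ intervals sitting inside $I_{j+1}(c_{0};c_{1},\ldots,c_{j},1)$ at its right end (since $j+1$ is odd), i.e.\ on the \emph{same} side as, and potentially very close to, the point in question. You instead bound the distance to $\cB(p/q,(c_{0},\ldots,c_{j}))$, the level-$(j+1)$ bad set at the far left of $I_{j}(c_{0};c_{1},\ldots,c_{j})$; these two sets lie on opposite sides of the point, so a lower bound for the distance to the left one says nothing about the distance to the right one. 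The distinction matters downstream: in cases $2^{*}bii$ and $3^{*}cii$ of Lemma~\ref{lem: bound on S(p/q) with divisibility} the quantity needed is exactly $\cL\bigl(\bigcup_{b=c_{j+2}+1}^{\lfloor\cdot\rfloor}I_{j+2}(c_{0};\ldots,c_{j},1,b)\bigr)$, i.e.\ the distance to the deeper bad set, compared against $\cL(\cB(p/q,(c_{0},\ldots,c_{j})))$. Your telescoping method does repair this if you run it one level deeper: inside $I_{j+1}(c_{0};\ldots,c_{j},1)$, with $q_{j+1}=q_{j}+q_{j-1}\leq 2q_{j}$ and threshold $K_{0}'=\lceil 1/(3q_{n+j+2}^{2}\Psi(q_{n+j+2}))\rceil$, the distance equals $\tfrac{1}{q_{j+1}}\bigl(\tfrac{1}{(c_{j+2}+1)q_{j+1}+q_{j}}-\tfrac{1}{K_{0}'q_{j+1}+q_{j}}\bigr)$, and comparing with $\cL(\cB(p/q,(c_{0},\ldots,c_{j})))\leq \tfrac{1}{K_{0}q_{j}^{2}}$ gives the claimed bound with room to spare; this is in substance what the paper does by reusing its part-(i) estimates at level $j+1$ together with Lemma~\ref{lem:Quasi-multiplicative} (via $c_{j+1}=1$) and \eqref{eq: Bad size again}. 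As written, though, your proof of (ii) does not establish the stated inequality.
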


\begin{proof}
 Assume for now that $j$ is odd, we will deal with case $(ii)$ later. To prove this statement we will focus on a sub-collection of the fundamental intervals lying between $[c_{0};c_{1},\ldots,c_{j},c_{j+1}+1]$ and $\cB(p/q,\left(c_{0},\ldots, c_{j}\right)$. The proof of our result will then rely upon two calculations. The first is a lower bound for the number of fundamental intervals (at level $j+1$) in our subcollection. 
 The second calculation yields a lower bound for the size of such a fundamental interval.\par 
    The left endpoint of $\cB(p/q,(c_{0},\ldots, c_{j}))$ is
    \begin{equation*}
        \left[c_{0};c_{1},\ldots, c_{j}, \left\lfloor \frac{1}{3q_{n+1+j}^{2}\Psi(q_{n+1+j})}\right\rfloor\right]\, ,
    \end{equation*}
    where 
    \begin{equation*}
        \frac{p_{n+1+j}}{q_{n+1+j}}=[0;a_{1},\ldots, a_{n},c_{0},c_{1},\ldots, c_{j}]\, .
    \end{equation*}
    We then have the bound 
    \begin{align} \label{eq: number of intervals}
        &\#\left\{I(c_{0};c_{1},\ldots, c_{j},b):c_{j}+2\leq b\leq \left\lfloor\frac{1}{3q_{n+1}^{2}\Psi(q_{n+1})}\right\rfloor\right\}\nonumber\\
        =&\left\lfloor \frac{1}{3q_{n+1}^{2}\Psi(q_{n+1})}\right\rfloor - c_{j+1}-1 \nonumber\\
        \geq&  \frac{1}{4q_{n+1}^{2}\Psi(q_{n+1})}-\frac{1}{6q^{2}\Psi(q)}-2 \quad\quad \text{ (by \eqref{cond: a_i bounded size divisibility statment 1} \& Lemma~\ref{lem: where is S(p/q)} $(iii)$)}\nonumber\\
        \geq &  \frac{1}{4q_{n+1}^{2}\Psi(q_{n+1})}-\frac{1}{5q^{2}\Psi(q)} \quad \quad \text{ (for $q$ sufficiently large)}\nonumber\\
        \geq & \frac{1}{20 q_{n+1}^{2}\Psi(q_{n+1})} .
    \end{align}
     The last inequality is a consequence of $q=q_{n}$ and the non-increasing property of $q\mapsto q^{2}\Psi(q)$, so $q^{2}\Psi(q)\geq q_{n+1}^{2}\Psi(q_{n+1})$. Note that there may be significantly more fundamental intervals between $[c_{0};c_{1},\ldots,c_{j},c_{j+1}+1]$ and $\cB(p/q,\left(c_{0},\ldots, c_{j}\right)$, however, the last partial quotient will be potentially larger, which will affect the next calculation. For every fundamental interval $I\left( c_{0};c_{1},\ldots,c_{j},b\right)$ with $b\leq \frac{1}{3q_{n+1}^{2}\Psi(q_{n+1})}$ we have, by Lemma~\ref{lem:Quasi-multiplicative}, that the length of the fundamental interval is bounded from below by
    \begin{align}\label{eq: length of an one interval}
        \cL\left( I\left( c_{0};c_{1},\ldots,c_{j},b\right)\right)&\geq \cL\left( I_{j+1}\left( c_{0};c_{1},\ldots,c_{j},\frac{1}{3q_{n+1}^{2}\Psi(q_{n+1})}\right)\right)\nonumber\\
        &\geq \frac{1}{32}\frac{1}{\left(\frac{1}{3q_{n+1}^{2}\Psi(q_{n+1})}\right)^{2}}\cL\left( I_{j}\left( c_{0};c_{1},\ldots,c_{j}\right)\right)
    \end{align}
    Multiplying \eqref{eq: number of intervals} by \eqref{eq: length of an one interval} we obtain that 
    \begin{equation} \label{eq: nearly there to proving minilemma}
        d\left([c_{0};c_{1},\ldots,c_{j},c_{j+1}+1],\cB(p/q,\left(c_{0},\ldots, c_{j}\right)) \right) >\tfrac{9}{640}q_{n+1}^{2}\Psi(q_{n+1})\cL\left( I_{j}\left( c_{0};c_{1},\ldots,c_{j}\right)\right).
    \end{equation}
    By \eqref{eq: bad size} and the non-increasing property of $q\mapsto q^{2}\Psi(q),$ we have that
\begin{align} \label{eq: Bad size again}
    \cL\left(\cB\left(p/q, (c_{0},c_{1},\ldots,c_{j})\right)\right)&\leq 9 q_{n+1+j}^{2}\Psi(q_{n+1+j}) \cL\left(I_{j}\left(c_{0};c_{1},\ldots,c_{j}\right)\right)\nonumber\\
    &\leq 9 q_{n+1}^{2}\Psi(q_{n+1}) \cL\left(I_{j}\left(c_{0};c_{1},\ldots,c_{j}\right)\right)\, ,
\end{align}
and so by \eqref{eq: nearly there to proving minilemma} we have
\begin{equation*}
   d\left([c_{0};c_{1},\ldots,c_{j},c_{j+1}+1],\cB(p/q,\left(c_{0},\ldots, c_{j}\right)) \right) \geq \frac{1}{640} \cL\left(\cB\left(p/q, (c_{0},c_{1},\ldots,c_{j})\right)\right)\, .
\end{equation*}
This completes the proof in case $(i)$. Now consider case $(ii)$. Since $j$ is even and $j\leq m-2$ the digit $c_{j+2}$ is well-defined. Consider
\begin{equation*}
     d\left( [c_{0};c_{1},\ldots,c_{j+1},c_{j+2}+1],\cB(p/q,(c_{0},\ldots, c_{j},c_{j+1})) \right)\, .
\end{equation*}
Now $j+1$ is odd so we can use \eqref{eq: number of intervals}, \eqref{eq: length of an one interval}, Lemma~\ref{lem:Quasi-multiplicative} combined with our assumption $c_{j+1}=1$, and \eqref{eq: Bad size again}, to obtain 
\begin{align*}
 &d\left( [c_{0};c_{1},\ldots,c_{j+1},c_{j+2}+1],\cB(p/q,(c_{0},\ldots, c_{j},c_{j+1})) \right)\\
 >&\frac{9}{640}q_{n+1}^{2}\Psi(q_{n+1})\cL\left( I_{j+1}\left( c_{0};c_{1},\ldots,c_{j+1}\right)\right)\\
    >&\frac{9}{40960}q_{n+1}^{2}\Psi(q_{n+1})\cL\left( I_{j}\left( c_{0};c_{1},\ldots,c_{j}\right)\right)\, \\
 >&\frac{1}{40960} \cL\left(\cB\left(p/q, (c_{0},c_{1},\ldots,c_{j})\right)\right)
\end{align*}
completing the proof in case $(ii)$.
\end{proof}

 The following lemma is our analogue of Lemma \ref{lem: bound on S(p/q)} in the context of Theorem \ref{thm:main2}. Its proof is similar, however there is one important distinction. In the proof of Lemma~\ref{lem: bound on S(p/q)} the first bad set that $S(p/q)$ can possibly intersect is $\cB(p/q,b_{0})$. Lemma~\ref{lem:S bigger than bad} then tells us that $S(p/q)$ is much larger than $\cB(p/q,b_{0})$, so if the two sets were to intersect only a small amount of $S(p/q)$ would be lost. In the context of Lemma \ref{lem: building U(p/q) with divisibility}, it is possible for $\cB(p/q,b_{0})$ to be significantly larger than $S(p/q)$, and so a large part of $S(p/q)$ can be lost if the two sets intersect. To combat this we use our square divisibility condition which by Lemma \ref{lem: where is S(p/q)} gives us more detailed information about the location of $S(p/q)$. Moreover, by Lemma~\ref{lem: far from bad} under mild conditions on the partial quotients of $p/q,$ we can ensure $S(p/q)$ is sufficiently far from a bad set.

\begin{lem} \label{lem: bound on S(p/q) with divisibility}
    Let $\Psi$ and $\omega$ satisfy \eqref{psi omega cond}. Assume that $\Psi$ satisfies the square divisibility condition and that 
    \begin{equation} \label{cond: extra decay requirement}
         \omega(q)\geq \frac{45}{16\Psi(q)}\Psi\left(\frac{1}{8\Psi(q)}\right)
    \end{equation}
    for all $q\in \N$ sufficiently large. Let $(p,q)\in \Z\times \N$ and $\frac{p}{q}=[0;a_{1},\ldots,a_{n}]$ for $n$ even, where the partial quotients satisfy
    \begin{equation} \label{cond: a_i bounded size divisibility statment}
        a_{i}\leq \frac{1}{6 q^{2}\Psi(q)}\, , \quad 1\leq i\leq n\, .
    \end{equation}
    Then if $q$ is sufficiently large, there exists a finite set of consecutive words $\{\bb_{l}\}_{l=1}^{L}$ such that 
\begin{equation} \label{eq: measure outcome with SqD}
    \bigcup_{l=1}^{L}I(\bb_{l})\subset S(p/q)\quad \textrm{ and }\quad \mathcal{L}\left(\bigcup_{l=1}^{L}I(\bb_{l})\right)\geq \frac{\mathcal{L}(S(p/q))}{10^{6}}.
\end{equation}
Moreover, if each $\bb_{l}$ has length $m\geq 2$, then
      \begin{equation} \label{b digit restrition divisibility statement}
    b_{i}\in \left\{ 1,2, \ldots,\left\lfloor \frac{1}{3q_{n+i}^{2}\Psi(q_{n+i})}\right\rfloor \right\} \quad 1\leq i \leq m-1\, ,
    \end{equation} where $\frac{p_{n+i}}{q_{n+i}}=[0;a_{1},\ldots, a_{n},b_{0},b_{1},\ldots,b_{i-1}]$ for $1\leq i\leq m-1.$
\end{lem}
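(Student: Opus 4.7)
The proof will adapt the case analysis in the proof of Lemma~\ref{lem: bound on S(p/q)} to the square divisibility setting, with Lemma~\ref{lem: where is S(p/q)} providing explicit information about the location of $S(p/q)$ and Lemma~\ref{lem: far from bad} replacing Lemma~\ref{lem:S bigger than bad} as the principal tool for avoiding bad sets. The key difference from the proof of Lemma~\ref{lem: bound on S(p/q)} is that here $\cB(p/q, c_{0})$ may be much larger than $\cL(S(p/q))$, so instead of bounding $\cB$ globally we must quantify how far $S(p/q)$ sits from it.

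First I invoke Lemma~\ref{lem: where is S(p/q)} to represent the left endpoint of $S(p/q)$ as $\alpha = [c_{0}; c_{1}, \ldots, c_{m}]$, where $c_{0} = \tfrac{1}{q^{2}\Psi(q)} - 1$, $m$ is even, and the digits satisfy $0 < c_{i} \leq \max_{1 \leq j \leq n} a_{j} + 1$. Combining this digit bound with the hypothesis $a_{i} \leq \tfrac{1}{6q^{2}\Psi(q)}$ and the monotonicity of $q \mapsto q^{2}\Psi(q)$, a short calculation shows that for $q$ sufficiently large every digit $c_{i}$ (with $1 \leq i \leq m$) satisfies $c_{i} \leq \lfloor \tfrac{1}{3 q_{n+i}^{2} \Psi(q_{n+i})} \rfloor$, so each $c_{i}$ lies strictly below the bad threshold at depth $i$. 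Since $\cL(S(p/q)) = \tfrac{\omega(q)}{q^{2}\Psi(q)(1-\omega(q))} \to 0$, for $q$ large we have $S(p/q) \subset I_{0}(c_{0})$.

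Next I walk down the chain of nested fundamental intervals $I_{0}(c_{0}) \supset I_{1}(c_{0}; c_{1}) \supset I_{2}(c_{0}; c_{1}, c_{2}) \supset \cdots$, each of which contains $\alpha$, and identify the smallest index $j^{*} \geq 0$ for which $S(p/q) \not\subset I_{j^{*}+1}(c_{0}; c_{1}, \ldots, c_{j^{*}+1})$; such a $j^{*}$ exists because these fundamental intervals have lengths tending to zero. At this scale $S(p/q) \subset I_{j^{*}}(c_{0}; c_{1}, \ldots, c_{j^{*}})$ but spans past one boundary of the child $I_{j^{*}+1}(c_{0}; c_{1}, \ldots, c_{j^{*}+1})$ into an adjacent sibling on the side opposite to $\cB(p/q, (c_{0}, \ldots, c_{j^{*}}))$. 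The consecutive words will then be $\{\bb_{l}\} = \{(c_{0}, c_{1}, \ldots, c_{j^{*}}, b)\}$ with $b$ ranging over a suitable band of indices lying strictly between $c_{j^{*}+1}$ and $\lfloor \tfrac{1}{3 q_{n+j^{*}+1}^{2} \Psi(q_{n+j^{*}+1})} \rfloor$.

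The containment $\bigcup_{l} I(\bb_{l}) \subset S(p/q)$ then follows from the definition of $j^{*}$ together with the choice of $b$ below the bad threshold. The measure bound $\cL(\bigcup_{l} I(\bb_{l})) \geq \cL(S(p/q))/10^{6}$ will follow from Lemma~\ref{lem: far from bad}: part (i) applies when $j^{*}$ is odd and yields that the boundary point $[c_{0}; c_{1}, \ldots, c_{j^{*}}, c_{j^{*}+1}+1]$ lies at distance at least $\tfrac{1}{640} \cL(\cB(p/q, (c_{0}, \ldots, c_{j^{*}})))$ from the bad set, while part (ii) handles $j^{*}$ even with $c_{j^{*}+1} = 1$ with the constant $\tfrac{1}{40960}$. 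Comparing this distance with $\cL(S(p/q))$, using that $S(p/q)$ spans beyond $I_{j^{*}+1}$ (so $\cL(S(p/q))$ is controlled by the measure of the adjacent-sibling structure, itself comparable to $\cL(\cB(p/q, (c_{0}, \ldots, c_{j^{*}})))$ via Lemma~\ref{lem:Quasi-multiplicative}), yields the required constant $10^{-6}$. The digit restriction \eqref{b digit restrition divisibility statement} was verified in the setup. The main obstacle will be the case $j^{*}$ even with $c_{j^{*}+1} \neq 1$, where Lemma~\ref{lem: far from bad} does not apply directly; in that case I descend one further level (using that $\alpha$ lies in the interior of $I_{j^{*}+1}(c_{0}; c_{1}, \ldots, c_{j^{*}+1})$ whenever $j^{*}+1 < m$) and replay the argument at level $j^{*}+2$, where the relevant level has odd parity and part (i) applies directly.
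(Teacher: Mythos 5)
Your outline founders at the very first reduction. You claim that since $\cL(S(p/q))=\tfrac{\omega(q)}{q^{2}\Psi(q)(1-\omega(q))}\to 0$ we may assume $S(p/q)\subset I_{0}(c_{0})$; but under \eqref{psi omega cond} both $\omega(q)$ and $q^{2}\Psi(q)$ tend to zero, so this quotient need not be small. For instance $\Psi(q)=q^{-3}$ and $\omega(q)=1/\log q$ satisfy all the hypotheses (including \eqref{cond: extra decay requirement}), yet then $\cL(S(p/q))\asymp q/\log q\to\infty$, so $S(p/q)$ can contain or straddle many integer intervals $I_{0}(b)$. These regimes are not exotic: slowly decaying $\omega$ is exactly the situation behind Corollary~\ref{cor:dimension coincidence}, and the paper's proof devotes its first two families of cases (whole intervals $I_{0}(b)$ inside $S(p/q)$, and $S(p/q)$ meeting both $I_{0}(c_{0})$ and $I_{0}(c_{0}+1)$) precisely to them. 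Your proposal silently discards these cases on the basis of a false limit, so it is incomplete from the start.

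Even in the regime $S(p/q)\subset I_{0}(c_{0})$, the single selection at the minimal level $j^{*}$ does not deliver \eqref{eq: measure outcome with SqD}, and the geometry is partly backwards. Minimality only gives $S(p/q)\not\subset I_{j^{*}+1}$; the overhang of $S(p/q)$ beyond $I_{j^{*}+1}$ can be an arbitrarily small fraction of $\cL(S(p/q))$, in which case the full sibling intervals you select outside $I_{j^{*}+1}$ carry far less than $\cL(S(p/q))/10^{6}$ and one must instead keep working inside $I_{j^{*}+1}$ — this is exactly why the paper's argument is a recursive case analysis rather than a one-shot choice of scale. Moreover, the orientation claim is wrong: when $j^{*}$ is odd the rightward overhang points \emph{towards} $\cB(p/q,(c_{0},\ldots,c_{j^{*}}))$, not away from it, while when $j^{*}$ is even the available good siblings to the right of $I_{j^{*}+1}$ are those with index \emph{smaller} than $c_{j^{*}+1}$, so your band ``between $c_{j^{*}+1}$ and the threshold'' lies on the wrong side (the paper simply takes $b\le c_{j^{*}+1}-1$ there, and only descends a level when $c_{j^{*}+1}=1$; your ``main obstacle'' is in fact the easy case). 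Finally, Lemma~\ref{lem: far from bad} bounds the good gap from below by a constant times $\cL(\cB(\cdot))$, which converts into a constant proportion of $\cL(S(p/q))$ only on the branch $\cL(S(p/q))<20\,\cL(\cB(\cdot))$; on the complementary branch the paper needs $\cL(S(p/q))\ge 20\,\cL(\cB(p/q,\bc))$, i.e.\ Lemma~\ref{lem:S bigger than bad 2}, which is where the decay hypothesis \eqref{cond: extra decay requirement} actually does its work. Your sketch never makes this dichotomy and never uses that lemma, so the quantitative hypothesis on $\omega$ plays no role in your argument — a sign that the measure accounting cannot close as written.
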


\begin{proof}

The proof is similar to that of Lemma~\ref{lem: bound on S(p/q)} and involves a lengthy case analysis. For the purpose of brevity, when the argument for a case is similar to one that has appeared previously, then we will only provide an outline for why the result follows.\par 

\noindent \textbf{Case $1^{*}$. - There exists $b_{0}\in \N$ such that $I_{0}(b_{0})\subseteq S(p/q)$.} The proof is the same as that appearing in Lemma~\ref{lem: bound on S(p/q)}. That is, take the set of consecutive words to be $\{b\}_{b=M_{1}}^{M_{2}}$ where $M_{1}=\min\{K: I_{0}(K)\subseteq S(p/q)\}$ and $M_{2}=\max\{K: I_{0}(K)\subseteq S(p/q)\}.$ The same calculations as in  case 1 of the proof of Lemma~\ref{lem: bound on S(p/q)} show this to be a suitable collection of words. \\

\noindent \textbf{Case $2^{*}$. - There exists no $b_{0}\in \N$ such that $I_{0}(b_{0})\subset S(p/q)$ and there exists $b_{0}\in \N$ such that $I_{0}(b_{0})\cap S(p/q)\neq \emptyset$ and $I_{0}(b_{0}+1)\cap S(p/q)\neq \emptyset$}. By Lemma~\ref{lem: where is S(p/q)} $(ii)$ we have that $b_{0}=c_{0}=\frac{1}{q^{2}\Psi(q)}-1$. We split into two subcases. \\

\noindent \textbf{Case $2^{*}a$. - $\cL(S(p/q))\geq 20 \cL(\cB(p/q,c_{0}))$.} We follow the proof of case 2 appearing in Lemma~\ref{lem: bound on S(p/q)}. That is, if 

\begin{enumerate}[leftmargin=2\parindent]
    \item $\cL(S(p/q)\cap I_{0}(c_{0})) \geq \frac{\cL(S(p/q))}{2}$: If $I_{1}(c_{0};1)\subset S(p/q)$ then we take $\{(c_{0},1)\}$ to be our set of words. If $I_{1}(c_{0};1)$ is not contained in $S(p/q)$ then we can take our set of words to be 
    \begin{equation*}
    \{(c_{0};1,b)\}_{b=M}^{\left\lfloor \frac{1}{3q_{n+2}^{2}\Psi(q_{n+2})}\right\rfloor} \ \quad \text{ with }\quad M:=\min\left\{K: I_{2}(c_{0};1,K)\subset S(p/q)\right\}\,  .
    \end{equation*}
        \item $\cL(S(p/q)\cap I_{0}(c_{0}+1)) \geq \frac{\cL(S(p/q))}{2}$: Then we can take our set of words to be 
    \begin{equation*}
        \{(c_{0}+1,b)\}_{b=M}^{\left\lfloor \frac{1}{3q_{n+1}^{2}\Psi(q_{n+1})}\right\rfloor} \quad \text{ with } \quad M:=  \min \{K: I_{1}(c_{0}+1;K)\subset S(p/q)\}\, . 
    \end{equation*}
\end{enumerate}
Both sets of consecutive words are shown to satisfy \eqref{eq: measure outcome with SqD} and \eqref{b digit restrition divisibility statement} in the course of the proof of Lemma~\ref{lem: bound on S(p/q)}.\\

\noindent \textbf{Case $2^{*}b$. - $\cL(S(p/q))< 20 \cL(\cB(p/q,c_{0}))$.} By the assumption of case $2^{*}$ we have that $S(p/q)$ contains the right endpoint of $I_{0}(c_{0})$. Consider the following subcases;\\

\noindent \textit{Case $2^{*}bi$. - $c_{1}>1$.} Since $I_{1}(c_{0};1)\cap S(p/q)\neq\emptyset$ and $S(p/q)$ contains the right endpoint of $I_{0}(c_{0}),$ we have
\begin{equation*}
    \bigcup_{b=1}^{c_{1}-1}I_{1}(c_{0};b) \subseteq S(p/q)\, .
\end{equation*}
By \eqref{cond: a_i bounded size divisibility statment}, Lemma~\ref{lem: where is S(p/q)} $(iii)$, and the non-increasing property of $q\mapsto q^{2}\Psi(q)$, the set is disjoint from $\cB(p/q,c_{0})$. Moreover the collection contains the fundamental interval $I_{1}(c_{0};1)$ and it can be shown, via Lemma~\ref{lem:Quasi-multiplicative} and our assumption in case $2^{*}b$ that
\begin{align}
    \cL\left( \bigcup_{b=1}^{c_{1}-1}I_{1}(c_{0};b) \right) \geq \cL\left( I_{1}(c_{0};1) \right) 
    &\geq \frac{1}{32} \cL\left( I_{0}(c_{0}) \right)
\nonumber \\
&\geq \frac{1}{32}\cL\left( \cB(p/q,c_{0})\right) \nonumber \\
&\geq \frac{1}{640}\cL(S(p/q))\, .
\end{align}
Thus \eqref{eq: measure outcome with SqD} and \eqref{b digit restrition divisibility statement} are satisfied by the collection of words $\{(c_{0},b)\}_{b=1}^{c_{1}-1}$.\\

\noindent \textit{Case $2^{*}bii$. - $c_{1}=1$.} Since $I_{2}(c_{0};1,c_{2})\cap S(p/q)\neq \emptyset$ and $S(p/q)$ contains the right end point of $I_{0}(c_{0})$ we have 
\begin{equation*}
    \bigcup_{b=c_{2}+1}^{\left\lfloor \frac{1}{3q_{n+2}^{2}\Psi(q_{n+2})}\right\rfloor} I_{2}(c_{0};1,b) \subseteq S(p/q)\setminus \cB(p/q,(c_{0},1))\, .
\end{equation*}
 By Lemma~\ref{lem: far from bad}, with $j=0$ and $c_{1}=1$, and our assumption in case $2^{*}b$, we have that
 \begin{equation*}
     \cL\left(\bigcup_{b=c_{2}+1}^{\left\lfloor \frac{1}{3q_{n+2}^{2}\Psi(q_{n+2})}\right\rfloor} I_{2}(c_{0};1,b) \right) \geq \frac{1}{40960} \cL\left( \cB(p/q,c_{0})\right) \geq \frac{1}{819200} \cL\left(S(p/q)\right)\, .
 \end{equation*}
 Hence \eqref{eq: measure outcome with SqD} and \eqref{b digit restrition divisibility statement} are satisfied by the collection of words $\{(c_{0},1,b)\}_{b=c_{2}+1}^{\left\lfloor \frac{1}{3q_{n+2}^{2}\Psi(q_{n+2})}\right\rfloor}$.\\

\noindent \textbf{Case $3^{*}$. - There exists $b_{0}\in \N$ such that $I_{0}(b_{0})\supseteq S(p/q)$.} Note again, by Lemma~\ref{lem: where is S(p/q)} $(ii)$, that $b_{0}=c_{0}=\frac{1}{q^{2}\Psi(q)}-1$. Similarly to case 3 of Lemma~\ref{lem: bound on S(p/q)} we split into further subcases. \\

\noindent \textit{Case $3^{*}a$ - $S(p/q)\cap \cB(p/q, c_{0}) \neq \emptyset$.} This case is not possible. To see this note that the left endpoint of $S(p/q)$ lies inside $I_{1}(c_{0};c_{1})$ by Lemma~\ref{lem: where is S(p/q)}. By \eqref{cond: a_i bounded size divisibility statment} and Lemma~\ref{lem: where is S(p/q)} $(iii)$ it is clear that $\cB(p/q,c_{0})$ is disjoint from $I_{1}(c_{0};c_{1})$ and lies to its left. Since the left endpoint of $S(p/q)$ is contained in $I_{1}(c_{0};c_{1})$ we must have $S(p/q)\cap \cB(p/q,c_{0})=\emptyset$.\\

\noindent \textit{Case $3^{*}b$ - $S(p/q)\cap \cB(p/q, c_{0})= \emptyset$ and there exists $b_{1}$ such that $I_{1}(c_{0};b_{1})\subset S(p/q)$.} The argument is the same as in case $3b$ of Lemma~\ref{lem: bound on S(p/q)}. That is,  the collection of words
    $\{(c_{0},b)\}_{b=M_{1}}^{M_{2}}$ with $M_{1}=\min\{K: I_{1}(c_{0};K)\subseteq S(p/q)\}$ and $M_{2}=\max\{K: I_{1}(c_{0};K)\subseteq S(p/q)\}$,
\eqref{eq: measure outcome with SqD} and \eqref{b digit restrition divisibility statement}.\\

\noindent \textit{Case $3^{*}c$ - $S(p/q)\cap \cB(p/q, c_{0})= \emptyset,$  there exists no $b_{1}$ such that $I_{1}(c_{0};b_{1})\subset S(p/q)$ and there exists $b_{1}$ such that $I_{1}(c_{0};b_{1})\cap S(p/q)\neq \emptyset$ and $I_{1}(c_{0};b_{1}+1)\cap S(p/q)\neq \emptyset$.} By Lemma~\ref{lem: where is S(p/q)} we know in this case we must have $b_{1}+1=c_{1}$. Moreover, by \eqref{cond: a_i bounded size divisibility statment}, Lemma \ref{lem: where is S(p/q)}(iii), and the non-increasing assumption for $q\to q^{2}\Psi(q)$ the digit $c_{1}$ must satisfy \eqref{b digit restrition divisibility statement}. We split into further subcases again. \\

\noindent\textit{Case $3^{*}ci$ - $\cL(S(p/q))\geq  20 \cL(\cB(p/q,(c_{0}, c_{1}))$.} 
Consider the subcases 
\begin{enumerate}[leftmargin=2\parindent]
    \item $\cL(S(p/q)\cap I_{1}(c_{0};c_{1}-1))\geq \tfrac{\cL(S(p/q))}{2}$: Split into another two further subcases\\
    
    \begin{enumerate}[leftmargin=\parindent]
        \item $S(p/q)\cap I_{1}(c_{0};c_{1}-1)\supseteq I_{2}(c_{0};c_{1}-1,1)$. In this case we take our collection of words to be $\{(c_0,c_{1}-1,1)\}.$ It is then straightforward to check that \eqref{eq: measure outcome with SqD} and \eqref{b digit restrition divisibility statement} are satisfied by this collection.


\item $S(p/q) \cap I_{1}(c_{0};c_{1}-1)\subset I_{2}(c_{0};c_{1}-1,1)$. Then take the collection of words 
\begin{equation*}
    \{(c_{0},c_{1}-1,1,b)\}_{b=M}^{\left\lfloor \frac{1}{3q_{n+3}^{2}\Psi(q_{n+3})} \right\rfloor} \quad \text{ with } \quad M:=\min \left\{K: I_{3}(c_{0};c_{1}-1,1,K)\subseteq S(p/q) \right\}.
\end{equation*}
Note that $q_{n+2}(\ba c_{0}c_{1})=q_{n+3}(\ba c_{0}(c_{1}-1)1).$ Therefore the set of digits that can appear in the last place of a word describing a fundamental interval in $B(p/q,(c_{0},c_{1}))$ coincides with the set of digits that can appear in the last place of a word describing a fundamental interval in $B(p/q,(c_{0},c_{1}-1,1))$ (recall def. of $B(p/q,\ba)$ on pp. 20). Furthermore, $c_{1}\geq 2$ since $c_{1}=b_{1}+1$ and $b_{1}\in \N$, and so we can apply Lemma~\ref{lem: c-1,1 shorter than c} to see by pairwise comparison of the fundamental intervals appearing in $B(p/q,(c_{0},c_{1}))$ and $B(p/q,(c_{0},c_{1}-1,1))$ that
\begin{equation*}
    \cL\left(\cB(p/q,(c_{0},c_{1}))\right)>\cL\left(\cB(p/q,(c_{0},c_{1}-1,1))\right).
\end{equation*}
Therefore $\cL(S(p/q))\geq 20 \cL\left(\cB(p/q,(c_{0},c_{1}-1,1))\right)$. We can then use similar calculations to those appearing in case 2a of Lemma~\ref{lem: bound on S(p/q)} to verify that our collection of words satisfies \eqref{eq: measure outcome with SqD} and \eqref{b digit restrition divisibility statement}.
    \end{enumerate}
    

\item $\cL(S(p/q) \cap I_{1}(c_{0};c_{1}))\geq \tfrac{\cL(S(p/q))}{2}$: Then take the collection of consecutive words 
\begin{equation*}
    \{(c_{0},c_{1},b)\}_{b=M}^{\left\lfloor\frac{1}{3q_{n+2}^{2}\Psi(q_{n+2})}\right\rfloor}\quad \text{ with } \quad M:=\min\{K: I_{2}(c_{0},c_{1},K)\subseteq S(p/q)\}\, .
\end{equation*}
Using the assumption $\cL(S(p/q)\geq  20 \cL(\cB(p/q,(c_{0}, c_{1}))$ we see that if $S(p/q)$ intersects $\cB(p/q,(c_{0},c_{1}))$ the intersection can only remove a small amount. Repeating the argument appearing in case $3c$ of the proof of Lemma~\ref{lem: bound on S(p/q)} we can deduce that the collection of consecutive words satisfies \eqref{eq: measure outcome with SqD} and \eqref{b digit restrition divisibility statement}.\\
\end{enumerate}

\noindent \textit{Case $3^{*}cii$ - $\cL(S(p/q))<  20 \cL(\cB(p/q,(c_{0}, c_{1}))$.}
The argument is similar to that used in case $2^{*}bii$ above. Consider the intersection of $S(p/q)$ with $I_{1}(c_{0};c_{1})$. Since $S(p/q)\cap I_{1}(c_{0};c_{1}-1)\neq \emptyset$ we have that $S(p/q)\cap \cB(p/q, (c_{0},c_{1}))\neq \emptyset$ as $\cB(p/q, (c_{0},c_{1}))$ appears on the right inside $I_{1}(c_{0};c_{1})$. Therefore
\begin{equation*}
    \bigcup_{b=c_{2}+1}^{\left\lfloor \frac{1}{3q_{n+2}^{2}\Psi(q_{n+2})}\right\rfloor}I_{2}(c_{0};c_{1},b) \subset S(p/q)\setminus \cB(p/q,(c_{0},c_{1}))\, ,
\end{equation*}
By Lemma~\ref{lem: far from bad} with $j=1$ odd, and our case $3^{*}cii$ assumption, we have that
\begin{equation*}
    \cL\left( \bigcup_{b=c_{2}+1}^{\left\lfloor \frac{1}{3q_{n+2}^{2}\Psi(q_{n+2})}\right\rfloor}I_{2}(c_{0};c_{1},b)\right) >\frac{1}{640} \cL(\cB(p/q,(c_{0},c_{1}))) >\frac{1}{12800}\cL(S(p/q))\, .
\end{equation*}
Thus \eqref{eq: measure outcome with SqD} and \eqref{b digit restrition divisibility statement} are satisfied by the collection of words $\{(c_{0},c_{1},b)\}_{b=c_{2}+1}^{\left\lfloor \frac{1}{3q_{n+2}^{2}\Psi(q_{n+2})}\right\rfloor}$.\\

\noindent\textit{Case $3^{*}d$ - $S(p/q)\cap \cB(p/q, c_{0})= \emptyset$ and there exists $b_{1}$ such that $S(p/q)\subset I_{1}(c_{0};b_{1})$.} By Lemma~\ref{lem: where is S(p/q)} we know $b_{1}=c_{1}$, which satisfies \eqref{b digit restrition divisibility statement}. Just as in the proof of Lemma~\ref{lem: bound on S(p/q)} consider the following four possible events
\begin{enumerate}[leftmargin=2\parindent]
    \item $S(p/q)\cap \cB(p/q, (c_{0},c_{1})) \neq \emptyset$. We use the argument of either case $3^{*}ci$ or $3^{*}cii$, depending on the size of $S(p/q)$ relative to $\cB(p/q,(c_{0},c_{1}))$, to see the set of words
    \begin{equation*}
        \{(c_{0}, c_{1},b)\}_{b=M}^{\left\lfloor \frac{1}{3q_{n+2}^{2}\Psi(q_{n+2})}\right\rfloor} \quad \text{ with } M:=\min \{K: I_{2}(c_{0};c_{1},K)\subset S(p/q)\}\, ,
    \end{equation*}
    satisfies \eqref{eq: measure outcome with SqD} and \eqref{b digit restrition divisibility statement}.
    \item \textit{$S(p/q)\cap \cB(p/q,(c_{0},c_{1}))=\emptyset$ and there exists $b_{2}$ such that $I_{2}(c_{0};c_{1},b_{2})\subset S(p/q)$.} We use the same argument as case $3^{*}b$ above. That is, take the collection of consecutive words $\{(c_{0},c_{1},b)\}_{b=M_{1}}^{M_{2}} $ where $M_{1}=\min\{K: I_{2}(c_{0};c_{1},K)\subseteq S(p/q)\}$ and $M_{2}=\max\{K: I_{2}(c_{0};c_{1},K)\subseteq S(p/q)\}$
    which satisfies \eqref{eq: measure outcome with SqD} and \eqref{b digit restrition divisibility statement}.
    \item \textit{$S(p/q)\cap \cB(p/q,(c_{0},c_{1}))=\emptyset$, there exists no $b_{2}$ such that $I_{2}(c_{0};c_{1},b_{2})\subset S(p/q)$ and there exists $b_{2}$ such that $I_{2}(c_{0};c_{1},b_{2})\cap S(p/q)\neq \emptyset$ and $I_{2}(c_{0};c_{1},b_{2}+1)\cap S(p/q)\neq \emptyset$.} By Lemma~\ref{lem: where is S(p/q)}$(iii)$ we have $b_{2}=c_{2}$ which satisfies \eqref{b digit restrition divisibility statement}. We then use similar arguments to case $2^{*}$ to obtain a suitable collections of words.  
    \item \textit{$S(p/q)\cap \cB(p/q,(c_{0},c_{1}))=\emptyset$ and there exists $b_{2}$ such that $S(p/q) \subset I_{2}(c_{0};c_{1},b_{2})$.} By Lemma~\ref{lem: where is S(p/q)}$(iii)$ $b_{2}=c_{2}$ and satisfies \eqref{b digit restrition divisibility statement}. If one of the following happens then we can duplicate the arguments as used in cases $3^{*}a-c$:
    \begin{itemize}[leftmargin=\parindent]
        \item[(1')] $S(p/q)\cap \cB(p/q,(c_{0},c_{1},c_{2}))\neq \emptyset$,
        \item[(2')] $S(p/q)\cap \cB(p/q,(c_{0},c_{1},c_{2}))=\emptyset$ and there exists $b_{3}\in \N$ such that $S(p/q)\supset I_{3}(c_{0};c_{1},c_{2},b_{3})$,
         \item[(3')] $S(p/q)\cap \cB(p/q,(c_{0},c_{1},c_{2}))=\emptyset$, there exists no $b_{3}\in \N$ such that $S(p/q)\supset I_{3}(c_{0};c_{1},c_{2},b_{3})$ and there exists $b_{3}\in \N$ such that $S(p/q)\cap I_{3}(c_{0};c_{1},c_{2},b_{3})\neq \emptyset$ and $S(p/q)\cap I_{3}(c_{0};c_{1},c_{2},b_{3}-1)\neq \emptyset$. 
    \end{itemize}
    This covers all cases except the possibility that there exists $b_{3}\in \N$ such that $S(p/q)\subset I_{3}(c_{0};c_{1},c_{2},b_{3}).$ In this case we must have $b_{3}=c_{3}$. This puts us essentially back to the beginning of $3^{*}d(4)$ except now we also know that $b_{3}=c_{3}$ which satisfies \eqref{b digit restrition divisibility statement}, by \eqref{cond: a_i bounded size divisibility statment} and Lemma~\ref{lem: where is S(p/q)}$(iii)$. We can repeat this process, with the prefix word $(c_{0},c_{1},\ldots, c_{i})$ being replaced by $(c_{0},c_{1},\ldots, c_{i},c_{i+1})$ at each step where (1')-(3') do not occur. This step also verifies $c_{i+1}$ satisfies \eqref{b digit restrition divisibility statement}. The process must eventually terminate. To see this assume that for $1\leq i\leq m-1$ we have $S(p/q)\subset I_{i+1}(c_{0};c_{1},\ldots, c_{i},b_{i+1})$ for some $b_{i+1}\in \N$. Consider the case where $i=m-1$ then $b_{m}=c_{m}$ and $S(p/q)\subset I_{m}(c_{0};c_{1},\ldots, c_{m})$. The digits $c_{1},\ldots,c_{m}$ each satisfy \eqref{b digit restrition divisibility statement} by \eqref{cond: a_i bounded size divisibility statment} and Lemma~\ref{lem: where is S(p/q)}$(iii).$ By  Lemma~\ref{lem: where is S(p/q)} and Lemma~\ref{lem:Fundamental interval} we know that the left endpoints of $S(p/q)$ and $I_{m}(c_{0};c_{1},\ldots, c_{m})$ agree. Furthermore, $m$ is even so $\cB(p/q,(c_{0},\ldots, c_{m}))$ appears on the left of $I_{m}(c_{0};c_{1},\ldots, c_{m})$ thus $S(p/q)\cap\cB(p/q,(c_{0},\ldots, c_{m})) \neq \emptyset$ and so we are in case (1'). At this point we use the bound $\cL(S(p/q))\geq 20 \cL(\cB(p/q,(c_{0},\ldots,c_{m})))$ which follows from Lemma~\ref{lem:S bigger than bad 2}. We take our collection of words to be
    \begin{equation*}
        \{(c_{0},c_{1},\ldots, c_{m},b)\}_{b=M}^{\left\lfloor \frac{1}{3q_{n+m+1}^{2}\Psi(q_{n+m+1})}\right\rfloor} \quad \text{ with } \quad M:=\min \{K: I_{m+1}(c_{0};c_{1},\ldots, c_{m},K)\subseteq S(p/q)\}\, .
    \end{equation*}
    This collection satisfies \eqref{b digit restrition divisibility statement} by the choice of $c_{i}$ at each step, and \eqref{eq: measure outcome with SqD} can be shown to hold by similar arguments to those appearing in cases $3^{*}ci(2)$ and $3^{*}d$. Here we use our condition $\cL(S(p/q))\geq 20 \cL(\cB(p/q,\bc))$.


\end{enumerate}

\end{proof}

\begin{proof}[Proof of Lemma~\ref{lem: building U(p/q) with divisibility}]
The proof is analogous to the proof of Lemma~\ref{lem: building U(p/q)} with Lemma~\ref{lem: bound on S(p/q) with divisibility} replacing Lemma~\ref{lem: bound on S(p/q)}. We take our collection of words to be the set $\{\bb_{l}\}_{l=1}^{L}$ provided by Lemma \ref{lem: bound on S(p/q) with divisibility}. Note that Lemma~\ref{lem: belongs to u(p/q)} and Lemma~\ref{lem:b size} are still applicable in the context of Lemma \ref{lem: building U(p/q) with divisibility}. Combining Lemma~\ref{lem: belongs to u(p/q)} with Lemma~\ref{lem: bound on S(p/q) with divisibility} implies that our set of words satisfies \eqref{contained SQD}. \eqref{b digits SQD} follows from \eqref{b digit restrition divisibility statement}, and \eqref{measure SQD} follows from the same calculations as in the proof of Lemma~\ref{lem: building U(p/q)} with \eqref{eq: measure outcome with SqD} replacing \eqref{words contained in S(p/q)}.
\end{proof}


\section{Proof of Theorems~\ref{thm: non-empty}--\ref{thm: non-empty3}} \label{sec: proof non-empty}
We begin with the proof of Theorem~\ref{thm: non-empty}. We will then briefly sketch the differences in the argument required to prove the second part of Theorem~\ref{thm: non-empty2}. Recall that the first part of Theorem~\ref{thm: non-empty2} is an immediate consequence of Theorem \ref{thm:empty}. We conclude this section with a proof of Theorem~\ref{thm: non-empty3}.
\subsection{Proof of Theorem~\ref{thm: non-empty}}
\begin{proof}
Let $\Psi$ and $\omega$ satisfy the assumptions of our theorem. To prove our result we will construct an injective function $f:\{1,2\}^{\N}\to E(\Psi,\omega).$ Let us start by picking $N_{0}\in \N$ to be even and sufficiently large that if $\frac{p_{N_{0}}}{q_{N_{0}}}=[0;1^{N_{0}}]$ then $q_{N_{0}}$ is sufficiently large that Lemma \ref{lem: building U(p/q)} can be applied. To simplify our notation we let $\ba=(1^{N_{0}}).$ Let us now pick $(\delta_{i})_{i=1}^{\infty}\in \{1,2\}^{\N}$ and set out to define $f((\delta_i))$. We will construct $f$ via an inductive argument.\\

\noindent \textbf{Step 1.} Applying Lemma \ref{lem: building U(p/q)} to $p_{N_{0}}/q_{N_{0}}$ we can choose a finite word $\bb_{1}=(b_{1,0},\ldots,b_{1,m_{1}-1})$ in a way that depends only upon $p_{N_{0}}/q_{N_{0}},$ $\Psi$ and $\omega$ such that $$I(0;\ba\bb_{1})\subset \left\{x:\Psi(q_{N_{0}})(1-\omega(q_{N_{0}}))\leq x-\frac{p_{N_{0}}}{q_{N_{0}}}\leq  \Psi(q_{N_{0}})\right\},$$ and if $m_{1}\geq 2$ then 
$$b_{1,i}\in \left\{ 1,2, \ldots,\left\lfloor \frac{1}{3q_{N_{0}+i}^{2}\Psi(q_{N_{0}+i})}\right\rfloor \right\} \quad 1\leq i \leq m_{1}-1,$$
where $\frac{p_{N_{0}+i}}{q_{N_{0}+i}}=[0;\ba,b_{1,0},b_{1,1},\ldots,b_{1,i-1}]$ for $1\leq i\leq m_{1}-1.$ Let us now also pick $l_{1}\in \{1,2\}$ such that $N_{0}+m_{1}+l_{1}$ is even. \\

\noindent \textbf{Inductive step.} Let $k\geq 1$ and suppose that we have constructed $k$ words $\bb_{1},\ldots, \bb_{k}$ of length $m_{1},\ldots, m_{k}$ respectively and $l_{1},\ldots,l_{k}\in \N$ such that the following properties hold:
\begin{enumerate}[leftmargin=2\parindent]
    \item For each $1\leq j\leq k$ the sum $N_{j}:=N_{0}+\sum_{p=1}^{j}(m_{p}+l_{p})$ is even.
    \item For each $1\leq j\leq k$ we have $$I(0;\ba \bb_{1}\delta_{1}^{l_{1}}\bb_{2}\delta_{2}^{l_{2}}\ldots \delta_{j-1}^{l_{j-1}}\bb_{j})\subset \left\{x:\Psi(q_{N_{j-1}})(1-\omega(q_{N_{j-1}}))\leq x-\frac{p_{N_{j-1}}}{q_{N_{j-1}}}\leq  \Psi(q_{N_{j-1}})\right\}.$$
\item For each $1\leq j\leq k$ if $m_{j}\geq 2$ then
$$b_{j,i}\in \left\{ 1,2, \ldots,\left\lfloor \frac{1}{3q_{N_{j-1}+i}^{2}\Psi(q_{N_{j-1}+i})}\right\rfloor \right\} \quad 1\leq i \leq m_{j}-1,$$
where $\frac{p_{N_{j-1}+i}}{q_{N_{j-1}+i}}=[0;\ba \bb_{1}\delta_{1}^{l_{1}}\bb_{2}\delta_{2}^{l_{2}}\ldots,\bb_{j-1}\delta_{j-1}^{l_{j-1}}b_{j,0},b_{j,1},\ldots,b_{j,i-1}].$ 
\end{enumerate}
We now apply Lemma \ref{lem: building U(p/q)} to $\frac{p_{N_{k}}}{q_{N_{k}}}=[0;\ba\bb_{1}\delta_{1}^{l_{1}}\ldots \bb_{k}\delta_{k}^{l_{k}}]$ and choose a word $\bb_{k+1}=(b_{k+1,0},\ldots, b_{k+1,m_{k+1}-1})$ in a way depending only upon $\frac{p_{N_{k}}}{q_{N_{k}}},$ $\Psi$ and $\omega$ so that
$$I(0;\ba \bb_{1}\delta_{1}^{l_{1}}\bb_{2}\delta_{2}^{l_{2}}\ldots \bb_{k}\delta_{k}^{l_{k}}\bb_{k+1})\subset \left\{x:\Psi(q_{N_{k}})(1-\omega(q_{N_{k}}))\leq x-\frac{p_{N_{k}}}{q_{N_{k}}}\leq \Psi(q_{N_{k}})\right\},$$ and if $m_{k+1}\geq 2$ then $$b_{k+1,i}\in \left\{ 1,2, \ldots,\left\lfloor \frac{1}{3q_{N_{k}+i}^{2}\Psi(q_{N_{k}+i})}\right\rfloor \right\} \quad 1\leq i \leq m_{k+1}-1,$$
where $\frac{p_{N_{k}+i}}{q_{N_{k}+i}}=[0;\ba \bb_{1}\delta_{1}^{l_{1}}\bb_{2}\delta_{2}^{l_{2}}\ldots,\bb_{k}\delta_{k}^{l_{k}}b_{k+1,0},b_{k+1,1},\ldots,b_{k+1,i-1}].$ We can apply Lemma \ref{lem: building U(p/q)} because $N_{k}$ is even. We now choose $l_{k+1}\in \{1,2\}$ so that $N_{k+1}=N_{0}+\sum_{p=1}^{k+1}(m_{p}+l_{p})$ is even.  We observe that our new collection of words $\bb_{1},\ldots, \bb_{k+1}$ with lengths $ m_{1},\ldots, m_{k+1}$ and the collection of integers $l_{1},\ldots,l_{k+1}$ satisfy properties $(1)-(3)$ listed above with $k$ replaced with $k+1.$ Thus we can repeat this step indefinitely. 

Repeatedly applying our inductive step yields an infinite sequence of words $(\bb_{j})_{j=1}^{\infty}$ with corresponding lengths $(m_{j})_{j=1}^{\infty}$ and the sequence of integers $(l_{j})_{j=1}^{\infty}.$ We define $f((\delta_{i}))$ to be 
$$f((\delta_{i}))=\bigcap_{j=1}^{\infty}I_{N_{j}}(0;\ba\bb_{1}\delta_{1}^{l_{1}}\ldots \bb_{j}\delta_{j}^{l_{j}}),$$ or equivalently
$f((\delta_i))$ is the irrational number satisfying $f((\delta_{i}))=[0;\ba \bb_{1}\delta_{1}^{l_{1}}\bb_{2}\delta_{2}^{l_{2}}\ldots].$ It remains to show that $f$ is injective and maps into $E(\Psi,\omega).$\\

\noindent \textbf{$f$ is injective.} Let $(\delta_{i}),(\delta_{i}')\in \{1,2\}^{\N}$ be distinct sequences and let $j\in \N$ be the minimal integer such that $\delta_{j}\neq \delta_{j}'.$ Without loss of generality we assume $\delta_{j}=1$ and $\delta_{j}'=2$. Then it follows from our construction of $f$ that there exists $\bb_{1}\ldots,\bb_{j}$ and $l_{1},\ldots, l_{j}$ such that
$$f((\delta_i))\in I(0;\ba \bb_{1}\delta_{1}^{l_{1}},\ldots, \bb_{j}, 1^{l_{j}})$$ and 
$$f((\delta_i'))\in I(0;\ba \bb_{1}\delta_{1}^{l_{1}},\ldots, \bb_{j}, 2^{l_{j}}).$$ Since the continued fraction expansion of an irrational number is unique, it follows that $f((\delta_i))\neq f((\delta_i'))$ and the injectivity of $f$ follows.\\

\noindent \textbf{$f$ maps into $E(\Psi,\omega)$.} 
 By construction, we know that $$0\leq f((\delta_i))-\frac{p_{N_{j}}}{q_{N_{j}}}\leq \Psi(q_{N_{j}})$$ for all $j.$ It remains to show that for any $(p,q)\in \Z\times \N$ with $q$ sufficiently large we have
 \begin{equation}
 \label{eq:uncountable WTS}
 \Psi(q)(1-\omega(q))\leq \left|x-\frac{p}{q}\right|.
 \end{equation}
 Since $\lim_{q\to\infty}q^{2}\Psi(q)=0,$ by Lemma \ref{lem:Legendre}, to prove \eqref{eq:uncountable WTS} holds for $q$ sufficiently large, it suffices to show that 
\begin{equation}
 \label{eq:uncountable WTS2}
\Psi(q_{n})(1-\omega(q_{n}))\leq \left|x-\frac{p_{n}}{q_{n}}\right|.
\end{equation}for all $n$ sufficiently large. If $n=N_{j}$ for some $j$ then $$\Psi(q_{N_{j}})(1-\omega(q_{N_{j}}))\leq x-\frac{p_{N_{j}}}{q_{N_{j}}} $$ by our construction. So \eqref{eq:uncountable WTS2} holds when $n=N_{j}.$ Suppose $n\notin \{N_{j}\}$. We let $(a_l)$ denote the sequence of partial quotients for $f((\delta_i))$. Then by our construction either $a_{n+1}\in \{1,2\}$ in which case 
 \begin{equation}
     \label{eq:Almost finishedA}
     \left|x-\frac{p_{n}}{q_{n}}\right |\geq \frac{1}{6q_{n}^{2}} 
 \end{equation}
  by Lemma \ref{lem:approximation quality}, or $$a_{n+1}\in \left\{1,2,\ldots ,\left \lfloor \frac{1}{3q_{n}^{2}\Psi(q_{n})}\right \rfloor \right\}$$ where $\frac{p_{n}}{q_{n}}=[0;a_{1},\ldots,a_{n}]$ and 
  \begin{equation}
      \label{eq:Almost finishedB}
      \left|x-\frac{p_{n}}{q_{n}}\right |\geq \Psi(q)
  \end{equation}
  holds by Lemma \ref{lem:approximation quality}. Since $\lim_{q\to\infty}q^{2}\Psi(q)=0$, it follows from \eqref{eq:Almost finishedA} and \eqref{eq:Almost finishedB} that \eqref{eq:uncountable WTS2} holds for all $n$ sufficiently large such that $n\notin \{N_{j}\}$. In summary, we have shown that \eqref{eq:uncountable WTS2} holds for all $n$ sufficiently large. Therefore $f$ maps into $E(\Psi,\omega)$ and our proof is complete.
\end{proof}


\subsection{ Proof of Theorem~\ref{thm: non-empty2}}
Recall that the first part of Theorem \ref{thm: non-empty2} is a special case of Theorem~\ref{thm:empty}. The second statement is proved below. The argument presented is similar in spirit to the proof of Theorem~\ref{thm: non-empty}. The main difference being that we apply Lemma \ref{lem: non-empty S(p/q) divisibility cond} in place of Lemma \ref{lem: building U(p/q)}.
 \begin{proof}[Proof of the second statement of Theorem~\ref{thm: non-empty2}.]

Let $\Psi$ and $\omega$ satisfy the assumptions of the second statement of Theorem \ref{thm: non-empty2}. To prove our result we will construct an injective function $f:\{2,3\}^{\N}\to E(\Psi,\omega)$\footnote{The significance of taking $\{2,3\}^{\N}$ instead of $\{1,2\}^{\N},$ as in the proof of Theorem \ref{thm: non-empty}, is to ensure we satisfy the $a_{1},a_{n}\geq 2$ assumption appearing in Lemma \ref{lem: non-empty S(p/q) divisibility cond}.}. Let us start by picking $N_{0}\in \N$ to be even and sufficiently large that if $\frac{p_{N_{0}}}{q_{N_{0}}}=[0;2^{N_{0}}]$ then $q_{N_{0}}$ is sufficiently large that Lemma \ref{lem: non-empty S(p/q) divisibility cond} can be applied. We let $\ba_{0}=(2^{N_{0}}).$ Let us now pick $(\delta_{i})_{i=1}^{\infty}\in \{2,3\}^{\N}$ and set out to define $f((\delta_i))$. We will construct $f((\delta_i))$ via an inductive argument.\\

\noindent \textbf{Step 1.} We apply Lemma \ref{lem: non-empty S(p/q) divisibility cond} to $p_{N_{0}}/q_{N_{0}}.$ Therefore 
\begin{align*}
&I\left(0;\ba_{0}, \frac{1}{q_{N_{0}}^{2}\Psi(q_{N_{0}})}-1,1,1,2^{ N_{0}-2},1,1 ,\left\lfloor \frac{1}{3q_{*,1}^{2}\Psi(q_{*,1})}\right\rfloor\right) \\
& \hspace{2cm}\subset  \left\{x:\Psi(q_{N_{0}})(1-\omega(q_{N_{0}}))\leq x-\frac{p_{N_{0}}}{q_{N_{0}}}\leq \Psi(q_{N_{0}})\right\}
\end{align*}
where $$\frac{p_{*,1}}{q_{*,1}}=\left[0;\ba_{0}, \frac{1}{q_{N_{0}}^{2}\Psi(q_{N_{0}})}-1,1,1,2^{ N_{0}-2},1,1 \right].$$ We may assume that we chose $N_{0}$ to be sufficiently large that 
\begin{equation}
\label{eq:positive integers}
\frac{1}{q_{N_{0}}^{2}\Psi(q_{N_{0}})}-1\geq 1\quad \text{ and }\quad \left\lfloor \frac{1}{3q_{*,0}^{2}\Psi(q_{*,0})}\right\rfloor\geq 1.
\end{equation}
We let 
$$\ba_{1}=\left(\ba_{0}, \frac{1}{q_{N_{0}}^{2}\Psi(q_{N_{0}})}-1,1,1,2^{N_{0}-2},1,1 ,\left\lfloor \frac{1}{3q_{*,0}^{2}\Psi(q_{*,0})}\right\rfloor, \delta_{1}^{l_{1}}\right),$$
$N_{1}$ denote the length of $\ba_{1},$ and $$\frac{p_{N_{1}}}{q_{N_{1}}}=[0;\ba_{1}].$$
Here $l_{1}\in \N$ is chosen so that $N_{1}$ is even and so that for all $q\geq q_{N_{1}}$ we have $$\max_{1\leq i\leq N_{1}}a_{1,i}\leq \left \lfloor \frac{1}{3q^{2}\Psi(q)}\right \rfloor$$ where $\ba_{1}=(a_{1,1},\ldots a_{1,N_{1}}).$ This is permissible as $\lim_{q\to\infty}q^{2}\Psi(q)=0$ and $\delta_{1}\in \{2,3\}$.  We remark that the first digit and last digit of $\ba_{1}$ are greater than or equal to two by construction and all other digits in $\ba_{1}$ are strictly positive integers by \eqref{eq:positive integers}. \\

\noindent \textbf{Inductive step.} Let $k\geq 1$ and suppose that we have constructed $k+1$ words $\{\ba_{j}=(a_{j,1},\ldots a_{j,N_{j}})\}_{j=0}^{k}$ such that $N_{j}$ is even for each $0\leq j\leq k$ and  $l_{1},\ldots,l_{k}\in \N.$ For each $0\leq j\leq k$ let $\frac{p_{N_{j}}}{q_{N_{j}}}=[0;\ba_{j}].$ Suppose that the following properties are satisfied:
\begin{enumerate}[leftmargin=2\parindent]
    \item For each $0\leq j< k$ we have $$\ba_{j+1}=\left(\ba_{j},\frac{1}{q_{N_{j}}^{2}\Psi(q_{N_{j}})}-1,1,a_{j,N_{j}}-1,a_{j,N_{j}-1}\ldots, a_{j,2},a_{j,1}-1,1,\left\lfloor \frac{1}{3q_{*,j}^{2}\Psi(q_{*,j+1})}\right\rfloor, \delta_{j}^{l_{j+1}}\right)$$where  $$\frac{p_{*,j+1}}{q_{*,j+1}}=\left[0;\ba_{j},\frac{1}{q_{N_{j}}^{2}\Psi(q_{N_{j}})}-1,1,a_{j,N_{j}}-1,a_{j,N_{j}-1}\ldots, a_{j,2},a_{j,1}-1,1\right]$$
    \item For each $0\leq j< k$ we have
    $$I(0;\ba_{j+1})\subset \left\{x:\Psi(q_{N_{j}})(1-\omega(q_{N_{j}}))\leq x-\frac{p_{N_{j}}}{q_{N_{j}}}\leq \Psi(q_{N_{j}})\right\}.$$
    \item For each $1\leq j\leq k,$ for any $q\geq q_{N_{j}}$ we have  
    $$\max_{1\leq i\leq N_{j}}a_{j,i}\leq \left \lfloor \frac{1}{3q^{2}\Psi(q)}\right \rfloor$$
    \item For each $0\leq j\leq k$ the first and last digit of $\ba_{j}$ is greater than or equal to two, and all other digits are strictly positive integers.
\end{enumerate}
We now show how to define $\ba_{k+1}$ and $l_{k+1}$ so that the above properties are satisfied with $k+1$ in place of $k$. We apply Lemma \ref{lem: non-empty S(p/q) divisibility cond} to $\frac{p_{N_{k}}}{q_{N_{k}}}$ so that
\begin{align*}
&I\left(0;\ba_{k},\frac{1}{q_{N_{k}}^{2}\Psi(q_{N_{k}})}-1,1,a_{k,N_{k}}-1,a_{k,N_{k}-1}\ldots, a_{k,2},a_{k,1}-1,1,\left\lfloor \frac{1}{3q_{*,k+1}^{2}\Psi(q_{*,k+1})}\right\rfloor\right)\\
\subset &\left\{x:\Psi(q_{N_{k}})(1-\omega(q_{N_{j}}))\leq x-\frac{p_{N_{k}}}{q_{N_{k}}}\leq \Psi(q_{N_{k}})\right\}
\end{align*}
where 
$\frac{p_{*,k+1}}{q_{*,k+1}}=\left[0;\ba_{k},\frac{1}{q_{N_{k}}^{2}\Psi(q_{N_{k}})}-1,1,a_{k,N_{k}}-1,a_{k,N_{k}-1}\ldots, a_{k,2},a_{k,1}-1,1\right]$. Let $\ba_{k+1}$ be given by $$\left(\ba_{k},\frac{1}{q_{N_{k}}^{2}\Psi(q_{N_{k}})}-1,1,a_{k,N_{k}}-1,a_{k,N_{k}-1}\ldots, a_{k,2},a_{k,1}-1,1,\left\lfloor \frac{1}{3q_{*,k+1}^{2}\Psi(q_{*,k+1})}\right\rfloor,\delta_{k+1}^{l_{k+1}}\right)$$ where $l_{k+1}$ has been chosen to be sufficiently large that $$\max_{1\leq i\leq N_{k+1}}a_{k+1,i}\leq \left \lfloor \frac{1}{3q^{2}\Psi(q)}\right \rfloor$$ for any $q\geq q_{N_{k+1}}$ where $\frac{p_{N_{k+1}}}{q_{N_{k+1}}}=[0;\ba_{k+1}]$ and $\ba_{k+1}=(a_{k+1,1},\ldots, a_{k+1,N_{k+1}}).$ This is permissible as $\lim_{q\to\infty}q^{2}\Psi(q)=0$ and $\delta_{k+1}\in \{2,3\}.$ It is clear that for this choice of $\ba_{k+1}$ and $l_{k+1}$ properties $(1), (2)$ and $(3)$ all hold with $k$ replaced by $k+1$. To see that the fourth property holds, observe that the first digit of $\ba_{k+1}$ is the first digit of $\ba_{k},$ so by assumption it is greater than or equal to $2$. The last digit $\delta_{k+1}$ belongs to $\{2,3\},$ so the last digit also has the desired property. It remains to check that all other digits in $\ba_{k+1}$ are strictly positive integers. If a digit belongs to $\{1,2,3\}$ then this is obviously true. If it equals one of $\ba_{k}$'s digits, then by our assumption it is a strictly positive integer. The only digits that do not necessarily fall in to these categories are $$\frac{1}{q_{N_{k}}^{2}\Psi(q_{N_{k}})}-1,\quad  a_{k,N_{k}}-1\quad \text{ and } \quad \left \lfloor \frac{1}{3q_{*,k+1}^{2}\Psi(q_{*,k+1})}\right \rfloor.$$ Since $\lim_{q\to\infty}q^{2}\Psi(q)=0$ we can assume that our initial choice of $N_{0}$ was sufficiently large that $$\frac{1}{q_{N_{k}}^{2}\Psi(q_{N_{k}})}-1\geq 1 \quad \text{ and }\quad \left \lfloor \frac{1}{3q_{*,k}^{2}\Psi(q_{*,k})}\right \rfloor\geq 1.$$ The remaining digit, $a_{k,N_{k}}-1$, is the last term of $\ba_{k}$ with one subtracted. By our assumption the last term of $\ba_{k}$ is at least two Therefore $a_{k,N_{k}}-1\geq 1.$ We may now conclude that property (4) is satisfied with $k$ replaced by $k+1$.\par
\vspace{1.2ex}
Repeatedly applying our inductive step yields an infinite sequence of words $(\ba_{j})_{j=0}^{\infty}$ and an infinite sequence of integers $(l_{j})_{j=1}^{\infty}$ such that properties $(1)-(4)$ hold for any $k\geq 1$. We define $f((\delta_{i}))$ to be $$f((\delta_i))=\bigcap_{j=0}^{\infty}I(0;\ba_{j}).$$ It follows from property $(1)$ that the intervals appearing in this intersection are nested and consequently the intersection is non-empty. Moreover, by our construction this intersection must be a single point. Equivalently, $f((\delta_{i}))$ is the point whose sequence of partial quotients is equal to the componentwise limit of the sequence $(0\ba_{k} 0^{\infty})_{k=0}^{\infty}$. Here we are using property (4) which guarantees that for each $k$ each digit in $\ba_{k}$ is a strictly positive integer. This guarantees that $(0\ba_{k} 0^{\infty})_{k=0}^{\infty}$ will converge to the continued fraction expansion of some real number.

It remains to show that $f$ is injective and maps into $E(\Psi,\omega).$ Injectivity follows by an analogous argument to that given in the proof of Theorem \ref{thm: non-empty}. As such we omit the argument and focus on proving that $f$ maps into $E(\Psi,\omega)$.\par
\vspace{1ex}
\noindent \textbf{$f$ maps into $E(\Psi,\omega).$}
By property $(2)$ we know that $$\Psi(q_{N_{j}})(1-\omega(q_{N_{j}})\leq f((\delta_{i}))-\frac{p_{N_{j}}}{q_{N_{j}}}\leq \Psi(q_{N_{j}})$$ for all $j\in \N$. Therefore, to show that $f((\delta_{i}))\in E(\Psi,\omega),$ it remains to show that there are at most finitely many $(p,q)\in \Z\times \N$ satisfying 
\begin{equation}
    \label{eq:Nearly finished WTS}
    \left|x-\frac{p}{q}\right|<\Psi(q)(1-\omega(q)).
\end{equation}
Since $\lim_{q\to \infty}q^{2}\Psi(q)=0$ we know that $\Psi(q)\leq \frac{1}{2q^{2}}$ for all $q$ sufficiently large. Therefore, by Lemma \ref{lem:Legendre}, to show that \eqref{eq:Nearly finished WTS} has finitely many solutions it suffices to show that for all $n$ sufficiently large we have
\begin{equation}
    \label{eq:Nearly finished WTSA}
    \left|x-\frac{p_{n}}{q_{n}}\right|\geq \Psi(q_{n})(1-\omega(q_{n})). 
\end{equation}  By property $(2)$ \eqref{eq:Nearly finished WTSA} is satisfied whenever $n\in \{N_{j}\}_{j=0}^{\infty}.$ It remains to consider the remaining case when $n\notin \{N_{j}\}_{j=0}^{\infty}.$ Let us fix such an $n$ and let $k$ be such that $N_{k}<n<N_{k+1}.$ Upon inspection three cases naturally arise from our construction. Either $a_{n+1}\in \{1,2,3\}$, $a_{n+1}\in \{a_{k,1}-1,a_{k,2},\ldots, a_{k,N_{k}-1},a_{k,N_{k}}-1\}$ or $q_{n}=q_{*,k+1}$. We treat each case below individually.\par
\vspace{1ex}
\noindent \textbf{Case 1 - $a_{n+1}\in \{1,2,3\}$.} In this case it follows from Lemma \ref{lem:approximation quality} that $$\left|x-\frac{p_{n}}{q_{n}}\right |\geq \frac{1}{9q_{n}^{2}}.$$ Since $\lim_{q\to\infty}q^{2}\Psi(q)=0$ there are at most finitely many values of $n$ where $a_{n+1}\in \{1,2,3\}$ satisfying \eqref{eq:Nearly finished WTSA}.\par
\vspace{1ex}
\noindent \textbf{Case 2 - $a_{n+1}\in \{a_{k,1}-1,a_{k,2},\ldots, a_{k,N_{k}-1},a_{k,N_{k}}-1\}.$} In this case we have the following bound for $a_{n+1}$ that follows from property (3): $$a_{n+1}\leq \left\lfloor \frac{1}{3q_{n}^{2}\Psi(q_{n})}\right\rfloor.$$ It now follows from an application of Lemma \ref{lem:approximation quality} that such an $n$ cannot contribute any solutions to \eqref{eq:Nearly finished WTSA}.\par
\vspace{1ex}
\noindent \textbf{Case 3 - $q_{n}=q_{*,k+1}.$} In this case, we know by our construction that $$a_{n+1}=\left\lfloor \frac{1}{3q_{n}^{2}\Psi(q_{n})}\right\rfloor.$$ Again by an application of Lemma \ref{lem:approximation quality} no such $n$ can contribute any solutions to \eqref{eq:Nearly finished WTSA}.\par
\vspace{1.2ex}
In summary, we have shown that there are finitely many values of $n\notin \{N_{j}\}_{j=0}^{\infty}$ that satisfy \eqref{eq:Nearly finished WTSA}. Combined with our earlier observation, we may conclude that \eqref{eq:Nearly finished WTSA} has finitely many solutions and $f((\delta_i))\in E(\Psi,\omega)$. This completes our proof.
\end{proof}

\subsection{Proof of Theorem~\ref{thm: non-empty3}}

\begin{proof}[Proof of Theorem \ref{thm: non-empty3}]
Let $\omega:(0,\infty)\to(0,\infty)$ and $\tau>2$ be arbitrary. Moreover let $\psi:(0,\infty)\to (0,\infty)$ be given by $\psi(q)=\frac{1}{q^{2}\left\lceil q^{\tau-2}\right\rceil}$. Clearly $\psi$ satisfies the square divisibility condition and $\lim_{q\to\infty}\frac{-\log\psi(q)}{\log q}=\tau$. We will construct the desired $\Psi$ by perturbing $\psi$. These perturbations will ensure $E(\Psi,\omega)\neq \emptyset$ but will be sufficiently small to leave the lower limit at infinity unchanged. \par 
Let $N_{0}\in\N$ and set $\frac{p_{N_{0}}}{q_{N_{0}}}=[0;1^{N_{0}}].$ We can choose $N_{0}$ sufficiently large so that 
\begin{equation}
\label{eq:integer first digits}
q^{2}\psi(q)\leq \tfrac{1}{2}
\end{equation} for all $q\geq q_{N_{0}}$. Define the function $\Psi$ at $q_{N_{0}}$ so that
\begin{equation*}
    \frac{1}{q_{N_{0}}^{2}\Psi(q_{N_{0}})}-\frac{q_{N_{0}-1}}{q_{N_{0}}}:=\left[\frac{1}{q_{N_{0}}^{2}\psi(q_{N_{0}})}-1;1,1,1,\ldots  \right]\, .
\end{equation*}
That is, writing $\varphi=[0;2,1,1\ldots]$, we set
\begin{equation}
\label{eq:first formula for Psi}
    \Psi(q_{N_{0}})= \frac{1}{q_{N_{0}}^{2}\left(\frac{1}{q_{N_{0}}^{2}\psi(q_{N_{0}})}-\varphi + \frac{q_{N_{0}-1}}{q_{N_{0}}}\right)}\, .
\end{equation}
Here we are using the formula $-[0;b_{1},b_{2},\ldots,]=[-1;1,b_{1}-1,b_{2},b_{3},\ldots ].$ Which holds for any sequence of positive integers $(b_i)_{i=1}^{\infty}.$ For a proof see \cite{PoorShall92}. It is a consequence of \eqref{eq:integer first digits} and the square divisibility condition that  $$\frac{1}{q_{N_{0}}^{2}\psi(q_{N_{0}})}-1\geq 1$$ and is an integer. It follows from \eqref{eq:first formula for Psi} that if $N_{0}$ is chosen sufficiently large then
\begin{equation}
\label{eq:first double bound}
    \frac{1}{2}\psi(q_{N_{0}})\leq \Psi(q_{N_{0}}) \leq 2\psi(q_{N_{0}})\, .
\end{equation}
We now want to pick a fundamental interval contained inside $S(\Psi,\omega, p_{N_{0}}/q_{N_{0}}).$
We write it in this format to emphasis $S(\Psi,\omega, p_{N_{0}}/q_{N_{0}})$ is the set appearing in Proposition~\ref{prop: equivalence}\ref{S(p/q) set} for $\Psi$, not $\psi$. 
Choose $R_{1}\in \N$ to be an even number and large enough (depending on $\omega,$ $\psi$ and $N_{0}$) so that
\begin{equation}
\label{eq:double measure bound}
 \sum_{i=1}^{2} \cL\left(I_{R_{1}}\left(\frac{1}{q_{N_{0}}^{2}\psi(q_{N_{0}})}-1;1^{R_{1}-1},i\right)\right)  <\frac{\omega(q_{N_{0}})}{q_{N_{0}}^{2}\Psi(q_{N_{0}})}\, .
\end{equation}
Equation \eqref{eq:double measure bound} must eventually hold since the terms on the left tend to zero as $R_{1}$ increases and the term on the right is positive and does not depend upon $R_{1}$. We remark that the left endpoint of $S(\Psi,\omega, p_{N_{0}}/q_{N_{0}})$ is contained in $$I_{R_{1}}\left(\frac{1}{q_{N_{0}}^{2}\psi(q_{N_{0}})}-1;1^{R_{1}}\right),$$ and to the immediate right of this fundamental interval we have $$I_{R_{1}}\left(\frac{1}{q_{N_{0}}^{2}\psi(q_{N_{0}})}-1;1^{R_{1}-1},2\right)$$ since $R_{1}$ is even.
It follows from these observations, \eqref{eq:double measure bound} and the lower bound $$\mathcal{L}(S(\Psi,\omega, p_{N_{0}}/q_{N_{0}}))>\frac{\omega(q_{N_{0}})}{q_{N_{0}}^{2}\Psi(q_{N_{0}})}$$  that 
\begin{equation}
\label{eq:inclusion1}
    I_{R_{1}}\left(\frac{1}{q_{N_{0}}^{2}\psi(q_{N_{0}})}-1;1,\ldots,1,2\right) \subset S(\Psi,\omega,p_{N_{0}}/q_{N_{0}})\, .
\end{equation}
Define
\begin{equation*}
    \bc_{1}:=\left( 1^{N_{0}}, \frac{1}{q_{N_{0}}^{2}\psi(q_{N_{0}})}-1, 1^{R_{1}-1},2\, \right)
\end{equation*}
Set $N_{1}=N_{0}+R_{1}+1$, define $\frac{p_{N_{1}}}{q_{N_{1}}}:=\left[ 0;\bc_{1}\right]$, and choose $R_{1}$ large enough so that 
\begin{equation}
\label{eq:guarantee monotoncity equation1}
2\psi(q_{N_{1}})<\frac{1}{2}\psi(q_{N_{0}}).
\end{equation} We now repeat the above step with  $\frac{p_{N_{0}}}{q_{N_{0}}}$ replaced by $\frac{p_{N_{1}}}{q_{N_{1}}}.$ That is, define $\Psi$ at $q_{N_{1}}$ by
\begin{equation*}
    \Psi(q_{N_{1}}):=\frac{1}{q_{N_{1}}^{2}\left(\frac{1}{q_{N_{1}}^{2}\psi(q_{N_{1}})}-\varphi+\frac{q_{N_{1}-1}}{q_{N_{1}}}\right)}\, .
\end{equation*}
Note that the inequalities
\begin{equation}
\label{eq:second double bound}
    \frac{1}{2}\psi(q_{N_{1}})\leq \Psi(q_{N_{1}})\leq 2\psi(q_{N_{1}})\, 
\end{equation}
can be guaranteed by our initial choice of $N_{0}$. Again, we can choose some $R_{2}\in \N$ a large even integer so that 
\begin{equation}
\label{eq:inclusion2}
    I_{R_{2}}\left(\frac{1}{q_{N_{1}}^{2}\psi(q_{N_{1}})}-1;1^{R_{2}-1},2\right) \subset S(\Psi,\omega,p_{N_{1}}/q_{N_{1}})\, .
\end{equation}
It is a consequence of \eqref{eq:integer first digits} and that $\psi$ satisfies the square divisibility condition that $$\frac{1}{q_{N_{1}}^{2}\psi(q_{N_{1}})}-1\geq 1$$ and the left hand side of this inequality is an integer. Define
\begin{equation*}
    \bc_{2}:=\left(\bc_{1}, \frac{1}{q_{N_{1}}^{2}\psi(q_{N_{1}})}-1, 1^{R_{2}-1} ,2\, \right).
\end{equation*}
Set $N_{2}=N_{1}+R_{2}+1$ and $\frac{p_{N_{2}}}{q_{N_{2}}}=[0;\bc_{2}]$. Again we may assume that $R_{2}$ is chosen sufficiently large so that 
\begin{equation}
    \label{eq:guarantee monotoncity equation2}
   2\psi(q_{N_{2}}) <\frac{1}{2}\psi(q_{N_{1}}).
\end{equation}
 Repeating the above arguments indefinitely yields two sequences of integers $(N_{i})_{i=0}^{\infty}$, $(R_{i})_{i=1}^{\infty},$ a sequence of rationals $(p_{N_{i}}/q_{N_{i}})_{i=0}^{\infty}$, and a sequence of words $(\bc_{i})_{i=1}^{\infty}$ whose digits are strictly positive integers. Moreover $\frac{p_{N_{i}}}{q_{N_{i}}}=[0;\bc_{i}]$ and $\bc_{i}$ has length $N_{i}$ for all $i\geq 1.$ We define $\Psi$ at $q_{N_{i}}$ according to the rule
 \begin{equation*}
    \Psi(q_{N_{i}}):=\frac{1}{q_{N_{i}}^{2}\left(\frac{1}{q_{N_{i}}^{2}\psi(q_{N_{i}})}-\varphi+\frac{q_{N_{i}-1}}{q_{N_{i}}}\right)}\,.
 \end{equation*}
The following properties are a consequence of our construction:
 \begin{enumerate}[leftmargin=2\parindent] 
 \item For all $i\geq 1$ we have
 $$\bc_{i+1}=\left(\bc_{i},\frac{1}{q_{N_{i}}^{2}\Psi(q_{N_{i}})}-1,1^{R_{i+1}-1},2\right).$$ 
  \item For all $i\geq 0$ we have $$ I_{R_{i+1}}\left(\frac{1}{q_{N_{i}}^{2}\psi(q_{N_{i}})}-1;1^{R_{i+1}-1},2\right) \subset S(\Psi,\omega,p_{N_{i}}/q_{N_{i}}).$$
     \item Equations \eqref{eq:first double bound} and \eqref{eq:second double bound} hold for all $i$, i.e. for all $i\geq 0$ we have
     $$\frac{1}{2}\psi(q_{N_{i}})\leq \Psi(q_{N_{i}})\leq 2\psi(q_{N_{i}})$$
     \item Equations \eqref{eq:guarantee monotoncity equation1} and \eqref{eq:guarantee monotoncity equation2} holds for all $i\geq 0$, i.e. for all $i\geq 0$ we have
     $$2\psi(q_{N_{i+1}}) <\frac{1}{2}\psi(q_{N_{i}}).$$
 \end{enumerate}
By property (1) we know that  
$$x=\lim_{i\to\infty} [0;\bc_{i}]:=[0;a_{1}(x),a_{2}(x),\ldots]$$ 
is well defined and its sequence of partial quotients is the component wise limit of the sequence $(0\bc_{i}0^{\infty})_{i=1}^{\infty}$. 
Moreover this $x$ has the following properties:
\begin{enumerate}[leftmargin=2\parindent]
    \item[(A)] for every $N_{i}$ we have $$[a_{N_{i}+1}(x);a_{N_{i}+2}(x),\ldots] \in S(\Psi,\omega, p_{N_{i}}(x)/q_{N_{i}}(x)),$$ 
    \item[(B)] for every $j\notin \{N_{i}+1\}_{i\in \N\cup\{0\}}$ we have $a_{j}(x)\leq 2$.
\end{enumerate}
Property $(A)$ is a consequence of Property $(2)$ above. Property $(B)$ is a consequence of the formula stated in property $(1)$.

Define $\Psi$ to be the non-increasing function given by
\begin{equation*}
    \Psi(q):=\begin{cases} \Psi(q_{N_{i}}) \quad \, &q=q_{N_{i}} \\
    \min\{\Psi(q_{N_{i}}), \max\{\psi(q),\Psi(q_{N_{i+1}})\}\} &q_{N_{i}}< q < q_{N_{i+1}}\, . 
    \end{cases}
\end{equation*}
It is a consequence of properties $(3)$ and $(4)$ that $\Psi$ is non-increasing and $\lim_{q\to\infty}\frac{-\log \Psi(q)}{\log q}=\tau$. It follows from the equality $\lim_{q\to\infty}\frac{-\log \Psi(q)}{\log q}=\tau$ that $q^{2}\Psi(q)<1/2$ for all sufficiently large $q$. Hence we can apply Proposition~\ref{prop: equivalence} together with properties $(A)$ and $(B)$ to conclude that $x\in E(\Psi,\omega)$ so the set is non-empty.
\end{proof}

\section{Proof of Theorem~\ref{thm:main} and Theorem~\ref{thm:main2}} \label{sec: dim statements proof}
We begin our proofs with some discussion on the lower order of the function $q\mapsto \Psi(q)\omega(q).$ As we will see, two divided by the lower order coincides with the natural upper bound for the Hausdorff dimension of $E(\Psi,\omega)$ provided by \eqref{eq:Hausdorff upper bound}. The benefit of the lower order is that when doing dimension calculations it is easier to work with than the natural upper bound.

Given $\Psi:(0,\infty)\to (0,\infty)$ and $\omega:(0,\infty)\to (0,1)$ we define the lower order of $(\Psi,\omega)$ to be $$\lambda(\Psi,\omega)=\liminf_{q\to\infty}\frac{-\log \Psi(q)\omega(q)}{\log q}.$$ The following simple proposition connects the lower order of $(\Psi,\omega)$ with the upper bound appearing in \eqref{eq:Hausdorff upper bound}. The proof is standard but we include it for completeness.

\begin{prop}
\label{prop:lower order and dimension}
Let $\Psi:(0,\infty)\to(0,\infty)$ and $\omega:(0,\infty)\to (0,1)$ be such that $q\to q^{2}\Psi(q)\omega(q)$ is non-increasing.  Then $$\inf\left\{s\geq 0: \sum_{q=1}^{\infty}q(\Psi(q)\omega(q))^{s}<\infty\right\}=\frac{2}{\lambda(\Psi,\omega)}.$$
\end{prop}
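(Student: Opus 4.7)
The plan is to establish the equality by proving the two natural inequalities and using the monotonicity hypothesis critically only for the lower bound. First I observe that since $q\mapsto q^{2}\Psi(q)\omega(q)$ is non-increasing, it is bounded above by its value at $q=1$, giving $\Psi(q)\omega(q)\ll q^{-2}$ and hence $\lambda(\Psi,\omega)\geq 2$; consequently $2/\lambda(\Psi,\omega)\leq 1$. I will write $\lambda=\lambda(\Psi,\omega)$ throughout.

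For the inequality $\inf\{s\geq 0:\sum_{q}q(\Psi(q)\omega(q))^{s}<\infty\}\leq 2/\lambda$, fix any $s>2/\lambda$ (the case $\lambda=\infty$ being a routine variant), pick $\varepsilon>0$ small enough that $s>2/(\lambda-\varepsilon)$, and apply the definition of $\liminf$: for all sufficiently large $q$ one has $\Psi(q)\omega(q)\leq q^{-(\lambda-\varepsilon)}$. Then $q(\Psi(q)\omega(q))^{s}\leq q^{1-s(\lambda-\varepsilon)}$, and since $s(\lambda-\varepsilon)>2$ this is a convergent $p$-series tail.

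For the opposite inequality, the key is to turn the one-sided information supplied by $\liminf$ into information on \emph{all} integers via the monotonicity assumption. Fix $s<2/\lambda$ and choose $\varepsilon>0$ with $s(\lambda+\varepsilon)<2$. By definition of $\liminf$ there is a sequence $q_{n}\to\infty$ with $\Psi(q_{n})\omega(q_{n})\geq q_{n}^{-(\lambda+\varepsilon)}$. Using that $k\mapsto k^{2}\Psi(k)\omega(k)$ is non-increasing, for every $1\leq k\leq q_{n}$,
\begin{equation*}
\Psi(k)\omega(k)\;\geq\;k^{-2}\,q_{n}^{2}\Psi(q_{n})\omega(q_{n})\;\geq\;k^{-2}\,q_{n}^{2-(\lambda+\varepsilon)}.
\end{equation*}
Since $s<2/\lambda\leq 1$ we have $1-2s>-1$, so $\sum_{k=1}^{q_{n}}k^{1-2s}\gg q_{n}^{2-2s}$, and therefore
\begin{equation*}
\sum_{k=1}^{q_{n}}k\bigl(\Psi(k)\omega(k)\bigr)^{s}\;\gg\;q_{n}^{s(2-(\lambda+\varepsilon))}\cdot q_{n}^{2-2s}\;=\;q_{n}^{\,2-s(\lambda+\varepsilon)}\xrightarrow{n\to\infty}\infty,
\end{equation*}
as $s(\lambda+\varepsilon)<2$. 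Hence the series diverges, which forces $\inf\{\cdots\}\geq 2/\lambda$, completing the proof.

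The proof is essentially a calibration exercise; the only non-formal step is the propagation of the $\liminf$-based lower bound at the sequence $(q_{n})$ to all smaller indices, which is exactly where the monotonicity of $q^{2}\Psi(q)\omega(q)$ is used. No single step is technically difficult, and the hardest part is simply keeping track of the inequalities to make sure the bound $\lambda\geq 2$ (hence $s<1$) is invoked so that the elementary sum $\sum_{k\leq q_{n}}k^{1-2s}$ really is of order $q_{n}^{2-2s}$.
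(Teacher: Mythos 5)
Your proof is correct. The upper bound ($\inf\leq 2/\lambda$) is identical to the paper's argument. For the lower bound both proofs exploit the monotonicity of $q\mapsto q^{2}\Psi(q)\omega(q)$ to propagate the $\liminf$-based lower bound at a sparse sequence of good points to many other indices, but the decompositions differ. The paper splits the series into dyadic blocks $2^{n}<q\leq 2^{n+1}$, notes that every term in a block adjacent to a good $q'$ is comparable to the value at $q'$, and concludes that infinitely many blocks each contribute $\gg 2^{n(2-\lambda s)}\geq 1$. You instead push the monotonicity bound all the way down to $k=1$ and show directly that the partial sums up to $q_{n}$ are $\gg q_{n}^{2-s(\lambda+\varepsilon)}\to\infty$. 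Your variant needs the additional (correct) preliminary observation that monotonicity forces $\lambda(\Psi,\omega)\geq 2$, hence $s<2/\lambda\leq 1$, so that $\sum_{k\leq q_{n}}k^{1-2s}\asymp q_{n}^{2-2s}$; the dyadic version sidesteps this because within a single block the factor $q^{1-2s}$ is constant up to a bounded multiplicative error, whatever the sign of $1-2s$. Both routes are sound; yours is slightly more elementary (no block decomposition), the paper's is slightly more robust to the exponent bookkeeping.
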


\begin{proof}
 Let $\lambda<\lambda(\Psi,\omega)$ be arbitrary. Then for all $q$ sufficiently large we have $\Psi(q)\omega(q)<q^{-\lambda}.$ Taking $s>2/\lambda,$ it now follows that   $$\sum_{q=1}^{\infty}q(\Psi(q)\omega(q))^{s}\ll \sum_{q=1}^{\infty}q^{1-s\lambda}<\infty.$$ Since $s$ and $\lambda$ were arbitrary we have 
 \begin{equation}
 \label{eq:inequalityA}
 \inf\left\{s\geq 0: \sum_{q=1}^{\infty}q(\Psi(q)\omega(q))^{s}<\infty\right\}\leq \frac{2}{\lambda(\Psi,\omega)}.
 \end{equation}
Let us now establish the reverse inequality. Let us assume that $\lambda(\Psi,\omega)<\infty.$ Otherwise our desired equality follows from \eqref{eq:inequalityA}, as in this case both sides equal zero. Let $\lambda>\lambda(\Psi,\omega)$ be arbitrary. Then for infinitely many $q\in \R$ we have $\Psi(q)\omega(q)>q^{-\lambda}$. Let $s<2/\lambda$ be arbitrary. We observe the following:
\begin{align}
\label{eq:dyadic splitting}
\sum_{q=1}^{\infty}q(\Psi(q)\omega(q))^{s}=\sum_{n=1}^{\infty}\sum_{2^{n}<q\leq 2^{n+1}}q(\Psi(q)\omega(q))^{s}
&=\sum_{n=1}^{\infty}\sum_{2^{n}<q\leq 2^{n+1}}q^{1-2s}(q^{2}\Psi(q)\omega(q))^{s}\nonumber\\
&\gg \sum_{n=1}^{\infty}2^{n(1-2s)}\sum_{2^{n}<q\leq 2^{n+1}}(q^{2}\Psi(q)\omega(q))^{s}.
\end{align} Suppose that $q'\in \R$ is such that $\Psi(q')\omega(q')>(q')^{-\lambda}$ and $n$ is such that $2^{n+1}<q'\leq 2^{n+2},$ then since $q\to q^{2}\Psi(q)\omega(q)$ is decreasing, for such an $n$ we have 
\begin{equation}
   \label{eq:bound for infinite n}
   \sum_{2^{n}<q\leq 2^{n+1}}(q^{2}\Psi(q)\omega(q))^{s}\geq \sum_{2^{n}<q\leq 2^{n+1}}((q')^{2}\Psi(q')\omega(q'))^{s}\gg  2^{n(1+2s-s\lambda)}.
\end{equation}
\begin{equation}
\label{eq:infinite n lower bound}
   2^{n(1-2s)}\sum_{2^{n}<q\leq 2^{n+1}}(q^{2}\Psi(q)\omega(q))^{s}\geq 2^{n(1-2s)}\cdot 2^{n(1+2s-s\lambda)}=2^{n(2-\lambda s)}\geq 1.
\end{equation} In the final inequality we used that $2-\lambda s>0$. This inequality is a consequence of our choice of $s$. Combining \eqref{eq:dyadic splitting} with the fact \eqref{eq:infinite n lower bound} holds for infinitely many $n$, we see that $\sum_{q=1}^{\infty}q(\Psi(q)\omega(q))^{s}=\infty.$ Since $s$ and $\lambda$ were arbitrary, it follows that
\begin{equation}
\label{eq:inequalityB}
\frac{2}{\lambda(\Psi,\omega)}\leq \inf\left\{s\geq 0: \sum_{q=1}^{\infty}q(\Psi(q)\omega(q))^{s}<\infty\right\}.
\end{equation}
Combining \eqref{eq:inequalityA} and \eqref{eq:inequalityB} our result now follows.
\end{proof}

Observe that the conclusion of Proposition \ref{prop:lower order and dimension} can be made under the assumptions of Theorems \ref{thm:main} and \ref{thm:main2}. Hence to prove these statements it remains to show that
\begin{equation}
    \label{eq:WTS lower bound}
    \dim_{H}(E(\Psi,\omega))\geq \frac{2}{\lambda(\Psi,\omega)}
\end{equation}
within each of the relevant settings. Our proof is an adaptation of an argument of Bugeaud \cite{Bug1}. We will prove \eqref{eq:WTS lower bound} by constructing a sufficiently large Cantor set 
\begin{equation*}
    \cC=\bigcap_{k=1}^{\infty} E_{k} \subseteq E(\Psi,\omega)
\end{equation*}
and then obtain \eqref{eq:WTS lower bound} by applying Proposition~\ref{prop:dimension lower bound} to $\cC$. \par 
The proofs of Theorems \ref{thm:main} and \ref{thm:main2} follow the same strategy. For the purpose of brevity, we only prove Theorem~\ref{thm:main2} in full. The proof of Theorem \ref{thm:main2} contains additional technicalities that are unnecessary in the proof of Theorem~\ref{thm:main}. For the rest of this section we fix $\Psi$ and $\omega$ satisfying the assumptions of Theorem \ref{thm:main2}. We will highlight where differences arise in our proof when compared to the proof of Theorem~\ref{thm:main}. This will often simply involve using Lemma~\ref{lem: building U(p/q) with divisibility} in place of Lemma~\ref{lem: building U(p/q)}.
\subsection{Construction of a Cantor subset by induction}
In our construction of $\cC$ we adopt the following notational convention. Let $p/q\in \Q\cap (0,1)$ be such that $q$ satisfies the relevant partial quotient bound and is large enough so that Lemma~\ref{lem: building U(p/q) with divisibility} can be applied. We also assume that $q$ is sufficiently large so that Lemma \ref{lem:b size} can be applied. Given words $\{\bb_{l}\}_{l=1}^{L}$ satisfying the conclusion of Lemma \ref{lem: building U(p/q) with divisibility} for $p/q,$ we let $$U\left(\frac{p}{q}\right):=\bigcup_{l=1}^{L}I(0;\ba\bb_{l})$$ where $\ba=(a_{1},\ldots,a_{n})$ is the unique word of even length such that $p/q=[0;a_{1},\ldots,a_{n}]$\footnote{In our proof of Theorem~\ref{thm:main} the parameter $q$ is large enough so that Lemma~\ref{lem: building U(p/q)} and Lemma~\ref{lem:b size} can be applied. We then use the words $\{\bb_{l}\}_{l=1}^{L}$ satisfying the conclusion of Lemma \ref{lem: building U(p/q)}.}. Such a word exists by Lemma \ref{lem:rationalcfexpansion}. We will build $\cC$ inductively by starting with the base case and then repeatedly applying Lemma~\ref{inductive lemma} below. 

\subsubsection{Base level construction} We start by defining the set $E_{1}$. Consider the fundamental interval $I_{N_{0}}(0;2^{N_{0}})$ corresponding to the digit $2$ repeated $N_{0}$ times.  
We can assume that $N_{0}\in \N$ has been chosen sufficiently large so that for all $q\geq q_{N_{0}}(2^{N_{0}})$: 
\begin{equation}
    \label{eq:Psi quadratic bound}
    \Psi(q)\leq \frac{1}{1000q^{2}},
\end{equation} 
and 
\begin{equation}
    \label{eq:product bound}
    \prod_{j=1}^{\infty}\left(1-21\cdot 2^{j-1}q^{2}\Psi(2^{(j-1)/2}q)\right)\geq \frac{1}{2}\, .
\end{equation}
The former assumption can be shown to be possible by the condition $\lim_{q\to \infty}q^{2}\Psi(q)=0$, while the latter assumption follows from the condition $\sum_{q=1}^{\infty}q\Psi(q)<\infty$, Lemma \ref{lem:Geometric convergence}, and well-known properties of infinite series and infinite products.\par
Define the subset of the fundamental interval 
\begin{align*}
    J_{0}:=&\left\{ x \in I_{N_{0}}(0;\textbf{2}^{N_{0}}): a_{N_{0}+j}(x) \leq \frac{1}{7 \cdot 2^{j-1}q_{N_{0}}(2^{N_{0}})^{2}\Psi(2^{(j-1)/{2}}q_{N_{0}}(2^{N_{0}}))},\,  \quad j\geq 1\right\}\\
    \cup&\bigcup_{m=1}^{\infty}\left\{[0;2^{N_{0}},a_{1},\ldots, a_{m}]:a_{j}\leq \frac{1}{7 \cdot 2^{j-1}q_{N_{0}}(2^{N_{0}})^{2}\Psi(2^{(j-1)/{2}}q_{N_{0}}(2^{N_{0}}))},\,\quad  1\leq j\leq m \right\}.
\end{align*}
Observe that all irrational points in $J_{0}$ satisfy Proposition~\ref{prop: equivalence} \ref{E-bad}. This is due to the non-increasing property of $q\mapsto q^{2}\Psi(q)$ and noting that 
\begin{equation}
\label{eq:quotient exponential growth}
2^{(j-1)/2}q_{N_{0}}(2^{N_{0}})\leq q_{N_{0}+j}(2^{N_{0}}\ba)
\end{equation} for any word $\ba\in \N^{j}$. This is a consequence of \eqref{eq:recursive formulas}. Therefore by Lemma \ref{lem:Subset measure bound}, \eqref{eq:product bound} and \eqref{eq:quotient exponential growth} we have
    \begin{align} \label{size of J0}
        \mathcal{L}(J_{0})
        &\geq  \prod_{j=1}^{\infty}\left(1-\frac{3}{\frac{1}{7 \cdot 2^{j-1}q_{N_{0}}(2^{N_{0}})^{2}\Psi(2^{(j-1)/{2}}q_{N_{0}}(2^{N_{0}}))}} \right)\mathcal{L}\left(I_{N_{0}}(0;2^{N_{0}})\right)\nonumber\\
        &\geq \prod_{j=1}^{\infty}\left(1-21\cdot 2^{j-1}q_{N_{0}}(2^{N_{0}})^{2}\Psi\left(2^{(j-1)/2}q_{N_{0}}(2^{N_{0}})\right) \right)\mathcal{L}\left(I_{N_{0}}(0;2^{N_{0}})\right)\nonumber\\
        &= \frac{\mathcal{L}\left(I_{N_{0}}(0;2^{N_{0}})\right)}{2}\, .
    \end{align}
Hence $J_{0}$ retains a positive proportion of the measure of $I_{N_{0}}(0;2^{N_{0}})$.\par 
Pick a large integer $Q_{1} \in \N$ such that
\begin{equation}\label{eq:Q_1 size}
    Q_{1} > \max\left\{ -\frac{1000}{\mathcal{L}\left(I_{N_{0}}(0;2^{N_{0}})\right)}  \log\left(\mathcal{L}\left(I_{N_{0}}(0;2^{N_{0}})\right)\right) \right\} \, .
\end{equation}
Note that this lower bound for $Q_{1}$ depends only upon $N_{0}$. We will impose other lower bounds on the size of $Q_{1}$ later.
By Dirichlet's Theorem in Diophantine approximation, for any $x\in I_{N_{0}}(0;2^{N_{0}})$ there exists $(p,q)\in\mathbb{Z}\times \N$ satisfying
\begin{equation*}
    \left|x-\frac{p}{q}\right|<\frac{1}{qQ_{1}}\, , \quad 1\leq q\leq Q_{1}\, .
\end{equation*}
Let
\begin{equation*}
    A=\left\{x\in I_{N_{0}}(0;2^{N_{0}}): \left|x-\frac{p}{q}\right|<\frac{1}{qQ_{1}} \, , \quad 1\leq q\leq \frac{Q_{1}}{10}\right\}.
\end{equation*}
By \eqref{eq:Q_1 size} and a standard measure calculation we have that
\begin{align} \label{eq: A size base case}
    \mathcal{L}(A)\leq \sum_{1\leq q\leq \frac{Q_{1}}{10}} \frac{2}{qQ_{1}}(q\cL(I_{N_{0}}(0;2^{N_{0}}))+1)
    &\leq \frac{2}{10}\mathcal{L}\left(I_{N_{0}}(0;2^{N_{0}})\right)+\frac{2}{Q_{1}}\sum_{1\leq q\leq \frac{Q_{1}}{10}}q^{-1}\nonumber\\
    &\leq \frac{1}{4}\mathcal{L}\left(I_{N_{0}}(0;2^{N_{0}})\right).
\end{align}
Let $\tilde{T}=\{\frac{r_{j}}{s_{j}}\}_{1\leq j\leq \tilde{t}}$ be a maximal $(Q_{1})^{-2}$-separated family of rational points $\frac{r}{s}\in \Q\cap J_{0}$ with $\frac{Q_{1}}{10}\leq s\leq Q_{1}$. We claim that 
    \begin{equation}
    \label{eq:separated covering base case}
      J_{0}\setminus A \subseteq \bigcup_{j=1}^{\tilde{t}}\left[\frac{r_{j}}{s_{j}}-\frac{11}{Q_{1}^{2}},\frac{r_{j}}{s_{j}}+\frac{11}{Q_{1}^{2}}\right] .
    \end{equation} 
    This follows because if $x\in J_{0}\setminus A$, then by Dirichlet's Theorem there exists $(u,v)\in \Z\times \N$ with $\frac{Q_{1}}{10}\leq v\leq Q_{1}$ such that
    \begin{equation*}
        \left|x-\frac{u}{v}\right|<\frac{1}{vQ_{1}}\leq \frac{1}{v^{2}}\, .
    \end{equation*}
    By Lemma \ref{lem:Grace} this $(u,v)\in \Z\times \N$ must satisfy
     \begin{equation}
     \label{eq:l equation base case}
        \frac{u}{v}\in\left\{[0;a_{1}(x),\ldots,a_{l}(x)],[0;a_{1}(x),\ldots, a_{l}(x),1],[0;a_{1}(x),\ldots, a_{l}(x),a_{l+1}(x)-1]\right\}  
    \end{equation}
    for some $l\in \N$. Note that $l$ can be made arbitrarily large by choosing $Q_{1}$ suitably large in a way that does not depend upon $x$. Here we are using the fact that the size of the partial quotients is bounded at each level for elements of $J_{0}$. Hence we can choose $Q_{1}$ large enough so that $l>N_{0}$ thus guaranteeing $\frac{u}{v}\in J_{0}$.
    Consequently 
    \begin{equation}
    \label{eq:Almost covered base case}
    J_{0}\setminus A\subseteq \bigcup_{\frac{u}{v}\in J_{0}: \frac{Q_{1}}{10}\leq v\leq Q_{1}}\left[\frac{u}{v}-\frac{1}{vQ_{1}},\frac{u}{v}+\frac{1}{vQ_{1}}\right]\, .
    \end{equation}
     Equation \eqref{eq:separated covering base case} now follows from \eqref{eq:Almost covered base case} and an application of the triangle inequality. Namely, if $\frac{u}{v}\not\in \tilde{T}$ then by the maximality of $\tilde{T}$ there exists some $\frac{r}{s}\in \tilde{T}$ such that $|r/s-u/v|\leq Q_{1}^{-2}.$ Then, by the triangle inequality 
     \begin{align*}
         \left|x-\frac{r}{s}\right|&\leq \left| x-\frac{u}{v}\right| + \left|\frac{r}{s}-\frac{u}{v}\right|
         \leq \frac{1}{vQ_{1}}+\frac{1}{Q_{1}^{2}}
          \leq \frac{10}{Q_{1}^{2}}+\frac{1}{Q_{1}^{2}}\, .
     \end{align*}
     giving us \eqref{eq:separated covering base case}. We now use our measure bounds for $J_{0}$ and $A$ to determine a lower bound on the cardinality of $\tilde{T}$. By \eqref{size of J0}, \eqref{eq: A size base case}, and \eqref{eq:separated covering base case} we have that
\begin{align*}
\tilde{t} \cdot \frac{22}{Q_{1}^{2}} \geq 
   \mathcal{L}\left(\bigcup_{j=1}^{\tilde{t}}\left[ \frac{r_{j}}{s_{j}}-\frac{11}{Q_{1}^{2}}, \frac{r_{j}}{s_{j}}+\frac{11}{Q_{1}^{2}}\right]\right) \geq \cL(J_{0}\setminus A) \geq \frac{1}{4} \mathcal{L}\left(I_{N_{0}}(0;2^{N_{0}})\right)
\end{align*}
Thus we can rearrange to obtain
\begin{equation*}
    \tilde{t} \geq  \frac{ Q_{1}^{2}}{88}\mathcal{L}\left(I_{N_{0}}(0;2^{N_{0}})\right)\, .
\end{equation*}
Lastly note that since each $\frac{r}{s}\in \tilde{T}$ belongs to $J_{0}$ we can write $\frac{r}{s}=[0;a_{1},\ldots, a_{n}]$ for some $n=N_{0}+j$ for some $j\in \N$ where the partial quotients $a_{N_{0}+1},\ldots, a_{n}$ satisfy the upper bounds imposed by the definition of $J_{0}$. We would like to apply Lemma \ref{lem: building U(p/q) with divisibility} to $\frac{r}{s},$ however it is possible that $n$ is odd. If $n$ is odd we write $\tfrac{r}{s}=[0;a_{1},\ldots, a_{n-1}, a_{n}-1,1]$ if $a_{n}\geq 2$ and  $\tfrac{r}{s}=[0;a_{1},\ldots, a_{n-1}+1]$ if $a_{n}=1.$ Since $a_{i}=2$ for $1\leq i \leq N_{0}$, $q\to q^{2}\Psi(q)$ is non-increasing, and \eqref{eq:quotient exponential growth} we have
\begin{equation*}
    a_{N_{0}+i}+1\leq \frac{1}{7 \cdot 2^{j-1}q_{N_{0}}(2^{N_{0}})^{2}\Psi(2^{(j-1)/{2}}q_{N_{0}}(2^{N_{0}}))} +1 < \frac{1}{6 s^{2}\Psi(s)}\, , \qquad 1\leq i \leq j\, . 
\end{equation*}
Thus, regardless of the parity of $n$ we can find a continued fraction expansion of $\frac{r}{s}$ of even length so that the bound $$\max_{1\leq i\leq n} a_{i} <\frac{1}{6s^{2}\Psi(s)}$$ is satisfied. Therefore Lemma~\ref{lem: building U(p/q) with divisibility} can be applied to each $\frac{r}{s}\in \tilde{T}$. We define
\begin{equation*}
    E_{1}:=\bigcup_{j=1}^{\tilde{t}}U\left(\frac{r_{j}}{s_{j}}\right).
\end{equation*}
In this final line we have assumed that $Q_{1}$ is sufficiently large so that Lemma \ref{lem: building U(p/q) with divisibility} can be applied to each element in $\tilde{T}$.

\subsubsection{Inductive step} The following lemma allows us to apply an inductive argument.

\begin{lem} \label{inductive lemma}
 Let $Q\geq Q_{1}$. Then for all $Q'\geq Q$ sufficiently large in a way that only depends upon $Q,$ $\Psi$ and $\omega$ we have the following:  Suppose $\frac{p}{q}$ is such that 
 \begin{enumerate}[1)]
     \item $\frac{Q}{10}\leq q\leq Q$.
     \item For any $x\in U(\frac{p}{q})$ there are no solutions to
    \begin{equation*}
        \left|x-\frac{r}{s}\right|<(1-\omega(s))\Psi(s) \quad Q_{1}\leq s \leq Q\, .
    \end{equation*}
    \item Writing $\frac{p}{q}=[0;c_{1}, \ldots, c_{n}]$ for $n$ even, we have 
    \begin{equation*}
        c_{i} \leq \frac{1}{6q^{2}\Psi(q)} \, , \quad 1\leq i \leq n\, .
    \end{equation*}
 \end{enumerate}
    Then we can pick a collection of rational points $T=\{\frac{r_{j}}{s_{j}}\}_{1\leq j\leq t}$ with
    \begin{equation*}
        t=\left\lfloor \frac{(Q'^{2})\Psi(Q)\omega(Q)}{10^{10}}\right \rfloor
    \end{equation*}
    such that
    \begin{enumerate}
        \item[a)] $\frac{r_{j}}{s_{j}}\in U(\frac{p}{q})$ and $\frac{Q'}{10}\leq s_{j}\leq Q'$ for each $1\leq j\leq t$, 
        \item[b)] $|\frac{r_{j}}{s_{j}}-\frac{r_{i}}{s_{i}}|\geq (Q')^{-2}$ for every $1\leq j<i\leq t$,
        \item[c)] Writing $\frac{r_{j}}{s_{j}}=[0;a_{1},\ldots, a_{l}]$ for some $l$ even we have that
    \begin{equation*}
        a_{i}\leq \frac{1}{6 s_{j}^{2}\Psi(s_{j})} \, , \quad 1\leq i\leq l\, .
    \end{equation*}
        \item[d)] $U(\frac{r_{j}}{s_{j}})\subset U(\frac{p}{q})$ for every $1\leq j\leq t$,
        \item[e)] For every $x\in U(\frac{r_{j}}{s_{j}})$
        \begin{enumerate}
            \item[i)] $$\left|x-\frac{r}{s}\right|<\Psi(s),\quad Q< s\leq Q'$$ has a solution $(r,s)\in \Z\times \N.$ 
            \item[ii)] $$\left|x-\frac{r}{s}\right|< (1-\omega(s))\Psi(s),\quad Q_1\leq s\leq Q'$$ has no solutions $(r,s)\in\mathbb{Z}\times \N$. 
        \end{enumerate}
    \end{enumerate}
\end{lem}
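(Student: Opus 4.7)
The plan is to imitate the base case construction, but now starting from $U(p/q)$ instead of $I_{N_0}(0;2^{N_0})$. First, inside $U(p/q)$ I would define a subset $J$ analogous to $J_0$: fix some $m\in \N$ for which $\bigcup_{l=1}^{L}I(0;\ba\bb_l)$ is known, and then for each level fundamental interval in $U(p/q)$, restrict to points whose partial quotients at subsequent positions are bounded by $\tfrac{1}{7\cdot 2^{j-1}q_{n+m}^{2}\Psi(2^{(j-1)/2}q_{n+m})}$. Combined with \eqref{eq: measure bound} of Lemma~\ref{lem: building U(p/q) with divisibility}, \eqref{eq:product bound}, Lemma~\ref{lem:Subset measure bound} and the quasi-multiplicative behaviour \eqref{eq:quotient exponential growth}, this gives $\cL(J)\geq \tfrac{1}{2}\cL(U(p/q))\geq \tfrac{\omega(q)\Psi(q)}{2\cdot 10^8}$. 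The key point is that every irrational point of $J$ automatically satisfies condition \ref{E-bad} of Proposition~\ref{prop: equivalence} from level $n+m$ onward, thanks to the partial quotient bounds and the non-increasing property of $q\mapsto q^2\Psi(q)$.

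Next I would apply Dirichlet's theorem inside $J$ at scale $Q'$ and remove the set $A$ of $x$ admitting a rational approximation with denominator $\leq Q'/10$. Using the lower bound for $\cL(U(p/q))$ in place of $\cL(I_{N_0}(0;2^{N_0}))$, and choosing $Q'$ large enough (depending on $Q$, $\Psi$, $\omega$) so that the analogue of \eqref{eq:Q_1 size} holds, the standard calculation in \eqref{eq: A size base case} shows $\cL(A)\leq \tfrac{1}{4}\cL(U(p/q))$, leaving $\cL(J\setminus A)\geq \tfrac{1}{8}\cL(U(p/q))$. For $x\in J\setminus A$, Dirichlet produces a rational $\tfrac{u}{v}$ with $Q'/10\leq v\leq Q'$ and $|x-\tfrac{u}{v}|<\tfrac{1}{vQ'}<\tfrac{1}{v^2}$, so Lemma~\ref{lem:Grace} forces $\tfrac{u}{v}$ to coincide with one of three rational truncations of $x$ at depth $l$, where $l$ can be made arbitrarily large by enlarging $Q'$. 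Choosing $Q'$ large enough that $l$ exceeds both $n+m$ and the longest word in $\{\bb_l\}$, we conclude $\tfrac{u}{v}\in J$, and then pick a maximal $(Q')^{-2}$-separated family $T=\{r_j/s_j\}$ of such rationals; the covering argument \eqref{eq:separated covering base case} together with the measure bound then forces $|T|\geq \tfrac{(Q')^2\cL(U(p/q))}{10^{10}}\geq \lfloor (Q')^2\Psi(Q)\omega(Q)/10^{10}\rfloor$, using the monotonicity of $q\mapsto q^2\Psi(q)\omega(q)$ to replace $q$ by $Q$.

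Having selected $T$, the partial quotient condition (c) is immediate: each $r_j/s_j\in J$ inherits the partial quotient bounds from the definition of $J$, and (as in the base case) if the resulting continued fraction has odd length one uses the identity $[0;a_1,\ldots,a_n]=[0;a_1,\ldots,a_n-1,1]$ (or $[0;a_1,\ldots,a_{n-1}+1]$ when $a_n=1$) to obtain an even-length representation still satisfying $a_i\leq \tfrac{1}{6s_j^2\Psi(s_j)}$ (enlarging $Q'$ so the strict inequality survives the $+1$). Property (c) then allows Lemma~\ref{lem: building U(p/q) with divisibility} to be applied to each $r_j/s_j$, producing $U(r_j/s_j)\subset U(p/q)$, giving (a), (b), (d). For (e)(i), by Lemma~\ref{lem: belongs to u(p/q)} every $x\in U(r_j/s_j)$ satisfies $(1-\omega(s_j))\Psi(s_j)\leq x-r_j/s_j\leq \Psi(s_j)$ with $s_j\in (Q/10,Q']$, so taking $(r,s)=(r_j,s_j)$ handles the range $Q<s\leq Q'$ after enlarging $Q'$ so that $Q'/10>Q$.

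The main obstacle is verifying (e)(ii). The range $Q_1\leq s\leq Q$ is inherited from hypothesis 2 via $U(r_j/s_j)\subset U(p/q)$. The genuinely new range is $Q<s\leq Q'$: here I would argue that if $|x-r/s|<(1-\omega(s))\Psi(s)$ with $x\in U(r_j/s_j)$ and $s$ in this range, then by Legendre's Lemma~\ref{lem:Legendre} (using $\Psi(s)<\tfrac{1}{2s^2}$, which is \eqref{eq:Psi quadratic bound}) $r/s$ must be a convergent of $x$; but $x$ lies in a fundamental interval whose partial quotients up to the relevant level are exactly those of $r_j/s_j$ followed by the word from Lemma~\ref{lem: building U(p/q) with divisibility}, and the explicit bounds \eqref{b digits SQD} on those partial quotients force, via Lemma~\ref{lem:approximation quality}, that no convergent of $x$ with denominator in $(Q,Q']$ can give an error smaller than $(1-\omega(s))\Psi(s)$ except the convergent $r_j/s_j$ itself (which satisfies the reverse inequality by construction). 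The delicate point is to check the two boundary convergents — the one at denominator $s_j$ and the next one — which is precisely what the $\lfloor 1/(3q_{n+i}^2\Psi(q_{n+i}))\rfloor$ cap in \eqref{b digits SQD} is designed to handle, so the conclusion will follow once $Q'$ is chosen large relative to $Q$.
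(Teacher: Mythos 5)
Your proposal follows essentially the same route as the paper: build a subset $J\subset U(p/q)$ with geometrically decaying partial-quotient caps so that $\cL(J)\geq\tfrac12\cL(U(p/q))$, run Dirichlet at scale $Q'$, discard the set $A$, use Grace's lemma to place the surviving approximants inside $J$, extract a maximal $(Q')^{-2}$-separated family, and verify a)--e) via the case analysis on convergents. Two points in your sketch need more care than you give them. First, d) is not automatic from applying Lemma~\ref{lem: building U(p/q) with divisibility} to $r_j/s_j$: that lemma only places $U(r_j/s_j)$ inside $[r_j/s_j,\,r_j/s_j+\Psi(s_j)]$, and this interval could protrude past the right endpoint of $U(p/q)$; the paper fixes this by deleting the rightmost rational of the separated family, so that each remaining $r_j/s_j$ has another family member within $U(p/q)$ at distance at least $(Q')^{-2}\geq\Psi(s_j)$ to its right. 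Second, in e)(ii) your treatment of the convergent following $r_j/s_j$ is attributed to the wrong mechanism: the cap $\lfloor 1/(3q_{n+i}^{2}\Psi(q_{n+i}))\rfloor$ is an \emph{upper} bound on digits and handles the intermediate convergents, whereas the convergent at index $n_1+1$ is excluded because Lemma~\ref{lem:b size} forces $a_{n_1+1}\geq \tfrac{1}{2s_j^{2}\Psi(s_j)}$, hence $q_{n_1+1}\geq s_j/(2s_j^{2}\Psi(s_j))>Q'$, so it falls outside the denominator range altogether. With these two repairs your argument coincides with the paper's.
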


\begin{rem}
    Note the sets $U(\frac{r_{j}}{s_{j}})$ appearing in $d),e)$ are well-defined since Lemma~\ref{lem: building U(p/q) with divisibility} is applicable to $\frac{r_{j}}{s_{j}}\in T$ by $c)$. 
\end{rem}
\begin{rem}
Lemma \ref{inductive lemma} can be adapted to prove Theorem~\ref{thm:main}. Under the assumptions of Theorem~\ref{thm:main} its statement can be simplified. In particular, to apply Lemma~\ref{lem: building U(p/q)} (rather than Lemma~\ref{lem: building U(p/q) with divisibility}) we do not need assumption 3). Moreover, we do not require the resulting set of rationals $T$ to satisfy c). Furthermore, the constant appearing in the denominator in the formula for $t$ can be improved, but this has no effect on the later dimension calculations. 
\end{rem}

\begin{proof}
Let us begin by fixing $Q\geq Q_{1}$. We will now show that for all $Q'$ sufficiently large in a way that depends upon $Q,$ $\Psi$ and $\omega$ the desired conclusion holds. Let $\frac{p}{q}$ be as in the statement of our lemma. Write $\frac{p}{q}=[0;\bc]$ where $\bc=(c_{1},\ldots,c_{n})$ with $n$ even (this is possible by Lemma \ref{lem:rationalcfexpansion}). By assumption 3) we can apply Lemma \ref{lem: building U(p/q) with divisibility} to $p/q$\footnote{In the proof of Theorem \ref{thm:main} we would apply Lemma~\ref{lem: building U(p/q)} at this point.} and let $\{\bb_{l}\}_{l=1}^{L}$ denote the corresponding set of words. We let $m$ denote their common length. For each $l$ we let $\frac{p(\bc\bb_{l})}{q(\bc\bb_{l})}=[0;\bc\bb_{l}]$ and
    \begin{align*}
    J(\bb_{l}):=&\left\{ x\in I(0;\bc\bb_{l}): a_{n+m+j}(x)\leq \frac{1}{7\cdot 2^{j-1}q(\bc\bb_{l})^{2}\Psi\left(2^{(j-1)/2}q(\bc\bb_{l})\right)} , \quad j\geq 1\right\}\\
    \cup&\bigcup_{m=1}^{\infty}\left\{[0;\bc\bb_{l},a_{1},\ldots, a_{m}]:a_{j}\leq \frac{1}{7 \cdot 2^{j-1}q_{N_{0}}(\bc\bb_{l})^{2}\Psi(2^{(j-1)/{2}}q_{N_{0}}(\bc\bb_{l})},\,\quad  1\leq j\leq m \right\}.
    \end{align*}
    and $J=\bigcup_{l=1}^{L} J(\bb_{l}).$\par 
    By Lemma \ref{lem:Subset measure bound}, \eqref{eq:product bound}, and the definition of $U\left(\frac{p}{q}\right)$ we have
    \begin{align} \label{size of J}
        \mathcal{L}(J)&=\sum_{l=1}^{L}\mathcal{L}(J(\bb_{l})) \geq \sum_{l=1}^{L} \prod_{j=1}^{\infty}\left(1-\frac{3}{\frac{1}{7\cdot 2^{j-1}q(\bc\bb_{l})^{2}\Psi(2^{(j-1)/2}q(\bc\bb_{l}))}} \right)\mathcal{L}\left(I(0;\bc\bb_{l})\right)\nonumber\\
        &\geq \frac{1}{2}\sum_{l=1}^{L} \mathcal{L}\left(I(0;\bc\bb_{l})\right)\nonumber\\
        &= \frac{1}{2}\mathcal{L}\left(U\left(\frac{p}{q}\right)\right).
    \end{align}
Let us assume that $Q'$ has been chosen sufficiently large so that we have:
   \begin{align}
       \label{eq:Q bound}
       &Q'\geq \max_{p'/q'\in \mathbb{Q}\cap (0,1):Q/10\leq q'\leq Q}1000 \left|U\left(\frac{p'}{q'}\right)\right|^{-1} \log\left(\left|U\left(\frac{p'}{q'}\right)\right|^{-1}\right)\, ,\\
    \label{eq: Q bound 2}
       &\max_{\stackrel{p'/q'\in \mathbb{Q}\cap (0,1)}{Q/10\leq q'\leq Q}}\max_{\bb_{l}\text{ provided by Lemma }\ref{lem: building U(p/q) with divisibility}}\frac{1}{2}q(\bc\bb_{l})^{2}\Psi\left(q(\bc\bb_{l})\right)>\left(\frac{Q'}{10}\right)^{2}\Psi\left(\frac{Q'}{10}\right)\, , \\
   \label{eq: Q bound 3}
       &Q'>\max_{\stackrel{p'/q'\in \mathbb{Q}\cap (0,1)}{Q/10\leq q'\leq Q}}\max_{\bb_{l}\text{ provided by Lemma }\ref{lem: building U(p/q) with divisibility}}10 q(\bc\bb_{l})\, , \\
           \label{eq: Q bound 4}
        &Q^{2}\Psi(Q)>12\left(\frac{Q'}{10}\right)^{2}\Psi\left(\frac{Q'}{10}\right)   
   \end{align}
   Note that \eqref{eq:Q bound}, \eqref{eq: Q bound 3} and \eqref{eq: Q bound 4} are possible since one side is fixed (depending on $Q$, $\Psi$ and $\omega$), and that \eqref{eq: Q bound 2} is possible since $\lim_{q\to\infty} q^{2}\Psi(q)=0$. By applying Dirichlet's Theorem for $Q'$ and an arbitrary choice of $x\in U(\frac{p}{q}),$ we see there exists a solution $(u,v)\in \Z\times \N$ satisfying
    \begin{equation*}
        \left|x-\frac{u}{v}\right|<\frac{1}{vQ'} \, , \quad 1\leq v\leq Q'\, .
    \end{equation*}
    Letting
    \begin{equation*}
        A=\left\{x\in U\left(\frac{p}{q}\right): \left|x-\frac{u}{v}\right|<\frac{1}{vQ'} \, , \, \, 1\leq v\leq \frac{Q'}{10}\right\}
    \end{equation*}
    we see as before that
    \begin{align} 
    \label{size of A_k}
        \mathcal{L}(A)\leq \sum_{1\leq v\leq \frac{Q'}{10}} \frac{2}{vQ'}\left(v\mathcal{L}\left(U\left(\frac{p}{q}\right)\right)+1\right)& \leq \mathcal{L}\left(U\left(\frac{p}{q}\right)\right)\frac{2}{10}+\frac{2}{Q'}\sum_{1\leq v\leq \frac{Q'}{10}} \frac{1}{v}\nonumber \\
        &\leq \frac{1}{4}\mathcal{L}\left(U\left(\frac{p}{q}\right)\right),
    \end{align}
    where in the final line we have used \eqref{eq:Q bound}. Let $\tilde{T}=\{\frac{r_{j}}{s_{j}}\}_{1\leq j\leq \tilde{t}}$ be a maximal $(Q')^{-2}$-separated family of rational points $\frac{r}{s}\in \Q\cap J$ with $\frac{Q'}{10}\leq s\leq Q'$. We claim that 
    \begin{equation}
    \label{eq:separated covering}
      J\setminus A \subseteq \bigcup_{j=1}^{\tilde{t}}\left[\frac{r_{j}}{s_{j}}-\frac{11}{(Q')^{2}},\frac{r_{j}}{s_{j}}+\frac{11}{(Q')^{2}}\right] .
    \end{equation} This follows because if $x\in J\setminus A,$ then by Dirichlet's Theorem there exists $(u,v)\in \Z\times \N$ with $\frac{Q'}{10}\leq v\leq Q'$ such that
    \begin{equation*}
        \left|x-\frac{u}{v}\right|<\frac{1}{vQ'}\leq \frac{1}{v^{2}}\, .
    \end{equation*}
    By Lemma \ref{lem:Grace} this $(u,v)\in \Z\times \N$ must satisfy
     \begin{equation}
     \label{eq:l equation}
        \frac{u}{v}\in\left\{[0;a_{1}(x),\ldots,a_{l}(x)],[0;a_{1}(x),\ldots, a_{l}(x),1],[0;a_{1}(x),\ldots, a_{l}(x),a_{l+1}(x)-1]\right\}  
    \end{equation}for some $l\in \N$. Note that \eqref{eq: Q bound 3} implies that $v\geq \frac{Q'}{10}> q(\bc\bb_{l})$ for any $\bc$ and $\bb_{l}$. Therefore $l$ must be strictly greater than the length of $\bc\bb_{l}$ for any $\bc$ and $\bb_{l}$. Using this fact we see that \eqref{eq:l equation} implies that  $\frac{u}{v}\in J$.
    Consequently 
    \begin{equation}
    \label{eq:Almost covered}
    J\setminus A\subseteq \bigcup_{\frac{u}{v}\in J: Q'/10\leq v\leq Q'}\left[\frac{u}{v}-\frac{1}{vQ'},\frac{u}{v}+\frac{1}{vQ'}\right]
    \end{equation}
     \eqref{eq:separated covering} now follows from \eqref{eq:Almost covered} and an application of the triangle inequality.
    
    Hence, by \eqref{size of J}, \eqref{size of A_k} and \eqref{eq:separated covering}, we have that
    \begin{equation*}
        \mathcal{L}\left( \bigcup_{j=1}^{\tilde{t}}\left[\frac{r_{j}}{s_{j}}-\frac{11}{(Q')^{2}},\frac{r_{j}}{s_{j}}+\frac{11}{(Q')^{2}}\right] \right) \geq \frac{1}{4}\mathcal{L}\left(U\left(\frac{p}{q}\right)\right),
    \end{equation*}
    and so we have the lower bound
    \begin{equation} \label{eq: step where different lemma used 1}
        \tilde{t}\geq \frac{(Q')^{2}\mathcal{L}\left(U\left(\frac{p}{q}\right)\right)}{88}.
    \end{equation}
    By Lemma \ref{lem: building U(p/q) with divisibility} and using the fact $q\to \Psi(q)\omega(q)$ is non-increasing, the above implies the following lower bound\footnote{It is here where a change in constant appears in the proof of Theorem~\ref{thm:main}, since Lemmas~\ref{lem: building U(p/q)} and ~\ref{lem: building U(p/q) with divisibility} give different bounds on the size of $U(p/q)$. Namely, the constant in the denominator would be $563200$ rather than $88\times 10^{8}$. This has a knock-on effect in later constants.}
    \begin{equation*}
        \tilde{t}\geq \frac{(Q')^{2}\Psi(Q)\omega(Q)}{88\times 10^{8}}\, .
    \end{equation*}
  We now remove the rightmost rational from $\tilde{T}.$ Thus we are left with at least $$\frac{(Q')^{2}\Psi(Q)\omega(Q)}{88\times 10^{8}}-1$$ many rationals. We may assume that $Q'$ has been chosen sufficiently large that from those rationals that remain, there exists a subset of cardinality $$\left\lfloor \frac{(Q')^{2}\Psi(Q)\omega(Q)}{10^{10}}\right\rfloor.$$ We let this set be our set $T$. This collection satisfies the required properties:
    \begin{itemize}[leftmargin=2\parindent]
        \item Conditions $a)$ and $b)$ are satisfied immediately because $T$ is a subset of $\tilde{T}$.
        \item To see that condition $c)$ holds let $\frac{r_{j}}{s_{j}}\in T$ and $(a_{1},\ldots,a_{l})$ be such that $\frac{r_{j}}{s_{j}}=[0;a_{1},\ldots, a_{l}]$ and $(a_{1},\ldots,a_{l})$ satisfies the restrictions imposed by the set $J$, i.e. $(a_{1},\ldots,a_{l})$ begins with $\bc\bb_{l}$ for some $\bb_{l}$ and satisfies the partial quotient upper bound. The integer $l$ is not necessarily even. We will explain how to overcome this technicality at the end of this discussion. We see that $c)$ holds by the following case analysis:
    \begin{itemize}[leftmargin=\parindent]
        \item For $1\leq i \leq n$ condition $c)$ holds by our assumption 3) combined with the non-increasing property of $q\mapsto q^{2}\Psi(q)$.
        \item For $i=n+1$ Lemma \ref{lem:b size} implies that $a_{n+1}\leq \frac{2}{q^{2}\Psi(q)}$. Using this inequality and the non-increasing property of $q\mapsto q^{2}\Psi(q)$ and \eqref{eq: Q bound 4} we have that
        \begin{equation*}
            a_{n+1}\leq \frac{2}{q^{2}\Psi(q)} \leq \frac{2}{Q^{2}\Psi(Q)}< \frac{1}{6\left(\frac{Q'}{10}\right)^{2}\Psi\left( \frac{Q'}{10}\right)}\leq \frac{1}{6s^{2}\Psi(s)}\, .
        \end{equation*}
        \item Let $\bb_{l}$ be such that $\frac{r_{j}}{s_{j}}\in J(\bb_{l})$. Then for $n+2\leq i\leq n+m$ condition $c)$ holds by the non-increasing property of $q\mapsto q^{2}\Psi(q),$ Lemma~\ref{lem: building U(p/q) with divisibility} and \eqref{eq: Q bound 2}. In particular, for each $1\leq i \leq m$ we have  $a_{n+i}=b_{i-1}$ and so
        \begin{equation*}
            a_{n+i}\leq \left\lfloor \frac{1}{3q_{n+i-1}^{2}\Psi(q_{n+i-1})}\right\rfloor \leq \frac{1}{3q(\bc\bb_{l})^{2}\Psi(q(\bc\bb_{l}))} \leq \frac{1}{6\left(\frac{Q'}{10}\right)^{2}\Psi\left( \frac{Q'}{10}\right)}\leq \frac{1}{6 s^{2}\Psi(s)}\, .
        \end{equation*}
        \item For $\bb_{l}$ as above and $n+m+1\leq i \leq l,$ we have by construction of the set $J$ and the non-increasing condition on $q\mapsto q^{2}\Psi(q)$, that the following holds assuming we chose our original $Q_{1}$ to be sufficiently large
        \begin{equation*}
            a_{i}+1\leq \frac{1}{7\cdot 2^{i-n-m-1}q(\bc\bb_{l})^{2}\Psi(2^{(i-n-m-1)/2}q(\bc\bb_{l}))}+1 <\frac{1}{6s^{2}\Psi(s)}\,.
        \end{equation*}
        Here we have used the fact that $s\geq 2^{(i-n-m-1)/2}q(\bc\bb_{l}),$ which is a consequence of \eqref{eq:recursive formulas}. It follows that $$\max_{1\leq i\leq l}a_{i}< \frac{1}{6s^{2}\Psi(s)}.$$ So $c)$ is satisfied by $(a_{1},\ldots, a_{l})$ which is not necessarily of even length. Because of the additional $+1$ term appearing in the above equation, if we use the substitutions $[0;a_{1}\ldots,a_{l}]=[0;a_{1},\ldots, a_{l}-1,1]$ if $a_{l}\geq 2$ and $[0;a_{1}\ldots,a_{l}]=[0;a_{1},\ldots, a_{l-1}+1]$ if $a_{l}=1$ to ensure our word is of even length, then $c)$ is satisfied by the resulting even length word. This means that Lemma~\ref{lem: building U(p/q) with divisibility} is applicable to each $\frac{r_{j}}{s_{j}}\in T$ and so the sets $U(\frac{r_{j}}{s_{j}})$ are well-defined.
    \end{itemize}
     \item Let $\frac{r_j}{s_j}\in T.$ Since $s_{j}\geq Q_{1}$ and $s_{j}\geq \frac{Q'}{10},$ it follows from \eqref{eq:Psi quadratic bound} that $\Psi(s_{j})\leq (Q')^{-2}$. Recall that we removed the rightmost rational from $\tilde{T}$ to define $T$. Since $\tilde{T}$ is a $(Q')^{-2}$ separated subset of $U\left(\frac{p}{q}\right)$ and $\Psi(s_{j})\leq (Q')^{-2}$, it follows therefore that $$U\left(\frac{r_{j}}{s_{j}}\right)\subseteq \left[\frac{r_{j}}{s_j},\frac{r_{j}}{s_{j}}+\Psi(s_{j})\right]\subseteq\left[\frac{r_{j}}{s_j},\frac{r_{j}}{s_{j}}+(Q')^{-2}\right]\subseteq U\left(\frac{p}{q}\right). $$ So condition $d)$ holds.
        \item Let $\frac{r_j}{s_j}\in T$ and $x\in U\left(\frac{r_j}{s_j}\right)$. It  follows from Lemma \ref{lem: building U(p/q) with divisibility} that $|x-\frac{r_j}{s_j}|\leq \Psi(s_j)$\footnote{In the proof of Theorem \ref{thm:main} this would follow from Lemma \ref{lem: building U(p/q)}.}. We can choose $Q'$ large enough so that $\frac{Q'}{10}>Q.$ Since $\frac{Q'}{10}\leq s_{j}\leq Q'$ we have $Q<s_{j}\leq Q'$ and so condition $e)i)$ holds.
        \item Let $\frac{r_j}{s_j}\in T$ and  $x\in U\left(\frac{r_{j}}{s_{j}}\right)$. By \eqref{eq:Psi quadratic bound} and Lemma \ref{lem:Legendre}, if $(u,v)\in \Z\times \N$ with $v\geq Q_{1}$ satisfies $$\left|x-\frac{u}{v}\right|< (1-\omega(v))\Psi(v)$$ then $\frac{u}{v}$ is a convergent of $x$. Thus, to verify condition $e)ii)$, it suffices to consider convergents $\frac{p_{k}}{q_{k}}$ to $x$ satisfying $Q_{1}\leq q_{k}\leq Q'$. Therefore, to show that condition $e)ii)$ holds, it suffices to show that there exists no $k\in\N$ such that
        \begin{equation}
        \label{eq:no solutions}
            \left|x-\frac{p_{k}}{q_k}\right|< (1-\omega(q_{k}))\Psi(q_{k})\quad \textrm{ and }\quad Q_{1}\leq q_{k}\leq Q'.
        \end{equation}        
        Let $(a_{k})_{k=1}^{\infty}$ be sequence of partial quotients for $x$. Since $x\in U\left(\frac{p}{q}\right)$ we have $$\frac{p}{q}=[0;a_{1},\ldots, a_{n}].$$ Moreover, since $x\in U\left(\frac{r_{j}}{s_j}\right)$ there exists $n_{1}>n$ such that  $$\frac{r_{j}}{s_{j}}=[0;a_{1},\ldots, a_{n_{1}}].$$ Since $(q_k)_{k=1}^{\infty}$ is an increasing sequence,  $q=q_{n}\leq Q,$ and $x\in U\left(\frac{p}{q}\right),$ it follows from statement $2)$ in our assumptions that \eqref{eq:no solutions} has no solutions for $k\leq n$. If $n<k\leq n+m-1,$ then since $[0;a_{1},\ldots, a_{n_{1}}]$ is the continued fraction expansion of $\frac{r_{j}}{s_{j}}$ and $\frac{r_{j}}{s_{j}}\in J,$ it follows that $a_{k+1}$ is a digit coming from a word belonging to $\{\bb_{l}\}_{l=1}^{L},$ in which case 
        \begin{equation}
            \label{eq:a_{l+1} bound}
            a_{k+1}\leq \left \lfloor \frac{1}{3q_{k}^{2}\Psi(q_{k})}\right \rfloor
        \end{equation}
         by Lemma \ref{lem: building U(p/q) with divisibility}\footnote{Or by Lemma~\ref{lem: building U(p/q)} in the proof of Theorem \ref{thm:main}}. Combining Lemma \ref{lem:approximation quality} and \eqref{eq:a_{l+1} bound} we have $$\left|x-\frac{p_{k}}{q_{k}}\right|\geq \frac{1}{3a_{k+1}q_k^{2}}\geq \Psi(q_{k}).$$ So there are no solutions to \eqref{eq:no solutions} for $n<k\leq n+m-1$. If $n+m\leq k<n_{1}$ then since $[0;a_{1},\ldots, a_{n_{1}}]$ is the continued fraction expansion of $\frac{r_{j}}{s_{j}}$ and $\frac{r_{j}}{s_{j}}\in J,$ there exists $\bb_{l}$ such that we have 
        $$a_{k+1}\leq \frac{1}{7\cdot 2^{k-m-n}q(\bc\bb_{l})^{2}\Psi(2^{(k-m-n)/2}q(\bc\bb_{l}))}.$$ 
        Appealing to \eqref{eq:recursive formulas} we can show that $q_{k}\geq 2^{(k-m-n)/2}q(\bc\bb_{l})$. Using our assumption that the function $q\mapsto q^{2}\Psi(q)$ is non-increasing, the following holds 
    \begin{equation}
    \label{eq:q power 2 bound}
        q_{k}^{2}(x)\Psi(q_{k}(x)) \leq 2^{k-m-n}q(\bc\bb_{l})^{2}\Psi(2^{(k-m-n)/2}q(\bc\bb_{l})).
    \end{equation}
    Hence, by Lemma \ref{lem:approximation quality} and \eqref{eq:q power 2 bound} we have
    \begin{align*}
    \label{eq: different lemma used 2}
        \left|x-\frac{p_{k}(x)}{q_{k}(x)}\right|>\frac{1}{3a_{k+1}(x)q_{k}(x)^{2}}& \geq \frac{1}{3\frac{1}{7\cdot 2^{k-m-n}q(\bc\bb_{l})^{2}\Psi(2^{(k-m-n)/2}q(\bc\bb_{l}))}q_{k}(x)^{2}} \\
        &\geq  \Psi(q_{k}(x)),
    \end{align*} 
    So \eqref{eq:no solutions} has no solutions for $n+m\leq k<n_{1}$. Taking $k=n_{1},$ then since $x\in U\left(\frac{r_{j}}{s_{j}}\right)$ it follows from the definition of $U\left(\frac{r_{j}}{s_{j}}\right)$ that $$\left|x-\frac{p_{n_{1}}}{q_{n_{1}}}\right|=\left|x-\frac{r_{j}}{s_{j}}\right|\geq  (1-\omega(s_{j}))\Psi(s_{j}).$$ So \eqref{eq:no solutions} has no solution when $k=n_{1}$. Finally, if $k> n_{1}$ then by applying Lemma \ref{lem:b size} to $\frac{r_{j}}{s_{j}}$\footnote{which is satisfied under the assumptions of both Theorem~\ref{thm:main} and Theorem~\ref{thm:main2}} written with an even number of partial quotients we have that
    \begin{equation*}
        a_{n_{1}+1}\geq \frac{1}{2q_{n_{1}}^{2}\Psi(q_{n_{1}})}\, . 
    \end{equation*}
    So, combined with \eqref{eq:recursive formulas}, \eqref{eq:Psi quadratic bound} and the bound $\frac{Q'}{10}\leq s_{j}$ we have $$q_{k}\geq a_{n_{1}+1}q_{n_{1}}\geq \frac{q_{n_{1}}}{2q_{n_{1}}^{2}\Psi(q_{n_{1}})}=\frac{s_{j}}{2s_{j}^{2}\Psi(s_{j})}\geq 500s_{j}>Q'.$$  Thus any convergent $\frac{p_{k}}{q_{k}}$ of $x\in U(\frac{r_{j}}{s_{j}})$ with $k>n_{1}$ cannot contribute a solution to \eqref{eq:no solutions}. Summarising, we have shown that \eqref{eq:no solutions} has no solutions for all $k\in \N.$ This proves condition $e)ii)$ holds.
    \end{itemize}
\end{proof}

Let $E_{1}$ and $Q_{1}$ be as above. Now let $(Q_{k})_{k=1}^{\infty}$ be a rapidly increasing sequence of integers satisfying the following properties:
\begin{itemize}[leftmargin=2\parindent]
    \item For each $k\in \N$, $Q_{k+1}$ is sufficiently large that the conclusion of Lemma \ref{inductive lemma} holds when we have taken $Q=Q_{k}$ in the statement of this lemma.
    \item We have 
    \begin{equation}
        \label{eq:View lower order}
        \lim_{k\to\infty}\frac{-\log \Psi(Q_{k})\omega(Q_{k})}{\log Q_{k}} =\lambda(\Psi,\omega).
    \end{equation}
  \item We have 
  \begin{equation}
  \label{eq:logs disappear}
     \lim_{k\to\infty}\frac{\log \Psi(Q_{k})\omega(Q_{k})}{\log \Psi(Q_{k+1})\omega(Q_{k+1})}=0.
  \end{equation}
  
    \end{itemize} Repeatedly applying Lemma \ref{inductive lemma} yields a sequence of sets $(E_k)_{k=1}^{\infty}$ satisfying the following properties:

\begin{itemize}[leftmargin=2\parindent]
    \item For each $E_{k}$ there exists a set of rationals $S_{k}=\{\frac{r_{j,k}}{s_{j,k}}\}_{j\in J_{k}}$ such that $$E_{k}=\bigcup_{j\in J_{k}}U\left(\frac{r_{j,k}}{s_{j,k}}\right)$$ is a disjoint union and $\frac{Q_{k}}{10}\leq s_{j,k}\leq Q_{k}$ for all $j\in J_{k}.$
    \item For all $k\geq 1$ we have
    $E_{k+1}\subset E_{k}$.
    \item For all $x\in E_{k}$ there exists at least $k$ distinct choices of $(r,s)\in \Z\times \N$ satisfying $$\left| x-\frac{r}{s}\right|<\Psi(s),\quad Q_{1}\leq s\leq Q_{k}.$$
    \item For all $x\in E_{k}$ there are no $(r,s)\in \Z\times \N$ satisfying $$\left| x-\frac{r}{s}\right|<(1-\omega(s)\Psi(s),\quad Q_{1}\leq s\leq Q_{k}.$$
    \end{itemize}
\subsubsection{Hausdorff dimension of $\cC$} Taking $$\cC=\bigcap_{k=1}^{\infty}E_{k},$$ then it is clear from the above that $\cC\subset E(\Psi,\omega).$ It also follows from Lemma \ref{inductive lemma} that we can assume that our sets $\{E_k\}$ satisfy the following properties:    
    \begin{itemize}[leftmargin=2\parindent]
    \item For each $k\in \N$ and $\frac{r_{j,k}}{s_{j,k}}\in S_{k},$ there exists $$\left\lfloor \frac{(Q_{k+1})^{2}\Psi(Q_{k})\omega(Q_{k})}{10^{10}}\right\rfloor$$ choices of $j'\in J_{k+1}$ such that $$U\left(\frac{r_{j',k+1}}{s_{j',k+1}} \right)\subset U\left(\frac{r_{j,k}}{s_{j,k}} \right).$$
    \item For each $k\in \N$ and $\frac{r_{j,k}}{s_{j,k}}\in S_{k},$ we have $$\mathcal{L}\left(U\left(\frac{r_{j,k}}{s_{j,k}}\right)\right)\geq \frac{\Psi(Q_{k})\omega(Q_{k})}{10^{8}}.$$ This follows from Lemma \ref{lem: building U(p/q) with divisibility} \footnote{Or from Lemma~\ref{lem: building U(p/q)} in the proof of Theorem \ref{thm:main} with a slightly better constant} and the fact $q\mapsto \Psi(q)\omega(q)$ is a decreasing function.
    \item For each $k\in \N$ and $\frac{r_{j,k}}{s_{j,k}},\frac{r_{j',k}}{s_{j',k}}\in S_{k},$ there is a gap of size at least $\frac{1}{2Q_{k}^{2}}$ between $U\left(\frac{r_{j,k}}{s_{j,k}}\right)$ and $U\left(\frac{r_{j',k}}{s_{j',k}}\right).$ This follows from \eqref{eq:Psi quadratic bound} and the inequality $\left|\frac{r_{j,k}}{s_{j,k}}-\frac{r_{j',k}}{s_{j',k}}\right|\geq \frac{1}{Q_{k}^{2}}$ which is a consequence of Lemma \ref{inductive lemma}.
\end{itemize}
Using the properties listed above and Proposition \ref{prop:dimension lower bound} we have
\begin{align*}
\dim_{H}(E(\Psi,\omega))\geq \dim_{H}(\cC)&\geq \liminf_{k\to\infty}\frac{\log \lfloor \frac{Q_{k}^{2}\Psi(Q_{k-1})\omega(Q_{k-1})}{10^{10}}\rfloor }{-\log \frac{\Psi(Q_{k})\omega(Q_{k})}{10^{8}} }\\
&\geq \liminf_{k\to\infty}\frac{\log Q_{k}^{2}\Psi(Q_{k-1})\omega(Q_{k-1})}{-\log \Psi(Q_{k})\omega(Q_{k})}\\
&=\liminf_{k\to\infty}\frac{\log Q_{k}^{2}}{-\log \Psi(Q_{k})\omega(Q_{k})}\\
&=\frac{2}{\lambda(\Psi,\omega)}.
\end{align*}
In the penultimate line we used \eqref{eq:logs disappear}. In the final line we used \eqref{eq:View lower order}. Summarising, we have shown that \eqref{eq:WTS lower bound} holds, and so the proof of Theorem \ref{thm:main2} is complete. Note that the changes in constants that appear in the proof of Theorem~\ref{thm:main} make no difference to the final dimension calculation. Thus this theorem follows similarly.

\subsection*{Acknowledgments}
The first author was supported by an EPSRC New Investigators Award (EP/W003880/1). The second author is supported by a Leverhulme Trust Early Career Research Fellowship ECF-2024-401.

\end{document}